\newtheorem{thm}{Theorem}[section]
\newtheorem{prop}[thm]{Proposition}
\newtheorem{defn}[thm]{Definition}
\newtheorem{lem}[thm]{Lemma}
\newtheorem{cor}[thm]{Corollary}
\newtheorem{rem}[thm]{Remark}
\theoremstyle{remark}
\newcommand{\subsubsubsection}{\@startsection{paragraph}{4}{\z@}%
 {1.0\Cvs \@plus.5\Cdp \@minus.2\Cdp}%
 {.1\Cvs \@plus.3\Cdp}%
 {\reset@font\sffamily\normalsize}
 }
\DeclareMathOperator{\Spec}{Spec}
\DeclareMathOperator{\id}{id}
\DeclareMathOperator{\End}{End}
\DeclareMathOperator{\Hom}{Hom}
\DeclareMathOperator{\Ext}{Ext}
\DeclareMathOperator{\Aut}{Aut} 
\DeclareMathOperator{\Ind}{Ind}
\DeclareMathOperator{\Nrd}{Nrd}
\DeclareMathOperator{\Spa}{Spa}
\DeclareMathOperator{\Lie}{Lie}
\DeclareMathOperator{\IC}{IC}
\DeclareMathOperator{\Sht}{Sht}
\DeclareMathOperator{\Spd}{Spd}
\DeclareMathOperator{\Perf}{Perf}
\DeclareMathOperator{\GL}{GL}
\DeclareMathOperator{\Bun}{Bun}
\DeclareMathOperator{\lis}{lis}
\DeclareMathOperator{\ad}{ad}
\DeclareMathOperator{\Hck}{Hck}
\DeclareMathOperator{\Div}{Div}
\DeclareMathOperator{\AffPerf}{AffPerf}
\newcommand{\ab}{\mathrm{ab}}
\newcommand{\alg}{\mathrm{alg}}
\newcommand{\Gm}{\mathbb{G}_{\mathrm{m}}}
\newcommand{\Hc}{\operatorname{H}_{\mathrm{c}}}
\newcommand{\Eis}{\mathrm{Eis}}
\newcommand{\nmEis}{\mathrm{nEis}}
\newcommand{\ol}{\overline}
\newcommand{\ul}{\underline}
\newcommand{\wh}{\widehat}
\newcommand{\wt}{\widetilde}
\newcommand{\lra}{\longrightarrow}
\newcommand{\ra}{\rightarrow}
\newcommand{\bb}{\mathbb}
\newcommand{\ca}{\mathcal}
\newcommand{\scr}{\mathscr}
\def\basic{\mathrm{basic}}
\def\mf{\mathfrak}
\def\det{\mathrm{det}}
\def\nmEisP{\mathrm{nEis}_{P}}
\def\EisP{\mathrm{Eis}_{P}}
\def\Loc{\mathrm{Loc}}
\def\mcLie{\mathcal{L}\mathrm{ie}}
\def\Ad{\mathrm{Ad}}
\newcommand{\Lcoinv}{\mathbb{X}_{*}(L^{\mathrm{ab}}_{\ol{F}})_{\Gamma_{F}}}
\newcommand{\Lcoinvdom}{\mathbb{X}_{*}(L^{\mathrm{ab}}_{\ol{F}})_{\Gamma_{F}}^{+}}
\newcommand{\cf}{\textit{cf.\ }}
\begin{document}

\title
{Dualizing complexes on the moduli\\ of parabolic bundles} 

\author{Linus Hamann and Naoki Imai}
\date{}
\maketitle


\begin{abstract}
For a non-archimedean local field $F$ and a connected reductive group $G$ over $F$ equipped with a parabolic subgroup $P$, we show that the dualizing complex on $\Bun_{P}$, the moduli stack of $P$-bundles on the Fargues--Fontaine curve, can be described explicitly in terms of the modulus character of $P$. As applications, we identify various characters appearing in the theory of local and global Shimura varieties, show the Harris--Viehmann conjecture in the Hodge--Newton reducible case, and carry out some computations of the geometric Eisenstein functors for general parabolics.
\end{abstract}
\tableofcontents 
\section{Introduction}
Let $G/F$ be a connected reductive group over a non-archimedean field $F$. Let $p$ be the characteristic of the residue field,  $q$ denote its cardinality, and $k$ denote its algebraic closure. Let $\Lambda$ be a torsion ring of order prime to $p$ unless otherwise stated. We let $\Bun_{G}$ denote the $v$-stack parameterizing $G$-bundles on the relative Fargues--Fontaine curve $X_{S}$, for $S \in \Perf_k$, the category of perfectoid spaces over $k$. It is known \cite[Theorem~IV.1.19]{FaScGeomLLC} that this defines an $\ell$-cohomologically smooth Artin $v$-stack of pure $\ell$-dimension $0$. Concretely, this translates into the fact that, \'etale locally on $\Bun_{G}$, the dualizing complex $K_{\Bun_{G}} \in D(\Bun_{G})$ is isomorphic to the constant sheaf $\Lambda$ on $\Bun_{G}$. On the other hand, $\Bun_{G}$ admits a locally closed stratification into Harder--Narasimhan (abbv. HN) strata $i^{b}\colon \Bun_{G}^{b} \hookrightarrow \Bun_{G}$, indexed by elements $b \in B(G)$ in the Kottwitz set of $G$. These HN-strata are isomorphic to $[\ast/\wt{G}_{b}]$, where $\wt{G}_{b}$ is a group diamond admitting a surjection $\wt{G}_{b} \ra \ul{G_{b}(F)}$ with kernel given by an $\ell$-adically contractible unipotent part denoted $\wt{G}_{b}^{> 0}$. Pulling back along the induced map $[\ast/\wt{G}_{b}] \ra [\ast/\ul{G_{b}(F)}]$ induces an equivalence $D(\Bun_{G}^{b}) \cong D([\ast/\ul{G_{b}(F)}])$, and the RHS is in turn isomorphic to the unbounded (left-completed) derived category of smooth $\Lambda$-representations of $G_{b}(F)$ \cite[Theorem~V.0.1 (ii)-(iii)]{FaScGeomLLC}. It follows that one has a semi-orthogonal decomposition of any object $\mathcal{F} \in D(\Bun_{G})$ into complexes of $G_{b}(F)$-representations, for varying $b \in B(G)$.

It is a natural question to wonder what the $G_{b}(F)$-representations corresponding to the dualizing complex $K_{\Bun_{G}}$ under the semi-orthogonal decomposition are. In particular, any sheaf on the classifying stack $D([\ast/\ul{G_{b}(F)}])$ corresponding to a smooth character of $G_{b}(F)$ will be \'etale locally constant, but not constant on the nose. Our first result is as follows.
\begin{prop}[Proposition \ref{prop: dualcomplexforreductive}]\label{prop:intdualBunG}
For $G/F$ any connected reductive group, we have an isomorphism 
\[ K_{\Bun_{G}} \cong \Lambda \]
of objects in $D(\Bun_{G})$.
\end{prop}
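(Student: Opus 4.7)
The strategy is to leverage the $\ell$-cohomological smoothness of $\Bun_{G}$ of pure dimension zero: this tells us that $K_{\Bun_{G}}$ is an invertible object in $D(\Bun_{G})$ concentrated in degree $0$ and étale locally isomorphic to $\Lambda$. Hence $K_{\Bun_{G}}$ corresponds to a continuous character of the étale fundamental group of each connected component of $\Bun_{G}$, and the task reduces to showing each such character is trivial.

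Recall that $\pi_{0}(\Bun_{G}) \cong \pi_{1}(G)_{\Gamma}$, and each component $\Bun_{G}^{\alpha}$ contains a unique open semistable stratum $[\ast/\ul{G_{b_{\alpha}}(F)}]$ indexed by the basic element $b_{\alpha} \in B(G)_{\alpha}$; here $G_{b_{\alpha}}$ is an extended pure inner form of $G$, hence reductive. Since open immersions preserve the dualizing complex, I would first compute $K_{[\ast/\ul{H}]}$ for $H = G_{b_{\alpha}}(F)$. Choosing a compact open subgroup $K \subset H$, the canonical factorization
\[
\ast \;\longrightarrow\; [\ast/\ul{K}] \;\longrightarrow\; [\ast/\ul{H}]
\]
consists of a $\ul{K}$-torsor (with $\ul{K}$ profinite, hence cohomologically smooth of dimension zero with trivial relative dualizing complex) followed by an étale map with discrete fibers $\ul{H/K}$. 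Chasing dualizing complexes through this factorization identifies $K_{[\ast/\ul{H}]}$ with the smooth character of $H$ given by the modulus $\delta_{H}$, which is trivial since $H$ is reductive. Thus the restriction of $K_{\Bun_{G}}$ to each open semistable stratum is isomorphic to $\Lambda$.

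To upgrade this to a global statement on each component, one observes that $K_{\Bun_{G}^{\alpha}}$ is an invertible locally constant sheaf whose pullback to the open dense substack $[\ast/\ul{G_{b_{\alpha}}(F)}]$ is trivial. Equivalently, it corresponds to a character of the étale fundamental group of $\Bun_{G}^{\alpha}$ which vanishes on the image coming from the basic stratum. The main obstacle is therefore verifying that this image is surjective, i.e., that no non-trivial pro-étale cover of $\Bun_{G}^{\alpha}$ becomes split once restricted to the open semistable locus. This ought to follow from the HN stratification of $\Bun_{G}^{\alpha}$ by the locally closed substacks $\Bun_{G}^{b}$ for $b \in B(G)_{\alpha}$ together with the fact that the non-basic strata have "positive codimension" in an $\ell$-cohomological sense, but carrying this out carefully in the v-stack formalism — in particular checking the relevant statement about étale covers passing to the open semistable locus — is the subtle step.
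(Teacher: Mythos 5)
Your overall strategy — reduce to local systems via $\ell$-cohomological smoothness, compute $K$ on the open semistable locus via unimodularity of the $G_b(F)$, then extend to all of $\Bun_G$ — is exactly the strategy of the paper. The computation on $\Bun_G^{\mathrm{ss}}$ is correct (the paper cites \cite[Example~4.4.3]{HKWKotloc}). The issue is the step you yourself flag as ``the subtle step.''

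Your proposed mechanism for that step — that non-basic strata have ``positive codimension'' so local systems on the semistable locus should extend uniquely, i.e.\ that $\pi_1(\Bun_G^{\mathrm{ss}}) \to \pi_1(\Bun_G)$ should be a bijection by some purity principle — does not work, and the paper explicitly warns against it. The inclusion $\widetilde{\Loc}(\Bun_G^{\mathrm{ss}}) \subset \Loc(\Bun_G^{\mathrm{ss}})$ is strict: for a supercuspidal representation $\rho$ of $D^{\ast}_{1/n}$, its $\ast$-pushforward to $\Bun_{\GL_n}$ agrees with its $!$-pushforward and hence is not a local system. So local systems on $\Bun_G^{\mathrm{ss}}$ genuinely fail to extend, the map on fundamental groups is not an isomorphism, and codimension/purity heuristics are unreliable here.

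What you actually need is the weaker statement that the map on fundamental groups is \emph{surjective}, equivalently that $R^0 j_{G*}$ takes local systems in $\widetilde{\Loc}(\Bun_G^{\mathrm{ss}})$ to local systems on $\Bun_G$ — this is Proposition~\ref{prop: Localsystemsextend}. But its proof is not a soft topological argument: it proceeds by excision along the HN stratification, uses the smooth charts $\pi_b\colon \mathcal{M}_b \to \Bun_G^{\leq b}$ and \cite[Proposition~V.4.2]{FaScGeomLLC} to reduce to a cohomology computation on $\widetilde{\mathcal{M}}_b^{\circ}$ (Lemma~\ref{lem: Mbcohomology}), and that computation in turn rests on Proposition~\ref{prop: moduluscharacterinnaturalsituations}, which is deduced from the description of $K_{\Bun_P^{\theta}}$ via Picard $v$-groupoids (Proposition~\ref{prop:KBunPtheta} and the material of \S\ref{sec:DualPic}). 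In other words, the ``subtle step'' you defer is a substantial computation whose proof runs through the parabolic machinery, not a formal consequence of codimension bounds. The proposal correctly isolates where the difficulty lies, but the suggested route for closing it would fail.
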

\begin{rem}
This is claimed in a footnote in \cite[V.5]{FaScGeomLLC} indicating an argument, which is close to the one in Remark \ref{rem: WildProof}. 
\end{rem}
The idea for proving this is to note that, on the semistable locus $\Bun_{G}^{\mathrm{ss}}$, we have a decomposition 
\[ \Bun_{G}^{\mathrm{ss}} \cong \bigsqcup_{b \in B(G)_{\basic}} [\ast/\ul{G_{b}(F)}], \]
and the dualizing complex on such classifying stacks $[\ast/\ul{G_{b}(F)}]$ is given by a module of Haar measures on $G_{b}(F)$, which is trivial as a $G_{b}(F)$-representation since $G_{b}$ is unimodular. We can then extend to all of $\Bun_{G}$ using the following result, which is interesting in its own right. 
\begin{prop}[Proposition \ref{prop: Localsystemsextend}]\label{prop:intssequiv}
For $G/F$ a connected reductive group, we have a well-defined equivalence of categories 
\[ \widetilde{\Loc}(\Bun_{G}^{\mathrm{ss}}) \xrightarrow{\cong} \Loc(\Bun_{G}) \]
\[ A \mapsto R^{0}j_{G*}(A), \]
where $\Loc(X) \subset D(X)$ denotes the category of local systems on $X$ with respect to the \'etale topology, $\widetilde{\Loc}(\Bun_{G}^{\mathrm{ss}}) \subset \Loc(\Bun_{G}^{\mathrm{ss}})$ denotes the subcategory of \'etale local systems which admit a trivialization induced from an \'etale covering of $\Bun_{G}$, and $j_{G}: \Bun_{G}^{\mathrm{ss}} \hookrightarrow \Bun_{G}$ is the natural open immersion. 
\end{prop}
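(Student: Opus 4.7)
The plan is to show that $j_G^*\colon \Loc(\Bun_G) \to \Loc(\Bun_G^{\mathrm{ss}})$ is fully faithful with essential image $\widetilde{\Loc}(\Bun_G^{\mathrm{ss}})$, and that $R^0 j_{G*}$ is the quasi-inverse. Since $j_G$ is an open immersion, one has $j_G^* R j_{G*} \cong \id$ on sheaves, so everything reduces to showing that for any $A \in \widetilde{\Loc}(\Bun_G^{\mathrm{ss}})$, the pushforward $R j_{G*}(A)$ is concentrated in degree zero and yields an étale local system on $\Bun_G$; both functor identities then follow formally by adjunction.

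By hypothesis there is an étale cover $f\colon U \to \Bun_G$ such that $f^*A$ becomes the constant sheaf $\Lambda^n$ on $f^{-1}(\Bun_G^{\mathrm{ss}})$. Writing $V = U \times_{\Bun_G} U$ and $W := V \times_{\Bun_G} \Bun_G^{\mathrm{ss}}$, the local system $A$ is encoded by a locally constant $\GL_n(\Lambda)$-valued descent cocycle on $W$. Extending $A$ from $\Bun_G^{\mathrm{ss}}$ to $\Bun_G$ then amounts to extending this cocycle from $W$ to $V$, which by local constancy is equivalent to checking that the open inclusion $W \hookrightarrow V$ induces a bijection on étale connected components (and that no higher obstruction appears).

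I would verify this by induction on the Harder--Narasimhan stratification, adding one non-basic stratum $\Bun_G^b$ at a time in the order of the closure relations on $B(G)$. For each such $b$, the Fargues--Scholze local charts around $\Bun_G^b$---obtained from the parabolic correspondence $\Bun_G \leftarrow \Bun_P \to \Bun_M$, where $P = MN$ is a standard parabolic and $b$ comes from a basic element of $B(M)$---present $\Bun_G$ near $\Bun_G^b$ as a ``unipotent tube'' whose transverse direction is controlled by the $\ell$-cohomologically contractible diamond $\wt{G}_b^{>0}$. The contractibility of $\wt{G}_b^{>0}$ implies that, étale locally on $\Bun_G$, removing $\Bun_G^b$ preserves both étale $\pi_0$ and higher étale cohomology with local system coefficients. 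This simultaneously produces the needed bijection on connected components of $V$ vs.\ $W$ and forces $R^i j_{G*}(A) = 0$ for $i > 0$, so the iterated extension across all non-basic strata lives canonically in $\Loc(\Bun_G)$ and is identified with $R^0 j_{G*}(A)$.

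The main obstacle is assembling this local picture correctly: identifying the precise local model of $\Bun_G$ near each non-basic stratum, and translating the $\ell$-adic contractibility of $\wt{G}_b^{>0}$ into the statements about étale $\pi_0$ and higher pushforwards used above. This is delicate because $\wt{G}_b^{>0}$ arises naturally in the $v$-topology, whereas the local systems under consideration are étale; once the passage between the two is made precise (e.g.\ via the framework of \cite{FaScGeomLLC}), the inductive extension step is formal and the proposition follows.
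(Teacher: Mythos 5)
Your proposal captures the right skeleton --- induction on the Harder--Narasimhan stratification, passage to the charts $\mathcal{M}_{b}$, and use of contractibility to control the passage across a non-basic stratum --- and the cocycle/descent formulation is an equivalent repackaging of the paper's adjunction-map argument. However, there are two substantive problems.

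First, the claim that contractibility ``forces $R^{i}j_{G*}(A)=0$ for $i>0$'' is false, and the paper explicitly disproves it: Lemma \ref{lem: Mbcohomology} shows that $R^{i}\Gamma(\wt{\mathcal{M}}_{b}^{\circ},\Lambda)$ is nonzero in the single higher degree $2\langle 2\rho_{G},\nu_{b}\rangle-1$, so the pushforward across the non-basic stratum is \emph{not} concentrated in degree zero. Fortunately this is also not needed for the proposition, which only involves $R^{0}j_{G*}$ --- but the error signals that the intuition ``removing the stratum does not change cohomology'' is wrong, and with it the claim that the connected-components bijection is a formal consequence.

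Second, and more seriously, you attribute the $\pi_{0}$-preservation entirely to the contractibility of $\wt{G}_{b}^{>0}$, whereas in the paper this is not sufficient. The argument in Lemma \ref{lem: Mbcohomology} requires two distinct inputs: (i) contractibility of $\wt{\mathcal{M}}_{b}$ (this is the iterated positive Banach--Colmez fibration structure, not $\wt{G}_{b}^{>0}$ --- you are conflating the structure making $\Bun_{G}^{b}$ a classifying stack with the structure of the chart), and, crucially, (ii) the computation of the dualizing complex on $\mathcal{M}_{b}$ as $\mf{q}_{b}^{+*}(\delta_{b})[2d_{b}]$ from Proposition \ref{prop: moduluscharacterinnaturalsituations}. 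One needs (ii) to convert the compactly supported cohomology of $\wt{\mathcal{M}}_{b}^{\circ}$, obtained from (i) and excision against the split section, into the ordinary cohomology that controls $\pi_{0}$. The dualizing complex is not constant --- the modulus character $\delta_{b}$ really appears --- so (ii) is a genuine computation built from the Picard $v$-groupoid analysis in \S\ref{sec:DualPic} and is the heart of the paper, not a formal byproduct of contractibility. Your sketch never engages with this input, which is the missing idea.
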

\begin{rem}
A proof of this result and the previous result through a different but less explicit method has been carried out for $\GL_{2}$ in the Masters Thesis of Ruth Wild \cite{WildBunGcohomology}. The idea of Wild generalizes and we sketch this in Remark \ref{rem: WildProof}. Our argument is very similar, but fits in more naturally with the other calculations we carry out.
\end{rem}
\begin{rem}
The restriction to the subcategory $\widetilde{\Loc}(\Bun_{G}^{\mathrm{ss}})$ implies (non-obviously) that $R^{0}j_{G*}(A)$ is also an \'etale local system so that the map is well-defined. In general, the inclusion $\widetilde{\Loc}(\Bun_{G}^{\mathrm{ss}}) \subset \Loc(\Bun_{G}^{\mathrm{ss}})$ is strict. For example, consider a finite-dimensional smooth irreducible representation $\rho$ of the division algebra $D^{*}_{\frac{1}{n}}$. This defines a sheaf on a connected component of $\Bun_{\GL_{n}}^{\mathrm{ss}}$. 
Suppose $\rho$ is the mod $\ell$ reduction of a $\ol{\bb{Q}}_{\ell}$-representation $\tilde{\rho}$ such that the mod $\ell$ reduction of the semi-simplified Harris--Taylor parameter (= Fargues--Scholze parameter) of $\tilde{\rho}$ is supercuspidal (i.e does not factor through a proper Levi) then it follows, since the Fargues--Scholze parameter correspondence is compatible with mod $\ell$ reduction (by the $\Lambda$-linearity of the maps appearing in \cite[Theorem~IX.5.2]{FaScGeomLLC}), that the Fargues--Scholze parameter of $\rho$ is supercuspidal, and it in turn follows that its $*$-pushforward to all of $\Bun_{G}$ is the same as its $!$-pushforward (cf. \cite[Lemma~2.26]{BMHNCompuni}). Therefore, we see that $R^{0}j_{G*}(A)$ does not lie in $\widetilde{\Loc}(\Bun_{G}^{\mathrm{ss}})$ if $A \in \Loc(\Bun_{G}^{\mathrm{ss}})$ has such $\rho$ as a constituent.
\end{rem}
The proof we give of Proposition \ref{prop:intssequiv} is in fact related to the computation of the dualizing object on certain $\ell$-cohomologically smooth charts $\pi_{b}\colon \mathcal{M}_{b} \ra \Bun_{G}$, as studied in \cite[Section~V.3]{FaScGeomLLC}. These can be viewed as subspaces of the moduli stack of $P$-bundles, where $P$ is a parabolic. In particular, consider such a parabolic $P$ with Levi factor $L$ and the natural diagram 
\[ \begin{tikzcd}
\Bun_{P} \arrow[d,"\mf{q}_{P}"] \arrow[r,"\mf{p}_{P}"] & \Bun_{G} \\
\Bun_{L}, &  
\end{tikzcd} \]
where $\Bun_{P}$ parameterizes $P$-bundles on $X_{S}$. The $v$-stack $\Bun_{P}$ is an $\ell$-cohomologically smooth Artin $v$-stack, and the map $\mf{q}_{P}$ induces an isomorphism on the set of connected components. Moreover, each connected component is pure of some fixed $\ell$-dimension (Proposition \ref{prop: qissmooth}, Corollary \ref{cor: conncompsBunP}). We write $\dim(-)\colon |\Bun_{P}| \ra \mathbb{Z}$ for this locally constant function. It is natural to ask for an explicit description of the dualizing complex of $\Bun_{P}$. As in the case of $\Bun_{G}$, it is helpful to first look at the classifying stack $[\ast/\ul{P(F)}]$, which will define an open subspace of $\Bun_{P}$ corresponding to the trivial $P$-structure. As above, the dualizing complex on this classifying stack will be identified with the module of Haar measures, which is isomorphic to the modulus character $\delta_{P}$ as a $P(F)$-representation, by definition. 

One can wonder whether this relationship between the dualizing complex on $\Bun_{P}$ and the modulus character holds over the entire space. To formulate this, we consider the abelianization $L^{\mathrm{ab}}$ of $L$, and write 
\[ \ol{\mf{q}}_{L} \colon \Bun_{L} \ra \Bun_{L^{\mathrm{ab}}} \]
for the natural map. Since $L^{\mathrm{ab}}$ is commutative, we have an isomorphism
\[ \Bun_{L^{\mathrm{ab}}} \cong \bigsqcup_{\ol{\theta} \in B(L^{\mathrm{ab}})} [\ast/\ul{L^{\mathrm{ab}}(F)}]. \]
We write $\Delta_{P}$ for the sheaf on $\Bun_{L}$ obtained by pulling back along $\ol{\mf{q}}_{L}$ the sheaf on $\Bun_{L^{\mathrm{ab}}}$ whose value is given by $\ol{\delta}_{P}$ on each connected component, where $\ol{\delta}_{P}\colon L^{\mathrm{ab}}(F) \ra \Lambda^{*}$ is 
a factorization of $\delta_P$ (\cf \S \ref{sec:Strmod}). Our main theorem is as follows.
\begin{thm}[Theorem \ref{thm: main}]\label{thm:intmain}
For $G/F$ any connected reductive group, with parabolic $P$ and Levi $L$, we have an isomorphism
\[ K_{\Bun_{P}} \cong \mf{q}_{P}^{*}(\Delta_{P})[2\dim(\Bun_{P})] \]
of objects in $D(\Bun_{P})$.
\end{thm}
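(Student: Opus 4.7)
The plan is to use that $\mf{q}_{P}\colon \Bun_{P} \to \Bun_{L}$ is $\ell$-cohomologically smooth, together with Proposition~\ref{prop:intdualBunG} applied to $L$, reducing the statement to an identification of the orientation sheaf of $\mf{q}_{P}$. Concretely, since $K_{\Bun_{L}} \cong \Lambda$ and $\mf{q}_{P}$ is smooth of pure $\ell$-dimension $\dim(\Bun_{P})$ on each connected component, we obtain
\[ K_{\Bun_{P}} \cong \mf{q}_{P}^{!}(K_{\Bun_{L}}) \cong \mf{q}_{P}^{!}(\Lambda) \cong \omega[2\dim(\Bun_{P})], \]
for some invertible \'etale local system $\omega$ on $\Bun_{P}$---the orientation sheaf of $\mf{q}_{P}$. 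The theorem reduces to constructing a global isomorphism $\omega \cong \mf{q}_{P}^{*}\Delta_{P}$.

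To pin down $\omega$ on one stratum, I would restrict to the substack $[\ast/\ul{P(F)}] \hookrightarrow \Bun_{P}$ corresponding to the trivial $P$-bundle, on which $\mf{q}_{P}$ becomes the classifying-stack map $[\ast/\ul{P(F)}] \to [\ast/\ul{L(F)}]$ induced by $P \twoheadrightarrow L$. The orientation sheaf of this map is the $P(F)$-character acting on $\det \Lie(N)(F)$, which is by definition the modulus character $\delta_{P}$. Since $\delta_{P}$ factors through $L^{\ab}(F)$ as $\ol{\delta}_{P}$, this matches the restriction of $\mf{q}_{P}^{*}\Delta_{P}$ to this locus.

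To extend to all of $\Bun_{P}$, I would argue that both $\omega$ and $\mf{q}_{P}^{*}\Delta_{P}$ are pulled back from invertible local systems on $\Bun_{L^{\ab}}$ via $\ol{\mf{q}}_{L} \circ \mf{q}_{P}$: for $\Delta_{P}$ this is the definition, and for $\omega$ it should hold because the source of the orientation is the adjoint $L$-representation on $\det \Lie(N)$, which factors through $L^{\ab}$. On the decomposition $\Bun_{L^{\ab}} \cong \bigsqcup_{\ol{\theta}} [\ast/\ul{L^{\ab}(F)}]$, invertible \'etale local systems on each component are classified by characters of $L^{\ab}(F)$; the previous step identifies both with $\ol{\delta}_{P}$ on the component containing the trivial $P$-bundle, and since $\ol{\delta}_{P}$ is a single character independent of the component, the comparison propagates to every connected component.

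The main obstacle is the descent claim just invoked---rigorously showing that the orientation sheaf of $\mf{q}_{P}$ is a pullback from $\Bun_{L^{\ab}}$ rather than being merely an abstract invertible local system on $\Bun_{P}$. One can address this either via an analogue of Proposition~\ref{prop:intssequiv} for $\Bun_{P}$, extending the identification from the open trivial-$P$ stratum to all of $\Bun_{P}$, or by a direct tangent-complex computation on the $\ell$-cohomologically smooth charts $\mathcal{M}_{b} \to \Bun_{G}$ of~\cite[Section~V.3]{FaScGeomLLC}, which realize local models for $\Bun_{P}$ where $\det \Lie(N)$ is visibly responsible for the orientation.
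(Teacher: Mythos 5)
Your overall skeleton—reduce via $\ell$-cohomological smoothness of $\mf{q}_{P}$ to identifying the orientation sheaf $\omega$, compute it explicitly on a substack, then descend/extend—is the same skeleton the paper uses. The local computation you do on $[\ast/\ul{P(F)}]$, i.e.\ the preimage $\Bun_{P}^{0}$ of $[\ast/\ul{L(F)}]$ for the trivial $\theta$, is also correct: there all $\mcLie(U)_{\alpha_i,\theta}$ have slope $0$, so the Picard $v$-groupoids degenerate to $[\ast/\ul{V(F)}]$ and the orientation sheaf is the module of Haar measures, recovering $\delta_{P}$.

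The gap is in the sentence ``since $\ol{\delta}_{P}$ is a single character independent of the component, the comparison propagates to every connected component.'' That is exactly what must be proved, and nothing in your argument proves it. Even granting that $\omega$ is pulled back from $\Bun_{L^{\ab}} = \bigsqcup_{\ol{\theta}}[\ast/\ul{L^{\ab}(F)}]$, a local system on this space is an arbitrary assignment of a character of $L^{\ab}(F)$ to each component; you have pinned down only the $\ol{\theta}=0$ component. For $\theta \neq 0$ the geometry of $\Bun_{P}^{\theta}$ over $[\ast/\ul{L_{\theta}(F)}]$ is genuinely different: the bundles $\mcLie(U)_{\alpha_i,\theta}$ have nonzero slopes, so the relevant Picard $v$-groupoids $\mathcal{P}(\mcLie(U)_{\alpha_i,\theta}[1])$ are now built from $\mathcal{H}^{0}$ of positive-slope bundles and $\mathcal{H}^{1}$ of negative-slope bundles, and moreover the acting group is the inner form $L_{\theta}(F)$, not $L(F)$. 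Identifying the $L_{\theta}(F)$-action on the dualizing complex of such Picard $v$-groupoids with $\delta_{P,\theta}$ is the technical heart of the paper (Proposition \ref{prop:KBunPtheta}, resting on Proposition \ref{prop: Picardvgroupfullaction} and the explicit coordinate computations in Lemma \ref{lemm:Oaction}, together with the observation that the $L_{\theta}(F)$-action factors through the inner form $\GL(\Lie(U)_{\alpha_i})_{\theta_{\Ad,i}}(F)$), and your proposal omits it entirely. The heuristic ``the orientation comes from $\det\Lie(N)$, which factors through $L^{\ab}$'' is exactly what those lemmas are needed to make rigorous, and it is not free: na\"ive ``tangent complex'' reasoning does not by itself determine the Frobenius/Galois twist on the orientation line for nonbasic Banach--Colmez spaces.

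A secondary, smaller issue: you correctly flag that one needs an extension statement to pass from the semistable locus to all of $\Bun_{P}$, and Proposition \ref{prop:intssequiv} (applied to $L$, not to $P$, as in the paper's actual proof) does supply it. But invoking it only after having computed $\omega$ on the full semistable locus $\Bun_{P}^{\mathrm{ss}} = \bigsqcup_{\theta}\Bun_{P}^{\theta}$ is what makes the proof close; invoking it after computing only on the $\theta=0$ stratum leaves the other connected components of $\Bun_{P}$ undetermined.
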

In other words, the dualizing complex is the geometrization of the modulus character in the representation theory of $p$-adic reductive groups.

Using Proposition \ref{prop:intssequiv}, Theorem \ref{thm:intmain} is reduced to determining the dualizing object on $\Bun_{P}^{\mathrm{ss}}$. 
This is the locus given by the preimage of the semistable locus in $\Bun_{L}^{\mathrm{ss}} = \bigsqcup_{b_{L} \in B(L)_{\basic}} [\ast/\ul{L_{b_{L}}(F)}]$ under $\mf{q}_{P}$. Here, for each $b_{L} \in B(L)_{\basic}$, we have an explicit geometric description of the connected components $\Bun_{P}^{b_{L}}$ of $\Bun_{P}^{\mathrm{ss}}$ in terms of certain stacks attached to Banach--Colmez spaces called Picard $v$-groupoids. However it is difficult to determine the action of $L_{b_{L}}(F)$ on the Picard $v$-groupoids directly. We use the fact that the action of $L_{b_{L}}(F)$ factors through an inner form of the automorphism group of a subspace of the Lie algebra of the unipotent radical of $P$. By this trick, the question is reduced to calculating dualizing complexes on what we call Picard $v$-groupoids attached to vector bundles (See Definition \ref{def: picardvgroup}), which we can do explicitly (Proposition \ref{prop: Picardvgroupfullaction}). 

When $G$ is quasi-split and $P$ is a Borel, this recovers \cite[Theorem~1.2]{HamGeomES}, where this result was used as a fundamental computational input in the theory of geometric Eisenstein series over the Fargues--Fontaine curve in the principal case. The result we prove here should serve a similar role in building an analogous theory for general parabolics. We partially carry this out in \S \ref{sec: Eisensteinseries} by defining the Eisenstein functor with the appropriate modulus character twists in light of Theorem \ref{thm:intmain} (Definition \ref{defn: normalizedeisfunct}), proving some of its basic properties (Lemmas \ref{lem: compositionofeisensteinseries} and \ref{lem: Weylgrouptranslate}), and computing its values after restricting to the semistable locus of $\Bun_{P}$ (Theorem \ref{thm: eisensteinvalues}). 

The importance of this result is also emphasized by its relation to various other computations occurring in the Fargues--Scholze geometric Langlands program and the theory of local Shimura varieties. Namely, we already mentioned above that, in the quasi-split case, the $\ell$-cohomologically smooth charts for $\Bun_{G}$ can be realized as $\Bun_{P}^{\theta}$, where $\theta$ has dominant isocrystal slopes (which we recall are the negatives of the Harder--Narasimhan slopes). More specifically, assume that $G$ is quasi-split with a choice of Borel $B$. Given $b \in B(G)$, there is a maximal Levi subgroup $L^{b} \subset G$ such that $b \in B(G)$ admits a basic reduction to $b_{L^{b}} \in B(L^{b})_{\basic}$ which has $G$-dominant isocrystal slopes. Here $L^{b}$ is the centralizer of the slope homomorphism of $b$. We write $P^{b}$ for the standard parabolic with Levi factor $L^{b}$, and $P^{b,-}$ for its opposite. In this case, we have an identification $G_{b} \cong (L^b)_{b_{L^{b}}}$, where $(L^{b})_{b_{L^{b}}}$ is the inner twist of $L^{b}$ induced by $b_{L^{b}}$. In particular, the group $G_{b}$ is an inner twist of $L^{b}$. The projections $\mf{q}_{P^{b}}$ and $\mf{q}_{P^{b,-}}$ then give rise to maps
\[ \mf{q}_{b}^{+}\colon \mathcal{M}_{b} \cong \Bun_{P^{b}}^{b_{L^{b}}} \ra [\ast/\ul{G_{b}(F)}], \]
\[ \mf{q}_{b}^{-}\colon [\ast/\wt{G}_{b}] \cong \Bun_{P^{b,-}}^{b_{L^{b}}} \ra [\ast/\ul{G_{b}(F)}] \]
respectively, where $\mf{q}_{b}^{+}$ and $\mf{q}_{b}^{-}$ (resp. $\mathcal{M}_{b}$ and $[\ast/\tilde{G}_{b}]$) identify with the corresponding maps considered in \cite{FaScGeomLLC}.
\begin{cor}[Proposition \ref{prop: moduluscharacterinnaturalsituations}]\label{cor:intdualMb}
For $G/F$ a connected reductive group, there exists a character $\delta_{b}$ such that we have isomorphisms 
\[ \mf{q}_{b}^{+*}(\delta_{b})[2\langle 2\rho_{G}, \nu_{b} \rangle] \cong K_{\mathcal{M}_{b}} \]
\[ \mf{q}_{b}^{-*}(\delta_{b}^{-1})[-2\langle 2\rho_{G}, \nu_{b} \rangle] \cong K_{[\ast/\wt{G}_{b}]}, 
\]
where $\rho_{G}$ denotes the half sum of the positive roots. If $G$ is quasi-split then $\delta_{b}$ is the modulus character of the standard parabolic $P^{b} \subset G$ transferred to $G_{b} \cong (L^{b})_{b_{L^{b}}}$ via the inner twisting from $L^{b}$ (In general, see Definition \ref{defn: generaldefnofmoduluschar}).
\end{cor}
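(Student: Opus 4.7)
The plan is to specialize Theorem \ref{thm:intmain} to the parabolics $P^{b}$ and $P^{b,-}$, and restrict to the connected components of the respective semistable loci indexed by $b_{L^{b}} \in B(L^{b})_{\basic}$. By construction, these components recover $\mathcal{M}_{b} \cong \Bun_{P^{b}}^{b_{L^{b}}}$ and $[\ast/\wt{G}_{b}] \cong \Bun_{P^{b,-}}^{b_{L^{b}}}$, and the maps $\mf{q}_{b}^{\pm}$ factor through $\mf{q}_{P^{b}}$ and $\mf{q}_{P^{b,-}}$ followed by the trivialization $\Bun_{L^{b}}^{b_{L^{b}}} \cong [\ast/\ul{G_{b}(F)}]$ coming from the inner twisting $G_{b} \cong (L^{b})_{b_{L^{b}}}$. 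Theorem \ref{thm:intmain} therefore reduces the corollary to two computations on the $b_{L^{b}}$-component: the shift, and the pulled-back character.

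For the shift, Corollary \ref{cor: conncompsBunP} combined with the Picard $v$-groupoid description of the semistable stratum (which is already used in the proof of Theorem \ref{thm:intmain}) expresses $\dim(\Bun_{P^{b}}^{b_{L^{b}}})$ as the degree of the bundle that $b_{L^{b}}$ cuts out on the Lie algebra of the unipotent radical of $P^{b}$. Since $b_{L^{b}}$ has $G$-dominant slopes, this degree equals $\langle 2\rho_{G}, \nu_{b}\rangle$; passing to $P^{b,-}$ negates the slopes on the opposite unipotent radical and hence flips the sign, yielding $-\langle 2\rho_{G}, \nu_{b}\rangle$ for $[\ast/\wt{G}_{b}]$. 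For the character, Theorem \ref{thm:intmain} identifies $K_{\Bun_{P^{b}}^{b_{L^{b}}}}$ up to the shift with the pullback of $\Delta_{P^{b}}$; since $\Delta_{P^{b}}$ is itself pulled from $\Bun_{(L^{b})^{\ab}}$ with value $\ol{\delta}_{P^{b}}$ on each component, descending along $\mf{q}_{b}^{+}$ produces the smooth character of $G_{b}(F)$ obtained from $\delta_{P^{b}}$ by transfer through the inner twisting, which is $\delta_{b}$ in the quasi-split case and serves as the definition of $\delta_{b}$ in general (Definition \ref{defn: generaldefnofmoduluschar}). The opposite parabolic case uses $\delta_{P^{b,-}} = \delta_{P^{b}}^{-1}$ and the analogous factorization of $\ol{\delta}_{P^{b,-}}$.

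The main obstacle is the character identification: one must verify that pulling back $\Delta_{P^{b}}$ via $\mf{q}_{P^{b}}$ and then descending through $\mf{q}_{b}^{+}$ indeed yields, as a $G_{b}(F)$-representation, the transfer of $\delta_{P^{b}}$ through the inner twisting. This amounts to a compatibility between the Kottwitz parameterizations of the connected components of $\Bun_{L^{b}}$ and $\Bun_{(L^{b})^{\ab}}$ on the one hand, and the inner twisting of $L^{b}$ by $b_{L^{b}}$ on the other. Once a factorization $\delta_{P^{b}} = \ol{\delta}_{P^{b}} \circ (L^{b} \to (L^{b})^{\ab})$ is fixed, this compatibility can be checked directly at the level of characters of $F$-points of the abelianization, after which both statements of the corollary follow at once from Theorem \ref{thm:intmain}.
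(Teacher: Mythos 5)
Your argument handles the quasi-split case in essentially the same way the paper does (restrict Theorem \ref{thm:intmain} to $\Bun_{P^b}^{b_{L^b}}$ and $\Bun_{P^{b,-}}^{b_{L^b}}$, compute the shift from the dimension formula for $\mf{q}_P$, and identify the character via the composition $G_b(F) \cong (L^b)_{b_{L^b}}(F) \to ((L^b)^{\ab})(F) \xrightarrow{\ol{\delta}_{P^b}} \Lambda^\times$, which is precisely $\delta_{P^b,b_{L^b}}$). Both the dimension bookkeeping $d_{P^b,b_{L^b}} = \langle 2\rho_G,\nu_b\rangle$ and the character identification are correct, and in fact the paper's Proposition \ref{prop:KBunPtheta} gives you exactly what you want on the stratum without having to invoke Proposition \ref{prop: Localsystemsextend}.

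The genuine gap is the non-quasi-split case. The objects $P^b$, $L^b$, $b_{L^b}$ in your argument only exist once a Borel pair is fixed, so the identification $\mathcal{M}_b \cong \Bun_{P^b}^{b_{L^b}}$ is available only when $G$ is quasi-split. The statement, however, is asserted for arbitrary connected reductive $G$, with $\delta_b$ given by Definition \ref{defn: generaldefnofmoduluschar} in general. Simply pointing at that definition does not finish the proof: you must still establish the two isomorphisms of dualizing complexes when $G$ is not quasi-split, and your argument gives no mechanism for this. The paper fills this gap with a two-step reduction that you omit entirely: Lemma \ref{lem:Mbad} shows that $\mathcal{M}_b$ and $[\ast/\wt{G}_b]$ are pulled back (via Cartesian squares over $[\ast/\ul{G_b(F)}] \to [\ast/\ul{G^{\ad}_{b^{\ad}}(F)}]$) from the corresponding charts for the adjoint group $G^{\ad}$, and Lemma \ref{lem:Mbpure} shows that twisting by a basic element $b' \in G^{\ad,*}(\breve{F})$ with $G^{\ad,*}_{b'} = G^{\ad}$ identifies the charts for $G^{\ad}$ with those for the quasi-split inner form $G^{\ad,*}$. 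Only after this reduction can one apply the quasi-split computation you outline; without it the claim for general $G$ is unproved.
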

For the rest of our results, we work in the larger level of generality where $\Lambda$ is a $\bb{Z}_{\ell}$-algebra and we will be implicitly working in the $D_{\lis}$ formalism of \cite[Section~VII.6]{FaScGeomLLC}, which we recall in \S 4.1. The second claim in Corollary \ref{cor:intdualMb} is compatible with the key result used in the proof of \cite[Proposition~1.1.4]{HanBeijLec}, where an alternative proof is mentioned using the compatibility of Fargues--Scholze parameters with Verdier duality on $\Bun_{G}$. We write out the details of this alternative argument in \S \ref{ssec:connection}. It is also essentially equivalent to the following corollary.
\begin{cor}[Proposition \ref{prop: compactsuppcohom}]\label{cor:intcptsupp}
The compactly supported cohomology (resp. dualizing complex) of $\wt{G}_{b}^{> 0}$ as a $G_{b}(F)$-representation is isomorphic to $\delta_{b}^{-1}[-2\langle 2\rho_{G}, \nu_{b} \rangle]$ (resp. $\delta_{b}[2\langle 2\rho_{G}, \nu_{b} \rangle]$) via the right action of $G_{b}(F)$ induced by right conjugation.
\end{cor}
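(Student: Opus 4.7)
The two displayed isomorphisms are exchanged by Verdier duality on the complex of $G_{b}(F)$-representations in question---the Verdier dual of $\delta_{b}^{-1}[-2\langle 2\rho_{G},\nu_{b}\rangle]$ is $\delta_{b}[2\langle 2\rho_{G},\nu_{b}\rangle]$---so it suffices to prove the compactly supported cohomology statement. Setting $d:=\langle 2\rho_{G},\nu_{b}\rangle=\dim\wt{G}_{b}^{>0}=\dim\wt{G}_{b}$ (the latter because $\ul{G_{b}(F)}$ has dimension zero), I would deduce this directly from Corollary~\ref{cor:intdualMb} by interpreting dualizing complexes of classifying stacks as modules of Haar measures.

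The first step is the standard identification: for any group diamond $H$, the dualizing complex $K_{[\ast/H]}$ together with its natural $H$-equivariant structure is the module of Haar measures on $H$, whose underlying $H$-character is the modulus $\delta_{H}$. Applied with $H=\wt{G}_{b}$, Corollary~\ref{cor:intdualMb} reads $K_{[\ast/\wt{G}_{b}]}\cong\mf{q}_{b}^{-*}(\delta_{b}^{-1})[-2d]$, which I would interpret as saying that the modulus of $\wt{G}_{b}$ factors through the quotient $\wt{G}_{b}\to\ul{G_{b}(F)}$ and equals $\delta_{b}^{-1}$ there, the shift $[-2d]$ accounting for $\dim\wt{G}_{b}=d$.

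The second step is the general extension identity: for any short exact sequence $1\to N\to H\to Q\to 1$ of group diamonds with $N$ and $Q$ both unimodular, the modulus $\delta_{H}$ factors through $Q$ and equals the character by which $Q$ acts by conjugation on the top compactly supported cohomology of $N$. Applied in our setting---$N=\wt{G}_{b}^{>0}$ (unipotent, unimodular), $Q=\ul{G_{b}(F)}$ (reductive, unimodular)---this identifies $\delta_{b}^{-1}$ with the right-conjugation character of $G_{b}(F)$ on $\Hc^{2d}(\wt{G}_{b}^{>0},\Lambda)$. Since $\wt{G}_{b}^{>0}$ is $\ell$-adically contractible of dimension $d$, Verdier duality concentrates $R\Gamma_{c}(\wt{G}_{b}^{>0},\Lambda)$ in degree $2d$ with rank-one value there, so the character calculation yields the asserted isomorphism $R\Gamma_{c}(\wt{G}_{b}^{>0},\Lambda)\cong\delta_{b}^{-1}[-2d]$ as complexes of smooth $G_{b}(F)$-representations.

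The main obstacle I expect is the rigorous verification, within the v-stack framework of Fargues--Scholze, of both the Haar-measure interpretation of $K_{[\ast/H]}$ and its multiplicativity along short exact sequences. This can be sidestepped by working directly with the Cartesian square
\[
\begin{tikzcd}
{[\ast/\wt{G}_{b}^{>0}]}\arrow[r]\arrow[d]&{[\ast/\wt{G}_{b}]}\arrow[d,"\mf{q}_{b}^{-}"]\\
\ast\arrow[r]&{[\ast/\ul{G_{b}(F)}]}
\end{tikzcd}
\]
and applying smooth base change: one recovers the underlying complex as $\Lambda[-2d]$, and the $\delta_{b}^{-1}$-twist is detected by retaining the $G_{b}(F)$-equivariance through the universal torsor $\ul{G_{b}(F)}\to[\ast/\ul{G_{b}(F)}]$. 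Once this is in place, the dualizing complex statement follows formally from the compactly supported cohomology statement by Verdier duality.
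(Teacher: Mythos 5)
Your first paragraph (reducing the dualizing-complex half to the compactly-supported-cohomology half) is fine, and you correctly identify Corollary~\ref{cor:intdualMb} / Proposition~\ref{prop: moduluscharacterinnaturalsituations} as the main input. But the ``sidestep'' in your last paragraph has a genuine gap: the Cartesian square you write down base-changes $\mf{q}_b^- = p_b\colon [\ast/\wt{G}_b]\to[\ast/\ul{G_b(F)}]$ along $\ast\to[\ast/\ul{G_b(F)}]$, and the fiber is the \emph{classifying stack} $[\ast/\wt{G}_b^{>0}]$, not the group diamond $\wt{G}_b^{>0}$ itself. Smooth base change applied to $p_b^!(\Lambda)$ in this square computes the ($G_b(F)$-equivariant) dualizing complex of $[\ast/\wt{G}_b^{>0}]$, which is a priori a different object from $R\Gamma_c(\wt{G}_b^{>0},\Lambda)$. (If instead you try $p_{b!}(\Lambda)$, base change gives $R\Gamma_c([\ast/\wt{G}_b^{>0}],\Lambda)\cong\Lambda[2d]$, which is the \emph{wrong} shift.) So the statement you actually need --- about the conjugation action on the cohomology of the group $\wt{G}_b^{>0}$ --- is not what your square delivers. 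Your first argument via multiplicativity of Haar measures along $1\to\wt{G}_b^{>0}\to\wt{G}_b\to\ul{G_b(F)}\to1$ would give exactly this, but, as you yourself flag, the ``extension identity'' for group diamonds is essentially the content of the proposition, so that route is not independent.

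The paper closes precisely this gap by working with the \emph{section} $s_b\colon[\ast/\ul{G_b(F)}]\to[\ast/\wt{G}_b]$ of $p_b$ rather than with $p_b$ itself, so that the relevant Cartesian square has $\wt{G}_b^{>0}$ (not its classifying stack) as the fiber, and the $G_b(F)$-equivariant structure on $R\Gamma_c(\wt{G}_b^{>0},\Lambda)$ is encoded by $s_{b!}\Lambda\in D([\ast/\wt{G}_b])$. The computation of $s_{b!}\Lambda$ then hinges on the adjunction isomorphism $s_{b!}s_b^!\cong\mathrm{id}$ for iterated torsors under positive Banach--Colmez spaces (Lemma~\ref{lem:BCadjisom}, and its lisse version Lemma~\ref{lem:BCadjisomnat} to handle $\mathbb{Z}_\ell$-algebra coefficients), combined with $s_b^!(\Lambda)\cong\delta_b[2d_b]$ extracted from Proposition~\ref{prop: Bunbdualcpx} via $p_bs_b=\mathrm{id}$. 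That adjunction --- the crucial ingredient --- never appears in your proposal. It is a nice formal observation that $p_bs_b=\mathrm{id}$ together with the adjunction forces $s_{b!}=p_b^!$ as functors, so the two answers coincide; but that coincidence is exactly what has to be proved, and your write-up elides it. To repair your argument you should introduce $s_b$, argue that the fiber of $s_b$ over $\ast$ is $\wt{G}_b^{>0}$ with the right $G_b(F)$-action, and invoke Lemma~\ref{lem:BCadjisom} to pass from $s_b^!(\Lambda)$ to $s_{b!}(\Lambda)$, as the paper does. You should also say something about reducing a general $\mathbb{Z}_\ell$-algebra $\Lambda$ to the torsion case, since $!$-pushforward is not directly available in the lisse category.
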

The character appearing in this compactly supported cohomology appears in several places in the literature; namely in \cite{KosOngenlg} and \cite{HamLeeTorsVan} in the guise of Manotovan's product formula, and similarly in \cite{GINsemi} and \cite{HamGeomES} in the guise of comparing Hecke correspondences on $\Bun_{G}$ with the cohomology of local Shimura varieties. We summarize this as follows.
\begin{cor}[Corollary \ref{cor: moduluscharappearinginShimVars} and Corollary \ref{cor:cptcohch}]\label{cor:introconnection}
The following is true. 
\begin{enumerate}
\item The sheaf $\ol{\mathbb{F}}_{\ell}(d_{b})$ with $G_{b}(F)$-equivariant structure appearing in \cite[Theorem~7.1,Lemma~7.4]{KosOngenlg} and \cite[Theorem~1.15]{HamLeeTorsVan} is isomorphic to $\delta_{b}$ as a sheaf with $G_{b}(F)$-action. 
\item The character $\kappa$ appearing in \cite[Lemma~11.1]{HamGeomES} is equal to $\delta_{b}^{-1}$. 
\item The character $\kappa$ appearing in \cite[Lemma~4.18 (ii),Theorem~4.26]{GINsemi} is equal to $\delta_{P,\theta}$ (\cf \S \ref{ssec:connection} for the notations). 
\end{enumerate}
\end{cor}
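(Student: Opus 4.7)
The plan is to handle the three items in the corollary by tracing, in each cited paper, the definition of the relevant character back to either a dualizing complex on an $\ell$-cohomologically smooth chart of $\Bun_G$ or to the action on compactly supported cohomology of a contractible unipotent diamond of the form $\wt{G}_b^{>0}$. Once the character has been isolated in this form, the identification with $\delta_b$, $\delta_b^{-1}$, or $\delta_{P,\theta}$ is a direct consequence of Corollary \ref{cor:intdualMb} and Corollary \ref{cor:intcptsupp}. The unifying mechanism is Theorem \ref{thm:intmain}, which pins down the dualizing complex on $\Bun_P$ in terms of the modulus character; everything else is a translation.

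For (1), I would begin by unpacking the construction of $\ol{\mathbb{F}}_\ell(d_b)$ in the two papers. In both \cite{KosOngenlg} and \cite{HamLeeTorsVan}, the sheaf appears as a twist needed to compare a constant sheaf on the local Shimura chart with its image on the HN-stratum $\Bun_G^b \cong [\ast/\wt{G}_b]$ via the map $\mathcal{M}_b \to [\ast/\wt{G}_b]$, and its $G_b(F)$-equivariant structure is recorded by the action on the top compactly supported cohomology of $\wt{G}_b^{>0}$ (after a Tate twist by $d_b = \langle 2\rho_G,\nu_b\rangle$). Corollary \ref{cor:intcptsupp} identifies this action with $\delta_b^{-1}$, and incorporating the Tate twist reverses the sign and cancels the cohomological shift, giving $\ol{\mathbb{F}}_\ell(d_b) \cong \delta_b$ as claimed.

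For (2), the character $\kappa$ in \cite[Lemma~11.1]{HamGeomES} is defined by exactly the same mechanism: it records the induced right $G_b(F)$-action on $H^{\mathrm{top}}_c(\wt{G}_b^{>0})$ arising when one compares Hecke correspondences on $\Bun_G$ with cohomology of local Shimura varieties. Applying Corollary \ref{cor:intcptsupp} directly yields $\kappa = \delta_b^{-1}$, without any additional normalization. For (3), the character $\kappa$ in \cite[Lemma~4.18 (ii), Theorem~4.26]{GINsemi} arises when restricting geometric Eisenstein computations to the semistable locus of $\Bun_P$, where the connected components $\Bun_P^{b_L}$ admit an explicit description as Picard $v$-groupoids attached to Banach--Colmez spaces. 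The plan here is to combine that description with Theorem \ref{thm:intmain}: the dualizing complex on each such stratum is, up to shift, the pullback of $\ol{\delta}_P$ from $\Bun_{L^{\mathrm{ab}}}$, and by comparison with the formula given in \cite{GINsemi} one identifies $\kappa$ with $\delta_{P,\theta}$ as defined in \S\ref{ssec:connection}.

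The main obstacle is not conceptual but bookkeeping. The three cited papers use different sign conventions for the duality pairing, different orientations of $\wt{G}_b$ versus its opposite, and different placements of Tate twists; care is needed to line up $\delta_b$ against $\delta_b^{-1}$ correctly and to absorb the shifts $[2\langle 2\rho_G,\nu_b\rangle]$ into the twists $(d_b)$. Once these conventions are fixed, the substance of the corollary is already contained in Theorem \ref{thm:intmain} and its immediate consequences.
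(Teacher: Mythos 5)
Your proposal follows essentially the same route as the paper: for (1) and (2) the content is Proposition \ref{prop: Bunbdualcpx}/Proposition \ref{prop: compactsuppcohom} (equivalently Corollary \ref{cor:intcptsupp}), and for (3) it is the parabolic analogue Proposition \ref{prop: compactsuppcohomU}, which is proved from Proposition \ref{prop:KBunPtheta}, the same ingredient underlying Theorem \ref{thm:intmain}. One small imprecision in your account of (1): you attribute the sign reversal from $\delta_b^{-1}$ to $\delta_b$ to ``incorporating the Tate twist,'' but Tate twists are trivial on the $G_b(F)$-equivariant structure; the reversal actually comes from passing between $f_!\Lambda$ and $f^!\Lambda$ (i.e.\ between compactly supported cohomology and the dualizing complex on $\Bun_G^b$), which is exactly what \cite[Lemma~9.1]{KosOngenlg} mediates, as the paper notes.
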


As an application, we show the Harris--Viehmann conjecture \cite[Conjecture~8.4]{RVlocSh} in the Hodge--Newton reducible case. 
Assume that $\Lambda$ admits a fixed choice of square root of $q$.  
Let $b,b' \in B(G)$ and $\mu \in X_*(G)$. Let $E_{\mu}$ denote the reflex field of $\mu$. 
We write $R\Gamma_{\mathrm{c}}(\Sht_{G,b,b',K'}^{\mu})$ for 
the cohomology of moduli spaces of shtukas for $(G,b,b',\mu)$ at level $K' \subset G_{b'}(F)$. 
First, we show the following compatibility with character twists, which is useful in its own right. 

\begin{prop}[Proposition \ref{prop:cohtwist}]\label{prop:introcohtwist}
Let $\chi$ be a character of $G^{\ab}(F)$. 
We write $\chi_b$ and $\chi_{b'}$ for the characters of $G_b(F)$ and $G_{b'}(F)$ determined by $\chi$. 
Let $\chi_{\mu}$ be the character of $W_{E_{\mu}}$ determined by the L-parameter of $\chi$ and $\mu$. 
For a smooth representation $\rho$ of $G_b(F)$, 
we have 
\begin{align*}
\varinjlim_{K' \subset G_{b'}(F)} 
R\Hom_{G_b(F)} (R\Gamma_{\mathrm{c}}&(\Sht_{G,b,b',K'}^{\mu}),\rho \otimes \chi_b) \\ &\cong 
\varinjlim_{K' \subset G_{b'}(F)} R\Hom_{G_b(F)} (R\Gamma_{\mathrm{c}}(\Sht_{G,b,b',K'}^{\mu}),\rho ) \otimes \chi_{b'} \otimes \chi_{\mu}, 
\end{align*}
in the derived category of $G_{b'}(F) \times W_{E_{\mu}}$-representations, where $G_{b'}(F)$ acts smoothly and $W_{E_{\mu}}$ acts continuously.
\end{prop}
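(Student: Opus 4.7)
The plan is to translate the statement into a sheaf-theoretic identity on $\Bun_G$ via the Fargues--Scholze theory of Hecke operators, reduce it to a ``Hecke eigensheaf'' calculation for the character sheaf attached to $\chi$, and dispatch that calculation by abelianization. To begin, for any smooth $G_b(F)$-representation $\sigma$, the shtuka cohomology is governed by a $(G_{b'}(F) \times W_{E_\mu})$-equivariant identification
\[
\varinjlim_{K' \subset G_{b'}(F)} R\Hom_{G_b(F)}(R\Gamma_{\mathrm{c}}(\Sht^{\mu}_{G,b,b',K'}),\sigma) \cong (i^{b'})^{*}\, T_{\mu}\, (i^{b})_{*}(\sigma)
\]
(up to a cohomological shift, and with the appropriate variant of $*$ vs.\ $!$ pushforward), where $T_\mu$ is the Hecke functor and $i^{b}, i^{b'}$ denote the inclusions of the Harder--Narasimhan strata. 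The proposition thereby becomes equivalent to the sheaf-theoretic identity
\[
(i^{b'})^{*} T_{\mu} (i^{b})_{*}(\rho \otimes \chi_b) \cong \bigl((i^{b'})^{*} T_{\mu} (i^{b})_{*}\rho\bigr) \otimes \chi_{b'} \otimes \chi_{\mu}.
\]

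Next, I will introduce the character sheaf $S_\chi \in \Loc(\Bun_G)$: the rank-one \'etale local system obtained by pulling back along $\Bun_G \to \Bun_{G^{\ab}}$ the local system on $\Bun_{G^{\ab}} \cong \bigsqcup_{\ol{\theta} \in B(G^{\ab})}[\ast/\ul{G^{\ab}(F)}]$ whose value on each component is $\chi$. By construction $(i^{b})^{*} S_\chi \cong \chi_b$ for each $b \in B(G)$, and the projection formula gives $(i^{b})_{*}(\rho \otimes \chi_b) \cong (i^{b})_{*}\rho \otimes S_\chi$.

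The heart of the argument is the Hecke--character compatibility: for any $A \in D(\Bun_G)$,
\[
T_{\mu}(A \otimes S_\chi) \cong T_{\mu}(A) \otimes (S_\chi \boxtimes \chi_{\mu})
\]
in $D(\Bun_G \times [\ast/W_{E_\mu}])$. Writing $T_\mu = Rq_{*}(p^{*}(-) \otimes \IC_\mu)$ on the local Hecke stack $\cHck^{\mu}$ with structure maps $p,q\colon \cHck^{\mu} \to \Bun_G$, this reduces to the identity $p^{*} S_\chi \cong q^{*} S_\chi \otimes \chi_{\mu}$ on $\cHck^{\mu}$, where $\chi_{\mu}$ is pulled back from $[\ast/W_{E_\mu}]$ along $\cHck^{\mu} \to \Div^{1}_{E_\mu}$. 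Since $S_\chi$ descends along $G \to G^{\ab}$, the verification can be carried out on the abelian Hecke stack, where the two pullbacks differ by the evaluation of $\chi$ on the universal modification of type $\mu^{\ab}$; that evaluation realizes $\chi_{\mu} = \mu \circ \phi_{\chi}$ through the compatibility of local class field theory with the L-parameter $\phi_\chi$ of $\chi$. Combining the three steps then yields
\[
(i^{b'})^{*} T_{\mu} (i^{b})_{*}(\rho \otimes \chi_b) \cong (i^{b'})^{*}\bigl(T_{\mu}((i^{b})_{*}\rho) \otimes S_\chi \boxtimes \chi_{\mu}\bigr) \cong \bigl((i^{b'})^{*} T_{\mu} (i^{b})_{*}\rho\bigr) \otimes \chi_{b'} \otimes \chi_{\mu},
\]
as required.

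The main obstacle is the Hecke--character compatibility in the third step: although the abelian case itself is transparent, precisely tracking the $W_{E_\mu}$-equivariant structure on $\IC_\mu$ and verifying that the two pullbacks to $\cHck^{\mu}$ carry the claimed $\chi_{\mu}$-twist, with the correct sign conventions relative to the Satake normalization, requires care. One should also pin down the precise form of the identification relating shtuka cohomology to the Hecke functor that was used in the first step.
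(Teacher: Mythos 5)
Your proposal follows essentially the same strategy as the paper's proof: translate the shtuka cohomology statement into a sheaf-theoretic identity on $\Bun_{G}$ via the Hecke functor, separate out a rank-one character sheaf $S_{\chi}$ descending through $\Bun_{G^{\ab}}$, and deduce the Hecke--character compatibility by abelianization (your step 3 is exactly what Lemma \ref{lem:HecTorus} supplies). The main difference is one of rigor: what you record as ``the projection formula gives $(i^{b})_{*}(\rho\otimes\chi_{b})\cong(i^{b})_{*}\rho\otimes S_{\chi}$'' and as the identity $T_{\mu}(A\otimes S_{\chi})\cong T_{\mu}(A)\otimes(S_{\chi}\boxtimes\chi_{\mu})$ are, in the paper, genuinely the content of the proof and are established by introducing the auxiliary group $G_{0}=G\times G^{\ab}$ (see \eqref{eq:ib*rhochi}, \eqref{eq:Tmumu0}, \eqref{eq:Tmumuab}, \eqref{eq:Tmuab}) and by careful use of Lemmas \ref{lem:projflc} and \ref{lem:ibuppshrast}; the $G_{0}$-device lets one run the kernel decomposition on the Hecke stack precisely rather than via the heuristic identity $p^{*}S_{\chi}\cong q^{*}S_{\chi}\otimes\chi_{\mu}$. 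Also note that the paper works with $i^{b'!}$ and the explicit twist $(\delta_{b'})[2d_{b'}]$ from Proposition \ref{prop: compactsuppcohom} rather than $(i^{b'})^{*}$, which is why Lemma \ref{lem:ibuppshrast} is needed to pass the character through the shriek pullback --- a point your sketch elides but which, since the same twist appears on both sides, does not affect the validity of the final identity. As written, your proposal is a correct outline; the obstacles you flag at the end are exactly what the paper's $G_{0}$-decomposition is designed to resolve.
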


Using Corollary \ref{cor:introconnection} and Proposition \ref{prop:introcohtwist} as well as results in \cite{GINsemi}, 
we show the following form of the Harris--Viehmann conjecture (\cf \cite[Conjecture~8.4]{RVlocSh}, where we note that there are missing Tate twists here as first observed by Bertoloni Meli (See \cite[Conjecture~3.2.1]{BertoloniELTypeHarrisConjecture} for the fixed conjecture and the discussion around \cite[Example~3.2.5]{BertoloniELTypeHarrisConjecture} for an explanation))
\footnote{This also fixes a gap in \cite{HanHarr}. In particular, the Tate twists in \cite[Theorem~4.13]{HanHarr} should be replaced by modulus character twists coming from Corollary \ref{cor:introconnection}, and then one has to argue as in \S 4.2 to deduce Proposition \ref{prop:RGammaparab}.}: 

\begin{thm}[Theorem \ref{thm: HarrisViehmannConj}]{\label{thm:introHarrisViehmannConj}}
Assume that $F$ is $p$-adic, $G$ is quasi-split, 
$([b],[b'],\mu)$ is Hodge--Newton reducible for $L$ with reductions $[\theta],[\theta'] \in B(L)$ of $b$ and $b'$, $L \supset L^b$, $b'$ is basic and $\mu$ is minuscule. Let $P'$ be a parabolic of $G_{b'}$ corresponding to $P=LP^{b,-}$.
For a smooth representation $\rho$ of $G_b(F)$, 
we have 
\begin{align*}
\varinjlim_{K' \subset G_{b'}(F)} &
R\Hom_{G_b(F)}(R\Gamma_{\mathrm{c}}(\Sht_{G,b,b',K'}^{\mu}),\rho)\\ & \cong 
\Ind_{P'(F)}^{G_{b'}(F)} \varinjlim_{K' \subset L_{\theta'}(F)} R\Hom_{L_{\theta}(F)} (R\Gamma_{\mathrm{c}}(\Sht_{L,\theta,\theta',K'}^{\mu}),\rho) \otimes || \cdot ||^{-\frac{d_{P,\theta}}{2}}[-d_{P,\theta}], 
\end{align*}
in the derived category of $G_{b'}(F) \times W_{E_{\mu}}$-representations, where $G_{b'}(F)$ acts smoothly and $W_{E_{\mu}}$ acts continuously, $d_{P,\theta}$ is an integer defined in \S \ref{ssec:modch}, and $|| \cdot ||$ denotes the norm character of $W_{E_{\mu}}$ under the geometric normalization of local class field theory. 
\end{thm}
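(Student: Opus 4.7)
The plan is to reduce the computation of shtuka cohomology on $G$ to the corresponding computation on the Levi $L$, using the Fargues--Scholze Hecke formalism together with the parabolic induction diagram
\[ \Bun_{L} \xleftarrow{\mf{q}_{P}} \Bun_{P} \xrightarrow{\mf{p}_{P}} \Bun_{G}. \]
First I would rewrite the left-hand side as a derived Hom of sheaves on $\Bun_{G}$, using that $\varinjlim_{K'} R\Hom_{G_b(F)}(R\Gamma_{\mathrm{c}}(\Sht_{G,b,b',K'}^{\mu}),\rho)$ is computed as a smooth $G_{b'}(F)$-representation by applying the minuscule Hecke operator $T_{\mu}$ to a sheaf concentrated on the HN-stratum $\Bun_{G}^{b}$ and then restricting to the $b'$-stratum. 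Since $b'$ is basic, the $b'$-stratum is the open substack $[\ast/\ul{G_{b'}(F)}]$, which makes this step very clean.

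Next I would use the Hodge--Newton reducibility for $L$, together with $L \supset L^{b}$, to lift the Hecke correspondence along $\mf{p}_{P}$. The assumption means that $\mu$ is $L$-dominant minuscule and that the relevant Hecke modifications admit a Hodge--Newton reduction to $L$, so the sheaf $T_{\mu}(i_{!}^{b}\rho)$ can be computed via $\mf{p}_{P!}\mf{q}_{P}^{*}$ applied to the analogous Levi-side Hecke modification of $\rho$ viewed on $\Bun_{L}^{\theta}$. This step is essentially the Hodge--Newton decomposition of \cite{GINsemi}, whose input uses the character $\kappa$ which, by Corollary \ref{cor:introconnection}(3), is precisely $\delta_{P,\theta}$.

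The modulus character twist $||\cdot||^{-d_{P,\theta}/2}$ and the cohomological shift $[-d_{P,\theta}]$ then arise from the discrepancy between $\mf{p}_{P!}$ and $\mf{p}_{P*}$, equivalently from the dualizing complex along $\mf{p}_{P}$ restricted to the $b'$-stratum. This is exactly what Theorem \ref{thm:intmain}, combined with Corollary \ref{cor:intdualMb} and Corollary \ref{cor:intcptsupp}, computes: the character $\delta_{P,\theta}$ and the shift $2\langle 2\rho_{G},\nu_{b}\rangle$, which are then re-expressed in unitary normalization to produce the half-twist $||\cdot||^{-d_{P,\theta}/2}$ and shift $[-d_{P,\theta}]$. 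Geometrically, the parabolic induction $\Ind_{P'(F)}^{G_{b'}(F)}$ emerges because on the $b'$-stratum the fibers of $\mf{p}_{P}$ are classifying stacks of the parabolic $P' \subset G_{b'}$ described in the statement, so that $R\mf{p}_{P*}$ restricted to this stratum realizes precisely parabolic induction from $P'(F)$ to $G_{b'}(F)$.

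The final ingredient is Proposition \ref{prop:introcohtwist}, which lets one move central character twists between the $G_b$-, $G_{b'}$-, and $W_{E_{\mu}}$-sides of the equation in order to pin down the precise normalization. The main obstacle will be precisely this bookkeeping: one must track carefully the half-sum of positive roots $\rho_{G}$, the choice of factorization $\ol{\delta}_{P}$ of $\delta_{P}$ used to define $\Delta_{P}$, and the signs introduced by working with the opposite parabolic $P^{b,-}$ in place of $P^{b}$. All of these are resolved by the explicit identifications of Corollary \ref{cor:introconnection}, which allow the geometric dualizing-complex computation to be translated into the precise representation-theoretic twist appearing on the right-hand side.
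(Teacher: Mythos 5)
Your outline captures several of the correct ingredients (Hodge--Newton reducibility via \cite{GINsemi}, the identification of the character $\kappa$ in \cite{GINsemi} with $\delta_{P,\theta}$ from Corollary \ref{cor:cptcohch}, Proposition \ref{prop:cohtwist}), but there are two concrete gaps.

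First, the attribution of the half-twist $\|\cdot\|^{-d_{P,\theta}/2}$ and shift $[-d_{P,\theta}]$ to the dualizing complex of $\Bun_P$ is not correct. The modulus character $\delta_{P,\theta}$ does indeed trace back to the Picard-groupoid computation (via Proposition \ref{prop: compactsuppcohomU}), and that is what enters \cite{GINsemi} as $\kappa$. But the extra $\|\cdot\|^{-d_{P,\theta}/2}[d_{P,\theta}]$ in Proposition \ref{prop:RGammaparab} has a different origin: it comes from comparing the Satake-sheaf normalizations for $G$ and $L$, namely the Tate twists $\Lambda[d_{\mu}](\tfrac{d_{\mu}}{2})$ versus $\Lambda[d_{\mu_L}](\tfrac{d_{\mu_L}}{2})$ together with the identity $d_{P,\theta} = d_{\mu_L} - d_{\mu}$. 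The dualizing complex of $\Bun_P$ gives a shift by $2d_{P,\theta}$, not $d_{P,\theta}$, so ``re-expressing in unitary normalization'' would not recover the stated numerology; the Satake side is an independent input.

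Second, you describe Proposition \ref{prop:cohtwist} as normalization bookkeeping, but it plays a structurally essential role that your sketch skips: after applying Proposition \ref{prop:RGammaparab} and running the parabolic-induction adjunction, one is left with a twist by $\delta_{P,\theta}^{-1}$ on the $\rho$-side of the $R\Hom$, which sits on the wrong group. Proposition \ref{prop:cohtwist} is what lets one trade this for a $\delta_{P,\theta'}^{-1}$ twist on the $\theta'$-side together with a Galois twist $\|\cdot\|^{-\langle\xi_P,\mu^{\ab}\rangle}$; the crucial cancellations $\delta_{P,\theta'}=\delta_{P'}$ and $d_{P,\theta}=\langle\xi_P,\mu^{\ab}\rangle$ then collapse to the stated answer. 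Without that transfer step, the characters do not match and no amount of renormalization will fix it. Relatedly, you do not address why $\Ind_{P'(F)}^{G_{b'}(F)}$ passes through the $R\Hom_{L_{\theta}(F)}(-,-)^{\mathrm{sm}}$, which introduces the $\delta_{P'}$ factor that is ultimately cancelled; this adjunction step needs to be spelled out.
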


In \S \ref{sec:DualPic}, we recall the notion of the Picard $v$-groupoid attached to a vector bundle $\mathcal{E}$ on the Fargues--Fontaine $F$ and prove several results on its dualizing complex. In \S \ref{sec:Strmod}, we describe the geometric structure of the moduli stack of parabolic structures and prove Proposition \ref{prop:intdualBunG}, Proposition \ref{prop:intssequiv}, Theorem \ref{thm:intmain}, and Corollary \ref{cor:intdualMb}. In \S \ref{sec:App}, we deduce the applications to the Harris--Viehmann Conjecture and geometric Eisenstein series. 
\subsection*{Acknowledgements}
We would like to thank Laurent Fargues, David Hansen, Teruhisa Koshikawa, Peter Scholze, and Ruth Wild for helpful discussions. Many of the key ideas surrounding this project were found during the Hausdorff Trimester program ``Arithmetic of the Langlands Program'' at the Hausdorff Research Institute for Mathematics funded by the Deutsche Forschungsgemeinschaft (DFG, German Research Foundation) under Germany's Excellence Strategy - EXC-2047/1 - 390685813, and we would like to thank the organizers for the fantastic learning environment. We thank Konrad Zou for helpful comments. 
We also thank the referee for helpful suggestions to improve the manuscript. 
This work was carried out while L. Hamann was a NSF postdoctoral scholar, and he thanks them for their support. This work was also supported by JSPS KAKENHI Grant Number 22H00093. 

\subsection*{Notation}
\begin{itemize}
\item For a field $F$, let $\Gamma_F$ 
denote the absolute Galois group of $F$. 
\item For a non-archimedean local field $F$, let 
$\breve{F}$ denote the 
completion of the maximal unramified 
extension of $F$. We write $\sigma$ for the arithmetic Frobenius on $\breve{F}$. 
\item Let $F$ be a non-archimedean local field with residue characteristic $p$.  Let $k_F$ be the residue field of $F$. Let $q$ be the cardinality of $k_F$ and $\omega$ a uniformizing element. 
\item Let $C := \widehat{\ol{F}}$ be the completion of the algebraic closure of $F$. Let $k$ be the algebraic closure of $k_F$. 
\item We let $\Perf_k$ denote the category of perfectoid spaces over $k$, and 
$\AffPerf_k$ the category of affinoid perfectoid spaces over $k$. 
For $S \in \Perf_k$, let $\Perf_S$ denote the category of perfectoid spaces over $S$. 
\item Let $\Lambda$ be a torsion ring of order prime to $p$ unless otherwise stated. 
\item For a $v$-stack or diamond $Z$, we write $D(Z) := D_{\text{\'et}}(Z,\Lambda)$ for the derived category of \'etale $\Lambda$ sheaves on $Z$, as defined in \cite{SchEtdia}\footnote{At several points, we will take limits of these derived categories. By this, we will mean that we take the limit (in the $\infty$-category of small $\infty$-categories) of the $\infty$-categorical enhancements of these derived categories constructed in \cite[Proposition~26.2]{SchEtdia}, and then pass to the homotopy category to obtain a derived category. Similarly, when taking a limit of objects in one such fixed derived category, we will mean the homotopy limit, which similarly is formed by taking the limit in the $\infty$-categorical enhancement and then considering the resulting object in the homotopy category.}
\item For a fine map \cite[Definition~1.3]{GHWEnSixfunc} $f\colon X \ra Y$ of decent $v$-stacks \cite[Definition~1.2]{GHWEnSixfunc},  we will work with the four functors $Rf_{!},Rf_{*}\colon D(X) \ra D(Y)$ and $Rf^{!},Lf^{*}\colon D(Y) \ra D(X)$ constructed in \cite{GHWEnSixfunc}, which, together with $R\mathcal{H}om(-,-)$ and $\otimes^{\mathbb{L}}$, extend the six functor formalism in \cite{SchEtdia}.
\item We let $\ast = \Spd(k)$. 
\item For $X \ra \ast$ a decent $v$-stack fine over the base, we write $K_{X} := f^{!}(\Lambda) \in D(X)$ for the dualizing complex, and $R\Gamma_{c}(X,\Lambda) := f_{!}(\Lambda) \in D(\Lambda)$ for the compactly supported cohomology.
\item Let $G$ be a connected reductive algebraic group over $F$. Let $P$ be a parabolic subgroup of $G$, and $L$ the Levi factor of $P$. 
\item For a connected reductive group $H/F$, we write $D(H(F),\Lambda)$ for the unbounded (left-completed) derived category of smooth representations.
\item We let $\Ind_{P(F)}^{G(F)}\colon D(L(F),\Lambda) \ra D(G(F),\Lambda)$ 
denote the functor given by parabolic induction. We note that this has no higher derived functors since parabolic induction is exact. 
\item We let $\delta_{P}\colon P(F) \ra \Lambda^{*}$ denote the modulus character (\cf \cite[I. 2.6]{VigReplmod}). After fixing a choice of square root of $q$ in $\Lambda$, we define the square root $\delta_{P}^{1/2}$ of the modulus character with respect to this choice, and write $i_{P(F)}^{G(F)}(-) := \Ind_{P(F)}^{G(F)}(- \otimes \delta_{P}^{1/2})$ for the normalized parabolic induction functor. 
\item For $S \in \Perf_k$, we write $X_{S}$ for the associated relative adic Fargues--Fontaine curve. 
For $S \in \AffPerf_k$, we write $X_{S}^{\alg}$ for the associated schematic Fargues--Fontaine curve. 
We simply write $X$ for $X_{C^{\flat}}$, where $C^{\flat}$ is the tilt of $C$. 
\item Let $B(G) = G(\breve{F})/(b \sim gb\sigma(g)^{-1})$ be the Kottwitz set of $G$. For $b \in B(G)$, we write $G_{b}$ for the $\sigma$-centralizer of $b$
\item For $b \in G(\breve{F})$, let $b^{\ab}$ and $b^{\ad}$ denote the image of $b$ in $G^{\ab}(\breve{F})$ and $G^{\ad}(\breve{F})$ respectively. 
\item For $b \in B(G)$ and $S \in \Perf_k$ (resp. $S \in \AffPerf_k$), we write $\mathcal{E}_{b,S}$ (resp. $\mathcal{E}_{b,S}^{\mathrm{alg}}$) for the associated $G$-bundle on $X_S$ (resp. $X_S^{\mathrm{alg}}$).
\item We write $\ol{(-)}$ for the natural composition:
\[ B(L) \ra B(L^{\mathrm{ab}}) \cong^{\kappa} \mathbb{X}_{*}(L^{\mathrm{ab}}_{\ol{F}})_{\Gamma_{F}}. \]
\item We write
\[ \langle -,- \rangle\colon \Lcoinv \times \mathbb{X}^{*}(L_{\ol{F}}^{\mathrm{ab}})^{\Gamma_{F}} \ra \mathbb{Z}, \]
for the natural pairing induced by the pairing between cocharacters and characters.
\item We recall that the Kottwitz set $B(G)$ is equipped with two maps:
\begin{itemize}
    \item The slope homomorphism
    \[ \nu\colon B(G) \rightarrow \mathbb{X}_{*}(T_{\overline{F}})^{+,\Gamma_{F}}_{\mathbb{Q}} \]
    \[ b \mapsto \nu_{b}, \]
    where $\mathbb{X}_{*}(T_{\overline{F}})_{\mathbb{Q}}^{+}$ is the set of rational geometric dominant cocharacters of $G$. 
    \item The Kottwitz invariant
    \[ \kappa_{G}\colon B(G) \rightarrow \pi_{1}(G)_{\Gamma_{F}}, \]
    where $\pi_1(G)$ denotes the algebraic fundamental group of Borovoi.
\end{itemize}
\item If $G$ is quasi-split with a fixed Borel pair $(B,T)$ then, for $b \in B(G)$, we let $L^b$ be the centralizer of $\nu_b$, $P^{b}$ the parabolic subgroup of $G$ associated to $\nu_b$ such that the weights of $\nu_b$ in $\Lie (P^{b})$ is non-negative, and $P^{b,-}$ the parabolic subgroup of $G$ opposite to $P^{b}$. 
\item Throughout, we will use the terminology that $X$ is an iterated fibration of $v$-stacks in the $v$-stacks $Y_{i} \ra \ast$ for $i = 1,\ldots,n$. What we mean precisely is that there exists
a sequence of morphisms 
\[ X_{n} \xrightarrow{f_{n}} \cdots \ra X_{1} \xrightarrow{f_{1}} X_0=\ast \]
such that $X_{n} = X$ and the maps $f_{i} \colon X_{i} \ra X_{i - 1}$ are fibrations in $Y_{i}$ for the $v$-topology on $X_{i - 1}$. In other words, there exists a $v$-surjection $S \ra X_{i - 1}$ from a perfectoid space $S$ such that the pullback of $f_{i}$ along the surjection identifies with the natural projection  $S \times Y_{i} \ra S$.
\end{itemize}

\section{Dualizing complexes on Picard $v$-groupoids}\label{sec:DualPic}

For $i \in \mathbb{Z}$, $S \in \Perf_k$, and a two term complex $\mathcal{E}^{*} = \{\mathcal{E}^{-1} \rightarrow \mathcal{E}^{0}\}$ of vector bundles on $X_{S}$, we define the $v$-sheaves $\mathcal{H}^{i}(\mathcal{E}^{*}) \rightarrow S$ as the sheaves sending $T \in \Perf_{S}$ to the hyper-cohomology $\mathbb{H}^{i}(X_{T},\mathcal{E}^{*}_{T})$, where $\mathcal{E}^{*}_{T}$ is the base change of the complex to $X_{T}$. 
If $\mathcal{E}^{*}$ is a two term complex of vector bundles on $X$, we also have an absolute version $\mathcal{H}^{i}(\mathcal{E}^{*}) \rightarrow \Spd k$. 
We will make repeated use of the following fact.
\begin{lem}\label{lem:BCadjisom}
Let $f \colon S' \to S$ be a morphism of small v-stacks. We assume one of the following: 
\begin{enumerate}
\item $S \in \Perf_k$ and $\ca{E}$ is a vector bundle on $X_S$ that is everywhere of positive (resp. negative) slopes.  
\item $\ca{E}$ is a vector bundle on $X$ that is of positive (resp. negative) slopes. 
\end{enumerate}
If $f$ is a torsor under $\ca{H}^0(\ca{E})$ (resp. $\ca{H}^1(\ca{E})$), then $f$ is $\ell$-cohomologically smooth, 
$f^* \colon D(S) \to D(S')$ is fully faithful and 
the adjunction morphism 
$Rf_! Rf^! A \to A$ is an isomorphism for $A \in D(S)$. 
\end{lem}
\begin{proof}
The first claim follows from \cite[Proposition II.3.5]{FaScGeomLLC}.
The second claim is \cite[Proposition V.2.1]{FaScGeomLLC}. 
The third claim is reduced to the case where the torsor is split and $\ca{E}=\ca{O}_X(1/n)$ for some $n$, as in the proof of \cite[Proposition V.2.1]{FaScGeomLLC}. In this case, the claimed isomorphism is proved in the proof of \cite[Proposition V.2.1]{FaScGeomLLC}. 
\end{proof}

\begin{lem}\label{lem:BCadjisomAn}
Let $f \colon S' \to S$ be a morphism of small v-stacks over $\Spd C$. 
If $f$ is a torsor under $(\mathbb{A}_C^n)^{\diamond}$, then $f$ is $\ell$-cohomologically smooth, 
$f^* \colon D(S) \to D(S')$ is fully faithful and 
the adjunction morphism 
$Rf_! Rf^! A \to A$ is an isomorphism for $A \in D(S)$. 
\end{lem}
\begin{proof}
This is proved in the same way as Lemma \ref{lem:BCadjisom}. 
\end{proof}

The calculation of the dualizing complex on $\Bun_{P}$ will ultimately reduce to a calculation of the dualizing complex on the following kinds of spaces discussed in \cite[Definition~3.6]{HamJacCrit}.
\begin{defn}{\label{def: picardvgroup}}
Let $S \in \Perf_k$ and $\mathcal{E}^{*} = \{\mathcal{E}^{-1} \rightarrow \mathcal{E}^{0}\}$ a two term complex of vector bundles on $X_{S}$
We consider the action of $x \in \mathbb{H}^{-1}(X_{T},\mathcal{E}^{*})$ on $y \in \mathbb{H}^{0}(X_{T},\mathcal{E}^{*})$ via the map $y \mapsto y + d(x)$ and check that this gives a well-defined action on cohomology. This allows us to form the Banach--Colmez space like $v$-stack
\[ \mathcal{P}(\mathcal{E}^{*}) := [\mathcal{H}^{0}(\mathcal{E}^{*})/\mathcal{H}^{-1}(\mathcal{E}^{*})] \rightarrow S, \]
which we refer to as the Picard $v$-groupoid of $\mathcal{E}^{*}$.
\end{defn}
We will only consider this in the case that $\mathcal{E}^{*} := \mathcal{E}[1]$ for a vector bundle $\mathcal{E}$ on $X$. It follows from \cite[Proposition~3.14]{HamJacCrit} that $\mathcal{P}(\mathcal{E}[1])$ is $\ell$-cohomologically smooth over $\Spd(C)$ 
of pure $\ell$-dimension equal to $-\mathrm{deg}(\mathcal{E})$. In this case, the stack $\mathcal{P}(\mathcal{E}[1]) \ra \Spd(C)$ also has an absolute version, which we abusively also write as $\mathcal{P}(\mathcal{E}[1]) \ra \Spd(k)$ and will work with for the rest of the section.

We consider the dualizing complex $K_{\mathcal{P}(\mathcal{E}[1])}$ on this space. We let $G_{\mathcal{E}}$ denote the automorphism group of the isocrystal attatched to $\mathcal{E}$, and write 
\[ \mathrm{det}\colon G_{\mathcal{E}}(F) \ra (G_{\mathcal{E}}/G_{\mathcal{E}}^{\mathrm{der}})(F) \cong (F^{*})^{r} \xrightarrow{\mathrm{mult}} F^{*}, 
\]
where the last map is given by taking the product of the elements and $r \in \bb{N}_{\geq 1}$ is the number of simple factors of $G_{\mathcal{E}}^{\mathrm{ad}}$. The group $G_{\mathcal{E}}(F)$ acts on 
$K_{\mathcal{P}(\mathcal{E}[1])}$ via the right action on $\mathcal{E}$ by the inverse. The main result of this section is the following.
\begin{prop}{\label{prop: Picardvgroupfullaction}}
We have an isomorphism
\[ K_{\mathcal{P}(\mathcal{E}[1])} \cong |\mathrm{det}(\cdot)|[-2\mathrm{deg}(\mathcal{E})] \]
of sheaves with $G_{\mathcal{E}}(F)$-action. Here $|\cdot|\colon F^{*} \ra \Lambda^{*}$ denotes the norm character. 
\end{prop}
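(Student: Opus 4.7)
The plan is to identify the continuous character $\chi\colon G_{\mathcal{E}}(F)\to \Lambda^{*}$ by which $G_{\mathcal{E}}(F)$ acts on $K_{\mathcal{P}(\mathcal{E}[1])}$: by the $\ell$-cohomological smoothness of $\mathcal{P}(\mathcal{E}[1])$ of pure $\ell$-dimension $-\deg(\mathcal{E})$ recalled above the statement, $K_{\mathcal{P}(\mathcal{E}[1])}$ is \'etale-locally isomorphic to $\Lambda[-2\deg(\mathcal{E})]$, and the equivariant structure is encoded in such a $\chi$, which I must show equals $|\det(\cdot)|$.

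First I would reduce to the isotypic case. Since every vector bundle on the Fargues--Fontaine curve splits canonically as $\mathcal{E} = \bigoplus_{\lambda}\mathcal{E}_{\lambda}$ into isotypic slope components, and cohomology commutes with direct sums, one obtains a $G_{\mathcal{E}}$-equivariant product decomposition $\mathcal{P}(\mathcal{E}[1]) \cong \prod_{\lambda}\mathcal{P}(\mathcal{E}_{\lambda}[1])$, compatible with $G_{\mathcal{E}} = \prod_{\lambda}G_{\mathcal{E}_{\lambda}}$ and with the factorization $\det = \prod_{\lambda}\Nrd_{\lambda}$. By multiplicativity of the dualizing complex under products of $\ell$-cohomologically smooth stacks, it suffices to treat $\mathcal{E} = \mathcal{O}(\lambda)^{n}$ isotypic, so that $G_{\mathcal{E}}(F) = GL_{n}(D_{-\lambda})$ with $D_{-\lambda}$ the central division algebra over $F$ of invariant $-\lambda$. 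Since the commutator subgroup of $GL_{n}(D)$ over a non-archimedean local field is $SL_{n}(D)=\ker(\Nrd)$, the character $\chi$ must factor through $\Nrd$, and the problem reduces to verifying $\chi(tI_{n}) = |t|^{nh}$ on the center $F^{*}$, where $h$ is the denominator of $\lambda$ in lowest terms.

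The vanishing of the differential of the two-term complex $\mathcal{E}[1]$ gives a product decomposition $\mathcal{P}(\mathcal{E}[1]) \cong \mathcal{H}^{0}(\mathcal{E}[1])\times [\ast/\mathcal{H}^{-1}(\mathcal{E}[1])]$, in which at most one factor is nontrivial in the isotypic case, and the action of $t\in F^{*}$ coming from the scalar action on $\mathcal{O}(\lambda)^{n}$ restricts to scalar multiplication on this factor. Applying Lemma~\ref{lem:BCadjisom} to the corresponding $\mathcal{H}^{-1}(\mathcal{E}[1])$- or $\mathcal{H}^{0}(\mathcal{E}[1])$-torsor transports the $F^{*}$-equivariant structure on $K_{\mathcal{P}(\mathcal{E}[1])}$ to the Jacobian character of multiplication by $t$ on the underlying Banach--Colmez space. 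The key point is that $\mathcal{O}(\lambda)^{n}$ corresponds via isocrystal theory to a formal $\mathcal{O}_{F}$-module of height $nh$, so multiplication by $t$ is an isogeny of degree $q^{nh\,v(t)}$, rescaling the dualizing complex by $|t|^{nh}=|\Nrd(tI_{n})|$, as required.

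The main obstacle will be this last step: rigorously matching the diamond-theoretic rescaling of $K_{\mathcal{P}(\mathcal{E}[1])}$ under multiplication by $t\in F^{*}$ with the formal-group-theoretic isogeny degree. For $\lambda=0$ this reduces to the classical scaling of Haar measure on $\underline{F^{n}}$ by $|\det(\cdot)|$; for $\lambda\neq 0$, particularly when the height $h$ exceeds $1$, the Lubin--Tate-type universal-cover description of $\mathcal{O}(\lambda)$ together with Lemma~\ref{lem:BCadjisom} is needed to relate the diamond-theoretic Jacobian to the formal-group isogeny degree, since the exponent $h$ comes from the height rather than from the $\ell$-dimension of the Banach--Colmez space.
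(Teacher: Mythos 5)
Your proposal captures the right conceptual picture but leaves genuine gaps in the two places where the argument has to be most precise, and these are not small holes.

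The reduction to the isotypic case $\mathcal{E}=\mathcal{O}(\lambda)^{n}$ is valid, and closely parallels what the paper does (the paper first separates the $>0$, $=0$, $<0$ slope parts of $\mathcal{E}$ so as to match the $\mathcal{H}^{0}$/$\mathcal{H}^{1}$ decomposition of the Picard groupoid, then applies K\"unneth). The more interesting divergence is in how you pin down the character. The paper reduces, via the Cartan decomposition, to checking the equivariance on $\GL_{n}(\mathcal{O}_{D_{-\lambda}})$ (where it proves triviality using Poincar\'e duality on a rigid ball) and on the diagonal elements $\mathrm{diag}(\varpi_{-\lambda}^{m_{1}},\ldots,\varpi_{-\lambda}^{m_{n}})$ (where the computation is outsourced to \cite[Lemma~1.3]{ImaConv}). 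You instead use $[\GL_{n}(D),\GL_{n}(D)]=\ker(\Nrd)$ to conclude $\chi=\bar{\chi}\circ\Nrd$ and then check the \emph{central} character. This is where the first real gap is: $\Nrd$ restricted to the center $F^{*}\subset\GL_{n}(D)$ is $t\mapsto t^{nr}$, which is not surjective onto $F^{*}$. Verifying $\chi$ on scalars only determines $\bar{\chi}$ on the finite-index subgroup $(F^{*})^{nr}\subset F^{*}$; it leaves $\bar{\chi}/|\cdot|$ undetermined up to an arbitrary character with values in $\mu_{nr}\subset\Lambda^{*}$. To make the commutator reduction work you would need to evaluate $\chi$ on an element whose reduced norm generates $F^{*}$, e.g.\ $\mathrm{diag}(\varpi_{-\lambda},1,\ldots,1)$, and that is exactly a non-central diagonal element, bringing you back to the same kind of computation the paper performs.

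The second and more serious gap is in the formal-group step. You correctly observe that the exponent comes from the height $r$ rather than the $\ell$-dimension $d$, and the intuition that multiplication by a uniformizer behaves like an isogeny of degree $q^{h}$ is morally right for $0<\lambda\le 1$. But the universal cover is perfectoid and $\varpi$-divisible, so ``multiplication by $\varpi$'' is an automorphism, not a finite cover; there is no naive Jacobian to read off. The rigorous version of this heuristic is precisely the content of the paper's Lemma~\ref{lem:Olamstable} (which produces a Noetherian model $\Spd k[[x_{1},\ldots,x_{d}]]$ of the Banach--Colmez space on which $\mathcal{O}_{D_{-\lambda}}$ acts, so that the $\varpi_{-\lambda}$-action involves a partial $q$-power Frobenius on finitely many coordinates) together with \cite[Lemma~1.3]{ImaConv} which computes the effect of such a partial Frobenius on compactly supported cohomology. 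This does not fall out of a statement about isogeny degree alone. More importantly, for $\lambda>1$ (and for $\lambda<0$) there is no $p$-divisible group of that slope at all, so the formal-group picture simply does not apply; the paper handles these cases by a separate induction on $\lambda$ using the exact sequence \eqref{eq:BCBCA} relating $\mathcal{H}^{0}(\mathcal{O}(\lambda))$, $\mathcal{H}^{0}(\mathcal{O}(\lambda-1))$, and the fiber at $\infty$. Your proposal acknowledges the first difficulty in its last paragraph, but it offers no route around the second, and without it the argument covers only a proper subrange of slopes.
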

We first need some preliminaries. For $\lambda=d/r$ where $r$ is a positive integer and $d$ is an integer prime to $r$, 
we define the isocrystal $V_{-\lambda}$ as the vector space 
$\breve{F}^r$ with Frobenius $\varphi_{-\lambda}$ given by $\varpi_{-\lambda}^{-d}$, where 
\begin{equation}\label{eq:Frlam}
\varpi_{-\lambda} = 
\begin{pmatrix}
0 & 1 & 0 & \cdots & 0\\
0 & 0 &  1  &        & \vdots \\
\vdots &        & \ddots & \ddots    & 0 \\
0 &        &        & 0 & 1 \\
\varpi & 0 & \cdots & 0 & 0
\end{pmatrix} \in M_r (F),
\end{equation}
and 
let $\ca{O}(\lambda)$ be the vector bundle on the Fargues--Fontaine curve given by the isocrystal $V_{-\lambda}$ as \cite[Section~II.2]{FaScGeomLLC}. 
We put $D_{-\lambda} =\End (V_{-\lambda})$. 
Then $\ca{O}(\lambda)$ has a right action of $D_{-\lambda}$ by the inverse. 
We view $\varpi_{-\lambda}$ in \eqref{eq:Frlam} as an element of 
$D_{-\lambda}$, and denote it by the same symbol $\varpi_{-\lambda}$. 

\begin{lem}\label{lem:Olamstable}
Let $\lambda=d/r$ where $r$ is a positive integer and $d$ is an integer prime to $r$. 
Then there is an isomorphism 
\[
 \ca{H}^{0}(\ca{O}(\lambda)) \cong \Spd k[[x_1^{1/q^{\infty}},\ldots ,x_{d}^{1/q^{\infty}}]] 
\]
such that the action of $\ca{O}_{D_{-\lambda}}$ on 
$\mathcal{H}^{0}(\mathcal{O}(\lambda))$ via automorphisms of $\mathcal{O}(\lambda)$ is induced from an action of 
$\ca{O}_{D_{-\lambda}}$ on 
$\Spd k[[x_1,\ldots ,x_{d}]]$ over $\Spd k$ by taking inverse limits over the $q$-power Frobenius map. 
\end{lem}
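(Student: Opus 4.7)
The plan is to identify $\mathcal{H}^{0}(\mathcal{O}(\lambda))$ with the universal cover of a formal $\mathcal{O}_{D_{-\lambda}}$-module over $k$, and then to describe this universal cover explicitly as a perfectoid open polydisk.

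First, I would invoke the classical dictionary, due to Fargues--Fontaine and made precise at the level of $v$-sheaves by Scholze--Weinstein, between isoclinic positive-slope vector bundles on the Fargues--Fontaine curve and formal $\mathcal{O}_F$-modules over $k$. For $\lambda = d/r$ with $d, r \geq 1$ coprime, the bundle $\mathcal{O}(\lambda)$ corresponds to a formal $\mathcal{O}_F$-module $G$ over $k$ of dimension $d$ and $\mathcal{O}_F$-height $r$, unique up to isomorphism. Its endomorphism algebra is canonically $\mathcal{O}_{D_{-\lambda}}$, and there is an identification $\widetilde{G}^{\diamond} \cong \mathcal{H}^{0}(\mathcal{O}(\lambda))$ of $v$-sheaves over $\Spd k$, where $\widetilde{G} := \varprojlim_{[\varpi]} G$ denotes the universal cover. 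This identification is equivariant for the right $\mathcal{O}_{D_{-\lambda}}$-actions on both sides (via $\mathrm{End}(G) = \mathcal{O}_{D_{-\lambda}}$ on the left, and via automorphisms of $\mathcal{O}(\lambda)$ on the right).

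Next, since $G$ is a $d$-dimensional formal $\mathcal{O}_F$-module over the algebraically closed, perfect field $k$, its underlying formal scheme is non-canonically isomorphic to $\Spf k[[x_1, \ldots, x_d]]$, and the $\mathcal{O}_{D_{-\lambda}}$-action is defined at this formal-scheme level by the very definition of a formal module. In characteristic $p$, any such formal-scheme endomorphism commutes with the absolute $q$-power Frobenius on $\Spf k[[x_1, \ldots, x_d]]$. The universal cover is the inverse limit along $[\varpi]\colon G \to G$, an isogeny of $\mathcal{O}_F$-height $r$; in the category of diamonds this limit is cofinal with the inverse limit along the $q$-power Frobenius of $\Spf k[[x_1, \ldots, x_d]]$, since $[\varpi]$ is a $q^{r}$-power Frobenius (up to a unit) in suitable coordinates. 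This yields the claimed isomorphism
\[
 \widetilde{G}^{\diamond} \cong \Spd k[[x_1^{1/q^{\infty}}, \ldots, x_{d}^{1/q^{\infty}}]],
\]
and exhibits the $\mathcal{O}_{D_{-\lambda}}$-action as induced by inverse limits from the action on $\Spd k[[x_1, \ldots, x_d]]$.

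The main obstacle is the identification $\widetilde{G}^{\diamond} \cong \mathcal{H}^{0}(\mathcal{O}(\lambda))$ compatibly with the $\mathcal{O}_{D_{-\lambda}}$-actions, for general $d$. For $d = 1$ this is the classical Scholze--Weinstein description of the infinite-level Lubin--Tate tower with its division-algebra action. For general $d$, one can either invoke the analogous higher-dimensional result, or reduce to the line-bundle case over the degree-$r$ unramified extension $F_r/F$ via the pushforward identity $\mathcal{O}_{X_F}(\lambda) \cong \pi_{*} \mathcal{O}_{X_{F_r}}(d)$; the computation then reduces to analyzing $\mathcal{H}^{0}(\mathcal{O}(d))$ over $X_{F_r}$, which corresponds to a dimension-$d$, $\mathcal{O}_{F_r}$-height-$d$ formal $\mathcal{O}_{F_r}$-module whose universal cover is described explicitly. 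Matching the $\mathcal{O}_{D_{-\lambda}}$-action on the perfectoid space with a formal-scheme action is then automatic, since this action is by construction already defined on the formal module $G$ itself.
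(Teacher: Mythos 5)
Your plan is in the spirit of the paper's treatment of the \emph{$p$-adic} case, where one passes through the connected $p$-divisible group $G$ attached to the Dieudonn\'e module $(\mathcal{O}_{\breve{F}}^r, \varphi_{-\lambda})$ and invokes \cite[Proposition~3.1.3~(iii)]{ScWeMpd}. However, as written the proposal has several genuine gaps and diverges from the paper in important ways.

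First, the paper treats the equal-characteristic case by an entirely explicit computation with the rings $B_{R,[1,\infty]}$: it identifies $\mathcal{H}^0(\mathcal{O}(\lambda))(\Spa(R,R^+))$ with $B_{R,[1,\infty]}^{\varphi^r=\varpi^d}$, works out the action of $\mathbb{F}_{q^r}$ and $\varpi_{-\lambda}$ on coordinates $r_1,\dots,r_d$, and shows that after replacing some $r_i$ by $q^{n_i}$-th powers the ring $k[[x_1,\dots,x_d]]$ becomes stable. Your uniform ``formal $\mathcal{O}_F$-module'' formulation requires, in equal characteristic, the dictionary between Drinfeld-type formal $\mathcal{O}_F$-modules and positive-slope bundles on the equal-characteristic Fargues--Fontaine curve; this is known (cf.\ the discussion in \cite{ScWeBLp}) but is not free, and the paper deliberately avoids it. You should at least flag this as additional input rather than treating it as automatic.

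Second, the justification that ``$[\varpi]$ is a $q^r$-power Frobenius (up to a unit) in suitable coordinates'' is false for $d>1$. For a one-dimensional Lubin--Tate module this is the standard $[\varpi](x)\equiv ux^{q^r}$, but for a $d$-dimensional formal module $[\varpi]$ is not a power map. The correct statement, which is what \cite[Proposition~3.1.3~(iii)]{ScWeMpd} actually uses, is that for a \emph{connected} formal group $F$ and $[\varpi]$ generate cofinal inverse systems because each factors a power of the other. You reach the right conclusion (cofinality with Frobenius), but for the wrong reason.

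Third, the proposed reduction of the general-$d$ case via $\mathcal{O}_{X_F}(\lambda)\cong\pi_*\mathcal{O}_{X_{F_r}}(d)$ does not do what you claim. The bundle $\mathcal{O}_{X_{F_r}}(d)$ is a line bundle of slope $d$, not a ``dimension-$d$, $\mathcal{O}_{F_r}$-height-$d$'' module; for $d>1$ it has slope $>1$ and does not correspond directly to a formal $\mathcal{O}_{F_r}$-module in the sense you need. (The object of dimension $d$ and height $d$ corresponds to $\mathcal{O}_{X_{F_r}}(1)^{\oplus d}$, whose pushforward is $\mathcal{O}_{X_F}(1/r)^{\oplus d}$, not $\mathcal{O}_{X_F}(d/r)$.) So the fallback you offer for general $d$ collapses, and one really does need either the argument directly in terms of a $d$-dimensional connected formal group of height $r$ (which, note, requires $d\le r$, i.e.\ $\lambda\le 1$, consistent with where the lemma is actually applied in the proof of Lemma~\ref{lemm:Oaction}), or, in the $p$-adic case, the reduction to $F=\mathbb{Q}_p$ as the paper does.

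Finally, a remark on equivariance: the paper emphasizes that the $\mathcal{O}_{D_{-\lambda}}$-action on $\mathcal{H}^0(\mathcal{O}(\lambda))$ is the one coming from $\End(V_{-\lambda})=D_{-\lambda}$, and matches it with the action on the formal group $G$ obtained from Dieudonn\'e theory. You assert the identification $\widetilde{G}^\diamond\cong\mathcal{H}^0(\mathcal{O}(\lambda))$ is $\mathcal{O}_{D_{-\lambda}}$-equivariant ``by construction''; this is the content you need but it deserves a precise citation (e.g.\ compatibility of the Scholze--Weinstein comparison with endomorphisms of the isocrystal), since it is exactly the point of the lemma.
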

\begin{proof}
We follow the proof of \cite[Proposition II.2.5 (iv)]{FaScGeomLLC}. 
In the $p$-adic case, the question is reduced to the case where $F=\mathbb{Q}_p$ as in the proof of \cite[Proposition II.2.5 (iv)]{FaScGeomLLC}. Then, in the proof of \cite[Proposition 3.1.3 (iii)]{ScWeMpd}, 
the coordinates of $\mathcal{H}^{0}(\mathcal{O}(\lambda))$ are obtained from the coordinates of $G$, where $G$ is the connected $p$-divisible group over $k$ corresponding to $\ca{O}_{\breve{F}}^r \subset V_{-\lambda}$ with Frobenius $\varphi_{-\lambda}$ via Dieudonn\'{e} theory. Since $\ca{O}_{D_{-\lambda}}$ acts on $G$ in a way that is compatible with the action of $\mathcal{O}_{D_{-\lambda}}$ on $\mathcal{H}^{0}(\mathcal{O}(\lambda))$ by automorphisms, the claim in the $p$-adic case follows. 

Assume that $F$ is of equal characteristic. 
By the proof of \cite[Proposition 3.1.3 (iii)]{ScWeMpd}, 
$\mathcal{H}^{0}(\mathcal{O}(\lambda))(\Spa (R,R^+))$ is identified with $B_{R,[1,\infty]}^{\varphi^r=\varpi^d}$, whose element 
$\sum_{i \in \mathbb{Z}} r_i \varpi^i$ is determined by topologically nilpotent $r_1, \ldots , r_d$. We note that $B_{R,[1,\infty]}^{\varphi^r=\varpi^d}$ is identified with $\varphi_{-\lambda}$-fixed part $(B_{R,[1,\infty]}^d)^{\varphi_{-\lambda}}$ of $B_{R,[1,\infty]}^d$ under 
\begin{equation}\label{eq:idenBdB}
    (B_{R,[1,\infty]}^d)^{\varphi_{-\lambda}} \cong 
    B_{R,[1,\infty]}^{\varphi^r=\varpi^d} ;\ 
    (b_i)_{1 \leq i \leq d} \mapsto b_1. 
\end{equation}
Under the identification \eqref{eq:idenBdB}, the natural action of $a \in \mathbb{F}_{q^r}$ and $\varpi_{-\lambda}$ on $(B_{R,[1,\infty]}^d)^{\varphi_{-\lambda}}$ is identified with the action on $B_{R,[1,\infty]}^{\varphi^r=\varpi^d}$ 
given by $r_i \mapsto a r_i$ and 
$r_i \mapsto \varphi^{m_i} (r_{\sigma (i)})$ for $1 \leq i \leq d$, where $m_i \in \mathbb{Z}$ and $\sigma \in S_d$. Since $\varphi_{-\lambda}=\varpi_{-\lambda}^{-d} \varphi$, we see that the action of $\varpi_{-\lambda}^d$ on $r_1, \ldots , r_d$ is given by 
$r_i \mapsto \varphi (r_i)$. Hence we have $\sum_{1 \leq i \leq d} m_i =1$. Therefore, replacing the coordinates $r_i$ with $1 \leq i \leq d$ by their $q^{n_i}$-th power for some $n_i \in \mathbb{Z}$, we may assume that one of $m_i$ for $1 \leq i \leq d$ is $1$ and the other are $0$. Let $x_1, \ldots ,x_d$ be the coordinate of $\mathcal{H}^{0}(\mathcal{O}(\lambda))$ given by $r_1, \ldots , r_d$ after this replacement. Then $k[[x_1, \ldots , x_d]]$ is stable under the action of $\mathbb{F}_{q^r}$ 
and $\varpi_{-\lambda}$. 
Since $\ca{O}_{D_{-\lambda}}$ is generated by $\mathbb{F}_{q^r}$ and $\varpi_{-\lambda}$, the claim follows. 
\end{proof}

We recall that the map $\det \colon \GL_n (D_{-\lambda}) \ra F^{*}$ can be identified with the reduced norm, which we denote by $\mathrm{Nrd}_{M_{n}(D_{-\lambda})/F}$. 
\begin{lem}{\label{lemm:Oaction}}
Let $\lambda=d/r$ where $r$ is a positive integer and $d$ is an integer prime to $r$. 
Let $n$ be a positive integer. 
Let $f_{\lambda,n} \colon \ca{H}^{0}(\ca{O}(\lambda)^n) \to *$, 
$g_{\lambda,n} \colon [*/\ca{H}^{0}(\ca{O}(\lambda)^n)] \to *$ 
and $h_{\lambda,n} \colon \ca{H}^{1}(\ca{O}(\lambda)^n) \to *$ be the structure maps, where $\mathcal{H}^{0}(\ca{O}(\lambda))$ (resp. $\mathcal{H}^{1}(\ca{O}(\lambda))$) is the absolute Banach--Colmez space parameterizing elements of $H^{0}(X_{S},\ca{O}(\lambda)_{S})$ (resp. $H^{1}(X_{S},\ca{O}(\lambda)_{S})$) for $S \in \Perf_k$.  
\begin{enumerate}
\item \label{enu:Opos}
If $\lambda$ is positive, then $\ca{H}^{1}(\ca{O}(\lambda)^n)=*$ and 
we have isomorphisms 
\begin{align*}
 f_{\lambda,n,!}\Lambda &\cong |\Nrd_{M_n(D_{-\lambda})/F}(\cdot)|[-2dn] \\ 
 f_{\lambda,n}^! \Lambda &\cong |\Nrd_{M_n(D_{-\lambda})/F}(\cdot)|^{-1}[2dn] \\ 
 g_{\lambda,n,!}\Lambda &\cong |\Nrd_{M_n(D_{-\lambda})/F}(\cdot)|^{-1}[2dn] \\ 
 g_{\lambda,n}^! \Lambda &\cong |\Nrd_{M_n(D_{-\lambda})/F}(\cdot)|[-2dn]  
\end{align*}
as sheaves with $\GL_n (D_{-\lambda})$-action. 
\item \label{enu:Ozero}
If $\lambda=0$, then $\ca{H}^{0}(\ca{O}(\lambda)^n)\cong \underline{F}^n$, 
$\ca{H}^{1}(\ca{O}(\lambda)^n)=*$ and we have isomorphisms 
\begin{align*}
 g_{\lambda,n,!}\Lambda &\cong |\Nrd_{M_n(F)/F}(\cdot)|^{-1} \\ 
 g_{\lambda,n}^! \Lambda &\cong |\Nrd_{M_n(F)/F}(\cdot)| 
\end{align*}
as sheaves with $\GL_n (F)$-action. 
\item \label{enu:Oneg}
If $\lambda$ is negative, then $\ca{H}^{0}(\ca{O}(\lambda))=*$ and 
we have isomorphisms 
\begin{align*}
 h_{\lambda,n,!}\Lambda &\cong |\Nrd_{M_n(D_{-\lambda})/F}(\cdot)|^{-1}[2dn]\\ 
 h_{\lambda,n}^! \Lambda &\cong |\Nrd_{M_n(D_{-\lambda})/F}(\cdot)|[-2dn] 
\end{align*}
as sheaves with $\GL_n (D_{-\lambda})$-action. 
\end{enumerate}
\end{lem}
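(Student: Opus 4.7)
The substantive content is in part (1) with $\lambda > 0$; parts (2) and (3) follow from variations of the same arguments together with Lemma \ref{lem:BCadjisom} and Verdier duality. For part (1), Lemma \ref{lem:Olamstable} identifies $\mathcal{H}^0(\mathcal{O}(\lambda)^n)$ with the perfectoid open ball $\Spd k[[x_1^{1/q^\infty}, \ldots, x_{dn}^{1/q^\infty}]]$ of $\ell$-dimension $dn$, and the right $M_n(\mathcal{O}_{D_{-\lambda}})$-action is induced from an algebraic $F$-linear action on the discrete model $\Spd k[[x_1, \ldots, x_{dn}]]$. The underlying sheaf is then fixed by the standard cohomology of perfectoid open balls: $f_{\lambda,n,!}\Lambda \cong \Lambda[-2dn]$, and, by $\ell$-cohomological smoothness, $f_{\lambda,n}^!\Lambda \cong \Lambda[2dn]$. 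To pin down the $\GL_n(D_{-\lambda})$-equivariant structure, note that any smooth character of $\GL_n(D_{-\lambda})$ factors through the reduced norm $\Nrd\colon \GL_n(D_{-\lambda}) \to F^{\times}$, and $|\Nrd|$ is automatically trivial on $\GL_n(\mathcal{O}_{D_{-\lambda}})$ since $|\mathcal{O}_F^{\times}| = 1$. The induced action of $g \in \GL_n(D_{-\lambda})$ on the top compactly supported cohomology is given by the $p$-adic absolute value of the $F$-linear Jacobian of the algebraic action on coordinates, so it suffices to verify the value on a central uniformizer $\varpi \cdot I_n = \varpi_{-\lambda}^{rn}$; tracing through the explicit description of the action of $\varpi_{-\lambda}$ on the coordinates $r_i$ in the proof of Lemma \ref{lem:Olamstable} (permutation composed with Frobenius twists with $\sum m_i = 1$), one obtains the Jacobian $|\varpi|^n = q^{-n}$, matching $|\Nrd(\varpi \cdot I_n)| = |\varpi^n|$. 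The formula for $f_{\lambda,n}^!\Lambda$ then follows by Verdier duality.

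The $g$-versions in part (1) are obtained by applying Lemma \ref{lem:BCadjisom} to the presentation $p\colon * \to [*/\mathcal{H}^0(\mathcal{O}(\lambda)^n)]$, which is a torsor under $\mathcal{H}^0(\mathcal{O}(\lambda)^n)$ and satisfies the hypothesis since $\mathcal{O}(\lambda)^n$ has positive slopes. The adjunction isomorphism $p_! p^! \cong \id$, combined with $g_{\lambda,n} \circ p = \id_*$, yields $g_{\lambda,n}^!\Lambda \cong p_!\Lambda$; computing $p_!\Lambda$ by base change along $p$ itself reduces to $R\Gamma_c(\mathcal{H}^0(\mathcal{O}(\lambda)^n), \Lambda) \cong |\Nrd|[-2dn]$, giving the claimed formula. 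The formula for $g_{\lambda,n,!}\Lambda$ then follows from Verdier duality applied to $g_{\lambda,n,*} g_{\lambda,n}^!\Lambda$, which is identified with $g_{\lambda,n}^!\Lambda$ itself since $\mathcal{H}^0(\mathcal{O}(\lambda)^n)$ is connected and the character is external. For part (2), $\mathcal{H}^0(\mathcal{O}(0)^n) \cong \underline{F^n}$ is locally profinite of $\ell$-dimension $0$; the dualizing complex of $[*/\underline{F^n}]$ is the pullback of the module of Haar measures on $F^n$, which transforms under $\GL_n(F)$ by $|\det|$, and the $!$-version is its Verdier dual in degree $0$, using the vanishing of higher cohomology of $F^n$ with $\Lambda$-coefficients of order prime to $p$. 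For part (3), $\mathcal{H}^1(\mathcal{O}(\lambda)^n)$ with $\lambda<0$ is again a perfectoid open ball, now of $\ell$-dimension $-dn$, admitting an analogous coordinate description to Lemma \ref{lem:Olamstable}; the same Jacobian computation applies, and the inverted sign of the character reflects that one is computing on $H^1$ rather than $H^0$, equivalently, acting on the top form of the dual.

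\textbf{Main obstacle.} The key technical step is the identification of the Jacobian on top compactly supported cohomology with $|\Nrd_{M_n(D_{-\lambda})/F}|$ in part (1). This requires reconciling the $F$-linear action on the perfectoid coordinates $x_i$ with the $M_n(D_{-\lambda})$-module structure on $\mathcal{H}^0(\mathcal{O}(\lambda)^n)$, and careful bookkeeping of the Frobenius twists in the proof of Lemma \ref{lem:Olamstable} to pin down the value on a central uniformizer. Once this is established, the remaining statements follow by routine applications of the six-functor formalism, Verdier duality, and Lemma \ref{lem:BCadjisom}.
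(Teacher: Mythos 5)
Your plan for part (1) is broadly aligned with the paper's strategy (use Lemma~\ref{lem:Olamstable} to get a perfectoid-ball model, show the $\GL_n(\mathcal{O}_{D_{-\lambda}})$-action is trivial on top cohomology, determine the character on the non-compact part), but there are several concrete gaps, and part (3) rests on a false premise.

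\textbf{Part (3) is based on an incorrect description of $\mathcal{H}^1$.} You claim that for $\lambda<0$, $\mathcal{H}^1(\mathcal{O}(\lambda)^n)$ is ``again a perfectoid open ball'' with a coordinate description analogous to Lemma~\ref{lem:Olamstable}. This is false: for $-1\le\lambda<0$ it is a \emph{quotient} of $(\mathbb{A}^{nr}_C)^\diamond$ by $\mathcal{H}^0(\mathcal{O}(\lambda+1)^n)$, and for $\lambda<-1$ it is an \emph{extension} of $\mathcal{H}^1(\mathcal{O}(\lambda+1)^n)$ by $(\mathbb{A}^{nr}_C)^\diamond$ (these are the exact sequences from \cite[pp.\,138--139]{ScWeBLp} that the paper uses). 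None of these are perfectoid balls; your ``same Jacobian computation'' has nothing to which it can be applied. The actual proof of (3) is an induction in $\lambda$ using these short exact sequences together with the computation of $\psi_{\lambda,n,C,!}\Lambda$ on $(\mathbb{A}^{nr}_C)^\diamond$, and this machinery is entirely absent from your argument.

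\textbf{Part (1) is only argued for small $\lambda$, and the explicit numerics are off.} The argument via \cite[Proposition~10.2.3]{ScWeBLp} that the $\GL_n(\mathcal{O}_{D_{-\lambda}})$-action descends to a genuine rigid-analytic action on $\mathring{\mathbb{D}}^{dn}_C$ is only available for $0<\lambda\le 1$ (it comes from formal models of $p$-divisible groups); the paper handles $\lambda>1$ by a separate induction using the sequence $\mathcal{O}_X(\lambda-1)^n\to\mathcal{O}_X(\lambda)^n\to i_{\infty*}C^{nr}$, which you omit. Your determination of the character via a ``Jacobian on a central uniformizer'' is also heuristic rather than rigorous (there is no established Jacobian formalism for automorphisms of a perfectoid ball here; the paper invokes \cite[Lemma~1.3]{ImaConv} for precisely this computation), and contains a factor-of-$r$ error: $\varpi\cdot I_n=\varpi_{-\lambda}^{r}\cdot I_n$ (not $\varpi_{-\lambda}^{rn}$), and $|\Nrd_{M_n(D_{-\lambda})/F}(\varpi\cdot I_n)|=|\varpi^{rn}|=q^{-rn}$, not $|\varpi^n|$. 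Finally, even with a correct value on one central element, you should reduce to $\GL_n(\mathcal{O}_{D_{-\lambda}})$ and the diagonal matrices $\mathrm{diag}(\varpi_{-\lambda}^{m_1},\dots,\varpi_{-\lambda}^{m_n})$ via the Cartan decomposition, as the paper does, rather than just checking the center.
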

\begin{proof}
The first claims in \ref{enu:Opos} and \ref{enu:Oneg}, and the first two claims in  \ref{enu:Ozero} are proven in \cite[Proposition II.2.5]{FaScGeomLLC}. 
The later two claims in  \ref{enu:Ozero} follow from 
\cite[Example~4.2.4]{HKWKotloc}. 
For the remaining claims, it suffices to construct isomorphisms as sheaves that are compatible with the actions of $\GL_n (\ca{O}_{D_{-\lambda}})$ and $\mathrm{diag}(\varpi_{-\lambda}^{m_1}, \ldots , \varpi_{-\lambda}^{m_n})$ for $(m_1,\ldots,m_n) \in \bb{Z}^n$, since these generate $\GL_n (D_{-\lambda})$ by the Cartan decomposition. 
We note that it suffices to show the claim after the base change to $\Spd C$, by invoking \cite[Theorem~V.1.1]{FaScGeomLLC}. 

We prove the second claim in \ref{enu:Opos} 
in a way similar to \cite[Lemma~4.17]{GINsemi}. The claims on $g_{\lambda,n}$ are reduced to the claims on $f_{\lambda,n}$ by Lemma \ref{lem:BCadjisom}. 
First, we assume that $0<\lambda \leq 1$. 
We take an isomorphism 
\[
 \ca{H}^{0}(\ca{O}(\lambda)^n) \cong \Spd k[[x_1^{1/q^{\infty}},\ldots ,x_{dn}^{1/q^{\infty}}]] 
\]
using Lemma \ref{lem:Olamstable}. 
By \cite[Theorem 24.1]{SchEtdia}, we know that 
$f_{\lambda,n}^! \Lambda \cong \Lambda [2dn]$ if we forget the group actions. 
Hence, the claim on $f_{\lambda,n}^!\Lambda$ is reduced to the claim on $f_{\lambda,n,!}\Lambda$ by Lemma \ref{lem:BCadjisom}. 
By \cite[Lemma 1.3]{ImaConv}, we have an isomorphism 
\[
f_{\lambda,n,!}\Lambda \cong |\Nrd_{M_n(D_{-\lambda})/F}(\cdot)|[-2dn] 
\]
that is compatible with the action of 
$\mathrm{diag}(\varpi_{-\lambda}^{m_1}, \ldots , \varpi_{-\lambda}^{m_n})$. 
The action of $\GL_n (\ca{O}_{D_{-\lambda}})$ on $\mathcal{H}^{0}(\mathcal{O}(\lambda)^{n})$ is induced from an action of $\GL_n (\ca{O}_{D_{-\lambda}})$ on 
$\Spd k[[x_1,\ldots ,x_{dn}]]$ over $\Spd k$ by taking inverse limits, as in Lemma \ref{lem:Olamstable}. We show that $\GL_n (\ca{O}_{D_{-\lambda}})$ acts trivially on 
$f_{\lambda,n,!}\Lambda$. 
As mentioned above, it suffices to show this after base changing to $\Spd C$. 

We can see that the action of $\GL_n (\ca{O}_{D_{-\lambda}})$ on 
$\Spd k[[x_1,\ldots ,x_{dn}]] \times_{\Spd k} \Spd C$ comes from an action of $\GL_n (\ca{O}_{D_{-\lambda}})$ on $\mathring{\mathbb{D}}^{dn}_C=\Spa (\mathcal{O}_C[[x_1,\ldots ,x_{dn}]]) \times_{\Spa(\mathcal{O}_{C},\mathcal{O}_{C})} \Spa(C,\mathcal{O}_{C})$. If $F$ is $p$-adic, this follows from \cite[Proposition 10.2.3]{ScWeBLp}. In the equal characteristic case, we can use the base change to $\Spa(C,\mathcal{O}_{C})$ of the action on $k[[x_1,\ldots ,x_{dn}]]$. 

By \cite[Lemma 15.6]{SchEtdia}, it suffices to show the same claim for $\mathring{\mathbb{D}}^{dn}_C$. 
By Poincare duality (\cf \cite[Corollary 7.5.6]{HubEtadic}), we have 
\[
 \Hc^{2dn} (\mathring{\mathbb{D}}^{dn}_C,\Lambda) \cong 
 H^0 (\mathring{\mathbb{D}}^{dn}_C,\Lambda(dn)) =\Lambda(dn). 
\]
Since any automorphism of $\mathring{\mathbb{D}}^{dn}_C$ over $C$ acts trivially on $H^0 (\mathring{\mathbb{D}}^{dn}_C,\Lambda(dn))$, we see the claim. 

Assume that $\lambda >1$. 
Then we have an exact sequence 
\begin{equation}\label{eq:BCBCA}
 0 \to \ca{H}^{0}(\ca{O}(\lambda -1)^n)|_{\Spd C} \to 
 \ca{H}^{0}(\ca{O}(\lambda)^n)|_{\Spd C} \xrightarrow{p_{\lambda,n,C}} (\bb{A}_C^{nr})^{\diamond} \to 0 
\end{equation}
induced by 
\begin{equation}\label{eq:Olaminf}
 0 \to \ca{O}_X(\lambda -1)^n \to 
  \ca{O}_X(\lambda)^n \to i_{\infty *}C^{nr} \to 0 
\end{equation}
as in \cite[Example 15.2.9.4]{ScWeBLp}, where 
$i_{\infty} \colon \infty \hookrightarrow X$ is the point given by the untilt $C$ of $C^{\flat}$. 
In \eqref{eq:Olaminf}, the first non-zero map is given by the multiplication by a non-zero section of $\mathcal{O}_X(1)$ vanishing at $\infty$ (\cf \cite[Proposition II.2.3]{FaScGeomLLC}), and the second non-zero map is given by the restriction to $\infty$. 
Under the equality $D_{-(\lambda-1)}=D_{-\lambda} \subset \End (\breve{F}^r)$, the group $\GL_n (D_{-\lambda})$ acts on $\ca{O}_X(\lambda -1)^n$ and in turn on $\ca{H}^{0}(\ca{O}(\lambda -1)^n)|_{\Spd C}$. 
By taking the fiber of $\ca{O}_X(\lambda)^n$ at $\infty$, we have a homomorphism $\GL_n (D_{-\lambda}) \to \GL_{nr}(C)$. Via this homomorphism, $\GL_n (D_{-\lambda})$ acts on $\bb{A}_C^{nr}$ and $(\bb{A}_C^{nr})^{\diamond}$. 
Then the sequence \eqref{eq:BCBCA} is equivariant for the action of $\GL_n (D_{-\lambda})$ by the construction. 

Let $\psi_{\lambda,n,C} \colon (\bb{A}_C^{nr})^{\diamond} \to \Spd C$ be the structure morphism. 
Then we have 
\begin{equation}\label{eq:Anr!}
\psi_{\lambda,n,C,!} \Lambda \cong \Lambda [-2nr]
\end{equation} 
by \cite[Lemma 1.3]{ImaConv}. 
The action of $\GL_n (D_{-\lambda})$ on \eqref{eq:Anr!} is  trivial by the argument as above using Poincare duality since the action of $\GL_n (D_{-\lambda})$ on 
$(\bb{A}_C^{nr})^{\diamond}$ is induced from an action on $\bb{A}_C^{nr}$. 
Hence, we also have 
\begin{equation}\label{eq:Anrupp}
\psi_{\lambda,n,C}^! \Lambda \cong \Lambda [2nr]. 
\end{equation} 
We show that 
\begin{equation}\label{eq:plam!}
 p_{\lambda,n,C,!} \Lambda \cong |\Nrd_{M_n(D_{-\lambda})/F}(\cdot)|[-2(d-r)n]. 
\end{equation}
By Lemma \ref{lem:BCadjisom}, we may check this after the pullback under $p_{\lambda,n,C}$, where \eqref{eq:BCBCA} splits. Hence the isomorphism follows from the case of $\lambda -1$.  
We have 
\begin{align*}
 f_{\lambda,n,C,!} \Lambda &\cong 
 \psi_{\lambda,n,C,!} (p_{\lambda,n,C,!} \Lambda ) \\
 &\cong 
 \psi_{\lambda,n,C,!} ( |\Nrd_{M_n(D_{-\lambda})/F}(\cdot)|[-2(d-r)n]) 
 \cong |\Nrd_{M_n(D_{-\lambda})/F}(\cdot)|[-2dn], 
\end{align*}
where we use \eqref{eq:plam!} at the second isomorphism and \eqref{eq:Anr!} at the third isomorphism. 
The claim on $f_{\lambda,n,C}^! \Lambda$ is proved in the same way using \eqref{eq:BCBCA} and \eqref{eq:Anrupp}. 

We prove the second claim in \ref{enu:Oneg}. 
Assume that $-1 \leq \lambda <0$. Then we have 
\[
 0 \to \ca{H}^{0}(\ca{O}(\lambda +1)^n)|_{\Spd C} \to (\bb{A}_C^{nr})^{\diamond} \to 
 \ca{H}^{1}(\ca{O}(\lambda )^n)|_{\Spd C} \to 0, 
\]
as in \cite[p.~138]{ScWeBLp}. 
Hence we have 
\begin{align*}
 h_{\lambda,n,C,!} \Lambda &\cong g_{\lambda+1,n,C,!} \Lambda \otimes \psi_{\lambda,n,C,!} \Lambda \cong g_{\lambda+1,n,C,!} \Lambda [-2nr]  ,\\ 
 h_{\lambda,n,C}^! \Lambda &\cong g_{\lambda+1,n,C}^! \Lambda \otimes \psi_{\lambda,n,C}^! \Lambda \cong g_{\lambda+1,n,C}^! \Lambda [2nr]
\end{align*}
by \eqref{eq:Anr!} and \eqref{eq:Anrupp}. 
Therefore the claim follows from the claim \ref{enu:Opos} and \ref{enu:Ozero}. 

Assume that $\lambda <-1$. Then we have 
\[
 0 \to (\bb{A}_C^{nr})^{\diamond} \to \ca{H}^{1}(\ca{O}(\lambda )^n)|_{\Spd C} \to  
 \ca{H}^{1}(\ca{O}(\lambda +1 )^n)|_{\Spd C} \to 0 
\]
as \cite[p.~139]{ScWeBLp}. 
Hence the claim is reduced to the case of $\lambda +1$ by \eqref{eq:Anr!} and \eqref{eq:Anrupp} using Lemma \ref{lem:BCadjisomAn}. 
\end{proof}
With this in hand we can prove Proposition \ref{prop: Picardvgroupfullaction}. 
\begin{proof}[Proof of Proposition \ref{prop: Picardvgroupfullaction}]
By \cite[Proposition~II.2.5]{FaScGeomLLC}, we have identifications $\mathcal{H}^{1}(\mathcal{E}) \cong \mathcal{H}^{1}(\mathcal{E}^{< 0})$ and $\mathcal{H}^{0}(\mathcal{E}) \cong \mathcal{H}^{0}(\mathcal{E}^{\geq 0}) = \mathcal{H}^{0}(\mathcal{E}^{> 0}) \times \mathcal{H}^{0}(\mathcal{E}^{= 0})$. Therefore, we can factorize the structure map $f_{\mathcal{E}}\colon \mathcal{P}(\mathcal{E}) \ra \Spd(k)$ as
\[ [\mathcal{H}^{1}(\mathcal{E}^{< 0})/\mathcal{H}^{0}(\mathcal{E}^{\geq 0})] \xrightarrow{f^{< 0}} [\Spd(F)/\mathcal{H}^{0}(\mathcal{E}^{\geq 0})] \xrightarrow{f^{> 0}} [\Spd(F)/\mathcal{H}^{0}(\mathcal{E}^{= 0})] \xrightarrow{f^{= 0}} \Spd(k). \]
Since all these maps are $\ell$-cohomologically smooth by the slope decomposition of $\ca{E}$ and \cite[Proposition~II.2.5]{FaScGeomLLC}, we can use the formula
\[ R((g \circ f)^{!})(\mathbb{F}_{\ell}) = Rg^{!}(Rf^{!}(\mathbb{F}_{\ell})) = Rf^{!}(\mathbb{F}_{\ell}) \otimes_{\mathbb{F}_{\ell}} f^{*}Rg^{!}(\mathbb{F}_{\ell}) \]
for a pair of $\ell$-cohomologically smooth maps $f$ and $g$, as follows from the Definition (\cite[Definition~IV.1.15]{FaScGeomLLC}). This reduces us to showing the claim in three cases. 
\begin{enumerate}
\item The slopes of $\mathcal{E}$ are strictly positive.
\item The slopes of $\mathcal{E}$ are all $0$.
\item The slopes of $\mathcal{E}$ are strictly negative. 
\end{enumerate}
The claim now follows by writing $\mathcal{E}$ as a direct sum of semistable bundles of fixed slope, the K\"unneth formula, and Lemma \ref{lemm:Oaction}. 
\end{proof}

\section{Structure of the moduli space of $P$-structures}\label{sec:Strmod}
\subsection{Modulus character}\label{ssec:modch}
We make use of the following notation.
\begin{enumerate}
\item Let $P$ be a parabolic with Levi factor $L$. 
We set $U=R_{\mathrm{u}}(P)$ to be the unipotent radical. 
The adjoint action of $P$ on $\Lie (U)$ induces 
\[
P \to \Aut \left( \bigwedge^{\dim U} \Lie (U) \right) \cong \Gm, 
\] 
which factors through $L^{\ab}$ and defines an element $\xi_P \in \mathbb{X}^{*}(L^{\mathrm{ab}}_{\ol{F}})^{\Gamma_{F}}$. 
\item Recall that $\delta_{P}\colon P(F) \ra \Lambda^{\times}$ is the modulus character of $P$. We define $\ol{\delta}_{P}\colon L^{\mathrm{ab}}(F) \ra \Lambda^{\times}$ by $\ol{\delta}_{P}(x)=\lvert \xi_P (x) \rvert$. 
\item Given $\theta \in B(L)$, we consider the composition 
\[ \delta_{P,\theta}\colon L_{\theta}(F) \ra (L^{\ab})_{\theta^{\ab}}(F) \cong L^{\ab}(F) \xrightarrow{\ol{\delta}_{P}} \Lambda^{\times}, \]
where $\theta^{\ab}$ denotes the image of $\theta$ in $B(L^{\ab})$. 
We put $d_{P,\theta} := \langle \xi_P, \ol{\theta} \rangle$. 
\end{enumerate}

For a locally free sheaf $\scr{E}$ over a scheme $S$, we define the group scheme $\mathbf{W}(\scr{E})$ over $S$ by $\mathbf{W}(\scr{E})(T)=(\scr{E} \otimes_{\ca{O}_S} \ca{O}_T)(T)$. 

\begin{lem}\label{lem:filtU}
Let $P$ be a parabolic subgroup of a reductive group scheme $G$ over a scheme $S$ with a Levi subgroup $L$. Then 
there is a filtration 
\[ U=U_0 \supset U_1 \supset \cdots  \supset U_n =\{1\} 
\]
of the unipotent radical of $P$ 
stable under the conjugation action of $P$ such that for $0 \leq i \leq n-1$ 
\begin{enumerate}
\item\label{en:PUfact}
the action of $P$ on $U_{i}/U_{i+1}$ factors through $L$, 
\item\label{en:UUWgrp}
there is an isomorphism 
\[
 U_{i}/U_{i+1} \cong \mathbf{W} (\Lie (U_{i}/U_{i+1})) 
\]
of group schemes with an action of $L$, 
\item\label{en:UWsch}
there is an isomorphism 
\[
 U \cong \mathbf{W} \left( \bigoplus_{0 \leq i \leq n-1} \Lie (U_{i}/U_{i+1})\right) 
\]
of schemes with an action of $L$, and
\item\label{en:LieUtwodec}
if $S$ is connected semi-local,  
there is unique $\alpha_i \in X^*(A_L)$ such that 
$\Lie (U_{i})=\Lie (U)_{\alpha_i} \oplus \Lie (U_{i+1})$ for $0 \leq i \leq n-1$, where $A_{L}$ is the maximal split torus inside the center of $L$. 
\end{enumerate}
\end{lem}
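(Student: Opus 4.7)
The plan is to construct the filtration from the root-space decomposition of $\Lie(U)$ under a split central torus of $L$, identify each successive quotient with a root subgroup (which is automatically a vector group carrying the required $L$-action), and assemble $U$ from these factors via a big-cell decomposition.

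First I would reduce to the case where $S$ is connected semi-local. Properties (1)-(3) can be verified Zariski-locally, and the filtration I construct will be canonical enough to glue. Over a connected semi-local $S$ the Levi $L$ contains the maximal split central torus $A_L$, and the adjoint action yields a weight decomposition
\[
\Lie(U) = \bigoplus_{\alpha \in \Phi_P} \Lie(U)_\alpha, \qquad \Phi_P \subset \mathbb{X}^{*}(A_L) \setminus \{0\}
\]
with $\Phi_P$ finite (SGA 3, Exp. XXVI). Since $P$ is a parabolic, there exists $\lambda_0 \in \mathbb{X}_{*}(A_L)$ with $\langle \alpha,\lambda_0 \rangle > 0$ for all $\alpha \in \Phi_P$. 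I would then totally order $\Phi_P = \{\alpha_0,\ldots,\alpha_{n-1}\}$ compatibly with $\langle -,\lambda_0 \rangle$ and so that whenever $\alpha_j+\alpha_k \in \Phi_P$ this sum strictly follows $\max(j,k)$ in the enumeration (possible because the $\alpha$'s lie in a strictly convex cone).

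Next, define $U_i \subset U$ as the closed subgroup scheme with $\Lie(U_i) = \bigoplus_{j \geq i} \Lie(U)_{\alpha_j}$. The bracket relation $[\Lie(U)_{\alpha_j}, \Lie(U)_{\alpha_k}] \subset \Lie(U)_{\alpha_j+\alpha_k}$ (zero when $\alpha_j+\alpha_k \notin \Phi_P$) shows $\Lie(U_i)$ is a Lie subalgebra, and by the integration of Lie subalgebras for smooth unipotent group schemes (SGA 3, Exp. XXII) it is the Lie algebra of a closed subgroup scheme. The same relation combined with the chosen ordering gives $[U_j,U_k] \subset U_{\max(j,k)+1}$, so each $U_i$ is normal in $U$, and it is $L$-stable since $L$ preserves each weight space, hence normal in $P$. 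Property (4) is built into the definition, and property (1) is immediate from $[U,U_i] \subset U_{i+1}$. Each graded piece $U_i/U_{i+1}$ is the relative root subgroup $U_{\alpha_i}$, which by the structure theory in SGA 3, Exp. XXII is $L$-equivariantly isomorphic to $\mathbf{W}(\Lie(U)_{\alpha_i})$, yielding (2). For (3), the multiplication map
\[
\prod_{i=0}^{n-1} U_{\alpha_i} \longrightarrow U
\]
taken in the chosen order is a scheme-theoretic isomorphism by the big-cell decomposition of the unipotent radical of a parabolic; it is $L$-equivariant as a morphism of schemes because $L$ acts separately on each factor, producing the identification $U \cong \mathbf{W}\bigl(\bigoplus_i \Lie(U_i/U_{i+1})\bigr)$ of $L$-schemes (even though the product map is not a group homomorphism).

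The main obstacle will be globalizing from the connected semi-local case to arbitrary $S$, where neither the split central torus nor the weight decomposition exists globally, and the total order on $\Phi_P$ is an auxiliary choice. I would address this by observing that the lower central series of $U$ is canonical and $P$-stable, hence makes sense over $S$; the refinement constructed above agrees locally with a refinement of the lower central series, and any two such local refinements differ over overlaps only by reorderings of commuting pairs of factors, under which properties (1)-(3) are invariant. This invariance makes the scheme-theoretic identification in (3) and the $L$-action on graded pieces in (1)-(2) descend to a well-defined filtration over the whole base $S$.
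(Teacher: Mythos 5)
Your local construction over a connected semi-local base is essentially sound and overlaps with the paper's refinement step for condition~(4) (both use the $A_L$-weight decomposition of $\Lie(U)$ and a total ordering making the weight cone strictly convex), but the globalization argument contains a genuine gap. You acknowledge that the fine filtration depends on the auxiliary choice of total order on $\Phi_P$, and then claim that because different local refinements ``differ over overlaps only by reorderings of commuting pairs of factors, under which properties (1)--(3) are invariant,'' the filtration ``descends to a well-defined filtration over the whole base $S$.'' This is a non-sequitur: the invariance of the \emph{properties} under reordering does not make the \emph{filtration} glue. If two local filtrations genuinely differ (even by permuting commuting factors), they are different subgroup schemes, and there is no single global $U_i$. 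Moreover, over a base $S$ that is not connected semi-local, the maximal split central torus $A_L$ need not exist globally, so the fine filtration indexed by $X^*(A_L)$-weights is not even locally canonical in a way that would allow descent.

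The paper's route avoids this by splitting the problem differently: it first builds a \emph{canonical} coarser filtration over general $S$ using the argument of SGA~3, Exp.~XXVI, 2.1.1 (adapted pinnings with maximal torus in $L$), and descends the $L$-equivariant scheme-splitting $\prod_{a(\gamma)=i+1}U_\gamma$ via the observation in 2.1.2 that any two such pinnings are conjugate by $L(S)$, so the splitting is independent of the pinning. Conditions~(1)--(3) hold for this coarse filtration over arbitrary $S$; only then, when $S$ is connected semi-local, is the filtration refined by the $A_L$-weight decomposition of the graded pieces to obtain~(4). Your proposal would be correct if you replaced your descent paragraph by this two-stage structure: canonical coarse filtration via SGA~3 over general $S$, followed by the local refinement you already describe. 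You should also tighten the step asserting the existence of the subgroups $U_i$ (and the complementary factors $U_{\alpha_i}$) from Lie-subalgebra data over a non-field base --- SGA~3 XXVI actually hands you these subgroups directly, which is considerably less delicate than integrating Lie subalgebras from scratch.
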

\begin{proof}
We do the argument as in \cite[XXVI, 2.1.1]{SGA3-3} for a pinning of $G$ adapted to $P$ such that the maximal torus is contained in $L$. 
Then, there is an isomorphism 
$U_{i}/U_{i+1} \cong \mathbf{W} (\scr{E}_i)$ compatible with the actions of $L$ for a locally free sheaf $\scr{E}_i$ on $S$ with a linear action of $L$. By taking the Lie algebras, we obtain an isomorphism $\Lie (U_{i}/U_{i+1}) \cong \scr{E}_i$ compatible with the actions of $L$. 
Furthermore,  $\prod_{a(\gamma)=i+1}U_{\gamma}$ in \cite[XXVI, 2.1.1]{SGA3-3} gives a splitting of $U_i \to U_i/U_{i+1}$ as schemes with the action of $L$. 

In the argument of \cite[XXVI, 2.1.2]{SGA3-3}, different pinnings are conjugate by $L$ since we use only the pinnings such that the maximal tori are contained in $L$. Hence $\prod_{a(\gamma)=i+1}U_{\gamma}$ above  descends. Therefore we obtain a filtration $\{ U_i \}_{0 \leq i \leq n}$ satisfying the conditons \ref{en:PUfact}, \ref{en:UUWgrp} and \ref{en:UWsch}. 

Assume that $S$ is connected semi-local. 
We note that $L$ is the centralizer of $A_L$ in $G$ by \cite[XXVI, Proposition 6.8]{SGA3-3}. 
By using the weight decomposition of $\Lie (U_{i}/U_{i+1})$ with respect to the action of $A_L$, we can refine the filtration $\{ U_i \}_{0 \leq i \leq n}$ further so that the condition \ref{en:LieUtwodec} is satisfied. 
\end{proof}

\begin{lem}
The modulus character $\delta_{P}$ is equal to 
$P(F) \to L(F) \to L^{\ab}(F) \xrightarrow{\ol{\delta}_{P}} \Lambda^{\times}$. 
\end{lem}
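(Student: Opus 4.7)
The plan is to reduce the identity to the classical formula $\delta_P(p) = |\det \Ad(p)|_{\Lie U(F)}|$, and then match this against the definition of $\xi_P$. First I would recall that the modulus character is characterized by how the conjugation action of $P$ on $U$ deforms the Haar measure of $U(F)$: for any $p \in P(F)$, the automorphism $u \mapsto pup^{-1}$ of $U(F)$ multiplies a Haar measure by $|\det \Ad(p)|_{\Lie U(F)}|$, and this is exactly $\delta_P(p)$ by the normalization in \cite[I.2.6]{VigReplmod}. If one prefers to avoid quoting this as a black box, Lemma \ref{lem:filtU} provides the needed tool: the filtration $U = U_0 \supset \cdots \supset U_n = \{1\}$ together with the isomorphisms $U_i/U_{i+1} \cong \mathbf{W}(\Lie(U_i/U_{i+1}))$ and the splitting $U \cong \mathbf{W}(\bigoplus_i \Lie(U_i/U_{i+1}))$ as $L$-schemes shows that $U(F)$, as a measure space, is a product of additive groups of $F$-vector spaces on each of which $P$ acts linearly through $L$; conjugating by $p$ rescales the additive Haar measure on each graded piece by $|\det \Ad(p)|_{\Lie(U_i/U_{i+1})}|$, and multiplying over $i$ gives $|\det \Ad(p)|_{\Lie U}|$.

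Second, I would simply unfold the definition of $\xi_P$ given in \S \ref{ssec:modch}: $\xi_P$ is by construction the character of $P$ obtained from the adjoint action of $P$ on the top exterior power $\bigwedge^{\dim U} \Lie U$. Consequently $\xi_P(p) = \det \Ad(p)|_{\Lie U} \in F^\times$ for $p \in P(F)$, so $|\xi_P(p)| = \delta_P(p)$ by the previous paragraph. Because $\xi_P$ takes values in the abelian group $\Gm$, it factors through $P^{\ab}$; since the surjection $P \twoheadrightarrow L \twoheadrightarrow L^{\ab}$ identifies this with $L^{\ab}$ (as $U \subset [P,P]$), the character $\xi_P$ factors through $L^{\ab}$ as recorded in the definition of $\xi_P$. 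The composite $P(F) \to L^{\ab}(F) \xrightarrow{\bar{\delta}_P} \Lambda^\times$ therefore sends $p$ to $|\xi_P(p)| = \delta_P(p)$, as desired.

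I do not anticipate a substantial obstacle: the argument is essentially bookkeeping, and the only point requiring a bit of care is the sign/normalization convention for $\delta_P$ (left versus right Haar measure, and the precise exponent in $|\det \Ad|_{\Lie U}|$). One simply needs to verify that the convention in \cite[I.2.6]{VigReplmod} matches the definition of $\bar{\delta}_P$ via $\xi_P$, and in particular that no inverse or sign is lost in passing between the two formulations.
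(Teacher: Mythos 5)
Your proposal is correct and takes essentially the same route as the paper: both rely on Lemma \ref{lem:filtU} to decompose $U(F)$ into a product of additive groups of $F$-vector spaces on which $L$ acts linearly, and both compute $\delta_P$ as the product of the resulting Jacobian factors. The paper phrases the computation in terms of generalized indices $[\Ad(l)K:K]$ (factored as $[\Ad(l)\ol{K}:\ol{K}]\prod_i[\Ad(l)\ol{K}_i:\ol{K}_i]$, with the first factor equal to $1$ by unimodularity of $L$, and the remaining factors equal to $[\Ad(l)N_i:N_i]$ for lattices $N_i$), whereas you phrase it in terms of Haar measure scaling; these are the same calculation. The normalization concern you raise at the end is real but is exactly what the index computation in the paper handles without having to match against a quoted formula — so if you want your write-up to be self-contained and convention-free, mirroring the index argument is the cleaner choice.
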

\begin{proof}
Since the both characters are trivial on $U(F)$, it suffices to show the equality on $L(F)$. 
Let $l$ be an element of $L(F)$. 
We take a compact open subgroup $K$ of $P(F)$, and 
$\{ U_i \}_{0 \leq i \leq n}$ as Lemma \ref{lem:filtU}. 
We put $K_i=K \cap U_i(F)$ for $0 \leq i \leq n$. 
Let $\ol{K}$ and 
$\overline{K}_i$ be the images of $K$ in $L(F)$ and 
$K_i$ in $(U_i/U_{i+1})(F)$ for $0 \leq i \leq n-1$. 
By Lemma \ref{lem:filtU} \ref{en:UWsch}, $\ol{K}$ and 
$\overline{K}_i$ are compact open in $L(F)$ and 
$(U_i/U_{i+1})(F)$ respectively, and 
we have 
\begin{equation}\label{eq:delPdec}
\delta_P(l)=[\Ad(l)K:K] =[\Ad(l)\ol{K}:\ol{K}] 
\prod_{0 \leq i \leq n-1} [\Ad(l)\ol{K}_i :\ol{K}_i] . 
\end{equation}
Since the modulus character of $L$ is trivial, we have 
\begin{equation}\label{eq:adolK}
 [\Ad(l)\ol{K}:\ol{K}]=1. 
\end{equation}
We take an $\ca{O}_F$-lattice $N_i$ of $\Lie (U_i/U_{i+1})(F)$
which corresponds to a compact open subgroup of $\ol{K}_i$ under the isomorphism in Lemma \ref{lem:filtU} \ref{en:UUWgrp}. Then we have 
\begin{equation}\label{eq:adolKN}
[\Ad(l)\ol{K}_i :\ol{K}_i]=[\Ad(l)N_i :N_i]. 
\end{equation}
The claim follows from \eqref{eq:delPdec}, 
\eqref{eq:adolK} and \eqref{eq:adolKN}. 
\end{proof}

\subsection{Connected components of $\Bun_{P}$}\label{ssec:conncompBun}
We have a natural diagram
\[ \begin{tikzcd}
\Bun_{P} \arrow[r,"\mf{p}_{P}"] \arrow[d,"\mf{q}_{P}"] \arrow[dd,bend left=60,"\mf{q}^{\dagger}_{P}"] & \Bun_{G} \\
\Bun_{L} \arrow[d,"\ol{\mf{q}}_{L}"] &  \\
\Bun_{L^{\mathrm{ab}}} & 
\end{tikzcd} \] 
given by the corresponding maps of reductive groups.

By \cite[Corollary~IV.1.23]{FaScGeomLLC}, we have decompositions
\[ \Bun_{L} = \bigsqcup_{\theta \in B(L)_{\basic}} \Bun_{L}^{(\theta)} \quad \text{and} \quad \Bun_{L^{\mathrm{ab}}} = \bigsqcup_{\ol{\theta} \in \Lcoinv} \Bun_{L^{\mathrm{ab}}}^{\ol{\theta}} \]
into connected components via the Kottwitz invariant. We let $\Bun_{P}^{(\theta)}$ denote the fiber of $\mf{q}_{P}$ over the connected component $\Bun_{L}^{(\theta)}$, and write $\mf{q}_{P}^{(\theta)}$ (resp. $\mf{p}_{P}^{(\theta)}$) for the base change (resp. restriction)
of $\mf{q}_{P}$ (resp. $\mf{p}_{P}$) to this connected component. To describe these components in maximum generality, we first establish some lemmas.

\begin{lem}\label{lem:BunPPad}
Consider a parabolic $P \subset G$ with Levi factor $L$, its image defines a parabolic $P^{\mathrm{ad}} \subset G^{\mathrm{ad}}$ with Levi factor $L^{\mathrm{ad}}$. The natural diagram 
\begin{equation*}{\label{eqn: adjointgroupCartdiag}} \begin{tikzcd}
\Bun_{P} \arrow[r] \arrow[d,"\mf{q}_{P}"]  & \Bun_{P^{\mathrm{ad}}} \arrow[d,"\mf{q}_{P^{\mathrm{ad}}}"] \\
 \Bun_{L} \arrow[r] & \Bun_{L^{\mathrm{ad}}} 
\end{tikzcd} 
\end{equation*}
is Cartesian. 
\end{lem}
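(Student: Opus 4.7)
\emph{Proof plan.} The plan is to reduce the statement to the group-theoretic identity
\[ P \cong L \times_{L^{\ad}} P^{\ad} \]
of $F$-group schemes, and then to note that the functor $\Bun_{(-)}$ converts this fiber product of algebraic groups into the claimed fiber product of $v$-stacks.

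First I would verify the group-theoretic fiber product description. Since $Z(G) \subset L \subset P$ and $Z(G)$ is by definition the kernel of $G \to G^{\ad}$, both surjections $P \to P^{\ad}$ and $L \to L^{\ad}$ have kernel exactly $Z(G)$. Using the Levi decompositions $P = L \ltimes U$ and $P^{\ad} = L^{\ad} \ltimes U^{\ad}$, the induced map $U \to U^{\ad}$ on unipotent radicals is an isomorphism: injectivity follows from $U \cap Z(G) \subset U \cap L = \{1\}$, while surjectivity follows from comparing the Levi decompositions of $P \to P^{\ad}$. Chasing this through, the canonical map $P \to L \times_{L^{\ad}} P^{\ad}$, $lu \mapsto (l,l^{\ad}u^{\ad})$, is checked to be an isomorphism.

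Second, I would translate this into the statement on moduli stacks. For $S \in \Perf_k$, a $P$-torsor $\mathcal{E}$ on $X_S$ produces compatibly an $L$-torsor $\mathcal{E} \times^P L$ and a $P^{\ad}$-torsor $\mathcal{E} \times^P P^{\ad}$, whose further pushouts to $L^{\ad}$-torsors are canonically identified. Conversely, given a triple $(\mathcal{E}_L, \mathcal{E}_{P^{\ad}}, \iota)$, the group-scheme isomorphism above realizes a $P$-torsor as the fiber product $\mathcal{E}_L \times_{\mathcal{E}_{L^{\ad}}} \mathcal{E}_{P^{\ad}}$ of torsors (with transition data supplied by $\iota$). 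This yields the Cartesian square \eqref{eqn: adjointgroupCartdiag}.

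The only nontrivial step is the group-scheme identity $P \cong L \times_{L^{\ad}} P^{\ad}$; once that is in place the passage to $v$-stacks of $G$-bundles is a formal consequence of the fact that, for any fiber product of smooth affine groups $H = H_1 \times_{H_3} H_2$, the category of $H$-torsors is the $2$-fiber product of the categories of $H_i$-torsors.
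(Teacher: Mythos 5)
Your proposal is correct in its conclusion and closely related in spirit to the paper's argument, but it takes a different packaging. The paper's proof is a one-liner: the fiber of $\mf{q}_{P}$ over an $L$-bundle $\mathcal{F}_{L}$ is the stack of torsors under $\mathcal{F}_{L} \times^{L} U$, and since $Z(G) \subset L$ acts trivially on $U$ by conjugation (so $L$ acts through $L^{\ad}$), this group scheme --- and hence the fiber --- depends only on the induced $L^{\ad}$-bundle, which is exactly the Cartesian condition. Your route instead establishes the group-scheme fiber product $P \cong L \times_{L^{\ad}} P^{\ad}$ (which is correct and rests on exactly the same fact: $U \cap Z(G) = \{1\}$, so $U \xrightarrow{\sim} U^{\ad}$) and then appeals to a general compatibility of $\Bun_{(-)}$ with fiber products of groups. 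Your approach is a bit more structural, and it makes the mechanism fully explicit; the paper's is more direct. Either works, but note the following caveat.

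The last sentence --- ``for any fiber product of smooth affine groups $H = H_1 \times_{H_3} H_2$, the category of $H$-torsors is the $2$-fiber product of the categories of $H_i$-torsors'' --- is \emph{false} in that generality. Counterexample: $H_1 = H_2 = \{1\}$, $H_3$ nontrivial; then $H = \{1\}$, but $\ast \times_{[\ast/H_3]} \ast \cong \underline{H_3} \neq \ast$. What one always has is full faithfulness of the comparison functor; for essential surjectivity one needs the putative torsor $\mathcal{E}_1 \times_{\mathcal{E}_3} \mathcal{E}_2$ to admit local sections, which requires (at least) one of $H_i \to H_3$ to be surjective in the relevant topology. In your situation this holds: $P^{\ad} \to L^{\ad}$ is split surjective with unipotent kernel $U^{\ad}$, so $\mathcal{F}_{P^{\ad}} \to \mathcal{F}_{L^{\ad}}$ is a torsor under the smooth unipotent group scheme $\mathcal{F}_{L^{\ad}} \times^{L^{\ad}} U^{\ad}$ over $X_S$ and therefore has local sections, and then so does $\mathcal{F}_L \times_{\mathcal{F}_{L^{\ad}}} \mathcal{F}_{P^{\ad}}$. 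So your second paragraph in fact contains the correct justification; it is only the blanket ``formal consequence'' formulation at the end that overstates the general principle. With that surjectivity hypothesis added, the proof is complete.
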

\begin{proof}
This follows from the fact that the fibers of $\mf{q}_{P}$ over an $L$-bundle $\mathcal{F}_{L}$ can be described in terms of torsors under $\mathcal{F}_{L} \times^{L} U$, where $L$ acts through the group $L^{\mathrm{ad}}$.
\end{proof}

For $b \in P(\breve{F})$ and $S \in \Perf_k$, we define $P$-bundle $\ca{E}_{P,b}$ on $X_S$ by 
$(\varphi \times b \sigma )\backslash (Y_S \times P)$.

\begin{lem}\label{lem:BunPPb}
Let $b \in G(\breve{F})$ be a basic element. Let $P \subset G$ and $Q_b \subset G_b$ 
be parabolic subgroups which correspond under the natural isomorphism 
$G_{\breve{F}} \cong G_{b,\breve{F}}$. 
\begin{enumerate}
\item 
We have $b \in P(\breve{F})$. 
\item\label{enu:isomPPb}
We have an isomorphism
\[ \Bun_{P} \xrightarrow{\sim} \Bun_{Q_{b}} ;\ \mathcal{E} \mapsto \mathcal{I}\mathrm{som} (\mathcal{E}_{P,b},\mathcal{E}) \]
of $v$-stacks. This gives the commutative diagram 
\begin{equation*} \begin{tikzcd}
\Bun_{P} \arrow[r,"\sim"] \arrow[d,"\mf{q}_{P}"]  & \Bun_{Q_b} \arrow[d,"\mf{q}_{Q_b}"]  \\
 \Bun_{L} \arrow[r,"\sim"] & \Bun_{L_b}.  
\end{tikzcd} 
\end{equation*}
\end{enumerate}

\end{lem}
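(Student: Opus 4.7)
For part (1), my strategy is to show that $P_{\breve{F}}$ is preserved by $\Ad(b)$. Since $P$ is defined over $F$, its base change $P_{\breve{F}}$ is $\sigma$-stable. Under the canonical identification $G_{\breve{F}} \cong G_{b,\breve{F}}$, the $F$-rational Frobenius on $G_b$ corresponds to the twisted Frobenius $\Ad(b) \circ \sigma$ on $G_{\breve{F}}$; because $Q_b$ is defined over $F$, its base change $Q_{b,\breve{F}}$ is stable under this twisted action, and by hypothesis coincides with $P_{\breve{F}}$. Combining the two stability properties gives $\Ad(b)(P_{\breve{F}})=P_{\breve{F}}$, so $b \in N_G(P)(\breve{F}) = P(\breve{F})$, using that parabolic subgroups are self-normalizing.

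For part (2), once (1) is in hand, $\mathcal{E}_{P,b} = (\varphi \times b\sigma)\backslash (Y_S \times P)$ is a well-defined $P$-bundle on $X_S$. I would invoke the general torsor-twisting equivalence, which gives
\[
 \Bun_P \xrightarrow{\sim} \Bun_{\underline{\Aut}(\mathcal{E}_{P,b})}, \qquad \mathcal{E} \mapsto \mathcal{I}som(\mathcal{E}_{P,b}, \mathcal{E}),
\]
and then identify the automorphism sheaf $\underline{\Aut}(\mathcal{E}_{P,b})$ with the constant group scheme $Q_b \times X_S$, where $Q_b$ is viewed as an $F$-group. Unwinding the quotient presentation, a local automorphism of $\mathcal{E}_{P,b}$ corresponds to a $\breve{F}$-valued function $g$ into $P$ satisfying $g = b\sigma(g)b^{-1}$, which is exactly the defining condition for points of the $F$-form $P_b = Q_b \subset G_b$. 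Thus the target of the equivalence is $\Bun_{Q_b}$.

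For the commutative square, I would apply this twisting argument functorially along the quotient $P \to L$: extension of structure group commutes with the $b$-twisted Frobenius, yielding $\mathcal{E}_{P,b} \times^P L = \mathcal{E}_{L,b_L}$ where $b_L \in L(\breve{F})$ is the image of $b$, and hence a natural identification
\[
 \mathcal{I}som(\mathcal{E}_{P,b}, \mathcal{E}) \times^{Q_b} L_b \cong \mathcal{I}som(\mathcal{E}_{L,b_L}, \mathcal{E} \times^P L),
\]
which is exactly the commutativity claim, since the bottom arrow $\Bun_L \xrightarrow{\sim} \Bun_{L_b}$ is the analogous twisting equivalence for $L$ and $b_L$.

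The main obstacle I expect is the identification $\underline{\Aut}(\mathcal{E}_{P,b}) \cong Q_b \times X_S$ as group schemes \emph{over} $X_S$, rather than merely as an inner twist defined abstractly over the base. This should reduce to a descent argument parallel to the reductive case treated in \cite{FaScGeomLLC}: trivialize $\mathcal{E}_{P,b}$ after pullback along a cover of $X_S$ over which $Y_S$ splits, compute the automorphism group there as $P$-valued functions satisfying the $b$-twisted equivariance, and descend to recognize the $F$-form $Q_b$. Once this identification is nailed down, the rest is formal manipulation of torsors.
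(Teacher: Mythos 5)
Your proposal is correct and matches the paper's approach. For part (1), you and the authors make the identical argument: $Q_b$ being defined over $F$ means $P_{\breve{F}}$ is stable under $x\mapsto b\sigma(x)b^{-1}$, which combined with $\sigma$-stability of $P_{\breve{F}}$ (as $P$ is defined over $F$) gives $\mathrm{Ad}(b)(P_{\breve{F}})=P_{\breve{F}}$, hence $b\in N_{G_{\breve{F}}}(P_{\breve{F}})=P_{\breve{F}}$ by self-normalization of parabolics; for part (2) the paper simply cites the proof of \cite[Corollary~23.3.2]{ScWeBLp} (the $P=G$ case) and says the parabolic case is checked easily — your spelled-out torsor-twisting argument, including the identification of $\underline{\Aut}(\mathcal{E}_{P,b})$ with the inner form $Q_b$ and the functoriality along $P\to L$ for the commutative square, is precisely the content that reference supplies and that the paper leaves implicit.
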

\begin{proof}
Since 
$P_{\breve{F}} \subset G_{\breve{F}}$ is stable under $x \mapsto b \sigma (x) b^{-1}$, we have $b \in N_{G_{\breve{F}}}(P_{\breve{F}})=P_{\breve{F}}$. Then we can check the claim \ref{enu:isomPPb} easily (\cf the proof of \cite[Corollary~23.3.2]{ScWeBLp} when $P=G$). 
\end{proof}

We now have our key structural result on the components $\Bun_{P}^{(\theta)}$.
\begin{prop}{\label{prop: qissmooth}}
The map $\mf{q}_{P}$ is an $\ell$-cohomologically smooth (non-representable) map of Artin $v$-stacks with connected fibers. In particular, if $\theta \in B(L)_{\basic}$, then the map $\mf{q}_{P}^{(\theta)}$ is $\ell$-cohomologically smooth of pure $\ell$-dimension 
equal to $d_{P,\theta}$. 
\end{prop}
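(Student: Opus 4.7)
The plan is to factor $\mf{q}_P$ through a tower of maps whose successive fibers are Picard $v$-groupoids of vector bundles on $X_S$, then apply the smoothness and dimension results for such groupoids from \S\ref{sec:DualPic}.

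Applying Lemma \ref{lem:filtU} to $P$, we obtain a filtration $U = U_0 \supset U_1 \supset \ldots \supset U_n = \{1\}$ by $P$-stable normal subgroups whose graded pieces $U_i/U_{i+1} \cong \mathbf{W}(\Lie(U_i/U_{i+1}))$ are vector groups on which $P$ acts through $L$. Setting $P^{(i)} := P/U_i$ yields a tower
\[
\Bun_P = \Bun_{P^{(n)}} \longrightarrow \Bun_{P^{(n-1)}} \longrightarrow \cdots \longrightarrow \Bun_{P^{(0)}} = \Bun_L
\]
whose composition is $\mf{q}_P$. Each arrow $\mf{q}_i \colon \Bun_{P^{(i+1)}} \to \Bun_{P^{(i)}}$ is induced from the quotient $P^{(i+1)} \to P^{(i)}$ with kernel $U_i/U_{i+1}$. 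Since this kernel is a vector group on which $P^{(i)}$ acts through $L$, the fiber of $\mf{q}_i$ over a $P^{(i)}$-bundle $\ca{F}_{P^{(i)}}$ with underlying $L$-bundle $\ca{F}_L$ is the stack of torsors under the vector-group scheme $\mathbf{W}(\ca{F}_L \times^L \Lie(U_i/U_{i+1}))$ on $X_S$, which is precisely the Picard $v$-groupoid $\ca{P}((\ca{F}_L \times^L \Lie(U_i/U_{i+1}))[1])$ of Definition \ref{def: picardvgroup}.

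By the smoothness statement recalled just after Definition \ref{def: picardvgroup} (from \cite[Proposition~3.14]{HamJacCrit}), each such Picard $v$-groupoid is $\ell$-cohomologically smooth of pure $\ell$-dimension $-\deg(\ca{F}_L \times^L \Lie(U_i/U_{i+1}))$, and the formation is stable under base change. Composing across the tower, $\mf{q}_P$ is $\ell$-cohomologically smooth. Connectedness of fibers is immediate, since each $\ca{P}(\ca{E}[1]) = [\ca{H}^1(\ca{E})/\ca{H}^0(\ca{E})]$ is a quotient stack (hence connected), and an iterated fibration of connected stacks over a connected base remains connected. For $\theta \in B(L)_\basic$, $\Bun_L^{(\theta)} \cong [\ast/\ul{L_\theta(F)}]$ and the universal $L$-bundle is étale-locally isomorphic to $\ca{E}_{L,\theta}$; the $\ell$-dimensions telescope to
\[
\dim\bigl(\mf{q}_P^{(\theta)}\bigr) = -\deg\bigl(\ca{E}_{L,\theta} \times^L \Lie(U)\bigr) = \langle \xi_P, \ol{\theta}\rangle = d_{P,\theta},
\]
where the second equality uses the paper's convention that Harder--Narasimhan slopes are the negatives of isocrystal slopes.

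The main technical point is verifying that each $\mf{q}_i$ is $\ell$-cohomologically smooth relative to the non-trivial base $\Bun_{P^{(i)}}$, as opposed to merely fiberwise over it. I expect this to follow by étale descent along a cover of $\Bun_{P^{(i)}}$ trivializing the underlying $L$-bundle, thereby reducing to the absolute smoothness result for Picard $v$-groupoids. Tracking the sign convention in the dimension formula also requires attention but is routine.
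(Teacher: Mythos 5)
Your argument reconstructs directly what the paper delegates to \cite[Proposition~3.16]{HamJacCrit}, and it is correct in substance. One imprecision: the fiber of $\mf{q}_i$ over a given $P^{(i)}$-bundle is in general only a \emph{torsor} under the Picard $v$-groupoid $\ca{P}\bigl((\ca{F}_L\times^L \Lie(U_i/U_{i+1}))[1]\bigr)$, not canonically equal to it, since a general $P^{(i)}$-bundle does not carry a distinguished lift to $P^{(i+1)}$ (the splitting $L\hookrightarrow P^{(i+1)}$ only produces one when the $P^{(i)}$-bundle is itself induced from its underlying $L$-bundle). As such torsors are $v$-locally trivial this has no effect on smoothness, dimension, or connectedness, but the phrasing should be corrected. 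The descent step you flag as the main technical point is indeed the real content of \cite[Proposition~3.16]{HamJacCrit}; the mechanism you sketch---pull back along a $v$-cover of $\Bun_{P^{(i)}}$ trivializing the underlying $L$-bundle and use $v$-locality of $\ell$-cohomological smoothness on the target---is essentially how it goes, and the cover should be a $v$-cover rather than \'etale since trivializing the $L$-bundle typically requires more than an \'etale cover. Compared with the paper, your dimension computation is more self-contained: you telescope the degrees $-\deg(\ca{E}_{L,\theta}\times^L\Lie(U_i/U_{i+1}))$ through the tower, whereas the paper combines the absolute dimension of $\Bun_P$ from \cite[Lemma~3.8, Proposition~4.7]{HamJacCrit} with $\dim\Bun_L=0$ from \cite[Theorem~IV.1.19]{FaScGeomLLC} and then reduces the non-quasi-split case to the quasi-split one via Lemmas \ref{lem:BunPPad} and \ref{lem:BunPPb}. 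Your telescoping avoids that reduction entirely, since Lemma \ref{lem:filtU} and the Picard $v$-groupoid dimension formula are available for arbitrary $P\subset G$.
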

\begin{proof}
The first part follows from \cite[Proposition~3.16]{HamJacCrit} and its proof. The second part follows from combining \cite[Lemma~3.8, Proposition~4.7]{HamJacCrit} 
and \cite[Theorem~IV.1.19]{FaScGeomLLC} when $G$ is quasi-split. The general case is reduced to the quasi-split case using Lemma \ref{lem:BunPPad} and Lemma \ref{lem:BunPPb}. 
\end{proof}
In particular, we deduce the following from this. 
\begin{cor}{\label{cor: conncompsBunP}}
We have a decomposition into connected components
\[ \Bun_{P} = \bigsqcup_{\theta \in B(L)_{\basic}} \Bun_{P}^{(\theta)} . \]
\end{cor}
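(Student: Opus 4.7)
My plan is to deduce the corollary directly from Proposition \ref{prop: qissmooth} together with the known connected-component decomposition of $\Bun_L$. The key input is that $\mf{q}_P \colon \Bun_P \to \Bun_L$ is $\ell$-cohomologically smooth with connected fibers, and that $\Bun_L = \bigsqcup_{\theta \in B(L)_{\basic}} \Bun_L^{(\theta)}$ is the decomposition into connected components established via the Kottwitz invariant in \cite[Corollary~IV.1.23]{FaScGeomLLC}.

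First I would observe that each subset $\Bun_P^{(\theta)} := \mf{q}_P^{-1}(\Bun_L^{(\theta)})$ is open and closed in $\Bun_P$: openness and closedness descend from $\Bun_L$ along $\mf{q}_P$ because each $\Bun_L^{(\theta)}$ is itself open and closed. Since the Levi inclusion $L \hookrightarrow P$ induces a section $\Bun_L \to \Bun_P$ of $\mf{q}_P$, the map $\mf{q}_P$ is surjective on points, so each $\Bun_P^{(\theta)}$ is non-empty. This already yields the decomposition $\Bun_P = \bigsqcup_\theta \Bun_P^{(\theta)}$ as a disjoint union of non-empty clopen substacks.

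It remains to show that each $\Bun_P^{(\theta)}$ is connected. For this I would use the following general fact: if $f \colon X \to Y$ is a surjective, open morphism of topological spaces (or $v$-stacks at the level of $|-|$) with connected fibers, and $Y$ is connected, then $X$ is connected. In our setting, $\mf{q}_P^{(\theta)} \colon \Bun_P^{(\theta)} \to \Bun_L^{(\theta)}$ is $\ell$-cohomologically smooth (hence open on underlying topological spaces by \cite[Proposition~23.11]{SchEtdia}), surjective (via the Levi section), and has connected fibers by Proposition \ref{prop: qissmooth}; the base $\Bun_L^{(\theta)}$ is connected by definition. So a non-trivial clopen decomposition of $\Bun_P^{(\theta)}$ would, using openness of $\mf{q}_P^{(\theta)}$ and connectedness of fibers, pull back to a non-trivial clopen decomposition of $\Bun_L^{(\theta)}$, a contradiction.

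The main (minor) obstacle is making the topological argument on connectedness rigorous in the language of $v$-stacks rather than for honest topological spaces; this reduces to the standard fact that a smooth surjection of $v$-stacks is open on the underlying topological spaces $|-|$, so the argument above runs on $|\Bun_P^{(\theta)}| \to |\Bun_L^{(\theta)}|$ with the fiber over a geometric point being connected. Given these formalities, the result follows immediately.
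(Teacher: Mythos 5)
Your proof is correct and follows essentially the same route as the paper: Corollary \ref{cor: conncompsBunP} is deduced from Proposition \ref{prop: qissmooth} (giving $\ell$-cohomological smoothness and connected fibers of $\mf{q}_P$) together with the openness of $\ell$-cohomologically smooth maps on underlying topological spaces. One small point worth flagging, which the paper itself does in a footnote: the openness result in \cite{SchEtdia} is stated for representable morphisms, whereas $\mf{q}_P$ is not representable, so one must observe that the proof of openness extends to this non-representable setting.
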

\begin{proof}
This follows by combining with the previous Proposition together with the fact that $\ell$-cohomologically smooth maps are universally open \cite[Proposition~23.1]{SchEtdia}\footnote{Strictly speaking, this only applies to representable morphisms; however, it is easy to see the proof extends to this context.}.  
\end{proof}
We consider the semistable locus $\bigsqcup_{\theta \in B(L)_{\basic}} [\ast/\ul{L_{\theta}(F)}] \cong \Bun_{L}^{\mathrm{ss}} \subset \Bun_{L}$, and write $\Bun_{P}^{\mathrm{ss}}$ for the preimage along $\mf{q}_{P}$. We have decompositions into connected components
\[ \bigsqcup_{\theta \in B(L)_{\basic}} \mf{q}_{P}^{\theta}\colon \Bun_{P}^{\mathrm{ss}} \cong \bigsqcup_{\theta} \Bun_{P}^{\theta} \ra \bigsqcup_{\theta} \Bun_{L}^{\theta}, \]
where $\mf{q}_{P}^{\theta}$ is the base change of $\mf{q}_{P}$ to the HN-strata $\Bun_{L}^{\theta} \hookrightarrow \Bun_{L}$. Ultimately, the proof of our main Theorem will reduce to an explicit description of the connected components $\Bun_{P}^{\theta}$ of $\Bun_{P}^{\mathrm{ss}}$ in terms of Banach--Colmez spaces. Those connected components are the preimage of the basic HN-strata. 

\subsection{Description of the preimage HN-strata}

Let $\theta \in B(L)$. 
We take $\{ U_i \}_{0 \leq i \leq n}$ and $\{ \alpha_i \}_{0 \leq i \leq n-1}$ as in Lemma \ref{lem:filtU} where $S = \Spec{(F)}$. 
We note that $\Lie (U_{i}/U_{i+1}) \cong \Lie (U)_{\alpha_i}$. 
Let $\mathcal{L}ie (U)_{\alpha_i,\theta}$ be the vector bundle on the Fargues--Fontaine curve with Kottwitz element given by the image of $\theta$ under $L \to \GL (\Lie (U)_{\alpha_i})$. Recalling the minus sign when passing between isocrystals and $G$-bundles, we note that this has degree equal to $-\langle \alpha_{i}, \theta \rangle$.

We write $\Bun_{P}^{\theta}$ for the preimage of the HN-strata $\Bun_{L}^{\theta}$ along $\mf{q}_P$. 
In order to describe $\Bun_{P}^{\theta}$, for $S \in \AffPerf_k$, we consider the torsors 
\[ \mathcal{Q}_{\theta,S} := \mathcal{E}_{\theta,S}^{\mathrm{alg}} \times^{L} P  \] 
and 
\[ R_{\mathrm{u}}\mathcal{Q}_{\theta,S} := \mathcal{E}_{\theta,S}^{\mathrm{alg}} \times^{L} U, \]
which we view as group schemes over $X_S^{\mathrm{alg}}$.

We now come to the key description of the space $\Bun_{P}^{\theta}$.
\begin{lem}{\label{lem:geomdescpofconncomp}}
Let $\Bun_{P,*}^{\theta}$ 
denote the fiber of the map 
\[\mf{q}_{P}^{\theta}\colon \Bun^{\theta}_{P} \ra \Bun_{L}^{\theta} \cong [\ast/\wt{L}_{\theta}] \]
 over $\ast \ra [\ast/\wt{L}_{\theta}]$. Then the map $\Bun_{P,*}^{\theta} \ra \ast$ is an iterated fibration of the absolute Picard $v$-groupoids
\begin{equation*}{\label{eqn: LieBC space}}
 \ca{P}(\mcLie (U)_{\alpha_i,\theta}[1]) \ra \ast 
\end{equation*}
for $0 \leq i \leq n-1$.
\end{lem}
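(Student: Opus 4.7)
The plan is to identify $\Bun_{P,*}^{\theta}$ with the moduli $v$-stack of torsors under the twisted unipotent radical $R_{\mathrm{u}}\mathcal{Q}_{\theta}$ on $X_{S}$, and then to use the $L$-stable filtration of Lemma \ref{lem:filtU} to exhibit this as an iterated tower whose layers are the absolute Picard $v$-groupoids of the twisted weight spaces. To set this up, an $S$-point of $\Bun_{P,*}^{\theta}$ is a $P$-bundle $\mathcal{E}_{P}$ on $X_{S}$ together with an isomorphism $\mathcal{E}_{P}\times^{P}L \xrightarrow{\sim} \mathcal{E}_{\theta,S}$, equivalently a reduction to the trivial $L$-structure of the $P$-bundle $\mathcal{E}_{\theta,S}\times^{L}P$. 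Such reductions are classified by $R_{\mathrm{u}}\mathcal{Q}_{\theta,S}$-torsors on $X_{S}$, so $\Bun_{P,*}^{\theta}$ identifies with the moduli $v$-stack of such torsors.

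Next, twisting the filtration $U=U_{0}\supset U_{1}\supset \cdots \supset U_{n}=\{1\}$ of Lemma \ref{lem:filtU} by $\mathcal{E}_{\theta}$ yields a filtration of $R_{\mathrm{u}}\mathcal{Q}_{\theta}$ by normal subgroup schemes whose successive quotients satisfy
\[
(R_{\mathrm{u}}\mathcal{Q}_{\theta})_{i}/(R_{\mathrm{u}}\mathcal{Q}_{\theta})_{i+1} \cong \mathbf{W}(\mcLie(U)_{\alpha_{i},\theta})
\]
by parts (1) and (2) of that lemma. Filtering torsors accordingly, and passing from $X_{S}^{\mathrm{alg}}$ to the analytic curve $X_{S}$ via GAGA for vector bundles, produces a tower
\[
\Bun_{P,*}^{\theta}=T_{0} \to T_{1} \to \cdots \to T_{n}=\ast
\]
in which each layer $T_{i}\to T_{i+1}$ has fiber the classifying stack of $\mathbf{W}(\mcLie(U)_{\alpha_{i},\theta})$-torsors on the Fargues--Fontaine curve. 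Applying Definition \ref{def: picardvgroup} with $\mathcal{E}^{\ast}=\mcLie(U)_{\alpha_{i},\theta}[1]$ identifies this classifying stack with $\mathcal{P}(\mcLie(U)_{\alpha_{i},\theta}[1])$, since $\mathcal{H}^{-1}(\mcLie(U)_{\alpha_{i},\theta}[1])=\mathcal{H}^{0}(\mcLie(U)_{\alpha_{i},\theta})$ encodes the automorphisms of the trivial torsor while $\mathcal{H}^{0}(\mcLie(U)_{\alpha_{i},\theta}[1])=\mathcal{H}^{1}(\mcLie(U)_{\alpha_{i},\theta})$ encodes its isomorphism classes.

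The main point requiring care is verifying that each morphism $T_{i}\to T_{i+1}$ is genuinely a fibration by $\mathcal{P}(\mcLie(U)_{\alpha_{i},\theta}[1])$. This is where part (3) of Lemma \ref{lem:filtU} is crucial: the $L$-equivariant splitting of $U$ as a scheme descends under twisting by $\mathcal{E}_{\theta}$ to a splitting of $R_{\mathrm{u}}\mathcal{Q}_{\theta}$ as a scheme over $X_{S}^{\mathrm{alg}}$, so the central extensions of group schemes appearing in the tower split as schemes and the non-abelian torsor cohomology reduces at each step to the abelian situation already handled by Definition \ref{def: picardvgroup}. Combined with the fact that vector bundle torsors on $X_{S}$ and on $X_{S}^{\mathrm{alg}}$ agree, this produces the claimed iterated fibration structure.
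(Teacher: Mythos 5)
Your proof is correct and takes essentially the same route as the paper: identify $\Bun_{P,*}^{\theta}$ with the moduli of torsors under the twisted unipotent radical $R_{\mathrm{u}}\mathcal{Q}_{\theta}$, filter using Lemma \ref{lem:filtU}, and recognize each layer as a Picard $v$-groupoid via $H^{0}$/$H^{1}$ of the twisted weight spaces, with GAGA bridging the algebraic and adic curves. Your explicit invocation of the $L$-equivariant scheme-theoretic splitting of $U$ (part \ref{en:UWsch} of Lemma \ref{lem:filtU}) to justify that the tower is a genuine iterated fibration is a useful elaboration of a step the paper's proof leaves implicit.
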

\begin{proof}
The fiber $\Bun_{P,*}^{\theta}$ will parametrize, for $S \in \AffPerf_k$, $P$-bundles on $X_{S}^{\alg}$ whose underlying $L$-bundle is equal to $\mathcal{E}_{\theta,S}^{\alg}$. This can be expressed as the set of torsors under the group scheme $R_{\mathrm{u}}(\mathcal{Q}_{\theta,S}) =\mathcal{E}_{\theta,S}^{\alg} \times^{L} U$ 
on $X_{S}^{\alg}$. Then the claim follows from 
Lemma \ref{lem:filtU} and the fact that, for a vector bundle $\mathcal{E}^{\mathrm{alg}}$ on $X_{S}^{\alg}$, the set of $\mathcal{E}^{\mathrm{alg}}$-torsors is parametrized by $H^{1}(X_{S}^{\alg},\mathcal{E}^{\mathrm{alg}})$, and the automorphisms of such $\mathcal{E}^{\mathrm{alg}}$-torsors is parametrized by $H^{0}(X_{S}^{\alg},\mathcal{E}^{\mathrm{alg}})$. Here we have implicitly used GAGA for the Fargues--Fontaine curve \cite[Proposition~II.2.7]{FaScGeomLLC}.  
\end{proof} 

\begin{cor}\label{cor:qPidentify}
Let $b \in G(\breve{F})$ be the image of $\theta$. 
We assume the conjugate action of $\nu_{\theta}$ on $\Lie (U)$ has non-positive weights. 
Then $\mf{q}_P^{\theta} \colon \Bun^{\theta}_{P} \ra \Bun_{L}^{\theta}$ identifies with the natural map $[*/\wt{G}_b] \to [\ast/\wt{L}_{\theta}]$. 
\end{cor}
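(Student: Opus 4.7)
The plan is to compute the fiber of $\mf{q}_P^\theta$ over the base point $\ast \to [\ast/\wt{L}_\theta]$ using Lemma \ref{lem:geomdescpofconncomp}, and then match the resulting group extension with the structural extension presenting $\wt{G}_b$.

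First, Lemma \ref{lem:geomdescpofconncomp} exhibits the fiber $\Bun_{P,\ast}^\theta$ as an iterated fibration of absolute Picard $v$-groupoids $\ca{P}(\mcLie(U)_{\alpha_i,\theta}[1])$, indexed by the graded pieces of the conjugation-stable filtration of $U$ from Lemma \ref{lem:filtU}. The non-positive weight hypothesis $\langle \alpha_i,\nu_\theta\rangle \leq 0$ translates---via the sign convention between isocrystal and vector bundle slopes visible in Lemma \ref{lem:Olamstable} together with the degree formula $\deg(\mcLie(U)_{\alpha_i,\theta}) = -\langle \alpha_i,\theta\rangle$---into the assertion that each $\mcLie(U)_{\alpha_i,\theta}$ has non-negative Harder--Narasimhan slopes. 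Hence Lemma \ref{lemm:Oaction}\ref{enu:Opos} and \ref{enu:Ozero} give $\ca{H}^1(\mcLie(U)_{\alpha_i,\theta}) = \ast$, so each Picard $v$-groupoid collapses to $[\ast/\ca{H}^0(\mcLie(U)_{\alpha_i,\theta})]$.

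Assembling the iterated fibration yields $\Bun_{P,\ast}^\theta \cong [\ast/\ca{U}_\theta]$, where $\ca{U}_\theta$ is the $v$-group of global sections of the twisted unipotent radical $\ca{E}_{L,\theta}\times^L U$ of $P$ over the Fargues--Fontaine curve. This presents $\Bun_P^\theta$ as $[\ast/\widetilde{\ca{H}}]$ for a $v$-group $\widetilde{\ca{H}}$ fitting in an extension $1 \to \ca{U}_\theta \to \widetilde{\ca{H}} \to \wt{L}_\theta \to 1$ whose quotient map is identified with $\mf{q}_P^\theta$.

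To identify $\widetilde{\ca{H}}$ with $\wt{G}_b$, I would invoke the structural exact sequence $1 \to \wt{G}_b^{>0} \to \wt{G}_b \to \ul{G_b(F)} \to 1$ recalled from \cite[V.3]{FaScGeomLLC}, where $\wt{G}_b^{>0}$ is given by the global sections of the positive slope part of the adjoint bundle $\ca{E}_b \times^G \Lie(G)$. Under the non-positive weight hypothesis, and using that $\theta \in B(L)_{\basic}$ lifts canonically to $L(\breve{F}) \subset P(\breve{F})$ so that $\ca{E}_b \cong \ca{E}_{L,\theta}\times^L G$, the slope decomposition identifies this positive slope part with $\ca{E}_{L,\theta}\times^L \Lie(U)$, giving a canonical isomorphism $\ca{U}_\theta \cong \wt{G}_b^{>0}$, together with a matching of the reductive quotient $\wt{L}_\theta$ with that of $\wt{G}_b$.

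I expect the main obstacle to be the compatibility check at the level of extension classes: the two presentations of $\widetilde{\ca{H}}$ arise from structurally different constructions, namely the fiber of $\mf{q}_P^\theta$ via the Picard $v$-groupoid tower versus the intrinsic structure of $\wt{G}_b = \ca{A}ut(\ca{E}_b)$. Tracing through the canonical isomorphisms supplied by GAGA for the Fargues--Fontaine curve and identifying automorphism sheaves with sections of twisted group schemes, together with a reduction via Lemma \ref{lem:BunPPb} to a simpler situation if convenient, should complete the identification compatibly with the projection to $\wt{L}_\theta$.
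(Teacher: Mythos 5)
Your approach is the same as the paper's: reduce via Lemma \ref{lem:geomdescpofconncomp} to the iterated Picard $v$-groupoid presentation of the fiber, then observe that the non-positive weight hypothesis translates into non-negative slopes of $\mcLie(U)_{\alpha_i,\theta}$, so all $\mathcal{H}^{1}$'s vanish and the fiber collapses to $[\ast/\mathcal{U}_{\theta}]$. The paper's proof is exactly this (one line), leaving implicit the identification of the resulting extension with $\wt{G}_{b}$; your elaboration of that step is a welcome addition. Two remarks, however.

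First, the worry about "matching extension classes" is misplaced. Both the fiber of $\mf{q}_{P}^{\theta}$ and the target presentation of $\wt{G}_{b}$ arise tautologically from the global sections of the single short exact sequence of twisted group schemes $1 \to R_{\mathrm{u}}\mathcal{Q}_{\theta,S} \to \mathcal{Q}_{\theta,S} \to \mathcal{E}_{\theta,S}\times^{L}L \to 1$ over $X_{S}^{\alg}$; there are not two independently constructed extensions to compare. What genuinely has to be checked is the equality $\wt{G}_{b}(S) = \mathcal{Q}_{\theta,S}(X_{S}^{\alg})$, i.e.\ that every automorphism of $\mathcal{E}_{b}$ preserves the $P$-reduction $\mathcal{Q}_{\theta}$. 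This is exactly what is asserted in the discussion before Proposition \ref{prop: compactsuppcohomU}, and it follows because the $P$-reduction refines the canonical Harder--Narasimhan filtration.

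Second, and more substantively, the identification $\mathcal{U}_{\theta}\cong\wt{G}_{b}^{>0}$ together with $\wt{L}_{\theta}\cong\ul{G_{b}(F)}$ on which you rely is only valid when the weights of $\nu_{\theta}$ on $\Lie(U)$ are \emph{strictly} negative (equivalently, when $\theta$ is basic and $L=L^{b}$). Recall that $\wt{G}_{b}^{>0}$ is the sections of the twist of $R_{\mathrm{u}}(P^{b,-})$, corresponding to strictly negative weights of $\nu_{b}$; if some root of $\Lie(U)$ has weight $0$, then $(R_{\mathrm{u}}\mathcal{Q}_{\theta})(X_{S}^{\alg})$ strictly contains $\wt{G}_{b}^{>0}$, and $\wt{L}_{\theta}\neq\ul{G_{b}(F)}$. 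In that case the conclusion you want --- and indeed the corollary as stated --- also fails: e.g.\ for $G=\GL_{3}$, $P=B^{-}$ the lower-triangular Borel, and $\theta=(1,0,0)\in B(T)$, one has $\mathcal{E}_{b}=\mathcal{O}(-1)\oplus\mathcal{O}^{2}$, and $\Bun_{P}^{\theta}$ is the classifying stack of the group of lower-triangular automorphisms of $\mathcal{E}_{b}$, a proper subgroup of $\wt{G}_{b}(S)$ (which also contains the off-diagonal $\mathcal{O}^{2}$-endomorphisms). So the hypothesis should be read as strictly negative weights; this holds automatically in the paper's applications (where $L\supset L^{b}$ so $\Lie(U)$ avoids weight $0$), but your argument should flag the strictness rather than derive it from the literal "non-positive" hypothesis.
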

\begin{proof}
This follows from the proof of Lemma \ref{lem:geomdescpofconncomp}, since 
$H^1 (X_S, \mcLie (U)_{\alpha_i,\theta})=0$ by the assumption. 
\end{proof}

We now prove the following proposition using Lemma \ref{lem:geomdescpofconncomp}. 
\begin{prop}\label{prop:KBunPtheta}
We have an isomorphism 
\[ K_{\Bun_{P}^{\theta}} \cong \mf{q}_{P}^{\theta *}(\delta_{P,\theta})[2d_{P,\theta}] \]
in $D(\Bun_{P}^{\theta})$. 
\end{prop}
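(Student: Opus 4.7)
The plan is to pull back to the fiber over the basepoint of $\Bun_L^{\theta} \cong [\ast/\wt{L}_{\theta}]$, compute the dualizing complex there via the iterated Picard $v$-groupoid structure from Lemma \ref{lem:geomdescpofconncomp} together with Proposition \ref{prop: Picardvgroupfullaction}, and then descend. Concretely, I would first set up the Cartesian square
\[
\begin{tikzcd}
\Bun_{P,*}^{\theta} \arrow[r,"p"] \arrow[d,"f"] & \Bun_P^{\theta} \arrow[d,"\mf{q}_P^{\theta}"] \\
\ast \arrow[r,"s"] & \Bun_L^{\theta} \cong [\ast/\wt{L}_{\theta}].
\end{tikzcd}
\]
The map $s$ exhibits $\ast$ as a $\wt{L}_{\theta}$-torsor over $\Bun_L^{\theta}$, and so does $p$. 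Thus $p^{!}K_{\Bun_P^{\theta}} \cong K_{\Bun_{P,*}^{\theta}}$, and $p^{*}$ identifies $D(\Bun_P^{\theta})$ with the category of $\wt{L}_{\theta}$-equivariant objects in $D(\Bun_{P,*}^{\theta})$; by \cite[Theorem~V.0.1]{FaScGeomLLC} combined with the $\ell$-adic contractibility of $\wt{L}_{\theta}^{>0}$, the latter is equivalent to $L_\theta(F)$-equivariant data. Hence it suffices to compute $K_{\Bun_{P,*}^{\theta}}$ with its natural $L_\theta(F)$-action and verify it matches $\delta_{P,\theta}[2d_{P,\theta}]$.

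By Lemma \ref{lem:geomdescpofconncomp}, the structure map $\Bun_{P,*}^{\theta} \to \ast$ factors as a tower of $\ell$-cohomologically smooth morphisms
\[ \Bun_{P,*}^{\theta} = X_0 \to X_1 \to \cdots \to X_{n-1} \to X_n = \ast \]
whose fibers are the absolute Picard $v$-groupoids $\ca{P}(\mcLie(U)_{\alpha_i,\theta}[1])$. Iteratively applying $!$-pullback and invoking Proposition \ref{prop: Picardvgroupfullaction} at each step produces, as a sheaf on $\ast$,
\[ K_{\Bun_{P,*}^{\theta}} \cong \Lambda[2d_{P,\theta}] \]
equipped with the character $\prod_{i=0}^{n-1} |\det_i|$, where $|\det_i|$ is the character on $G_{\mcLie(U)_{\alpha_i,\theta}}(F)$ supplied by Proposition \ref{prop: Picardvgroupfullaction}. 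The shifts combine correctly since each contributes $[-2\deg(\mcLie(U)_{\alpha_i,\theta})] = [2\langle \alpha_i,\theta\rangle]$, and $\sum_i \alpha_i = \xi_P$ in $\bb{X}^{*}(L^{\ab})^{\Gamma_F}$.

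The final step is to identify this $L_\theta(F)$-action with $\delta_{P,\theta}$. The adjoint action of $L$ on each weight space $\Lie(U)_{\alpha_i}$ induces, via the identification of inner forms, a homomorphism $L_\theta \to G_{\mcLie(U)_{\alpha_i,\theta}}$; composing with $|\det_i|$ gives a character of $L_\theta(F)$ that factors through $L_\theta^{\ab}(F) \cong L^{\ab}(F)$ and coincides with $|\det_L (\Ad \text{ on } \Lie(U)_{\alpha_i})|$. Multiplying over $i$ yields $|\xi_P| = \ol{\delta}_P$ on $L^{\ab}(F)$, whose pullback to $L_\theta(F)$ is exactly $\delta_{P,\theta}$. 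The main obstacle is organizing the iterated $!$-pullback equivariantly, tracking the $\wt{L}_\theta$-action compatibly through the whole tower, and verifying that the inner form identifications interact correctly with the characters coming out of Proposition \ref{prop: Picardvgroupfullaction}; once this equivariant bookkeeping is in place, the identification follows.
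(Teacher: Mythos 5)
Your proposal follows essentially the same route as the paper's proof: reduce to the fiber $\Bun_{P,*}^{\theta}$ over the basepoint (with its $L_\theta(F)$-action), use the iterated Picard $v$-groupoid decomposition of Lemma \ref{lem:geomdescpofconncomp}, invoke Proposition \ref{prop: Picardvgroupfullaction} layer by layer, and assemble the characters and shifts into $\delta_{P,\theta}[2d_{P,\theta}]$. The ``equivariant bookkeeping'' you flag as the main obstacle is exactly what the paper resolves by observing that the right $L_\theta(F)$-action on $\Bun_{P,*}^{\theta}$ factors, on the $i$-th layer, through the natural map $L_\theta(F) \to \GL(\Lie(U)_{\alpha_i})_{\theta_{\mathrm{Ad},i}}(F) = G_{\mcLie(U)_{\alpha_i,\theta}}(F)$ followed by the inverse action on the Picard $v$-groupoid — precisely the homomorphism you describe via ``the identification of inner forms''; so your identification of the character as $|\xi_P| = \ol{\delta}_P$ pulled back to $L_\theta(F)$ matches the paper's computation.
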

\begin{proof}
The adjoint action of $P$ on $\Lie (U_{i}/U_{i+1})$ induces 
\[
P \to \Aut \left( \bigwedge^{\dim (U_{i}/U_{i+1})} \Lie (U_{i}/U_{i+1}) \right) \cong \Gm, 
\] 
which factors through $L^{\ab}$ and define $\xi_{P,i} \in \mathbb{X}^{*}(L^{\mathrm{ab}}_{\ol{F}})^{\Gamma_{F}}$ to be the corresponding character. 
We put $d_{P,\theta,i}= \langle \xi_{P,i}, \ol{\theta} \rangle$ and 
\[
 \delta_{P,\theta,i} \colon L_{\theta} (F) \to (L^{\ab})_{\theta^{\ab}} (F) = L^{\ab} (F) \xrightarrow{\ol{\delta}_{P,i}} \Lambda^{\times} ,
\] 
where $\ol{\delta}_{P,i}(x)=\lvert \xi_{P,i} (x)\rvert$. 
By Lemma \ref{lem:geomdescpofconncomp}, it suffices to show that the dualizing complex of $\ca{P}(\mcLie (U)_{\alpha_i,\theta}[1])$ is isomorphic to 
\[ \Lambda [2d_{P,\theta,i}] \]
with the action of $L_{\theta}$ given by $\delta_{P,\theta,i}$. 

We consider the natural map
\[ L \ra \GL (\Lie (U)_{\alpha_i}) ;\ l  \mapsto ( u \mapsto lul^{-1} ) \]
given by the left conjugation action, and the element $\theta_{\mathrm{Ad},i}$ defined as the image of $\theta$ under the induced map $B(L) \ra B(\GL (\Lie (U)_{\alpha_i}))$. We have 
\[ \mathcal{E}_{\theta_{\mathrm{Ad},i}} \cong \mcLie (U)_{\alpha_i,\theta} .\]
The right action of $L_{\theta}(F)$ on $\Bun_{P,*}^{\theta}$ factors through the natural map
\[ L_{\theta}(F) \ra \GL (\Lie (U)_{\alpha_i})_{\theta_{\mathrm{Ad},i}}(F) \]
composed with the right action of $\GL (\Lie (U)_{\alpha_i})_{\theta_{\mathrm{Ad},i}}(F)$ on $\mathcal{P}(\mcLie (U)_{\alpha_i,\theta}[1])$ given by acting via automorphisms on $\mcLie (U)_{\alpha_i,\theta}$ and taking inverses. Hence, the claim follows from Proposition  \ref{prop: Picardvgroupfullaction}. 
\end{proof}

It is convenient to slightly reformulate this. We define the following.
\begin{defn}
We let $\ol{\Delta}_{P}$ be the sheaf on
\[ \Bun_{L^{\mathrm{ab}}} \cong \bigsqcup_{\ol{\theta} \in \Lcoinv} [\ast/\ul{L^{\mathrm{ab}}(F)}], \] 
whose value on each connected component is given by $\ol{\delta}_{P}$.
\end{defn} 
We let $\mf{q}_{P}^{\dagger,\mathrm{ss}}\colon \Bun_{P}^{\mathrm{ss}} \ra \Bun_{L^{\mathrm{ab}}}$ be the restriction of $\mf{q}_{P}^{\dagger}$ to the semistable locus. Recalling the definition of $\delta_{P,\theta}$, we can reformulate the previous Proposition as follows. 
\begin{cor}{\label{cor: mainthmsemistable}}
We have an isomorphism 
\[ K_{\Bun_{P}^{\mathrm{ss}}} \cong \mf{q}_{P}^{\dagger,\mathrm{ss}*}(\ol{\Delta}_{P})[2\dim(\Bun_{P})] \]
of objects in $D(\Bun_{P}^{\mathrm{ss}})$.
\end{cor}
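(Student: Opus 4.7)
The statement is essentially a repackaging of Proposition \ref{prop:KBunPtheta}, so my plan is to glue the pointwise statement across the components of $\Bun_{P}^{\mathrm{ss}}$ and match the combinatorial data. Concretely, using the decomposition
\[ \Bun_{P}^{\mathrm{ss}} = \bigsqcup_{\theta \in B(L)_{\basic}} \Bun_{P}^{\theta} \]
coming from $\Bun_{L}^{\mathrm{ss}} = \bigsqcup_{\theta \in B(L)_{\basic}}[\ast/\ul{L_{\theta}(F)}]$, it suffices to verify the isomorphism separately on each open-closed substack $\Bun_{P}^{\theta}$, and Proposition \ref{prop:KBunPtheta} already supplies
\[ K_{\Bun_{P}^{\theta}} \cong \mf{q}_{P}^{\theta *}(\delta_{P,\theta})[2d_{P,\theta}]. \]

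The first step is to identify the shift. Since $\Bun_{L}^{\theta} \cong [\ast/\wt{L}_{\theta}]$ is of pure $\ell$-dimension $0$ and $\mf{q}_{P}^{(\theta)}$ is $\ell$-cohomologically smooth of pure $\ell$-dimension $d_{P,\theta}$ by Proposition \ref{prop: qissmooth}, the open substack $\Bun_{P}^{\theta} \subset \Bun_{P}^{(\theta)}$ has pure $\ell$-dimension $d_{P,\theta}$. Thus the locally constant function $\dim(\Bun_{P})$ restricted to $\Bun_{P}^{\theta}$ equals $d_{P,\theta}$, so $2d_{P,\theta} = 2\dim(\Bun_{P})$ on this component.

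The second step is to identify the coefficient sheaf. By construction of $\ol{\Delta}_{P}$, its restriction to the connected component $\Bun_{L^{\mathrm{ab}}}^{\ol{\theta}}$ is the character sheaf associated to $\ol{\delta}_{P}\colon L^{\mathrm{ab}}(F) \to \Lambda^{\times}$. On the other hand, the restriction of $\mf{q}_{P}^{\dagger,\mathrm{ss}}$ to $\Bun_{P}^{\theta}$ factors as
\[ \Bun_{P}^{\theta} \xrightarrow{\mf{q}_{P}^{\theta}} [\ast/\ul{L_{\theta}(F)}] \xrightarrow{\ol{\mf{q}}_{L}} [\ast/\ul{L^{\mathrm{ab}}(F)}], \]
where the second arrow is induced by the natural map $L_{\theta}(F) \to (L^{\mathrm{ab}})_{\theta^{\mathrm{ab}}}(F) \cong L^{\mathrm{ab}}(F)$. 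Pulling $\ol{\delta}_{P}$ back along this composition gives precisely the character $\delta_{P,\theta}$ by the definition in \S\ref{ssec:modch}, and hence $(\mf{q}_{P}^{\dagger,\mathrm{ss}})^{*}(\ol{\Delta}_{P})|_{\Bun_{P}^{\theta}} \cong \mf{q}_{P}^{\theta *}(\delta_{P,\theta})$.

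Combining these two identifications with Proposition \ref{prop:KBunPtheta} yields the desired isomorphism on each component and hence globally on $\Bun_{P}^{\mathrm{ss}}$. There is no real obstacle here — the hard work has been done in Proposition \ref{prop:KBunPtheta} and Proposition \ref{prop: qissmooth}; the only thing to be careful about is to match the definitions of $\delta_{P,\theta}$ and $d_{P,\theta}$ from \S\ref{ssec:modch} against the geometric invariants on the component $\Bun_{P}^{\theta}$.
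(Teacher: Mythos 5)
Your proposal is correct and coincides with the paper's approach: the paper states the corollary as a direct reformulation of Proposition \ref{prop:KBunPtheta}, and your argument simply spells out the bookkeeping — decomposing $\Bun_{P}^{\mathrm{ss}}$ into the components $\Bun_{P}^{\theta}$, observing that $\dim(\Bun_{P})$ restricts to $d_{P,\theta}$ on each one via Proposition \ref{prop: qissmooth}, and unwinding the definitions of $\ol{\Delta}_{P}$ and $\delta_{P,\theta}$ to match the coefficient sheaves — that the paper leaves implicit.
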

We have the following lemma, where the maps $\mf{q}_{b}^{\pm}$ are as defined in the introduction.
\begin{lem}\label{lem:Mbad}
For $b \in G(\breve{F})$, 
the natural diagrams 
\begin{equation*}
\begin{tikzcd}
			\ca{M}_b \arrow[r] \arrow[d,"\mf{q}_{b}^{+}"]  & \ca{M}_{b^{\ad}} \arrow[d,"\mf{q}^{+}_{b^{\ad}}"] \\
			{[*/\ul{G_b (F)}]} \arrow[r] & {[*/\ul{G^{\ad}_{b^{\ad}} (F)}]}, 
		\end{tikzcd}  
  \quad
	\begin{tikzcd}
	{[*/\wt{G}_b]} \arrow[r] \arrow[d,"\mf{q}_{b}^{-}"]  & {[*/\wt{G}^{\ad}_{b^{\ad}} ]} \arrow[d,"\mf{q}^{-}_{b^{\ad}}"] \\
	{[*/\ul{G_b (F)}]} \arrow[r] & {[*/\ul{G^{\ad}_{b^{\ad}} (F)}]}  
\end{tikzcd} 
\end{equation*}
	are Cartesian. 
\end{lem}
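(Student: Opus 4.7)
The plan is to reduce each Cartesian square to Lemma \ref{lem:BunPPad}, applied to the standard parabolic $P^b \subset G$ for the first diagram and to its opposite $P^{b,-}$ for the second.

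First, I would realize the four non-adjoint corners of each square as connected components of parabolic moduli stacks. Using Lemma \ref{lem:BunPPb}, we may reduce to the case where $G$ is quasi-split, since $\mathcal{M}_b$, $[\ast/\wt{G}_b]$ and $[\ast/\ul{G_b(F)}]$ all transform compatibly under an inner twist by a basic element and under the projection to $G^{\mathrm{ad}}$. The basic reduction $b_{L^b} \in B(L^b)_{\basic}$ then gives identifications
\[ [\ast/\ul{G_b(F)}] \;\cong\; \Bun_{L^b}^{b_{L^b}}, \qquad \mathcal{M}_b \;\cong\; \Bun_{P^b}^{b_{L^b}}, \qquad [\ast/\wt{G}_b] \;\cong\; \Bun_{P^{b,-}}^{b_{L^b}}, \]
where the last uses Corollary \ref{cor:qPidentify}, since the weights of $\nu_{b_{L^b}}$ on $\Lie(R_{\mathrm{u}}(P^{b,-}))$ are non-positive. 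Under these, the maps $\mf{q}_b^{\pm}$ are identified with $\mf{q}_{P^{b,\pm}}^{b_{L^b}}$, and all identifications are compatible with the projection to $G^{\mathrm{ad}}$ since $P^b$, $P^{b,-}$ and $L^b$ map to their adjoint counterparts.

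Second, I would apply Lemma \ref{lem:BunPPad} to $P^b \subset G$ and to $P^{b,-} \subset G$, which yields Cartesian diagrams
\[
\begin{tikzcd}
\Bun_{P^b} \arrow[r] \arrow[d,"\mf{q}_{P^b}"] & \Bun_{P^{b,\mathrm{ad}}} \arrow[d,"\mf{q}_{P^{b,\mathrm{ad}}}"] \\
\Bun_{L^b} \arrow[r] & \Bun_{L^{b,\mathrm{ad}}}
\end{tikzcd}
\qquad
\begin{tikzcd}
\Bun_{P^{b,-}} \arrow[r] \arrow[d,"\mf{q}_{P^{b,-}}"] & \Bun_{P^{b,\mathrm{ad},-}} \arrow[d,"\mf{q}_{P^{b,\mathrm{ad},-}}"] \\
\Bun_{L^b} \arrow[r] & \Bun_{L^{b,\mathrm{ad}}}
\end{tikzcd}
\]
Pulling back along the open and closed immersion $\Bun_{L^b}^{b_{L^b}} \hookrightarrow \Bun_{L^b}$ preserves the Cartesian property, which after the identifications above yields exactly the two desired squares.

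The only real bookkeeping concern is the reduction to the quasi-split case: one needs to confirm that the description of $\mathcal{M}_b$ and $[\ast/\wt{G}_b]$ as components of $\Bun_{P^b}$ and $\Bun_{P^{b,-}}$ transports correctly under the inner twist of Lemma \ref{lem:BunPPb} and stays compatible with the adjoint quotient. Once this dictionary is established, the Cartesian property is immediate from Lemma \ref{lem:BunPPad}.
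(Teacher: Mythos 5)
Your proposal for the quasi-split case is sound: the identifications $\mathcal{M}_b \cong \Bun_{P^b}^{b_{L^b}}$, $[\ast/\wt{G}_b] \cong \Bun_{P^{b,-}}^{b_{L^b}}$ and $[\ast/\ul{G_b(F)}] \cong \Bun_{L^b}^{b_{L^b}}$ hold there (via \cite[Example V.3.4]{FaScGeomLLC} and Corollary \ref{cor:qPidentify}), and restricting the Cartesian square of Lemma \ref{lem:BunPPad} for $P^b$ and $P^{b,-}$ to the relevant open-closed components does give the two desired squares. The difficulty is the reduction step, which as written does not go through. Lemma \ref{lem:BunPPb} only identifies $\Bun_P$ for $G$ with $\Bun_{Q_{b'}}$ for a basic inner twist $G_{b'}$; it says nothing about the charts $\mathcal{M}_b$ or the stacks $[\ast/\wt{G}_b]$. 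What you actually need to transport the four corners of the diagram from $G$ to a quasi-split inner form is precisely Lemma \ref{lem:Mbpure}, which the paper states and proves \emph{after} Lemma \ref{lem:Mbad} and by the very same direct argument with the inner twist $H$ over the curve. Moreover, when you write that $\mathcal{M}_b$, $[\ast/\wt{G}_b]$ and $[\ast/\ul{G_b(F)}]$ ``transform compatibly... under the projection to $G^{\ad}$'', you are asserting a form of the conclusion: the compatibility of the inner-twist transport with the adjoint projection is itself a diagram that has to be verified, not a byproduct of Lemma \ref{lem:BunPPb}. Note also that for non-quasi-split $G$ the Levi $L^b$ and parabolic $P^b$ need not be defined over $F$, so there is no intrinsic $\Bun_P$-description of $\mathcal{M}_b$ to fall back on.

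The paper avoids this circularity by arguing directly for arbitrary $G$: it works with the inner twist $H$ of $G \times X_S^{\alg}$ by $\mathcal{E}_{b,S}^{\alg}$, identifies the kernel of $\wt{G}_b \to \ul{G_b(F)}$ with $H^{>0}(X_S^{\alg})$ and the fibers of $\mf{q}_b^+$ with torsors under $H^{<0}$, and then observes that $H^{>0}$ and $H^{<0}$ are unipotent radicals of parabolic subgroups of $H$, hence depend only on $H^{\ad}$. This is the adjoint-invariance directly, with no quasi-split hypothesis and no reliance on later lemmas. To repair your argument you would either have to prove Lemma \ref{lem:Mbpure} first by this direct method (at which point you may as well prove Lemma \ref{lem:Mbad} directly too, as the paper does), or check carefully that the Lemma \ref{lem:Mbpure}-type isomorphisms commute with the maps to the adjoint moduli -- a nontrivial verification you currently gloss over.
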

\begin{proof}
Let $S \in \AffPerf_k$. 
Let $H$ be the inner twisting of $G \times X_S^{\alg}$ by $\ca{E}_{b,S}^{\alg}$ over $X_S^{\alg}$. 
Then we have $\wt{G}_b^{>0}(S)=H^{>0}(X_S^{\alg})$ as in the proof of \cite[Proposition III.5.1]{FaScGeomLLC}. Hence $G \to G^{\ad}$ induces an isomorphism $\wt{G}_b^{>0} \cong \wt{G}_{b^{\ad}}^{\ad,>0}$, since 
$H^{>0}$ is a unipotent radical of a parabolic subgroup $H^{\geq 0}$ of $H$ by \cite[Proposition III.5.2]{FaScGeomLLC}. Hence the first diagram is Cartesian. 

By the proof of \cite[Proposition V.3.5]{FaScGeomLLC}, 
the fiber of $\mf{q}^{+}_b$ is parametrized by $H^{<0}$-torsors, where 
$H^{<0}$ is the unipotent radical of a parabolic subgroup $H^{\leq 0}$ 
of $H$. Hence the second diagram is also Cartesian for a similar reason. 
\end{proof}

\begin{lem}\label{lem:Mbpure}
Let $b' \in G(\breve{F})$ be an basic element. We put $G'=G_{b'}$. Then 
$b \mapsto bb'^{-1}$ induces a bijection $B(G) \cong B(G')$. 
Furthermore, for $b \in G(\breve{F})$ we have isomorphisms of
the commutative diagrams 
	\begin{equation*} 
        \begin{tikzcd}
		\ca{M}_b \arrow[r,"\sim"] \arrow[d,"\mf{q}_{b}^{+}"]  & \ca{M}_{bb'^{-1}} \arrow[d,"\mf{q}_{bb'^{-1}}^{+}"] \\
		{[*/\ul{G_b (F)}]} \arrow[r,"\sim"] & {[*/\ul{G'_{bb'^{-1}} (F)}]} , 
	\end{tikzcd}  
        \quad 
	\begin{tikzcd}
		{[*/\wt{G}_b]} \arrow[r,"\sim"] \arrow[d,"\mf{q}_{b}^{-}"]  & {[*/\wt{G'}_{bb'^{-1}} ]} \arrow[d,"\mf{q}_{bb'^{-1}}^{-}"] \\
		{[*/\ul{G_b (F)}]} \arrow[r,"\sim"] & {[*/\ul{G'_{bb'^{-1}} (F)}]}.  
	\end{tikzcd} 
\end{equation*}
\end{lem}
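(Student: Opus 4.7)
The plan is to construct a $v$-stack equivalence $\Bun_{G} \xrightarrow{\sim} \Bun_{G'}$ via twisting by $\ca{E}_{b'}$ (as in Lemma \ref{lem:BunPPb}, where this argument was carried out for $P$ in place of $G$), and then verify that, under this equivalence, every piece of data appearing in the diagrams matches up.

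First I would establish the bijection $B(G) \cong B(G')$. Since $b'$ is basic, the natural isomorphism $G'_{\breve{F}} \cong G_{\breve{F}}$ carries the Frobenius $\sigma'$ on $G'$ to the twisted Frobenius $\sigma_{b'} = \Ad(b') \circ \sigma$ on $G$. Under the identification $G'(\breve{F}) = G(\breve{F})$, $\sigma$-conjugation on $G'$ becomes $c \mapsto g c \sigma_{b'}(g)^{-1} = g c b' \sigma(g)^{-1} b'^{-1}$. Setting $c = bb'^{-1}$, one computes $g c \sigma_{b'}(g)^{-1} = (g b \sigma(g)^{-1}) b'^{-1}$, so $b \mapsto bb'^{-1}$ descends to a bijection $B(G) \cong B(G')$. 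Moreover, for a basic element $b'$ one has a canonical identification of $F$-groups $G_b \cong G'_{bb'^{-1}}$ coming from the fact that both are described as $\sigma$-centralizers in $G(\breve{F}) = G'(\breve{F})$ using the two Frobenii, and these coincide.

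Next I would define the twisting equivalence
\[ \Bun_{G} \xrightarrow{\sim} \Bun_{G'} ;\ \ca{E} \mapsto \ca{I}som(\ca{E}_{b',S}, \ca{E}), \]
for $S \in \Perf_k$. As in the proof of Lemma \ref{lem:BunPPb} (and \cite[Corollary~23.3.2]{ScWeBLp}), this is an equivalence of $v$-stacks, and it sends the $G$-bundle $\ca{E}_{b,S}$ to the $G'$-bundle $\ca{E}_{bb'^{-1},S}$ by the cocycle computation above. It therefore restricts to isomorphisms between HN-strata $\Bun_{G}^{b} \cong \Bun_{G'}^{bb'^{-1}}$, giving a canonical identification $[*/\wt{G}_b] \cong [*/\wt{G'}_{bb'^{-1}}]$ compatible with the identification $[*/\ul{G_b(F)}] \cong [*/\ul{G'_{bb'^{-1}}(F)}]$ from the previous step. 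This handles the second Cartesian-diagram assertion concerning $\mf{q}_b^-$.

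For the first diagram, recall that $\ca{M}_b$ parameterizes, roughly, modifications of $\ca{E}_{b}$ to the trivial $G$-bundle (via Beauville--Laszlo type data). Since the twisting equivalence is an equivalence of $v$-stacks on all of $\Bun_G$, and since it carries $\ca{E}_{b}$ to $\ca{E}_{bb'^{-1}}$ (and $\ca{E}_{\mathrm{triv}} = \ca{E}_{1}$ to $\ca{E}_{b'^{-1}}$, but the moduli of such modifications only depends intrinsically on the source bundle and the HN strata of the target), the moduli $\ca{M}_b$ is carried to $\ca{M}_{bb'^{-1}}$ and the map $\mf{q}_b^+$ is carried to $\mf{q}_{bb'^{-1}}^+$. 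This yields the desired isomorphism of commutative squares.

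The main obstacle in writing this out carefully is making the functoriality of $\ca{M}_{(-)}$ under the twisting equivalence precise: one must check that the unipotent part $\wt{G}_b^{>0}$, which is described in \cite[Proposition~V.3.5]{FaScGeomLLC} via the unipotent radical of a parabolic inside the inner form attached to $\ca{E}_b$, is preserved (and this is essentially formal from the fact that forming inner forms and their parabolic structures is intrinsic to the $G$-bundle, hence preserved by a $v$-stack equivalence that sends $\ca{E}_b$ to $\ca{E}_{bb'^{-1}}$). Once this compatibility is checked, the isomorphisms of commutative squares follow.
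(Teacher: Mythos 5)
Your proposal follows essentially the same route as the paper: both arguments come down to twisting by $\ca{E}_{b'}$. The paper's own proof is leaner — it directly observes that the inner twist $H$ of $G\times X_S^{\alg}$ by $\ca{E}_{b,S}^{\alg}$ is canonically identified with the inner twist $H'$ of $G'\times X_S^{\alg}$ by $\ca{E}_{bb'^{-1},S}^{\alg}$, and then invokes the descriptions of $\wt{G}_b^{>0}$ and of the fibers of $\mf{q}_b^{+}$ in terms of $H^{>0}$ and $H^{<0}$ already used in Lemma \ref{lem:Mbad}. You instead build the global twisting equivalence $\Bun_G \cong \Bun_{G'}$ and argue that it respects the intrinsic constructions attached to $\ca{E}_b$; this is the same underlying observation, just organized globally rather than locally, and the identification $H\cong H'$ is exactly what you flag as the remaining technical point to check. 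One minor imprecision worth fixing: $\ca{M}_b$ is not a moduli of modifications of $\ca{E}_b$ to the trivial $G$-bundle (the trivial bundle plays no role in its definition) — it is the chart classifying $G$-bundles $\ca{E}$ equipped with a map from $\ca{E}_b$ inducing isomorphisms on HN subquotients, and it is this intrinsic dependence on $\ca{E}_b$ and the HN formalism that makes the twisting functoriality go through.
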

\begin{proof}
The second bijection can be checked easily (\cf \cite[Lemma 2.2]{ImaConv}). 
We define $H$ as in the proof of Lemma \ref{lem:Mbad} using $G$ and $b$. We define $H'$ similarly using $G'$ and $bb'^{-1}$. Then we have a natural identification $H \cong H'$ by the construction. Hence, the isomorphisms in the diagram follow in the same way as the proof of Lemma \ref{lem:Mbad}. 
\end{proof}

We put $d_b=\langle 2\rho_{G}, \nu_{b} \rangle$. We have the following definition.
\begin{defn}{\label{defn: generaldefnofmoduluschar}}
We take a quasi-split inner form $G^{\ad,*}$ of $G^{\ad}$ and $b' \in G^{\ad,*}(\breve{F})$ such that $G^{\ad,*}_{b'} = G^{\ad}$. We put $b^*=b^{\ad}b'$. Then we have 
$G^{\ad,*}_{b^*}(F) \cong G^{\ad}_{b^{\ad}}(F)$ as in Lemma \ref{lem:Mbpure}. 
We fix a Borel pair of $G^{\ad,*}$. 
We define 
\[
 \delta_b \colon G_b(F) \to G^{\ad}_{b^{\ad}}(F) \cong G^{\ad,*}_{b^*}(F) \xrightarrow{\delta_{P^{b^*},b^*_L}} \Lambda^{\times}, 
\]
where $b^*_L$ is the image of $b^*$ in the Levi quotient of $P^{b^*}$. 
\end{defn}

\begin{prop}{\label{prop: moduluscharacterinnaturalsituations}}
We have isomorphisms 
\begin{align*}
 K_{\mathcal{M}_{b}} \cong \mf{q}_{b}^{+*}(\delta_{b})[2d_b] , \quad K_{[\ast/\wt{G}_{b}]} \cong \mf{q}_{b}^{-*}(\delta_{b}^{-1})[-2d_b]. 
\end{align*}
\end{prop}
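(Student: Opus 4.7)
The plan is to reduce to the quasi-split case and apply Proposition \ref{prop:KBunPtheta}. When $G$ is quasi-split, the introduction already gives $\mathcal{M}_b \cong \Bun_{P^b}^{b_{L^b}}$, and Corollary \ref{cor:qPidentify} gives $[\ast/\wt{G}_b] \cong \Bun_{P^{b,-}}^{b_{L^b}}$: indeed $\nu_b$ has non-positive weights on $\Lie(U^{b,-})$, the unipotent radical of the opposite parabolic, and one further identifies $(L^b)_{b_{L^b}} \cong G_b$. Applying Proposition \ref{prop:KBunPtheta} to each of $\Bun_{P^b}^{b_{L^b}}$ and $\Bun_{P^{b,-}}^{b_{L^b}}$ yields
\[
K_{\mathcal{M}_b} \cong \mf{q}_b^{+*}(\delta_{P^b, b_{L^b}})[2 d_{P^b, b_{L^b}}], \qquad K_{[\ast/\wt{G}_b]} \cong \mf{q}_b^{-*}(\delta_{P^{b,-}, b_{L^b}})[2 d_{P^{b,-}, b_{L^b}}].
\]

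Next I would match the data on the right with the statement. By definition, $\xi_{P^b}$ restricted to $L^b$ is the sum of positive roots of $G$ outside $L^b$, i.e.\ $2\rho_G - 2\rho_{L^b}$. Since $L^b$ is the centralizer of $\nu_b$, the rational cocharacter $\nu_b$ is central in $L^b$ and $\langle 2\rho_{L^b}, \nu_b \rangle = 0$, whence $d_{P^b, b_{L^b}} = \langle 2\rho_G - 2\rho_{L^b}, \nu_b \rangle = \langle 2\rho_G, \nu_b \rangle = d_b$. Similarly $\xi_{P^{b,-}} = -\xi_{P^b}$, so $\delta_{P^{b,-}, b_{L^b}} = \delta_{P^b, b_{L^b}}^{-1}$ and $d_{P^{b,-}, b_{L^b}} = -d_b$. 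Finally $\delta_{P^b, b_{L^b}}$ is, by construction, the pullback of $\delta_{P^b}$ under $(L^b)_{b_{L^b}}(F) \cong G_b(F)$, which matches $\delta_b$ in the quasi-split adjoint case of Definition \ref{defn: generaldefnofmoduluschar}.

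For the general case, I would reduce to the quasi-split adjoint situation in two steps. Lemma \ref{lem:Mbpure}, applied to a basic element trivializing the relevant inner form class, allows one to replace $G$ by a quasi-split inner form; the isomorphisms supplied by that lemma commute with $\mf{q}_b^{\pm}$ and are compatible with the definition of $\delta_b$. Then Lemma \ref{lem:Mbad} reduces to the adjoint case via Cartesian squares whose horizontal arrows come from $G \to G^{\ad}$, and Definition \ref{defn: generaldefnofmoduluschar} is built precisely so that $\delta_b$ pulls back along these arrows. The main obstacle is tracking the $G_b(F)$-equivariant structure through these reductions: one must check that the Cartesian squares in Lemma \ref{lem:Mbad} behave well for dualizing complexes, which ultimately reduces to the fact that the relevant fibers are classifying stacks of unimodular locally profinite groups, whose dualizing complexes are canonically trivial as equivariant sheaves.
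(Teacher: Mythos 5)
Your quasi-split computation is correct and matches what the paper does: identify $\mathcal{M}_b \cong \Bun_{P^b}^{b_{L^b}}$ and $[\ast/\wt{G}_b] \cong \Bun_{P^{b,-}}^{b_{L^b}}$, apply Proposition \ref{prop:KBunPtheta}, and check $d_{P^b,b_{L^b}} = d_b$, $\xi_{P^{b,-}} = -\xi_{P^b}$. The paper's proof is exactly this (citing \cite[Example~V.3.4]{FaScGeomLLC} for the first identification), so you have filled in details the authors left implicit.

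However, your order of reduction to the quasi-split adjoint case is reversed relative to the paper's, and this matters. You propose applying Lemma \ref{lem:Mbpure} to $G$ itself first to pass to a quasi-split inner form, and only afterwards Lemma \ref{lem:Mbad} to pass to the adjoint group. The first step requires finding a basic $b' \in G(\breve{F})$ whose image in $H^1(F,G^{\ad})$ trivializes the inner class of $G$, i.e.\ surjectivity of $B(G)_{\basic} \rightarrow H^1(F,G^{\ad})$. This fails in general: for $G = \mathrm{SL}_1(D)$ with $D$ a nonsplit division algebra one has $B(G)_{\basic} = \pi_1(\mathrm{SL}_n)_{\Gamma_F} = 0$, so no basic twist can reach the quasi-split form. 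The correct order — and the one built into Definition \ref{defn: generaldefnofmoduluschar} — is to apply Lemma \ref{lem:Mbad} first to replace $G$ by $G^{\ad}$, where $B(G^{\ad})_{\basic} \rightarrow H^1(F,G^{\ad})$ is a bijection, and only then apply Lemma \ref{lem:Mbpure} (in the form of taking $b'$ in the quasi-split adjoint form $G^{\ad,*}$ with $G^{\ad,*}_{b'} = G^{\ad}$). With that correction your argument goes through.
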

\begin{proof}
By Lemma \ref{lem:Mbad} and Lemma \ref{lem:Mbpure}, we may assume that 
$G$ is quasi-split. We fix a Borel pair of $G$. 
Then the first claim follows from 
Proposition \ref{prop:KBunPtheta} for $P^{b}$ by \cite[Example V.3.4]{FaScGeomLLC}. 
The second claim follows from Proposition \ref{prop:KBunPtheta} for $P^{b,-}$.
\end{proof}

\subsection{Local systems on $\Bun_{G}$ and reduction to the semistable locus}
For an Artin $v$-stack $Z$, we write $\Loc(Z) \subset D(Z)$ for the category of $\Lambda$-local systems on $Z$ with respect to the \'etale topology on $Z$. We write $j_{G}\colon \Bun_{G}^{\mathrm{ss}} \hookrightarrow \Bun_{G}$ for the open subspace defined by the semistable bundles. We recall that we have a decomposition 
\[ \bigsqcup_{b \in B(G)_{\basic}} \Bun_{G}^{b} \cong \Bun_{G}^{\mathrm{ss}}, \]
and that $\Bun_{G}^{b} \cong [\ast/\ul{G_{b}(F)}]$ is just the classifying stack attached to the $\sigma$-centralizer of $b$ for $b$ basic. 
The set $B(G)$ has a partial ordering, where $b \geq b'$ if $\nu_{b} - \nu_{b'}$ is a positive linear combination of coroots and $\kappa_{G}(b) = \kappa_{G}(b')$. 

We start by deducing some interesting consequences of Corollary \ref{cor: mainthmsemistable}.
\begin{prop}{\label{prop: Localsystemsextend}}
There is well-defined equivalence of categories:
\[ \widetilde{\Loc}(\Bun_{G}^{\mathrm{ss}}) \xrightarrow{\cong} \Loc(\Bun_{G}) \]
\[ A \mapsto R^{0}j_{G *}(A), \]
where $\widetilde{\Loc}(\Bun_{G}^{\mathrm{ss}}) \subset \Loc(\Bun_{G}^{\mathrm{ss}})$ denotes the subcategory of \'etale local systems which admit a trivialization induced from an \'etale covering of $\Bun_{G}$.
\end{prop}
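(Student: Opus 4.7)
The candidate inverse functor is the restriction $j_G^{*}$. It obviously sends $\Loc(\Bun_G)$ into $\widetilde{\Loc}(\Bun_G^{\mathrm{ss}})$, since a trivializing étale cover for $B \in \Loc(\Bun_G)$ restricts to a trivializing cover for $j_G^{*}B$. The substantive task is therefore to verify that $R^0 j_{G*}$ carries $\widetilde{\Loc}(\Bun_G^{\mathrm{ss}})$ into $\Loc(\Bun_G)$, and that the two constructions are mutually inverse.

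For well-definedness, let $A \in \widetilde{\Loc}(\Bun_G^{\mathrm{ss}})$ be trivialized by an étale cover $\pi\colon \tilde U \to \Bun_G$, so that $A$ becomes constant on $\pi^{-1}(\Bun_G^{\mathrm{ss}})$. Since $\pi$ is étale, smooth base change reduces the claim to showing that $R^0 (j_{\tilde U})_{*}\Lambda \cong \Lambda$ on $\tilde U$, where $j_{\tilde U}$ is the inclusion of $\pi^{-1}(\Bun_G^{\mathrm{ss}})$. This is a local assertion on $\Bun_G$, and amounts to the following: for every sufficiently small étale neighborhood $V \to \Bun_G$ of any point, the preimage $V \times_{\Bun_G} \Bun_G^{\mathrm{ss}}$ has $H^0(\cdot,\Lambda) = \Lambda$.

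To check this local connectedness, we cover $\Bun_G$ by the $\ell$-cohomologically smooth charts $\pi_b\colon \mathcal{M}_b \to \Bun_G$ from \cite[Section~V.3]{FaScGeomLLC} (reducing to the quasi-split case via Lemma \ref{lem:BunPPad}). Under the identification $\mathcal{M}_b \cong \Bun_{P^b}^{b_{L^b}}$, each such chart is a single connected component of $\Bun_{P^b}$ by Corollary \ref{cor: conncompsBunP}, and the preimage of $\Bun_G^{\mathrm{ss}}$ is an open substack whose complement consists of modifications with non-semistable source. The iterated Picard $v$-groupoid description of $\Bun_P^\theta$ provided by Lemma \ref{lem:geomdescpofconncomp}, together with the connectedness of the Banach--Colmez spaces appearing there, shows that this preimage is connected, yielding the required $H^0$-computation and hence $R^0 j_{G*}(A) \in \Loc(\Bun_G)$.

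With well-definedness established, the equivalence drops out of the adjunction $j_G^{*} \dashv Rj_{G*}$. Since $j_G$ is an open immersion, $j_G^{*} R^0 j_{G*}(A) = j_G^{*} j_{G*}(A) = A$, so the composition in one direction is the identity. In the other direction, for $B \in \Loc(\Bun_G)$, the unit $B \to R^0 j_{G*} j_G^{*}(B)$ is an isomorphism: this may be checked étale-locally on $\Bun_G$ after choosing a trivializing cover, reducing once again to the identity $\Lambda \cong R^0 j_{G*}\Lambda$ proved above. The main obstacle throughout is the local connectedness of the semistable locus inside the charts $\mathcal{M}_b$, which is the geometric input that really uses the structure of parabolic moduli stacks developed in \S \ref{sec:DualPic} and \S \ref{ssec:conncompBun}.
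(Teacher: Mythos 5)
Your framing of the inverse functor as $j_G^*$ and the reduction to $R^0 j_{G*}(\Lambda) \cong \Lambda$ are fine, but the core of the argument—the claim that this follows from ``connectedness of the preimage of $\Bun_G^{\mathrm{ss}}$ in the charts $\mathcal{M}_b$''—has a genuine gap, in two ways.

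First, the reduction to ``local connectedness'' is not executable on a $v$-stack by the route you indicate: you need to control the stalks of $R^0 j_{G*}(\Lambda)$ at every geometric point of the non-semistable locus, and global connectedness of some preimage does not give this. What the paper actually uses at this step is \cite[Proposition~V.4.2]{FaScGeomLLC}, which says that the $*$-fiber along the closed section $\wt{i}^b$ can be computed as $R\Gamma(\wt{\mathcal{M}}_b, -)$: the contracting structure of $\wt{\mathcal{M}}_b$ converts the local stalk question into a \emph{global} cohomology computation on $\wt{\mathcal{M}}_b^\circ$. That computation (Lemma \ref{lem: Mbcohomology}) then proceeds by excision, the dualizing complex of $\mathcal{M}_b$ from Proposition \ref{prop: moduluscharacterinnaturalsituations}, and Poincar\'e duality; it shows the degree-$0$ cohomology of $\wt{\mathcal{M}}_b^\circ$ is the trivial representation. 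Your proposal substitutes a claim of connectedness of a Banach--Colmez fibration, but the Picard $v$-groupoid description of Lemma \ref{lem:geomdescpadescpofconncomp} (as in Lemma \ref{lem:geomdescpofconncomp}) only describes the whole of $\mathcal{M}_b$, not the open sublocus where the pushout $G$-bundle is semistable, and does not by itself yield the needed cohomology statement.

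Second, you conflate the preimage of $\Bun_G^{\mathrm{ss}}$ in $\mathcal{M}_b$ with $\mathcal{M}_b^\circ$ (the complement of the split section, i.e. the preimage of $\Bun_G^{<b}$). These agree only when there is a single non-basic stratum below $b$. For a general $b$, the preimage of $\Bun_G^{\mathrm{ss}}$ is a strictly smaller open. The paper avoids this by running an induction over the HN-stratification and proving, one stratum at a time, that the adjunction $A \to R^0 j^b_* j^{b*}(A)$ on $\Bun_G^{\leq b}$ is an isomorphism; the open/closed pair is then exactly $(\mathcal{M}_b^\circ, \text{split section})$, which is what $\wt{\mathcal{M}}_b$ was built for. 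Without that induction and without the dualizing-complex/Poincar\'e-duality input, the argument does not close.
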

\begin{proof}
The claim that this is well-defined will follow from the proof. In particular, we will show that $R^{0}j_{G*}(\Lambda)$ defines an object in $\Loc(\Bun_{G})$. 

Let $b \in B(G)$, we write $\Bun_{G}^{\leq b}$ (resp. $\Bun_{G}^{< b}$) for the open substack parametrizing $G$-bundles whose associated element in $B(G)$ is $\leq b$ (resp. $< b$) after pulling back to a geometric point. We write $j^{b} \colon \Bun_{G}^{< b} \hookrightarrow \Bun_{G}^{\leq b}$ for the open inclusion and $i^{b}\colon \Bun_{G}^{b} \hookrightarrow \Bun_{G}^{\leq b}$ for the complementary closed inclusion.

By induction and excision on $D(\Bun_{G})$ with respect to the HN-strata and its partial ordering, we can reduce to showing that, for $A \in \Loc(\Bun_{G}^{\leq b})$ and all $b$ non-basic, the natural adjunction map
\[ A \ra R^{0}j^{b}_*j^{b*}(A) \]
is an isomorphism. 
It suffices to check this $v$-locally and, since $A$ is a local system with respect to the \'etale topology, we can reduce to checking this for the constant sheaf on $\Bun_{G}$. Since the cone is supported on the closed substack defined by $i^{b}$, it suffices to show that the map
\[ \Lambda \ra i^{b*}R^{0}j^{b}_*(\Lambda)  \]
is an isomorphism for all $b \in B(G)$.

To do this, we recall the $\ell$-cohomologically smooth chart $\pi_{b}\colon \mathcal{M}_{b} \ra \Bun_{G}^{\leq b}$ considered in \cite[Section~V.3]{FaScGeomLLC}. This has a natural map $\mf{q}_{b}^{+}\colon \mathcal{M}_{b} \ra [\ast/\ul{G_{b}(F)}]$, as in Proposition \ref{prop: moduluscharacterinnaturalsituations}. The map $\mf{q}_{b}^{+}$ admits a closed section given by the split reduction. We write $\mathcal{M}_{b}^{\circ}$ for the open subspace obtained by removing the image of this section and $\wt{\mf{q}}^{+}_{b}\colon \wt{\mathcal{M}}_{b} \ra \ast$ for the base change of $\mf{q}^{+}_{b}$ along $\ast \ra [\ast/\ul{G_{b}(F)}]$, 
with its corresponding open subspace $\wt{\mathcal{M}}_{b}^{\circ}$. We write $(\wt{i}^{b},\wt{j}^{b})$ for the base change of $(i^{b},j^{b})$ along $\pi_{b}$. 

By applying smooth base change with respect to $\pi_{b}$, the $G_{b}(F)$-representation $i^{b*}R^{0}j^{b}_*(\Lambda)$ can be identified with
\[ \wt{i}^{b*}R^{0}\wt{j}^{b}_*(\Lambda),   \]
together with its $G_{b}(F)$ action. 

Applying \cite[Proposition~V.4.2]{FaScGeomLLC}, we can identify this with the complex
\[ R\Gamma(\wt{\mathcal{M}}_{b},R^{0}\wt{j}^{b}_*(\Lambda)) \cong R^{0}\Gamma(\wt{\mathcal{M}}^{\circ}_{b},\Lambda), \]
where we note that $\mathcal{M}_{b}^{\circ}$ is precisely the preimage of the open substack $\Bun_{G}^{< b} \subset \Bun_{G}^{\leq b}$ under $\pi_{b}$. Therefore, we are reduced to the following.
\begin{lem}{\label{lem: Mbcohomology}}
For $b$ non-basic, we have that  
\[ R^{i}\Gamma(\wt{\mathcal{M}}^{\circ}_{b},\Lambda) \cong 0 \]
unless $i \in \{0,2\langle 2\rho_{G},\nu_{b} \rangle - 1\}$. If $i = 0$ then it is isomorphic to the trivial representation, and if $i = 2\langle 2\rho_{G},\nu_{b} \rangle - 1$ then it is isomorphic to $\delta_{b}^{-1}$, as in Definition \ref{defn: generaldefnofmoduluschar}
\end{lem}
\begin{proof}
By Proposition \ref{prop: moduluscharacterinnaturalsituations}, it follows that we have an isomorphism
\[ \wt{\mf{q}}_{b}^{+!}(\Lambda) \cong \delta_{b}[2\langle 2\rho_{G},\nu_{b} \rangle], \]
using that the dualizing complex on $[\ast/G_{b}(F)]$ is the constant sheaf by \cite[Example~4.4.3]{HKWKotloc} and the unimodularity of $G_{b}$.

Since $\wt{\mathcal{M}}_{b}$ is an iterated fibration of negative Banach--Colmez spaces, we have an isomorphism  $\wt{\mf{q}}^{+}_{b!}\wt{\mf{q}}_{b}^{+!}(\Lambda) \cong \Lambda$ by Lemma \ref{lem:BCadjisom}. 
It follows that the compactly supported cohomology of $\wt{\mathcal{M}}_{b}$ is isomorphic to $\delta_{b}^{-1}[-2\langle 2\rho_{G},\nu_{b} \rangle]$.

We can apply excision with respect to the pair $(\wt{i}^{b},\wt{j}^{b})$, which is a $G_{b}(F)$-equivariant stratification. This tells us that $R\Gamma_{c}(\wt{\mathcal{M}}^{\circ}_{b},\Lambda)$ has cohomology as a $G_{b}(F)$-representation in degree $1$ isomorphic to the trivial representation, and cohomology in degree $2\langle 2\rho_{G},\nu_{b} \rangle$ given by $\delta_{b}^{-1}$. Using the description of the dualizing complex on $\wt{\mathcal{M}}_{b}$ provided above and in turn on the open subspace $\wt{\mathcal{M}}^{\circ}_{b}$, we obtain that the non-compactly supported cohomology of $\wt{\mathcal{M}}_{b}^{\circ}$ has cohomology in degree $0$ isomorphic to the trivial representation, and cohomology in degree $\langle 2\rho_{G},\nu_{b} \rangle - 1$ with $G_{b}(F)$-action given by $\delta_{b}^{-1}$, 
as desired.
\end{proof}
This completes the proof of Proposition \ref{prop: Localsystemsextend}. 
\end{proof}
From this, we can deduce the following. 
\begin{prop}{{\label{prop: dualcomplexforreductive}}}
We have an isomorphism 
\[ K_{\Bun_{G}} \cong \Lambda \]
of objects in $D(\Bun_{G})$.
\end{prop}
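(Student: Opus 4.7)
The plan is to leverage Proposition \ref{prop: Localsystemsextend} to reduce the claim to a computation on the semistable locus, exactly as indicated in the sketch following the statement in the introduction. First, I note that since $\Bun_{G}$ is an $\ell$-cohomologically smooth Artin $v$-stack of pure $\ell$-dimension $0$, the dualizing complex $K_{\Bun_{G}}$ is \'etale locally isomorphic to $\Lambda$; in particular $K_{\Bun_{G}} \in \Loc(\Bun_{G})$.

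Next, since $j_{G}\colon \Bun_{G}^{\mathrm{ss}} \hookrightarrow \Bun_{G}$ is an open immersion, one has $j_{G}^{!} \cong j_{G}^{*}$, and hence
\[ j_{G}^{*}(K_{\Bun_{G}}) \cong K_{\Bun_{G}^{\mathrm{ss}}}. \]
I then compute the right-hand side directly using the decomposition
\[ \Bun_{G}^{\mathrm{ss}} \cong \bigsqcup_{b \in B(G)_{\basic}} [\ast/\ul{G_{b}(F)}]. \]
By \cite[Example~4.4.3]{HKWKotloc}, the dualizing complex on the classifying stack $[\ast/\ul{G_{b}(F)}]$ is identified with the module of Haar measures on $G_{b}(F)$ equipped with its natural $G_{b}(F)$-action. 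For $b$ basic, $G_{b}$ is an inner form of $G$, hence connected reductive and in particular unimodular, so this character is trivial. Thus $K_{\Bun_{G}^{\mathrm{ss}}} \cong \Lambda$, and in particular it lies in $\widetilde{\Loc}(\Bun_{G}^{\mathrm{ss}})$ as the restriction of the constant sheaf from $\Bun_{G}$.

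Finally, both $K_{\Bun_{G}}$ and the constant sheaf $\Lambda$ are objects of $\Loc(\Bun_{G})$ whose restrictions along $j_{G}$ are (canonically isomorphic to) $\Lambda \in \widetilde{\Loc}(\Bun_{G}^{\mathrm{ss}})$. Since the equivalence of Proposition \ref{prop: Localsystemsextend} has inverse given by $j_{G}^{*}$, two objects of $\Loc(\Bun_{G})$ with isomorphic restrictions are themselves isomorphic; applying this to $K_{\Bun_{G}}$ and $\Lambda$ yields the desired isomorphism.

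The substantive work has already been absorbed into the earlier results: the geometric computation of the dualizing complex on $\mathcal{M}_{b}$ (Proposition \ref{prop: moduluscharacterinnaturalsituations}), the cohomological vanishing for the punctured chart (Lemma \ref{lem: Mbcohomology}), and the resulting extension equivalence (Proposition \ref{prop: Localsystemsextend}). Once these are in place, the present Proposition is a formal consequence, and the only point requiring a moment of care is the verification that $j_{G}^{*}K_{\Bun_{G}}$ sits inside the subcategory $\widetilde{\Loc}(\Bun_{G}^{\mathrm{ss}})$, which is immediate from the identification $K_{\Bun_{G}^{\mathrm{ss}}} \cong \Lambda$ above.
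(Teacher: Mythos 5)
Your proposal is correct and takes essentially the same approach as the paper: reduce to the semistable locus via Proposition \ref{prop: Localsystemsextend}, identify $K_{\Bun_{G}^{\mathrm{ss}}} \cong \Lambda$ using the classifying stack decomposition and unimodularity of the $G_b$ for $b$ basic, and conclude by the equivalence of local systems. The only cosmetic difference is that the paper directly writes $K_{\Bun_G} \cong R^0 j_{G*} j_G^* K_{\Bun_G}$, whereas you phrase the final step in terms of $j_G^*$ being the inverse equivalence; these are the same argument.
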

\begin{proof}
We know that $\Bun_{G}$ is $\ell$-cohomologically smooth of pure $\ell$-dimension $0$, by \cite[Theorem~IV.1.19]{FaScGeomLLC}. It follows that $K_{\Bun_{G}}$ is \'etale locally isomorphic to the constant sheaf. Therefore, we have that $K_{\Bun_{G}} \in \Loc(\Bun_{G})$ and, by the previous proposition, we have an isomorphism: 
\[ K_{\Bun_{G}} \cong R^{0}j_{G*}j_{G}^{*}(K_{\Bun_{G}}) \cong R^{0}j_{G*}(K_{\Bun_{G}^{\mathrm{ss}}}). \]
Note that it is easy to describe the dualizing complex on $\Bun_{G}^{\mathrm{ss}} \cong \bigsqcup_{b \in B(G)_{\basic}} [\ast/G_{b}(F)]$, since it is a disjoint union of classifying stacks and the groups $G_{b}$ are unimodular. In particular, by \cite[Example~4.4.3]{HKWKotloc}, we have an identification 
\[ K_{\Bun_{G}^{\mathrm{ss}}} \cong \Lambda, \]
after fixing a choice of Haar measure on the $G_{b}(F)$ for $b \in B(G)_{\basic}$. The claim immediately follows. 
\end{proof}
\begin{rem}{\label{rem: WildProof}}
For completeness, we also sketch the alternative proof of Proposition \ref{prop: Localsystemsextend} and Proposition \ref{prop: dualcomplexforreductive}, explained by Wild for $G = \GL_{2}$ \cite{WildBunGcohomology}. In particular, the key point is to show Proposition \ref{prop: dualcomplexforreductive} first, by using the easy calculation of the dualizing complex over the semistable locus explained above and then using induction on the partial ordering of $B(G)$, just as was done above. Similarly, the inductive step proceeds by using the excision sequence of the form
\[ \xrightarrow{+1} i_{*}^{b}K_{\Bun_{G}^{b}} \ra K_{\Bun_{G}^{\leq b}} \ra j_{*}^{< b}K_{\Bun_{G}^{< b}} \xrightarrow{+1}. \]
One then invokes the very soft claim that $K_{\Bun_{G}^{b}}$ is concentrated in degree $2\langle 2\rho_{G},\nu_{b} \rangle \geq 2$ for $b$ non-basic \cite[Theorem~I.4.1 (iv)]{FaScGeomLLC} to see that the map $K_{\Bun_{G}^{\leq b}}  \ra R^{0}j_{*}^{< b}(K_{\Bun_{G}^{< b}}) \simeq R^{0}j_{*}^{< b}(\Lambda)$ is an isomorphism in this case, where we use that $K_{\Bun_{G}}$ is \'etale locally known to be concentrated in degree $0$ by \cite[Theorem~I.4.1 (vii)]{FaScGeomLLC}. This similarly implies that the natural adjunction map $\Lambda \ra R^{0}j_{*}^{< b}(\Lambda) \simeq K_{\Bun_{G}^{\leq b}}$ is an isomorphism, since \'etale locally it is an isomorphism, implying the global isomorphism $K_{\Bun_{G}} \simeq \Lambda$ as desired. From here, Proposition \ref{prop: Localsystemsextend} will also follow, since as already explained above one can reduce from a general $A$ to the constant sheaf. 
\end{rem}

We now have the following.
\begin{thm}{\label{thm: main}}
We have an isomorphism 
\[ K_{\Bun_{P}} \cong \mf{q}_{P}^{\dagger*}(\ol{\Delta}_{P})[2\dim(\Bun_{P})] 
\]
of objects in $D(\Bun_{P})$.
\end{thm}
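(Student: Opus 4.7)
The plan is to combine Corollary~\ref{cor: mainthmsemistable}, which identifies $K_{\Bun_P}$ on the open semistable locus $\Bun_P^{\mathrm{ss}}$, with an extension argument for local systems on $\Bun_P$ modelled on Proposition~\ref{prop: Localsystemsextend}. Concretely, I would introduce the comparison sheaf
\[ \mathcal{G} := K_{\Bun_P} \otimes \mf{q}_P^{\dagger *}(\ol{\Delta}_P)^{-1}[-2\dim(\Bun_P)], \]
so that the theorem becomes the assertion $\mathcal{G} \cong \Lambda$ in $D(\Bun_P)$.

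The first step is to verify that $\mathcal{G} \in \Loc(\Bun_P)$. On each connected component $\Bun_P^{(\theta)}$ (Corollary~\ref{cor: conncompsBunP}), Proposition~\ref{prop: qissmooth} provides $\ell$-cohomological smoothness of pure $\ell$-dimension $d_{P,\theta}$, so $K_{\Bun_P}[-2\dim(\Bun_P)]$ is étale-locally trivial, while $\mf{q}_P^{\dagger *}(\ol{\Delta}_P)$ is tautologically an étale local system. Corollary~\ref{cor: mainthmsemistable} then identifies $\mathcal{G}|_{\Bun_P^{\mathrm{ss}}} \cong \Lambda$. It therefore suffices to establish the following analogue of Proposition~\ref{prop: Localsystemsextend} with $\Bun_G$ replaced by $\Bun_P$: the pushforward $R^0 j_{P*}\colon \widetilde{\Loc}(\Bun_P^{\mathrm{ss}}) \to \Loc(\Bun_P)$ is an equivalence of categories, where $j_P\colon \Bun_P^{\mathrm{ss}} \hookrightarrow \Bun_P$ is the open immersion. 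Given this, $\mathcal{G} \cong R^0 j_{P*}(\mathcal{G}|_{\Bun_P^{\mathrm{ss}}}) \cong R^0 j_{P*}(\Lambda) \cong \Lambda$, finishing the proof.

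I would prove the extension equivalence by directly mimicking the proof of Proposition~\ref{prop: Localsystemsextend}. The HN-stratification of $\Bun_P$ is the $\mf{q}_P$-pullback of the HN-stratification of $\Bun_L$, and is indexed by $\theta \in B(L)$ with the semistable locus corresponding to $\theta$ basic; the partial order is inherited from $B(L)$. Induction and excision on strata reduces the problem, for each non-basic $\theta$, to checking that the adjunction $\Lambda \to i^{\theta *} R^0 j^{\theta}_{*}(\Lambda)$ is an isomorphism, where $(i^\theta, j^\theta)$ is the closed/open pair attached to $\Bun_P^\theta \subset \Bun_P^{\leq \theta}$.

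The main obstacle is then the analogue of Lemma~\ref{lem: Mbcohomology}, i.e.\ the concentration of cohomology on a suitable punctured chart. My plan is to use Lemma~\ref{lem:geomdescpofconncomp} to produce an explicit $\ell$-cohomologically smooth chart for $\Bun_P^{\leq \theta}$ whose fibers over $\Bun_L^\theta$ are the iterated Picard $v$-groupoids $\mathcal{P}(\mcLie(U)_{\alpha_i,\theta}[1])$, compute its compactly supported cohomology and dualizing complex via Proposition~\ref{prop: Picardvgroupfullaction} and Lemma~\ref{lem:BCadjisom} (as in the derivation of Proposition~\ref{prop:KBunPtheta}), and then excise the closed locus cut out by the split reduction — exactly the strategy executed in the proof of Lemma~\ref{lem: Mbcohomology} — to obtain the required two-term concentration of the non-compactly supported cohomology.
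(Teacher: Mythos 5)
Your strategy---reduce the theorem to showing the comparison sheaf $\mathcal{G} := K_{\Bun_P} \otimes \mf{q}_P^{\dagger*}(\ol{\Delta}_P)^{-1}[-2\dim(\Bun_P)]$ is the constant local system, verify this on $\Bun_P^{\mathrm{ss}}$ via Corollary~\ref{cor: mainthmsemistable}, then extend---is a genuinely different route from the paper's. The paper instead applies $\mf{q}_P^!$ to the isomorphism $K_{\Bun_L}\cong\Lambda\cong R^0 j_{L*}(\Lambda)$ and commutes $\mf{q}_P^!$ past $R^0 j_{L*}$ via smooth/proper base change, pushing the extension problem entirely down to $\Bun_L$ where Proposition~\ref{prop: Localsystemsextend} applies directly. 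Your first two steps (local-system-ness of $\mathcal{G}$; triviality on the semistable locus) are fine.

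The gap is in your proposed proof of the $\Bun_P$-analogue of Proposition~\ref{prop: Localsystemsextend}. You plan to "produce an explicit chart for $\Bun_P^{\leq\theta}$" and "excise the closed locus cut out by the split reduction---exactly as in Lemma~\ref{lem: Mbcohomology}." But the situation is not parallel: the natural chart (say the $\mf{q}_P$-pullback of $\mathcal{M}_\theta\to\Bun_L^{\leq\theta}$) has closed stratum given not by a point but by the iterated Picard $v$-groupoid $\Bun_{P,*}^\theta$ of Lemma~\ref{lem:geomdescpofconncomp}, so the identification of the relevant stalk with a global-sections complex \`a la [FaScGeomLLC, Prop.~V.4.2] and the resulting "two-term concentration" do not carry over verbatim; moreover "fibers over $\Bun_L^\theta$ are the iterated Picard $v$-groupoids" describes the fibers of $\mf{q}_P$, not of a chart. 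The clean fix that makes your strategy work is to bypass any new chart computation: the HN-stratification of $\Bun_P$ is the $\mf{q}_P$-pullback of that of $\Bun_L$, and $\mf{q}_P$ is $\ell$-cohomologically smooth (Proposition~\ref{prop: qissmooth}), so by smooth base change the adjunction $\Lambda\to i^{\theta*}_P R^0 j^{\theta}_{P*}(\Lambda)$ on $\Bun_P^{\leq\theta}$ is the $\mf{q}_P^{\theta*}$-pullback of the corresponding adjunction on $\Bun_L^{\leq\theta}$, which is an isomorphism by the proof of Proposition~\ref{prop: Localsystemsextend} applied to $L$. With that substitution your argument closes, and becomes essentially a reorganized version of the paper's, trading its $\mf{q}_P^!$-and-base-change manipulation for a base-changed extension lemma on $\Bun_P$.
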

\begin{proof}
By Proposition \ref{prop: dualcomplexforreductive} and its proof, we have isomorphisms: 
\[ K_{\Bun_{L}} \cong \Lambda \cong R^{0}(j_{L*})(\Lambda). \]
We apply $\mf{q}_{P}^{!}$ to both sides of this relationship, giving us an isomorphism:
\[ K_{\Bun_{P}} \cong \mf{q}_{P}^{!}R^{0}(j_{L*})(\Lambda). \]
It suffices to restrict to a connected component indexed by $\theta \in B(L)_{\basic}$. The map $\mf{q}_{P}^{(\theta)}$ is pure of $\ell$-dimension equal to $d_{P,\theta}$ by Proposition \ref{prop: qissmooth}. This implies that we have an identification
\[ K_{\Bun_{P}^{(\theta)}} \cong R^{-2d_{P,\theta}}(\mf{q}_{P}^{(\theta)!})R^{0}i^{\theta}_{*}(\Lambda)[2d_{P,\theta}] \cong R^{0}(i_{P*}^{\theta})R^{-2d_{P,\theta}}(\mf{q}_{P}^{\theta!})(\Lambda)[2d_{P,\theta}], 
\]
where we have used proper base change for the last identification, and $i_{P}^{\theta}\colon \Bun_{P}^{\theta} \hookrightarrow \Bun_{P}$ denotes the base change of $i^{\theta}\colon \Bun_{L}^{\theta} \hookrightarrow \Bun_{L}$ along $\mf{q}_{P}$. 

Using Corollary  \ref{cor: mainthmsemistable} and the fact that the dualizing complex on $[\ast/\ul{L_{\theta}(F)}]$ is the constant sheaf since it is unimodular, we can rewrite the RHS of the previous identification as 
\[ R^{0}(i_{P*}^{\theta})\mf{q}_{P}^{\theta*}(\delta_{P,\theta})[2d_{P,\theta}]. 
\]
However, now by applying smooth base change with respect to $\mf{q}_{P}^{(\theta)}$ via Proposition \ref{prop: qissmooth}, we can rewrite this as
\[ \mf{q}_{P}^{(\theta)*}(R^{0}(i^{\theta}_{*})(\delta_{P,\theta}))[2d_{P,\theta}], \]
but, using Proposition \ref{prop: Localsystemsextend}\footnote{Note that we can trivialize the sheaf $\delta_{P,\theta}$ on $\Bun_{M}^{\theta}$ on an \'etale cover of the connect component $\Bun_{L^{\mathrm{ab}}}^{(\theta)}$ by pulling back an \'etale  cover of $\Bun_{L^{\mathrm{ab}}}^{\ol{\theta}} = [\ast/\ul{L^{\mathrm{ab}}(F)}]$ which trivializes the character $\ol{\delta}_{P,\theta}$.}, this is easily identified with 
\[ (\mf{q}_{P}^{(\theta)*} \circ \ol{\mf{q}}_{L}^{(\theta)*})(\ol{\delta}_{P})[2d_{P,\theta}] \cong \mf{q}_{P}^{\dagger,(\theta)*}(\ol{\delta}_{P})[2d_{P,\theta}], 
\]
as desired, where $\ol{\theta} \in \Lcoinv$ is the element induced by $\theta \in B(L)_{\basic}$, and $\ol{\mf{q}}_{L}^{(\theta)} \colon \Bun_{L}^{(\theta)} \ra \Bun_{L^{\mathrm{ab}}}^{\ol{\theta}}$ (resp. $\mf{q}_{P}^{\dagger,(\theta)} \colon  \Bun_{P}^{(\theta)} \ra \Bun_{L^{\mathrm{ab}}}^{\ol{\theta}}$) is the natural map induced by $\ol{\mf{q}}_{L}$ (resp. $\mf{q}_{P}^{\dagger}$).
\end{proof}

\section{Applications}\label{sec:App}
\subsection{Connection with other results}\label{ssec:connection}
We now discuss the relation of this paper with various other results in the literature. 

For $b \in B(G)$, the natural map 
\[ p_{b}\colon \Bun_{G}^{b} \cong [\ast/\wt{G}_{b}] \ra [\ast/\ul{G_{b}(F)}] \]
considered in \cite[Proposition~V.2.2]{FaScGeomLLC} defines via pullback an equivalence 
\[ 
 D(\Bun_{G}^{b}) \cong D(G_{b}(F),\Lambda), 
\] 
where $D(G_{b}(F),\Lambda)$ is the unbounded derived category of smooth $\Lambda$-representations. This identifies with the map $\mf{q}_{b}^{-}$ considered in Proposition \ref{prop: moduluscharacterinnaturalsituations}. 

\begin{prop}{\label{prop: Bunbdualcpx}}
For all $b \in B(G)$, we have an isomorphism 
\[ p_{b}^{!}(\Lambda) \cong p_{b}^{*}(\delta_{b}^{-1})[-2d_{b}] \]
of objects in $D(\Bun_{G}^{b})$, where we recall that $d_{b} := \langle 2\rho_{G},\nu_{b} \rangle$ and $\delta_{b}$ is as in Proposition \ref{prop: moduluscharacterinnaturalsituations}. 
\end{prop}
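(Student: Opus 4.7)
The plan is to derive Proposition \ref{prop: Bunbdualcpx} as an essentially immediate consequence of Proposition \ref{prop: moduluscharacterinnaturalsituations} via functoriality of the exceptional upper-shriek functor. Since $p_b$ is identified with the map $\mf{q}_b^-\colon [\ast/\wt{G}_b] \ra [\ast/\ul{G_b(F)}]$ appearing in Proposition \ref{prop: moduluscharacterinnaturalsituations}, all the geometric work is already in place; the task reduces to bookkeeping of dualizing complexes via the six-functor formalism.

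Concretely, I would consider the factorization of the structure map
\[ [\ast/\wt{G}_b] \xrightarrow{p_b} [\ast/\ul{G_b(F)}] \xrightarrow{\pi_b} \Spd (k), \]
and apply the functoriality isomorphism $(\pi_b \circ p_b)^! \cong p_b^! \circ \pi_b^!$ from the six-functor formalism of \cite{GHWEnSixfunc}. This gives
\[ K_{[\ast/\wt{G}_b]} \cong p_b^!(K_{[\ast/\ul{G_b(F)}]}). \]

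The first key step is to identify $K_{[\ast/\ul{G_b(F)}]}$. Since $G_b$ is unimodular (as an inner form of a Levi), \cite[Example~4.4.3]{HKWKotloc} implies that the module of Haar measures is the trivial $G_b(F)$-representation, so $K_{[\ast/\ul{G_b(F)}]} \cong \Lambda$, exactly as in the proof of Proposition \ref{prop: dualcomplexforreductive}. The second key step is to invoke Proposition \ref{prop: moduluscharacterinnaturalsituations}, which identifies the left-hand side as $\mf{q}_b^{-*}(\delta_b^{-1})[-2d_b] = p_b^*(\delta_b^{-1})[-2d_b]$. Combining these two inputs yields
\[ p_b^!(\Lambda) \cong p_b^*(\delta_b^{-1})[-2d_b], \]
which is the desired isomorphism.

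There is no serious obstacle in this argument; the content has already been absorbed into Proposition \ref{prop: moduluscharacterinnaturalsituations}, whose proof relied on the explicit computation of dualizing complexes on Picard $v$-groupoids (Proposition \ref{prop: Picardvgroupfullaction}) via the iterated fibration structure in Lemma \ref{lem:geomdescpofconncomp}. The only minor subtlety is to ensure that the isomorphism produced is equivariant under the $G_b(F)$-action so that the characters match, but this is automatic since both sides are pulled back along $p_b$ from objects on $[\ast/\ul{G_b(F)}]$ and the identifications are compatible.
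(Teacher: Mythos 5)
Your argument is correct and is essentially the paper's own one-line proof, merely spelled out: the paper also deduces this directly from Proposition \ref{prop: moduluscharacterinnaturalsituations} combined with the unimodularity of $G_b(F)$ and \cite[Example~4.4.3]{HKWKotloc} to identify $K_{[\ast/\ul{G_b(F)}]} \cong \Lambda$. (The paper separately records an alternative proof in \S\ref{ssec:connection} via compatibility of Fargues--Scholze parameters with Verdier duality, but that is not the primary argument you are being compared against.)
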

\begin{proof}
This follows from Proposition \ref{prop: moduluscharacterinnaturalsituations}
together with the unimodularity of $G_{b}(F)$ and \cite[Example~4.4.3]{HKWKotloc}. 
\end{proof}

The action of $G_{b}(F)$ on the dualizing complex of $\Bun_{G}^{b}$ describes the inverse of the character appearing in \cite[Theorem~1.15]{HamLeeTorsVan} and \cite[Theorem~7.1, Lemma~7.4]{KosOngenlg}, by \cite[Lemma~9.1]{KosOngenlg}. 
Therefore, we have the following corollary. 

\begin{cor}{\label{cor: moduluscharappearinginShimVars}}
The sheaf $\ol{\mathbb{F}}_{\ell}(d_{b})$ with $G_{b}(F)$-equivariant structure appearing in \cite[Theorem~7.1,Lemma~7.4]{KosOngenlg} and \cite[Theorem~1.15]{HamLeeTorsVan} is isomorphic to $\delta_{b}$ as a sheaf with $G_{b}(F)$-action. 
\end{cor}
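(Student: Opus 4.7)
The statement is essentially an immediate bookkeeping consequence of what has already been established, so the plan is short. The key input has two pieces: the identification in Proposition \ref{prop: Bunbdualcpx} of the dualizing complex on $\Bun_{G}^{b}$ with the sheaf determined by the character $\delta_{b}^{-1}$ (up to a shift by $-2d_{b}$), and the comparison lemma \cite[Lemma~9.1]{KosOngenlg}, which expresses the $G_{b}(F)$-equivariant sheaf $\overline{\mathbb{F}}_{\ell}(d_{b})$ occurring in \cite{KosOngenlg} and \cite{HamLeeTorsVan} as the inverse of the character by which $G_{b}(F)$ acts on $p_{b}^{!}\Lambda$ (ignoring shifts).

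Concretely, the plan is to observe that under the equivalence $p_{b}^{*} \colon D([\ast/\underline{G_{b}(F)}]) \xrightarrow{\sim} D(\Bun_{G}^{b})$, Proposition \ref{prop: Bunbdualcpx} pins down the $G_{b}(F)$-representation underlying $p_{b}^{!}(\Lambda)$ as $\delta_{b}^{-1}[-2d_{b}]$. Feeding this into \cite[Lemma~9.1]{KosOngenlg}, the character $\overline{\mathbb{F}}_{\ell}(d_{b})$ that shows up in \cite[Theorem~7.1, Lemma~7.4]{KosOngenlg} and \cite[Theorem~1.15]{HamLeeTorsVan} is identified with the inverse of this representation, namely $\delta_{b}$, as a sheaf with $G_{b}(F)$-action. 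Since all shifts have already been tracked in Proposition \ref{prop: Bunbdualcpx}, no further computation is needed.

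There is essentially no main obstacle here: all the real work was done in Proposition \ref{prop: moduluscharacterinnaturalsituations} and Proposition \ref{prop: Bunbdualcpx}. The only subtlety worth double-checking is the direction of the convention — that is, whether the character appearing in the cited references is taken as the action on $p_{b}^{!}\Lambda$ itself or on its inverse — and whether the Tate twist $(d_{b})$ in the notation $\overline{\mathbb{F}}_{\ell}(d_{b})$ is purely a cohomological-degree shift or genuinely contributes a cyclotomic character. Both points are explicitly settled by \cite[Lemma~9.1]{KosOngenlg}, so once that reference is invoked the corollary follows in one line, and the proof reduces to citing Proposition \ref{prop: Bunbdualcpx} and \cite[Lemma~9.1]{KosOngenlg} and noting the inversion.
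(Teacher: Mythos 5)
Your proposal is correct and follows exactly the paper's own reasoning: combine Proposition \ref{prop: Bunbdualcpx}, which identifies the $G_{b}(F)$-action on $p_{b}^{!}\Lambda$ with $\delta_{b}^{-1}$, and \cite[Lemma~9.1]{KosOngenlg}, which identifies that action with the inverse of the sheaf $\overline{\mathbb{F}}_{\ell}(d_{b})$. The subtleties you flag (sign of the inversion, whether the Tate twist contributes a cyclotomic character) are precisely what the cited lemma settles, so the argument closes as you describe.
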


Now we will briefly change the assumption on $\Lambda$, and let $\Lambda$ be a $\bb{Z}_{\ell}$-algebra until Proposition \ref{prop: compactsuppcohomU}. 
We use the notations $D_{\blacksquare}(-,\Lambda)$ and $Rf_{\natural}$ in \cite[VII]{FaScGeomLLC}. 
We have the following analogue of Lemma \ref {lem:BCadjisom}. 

\begin{lem}\label{lem:BCadjisomnat}
Let $f \colon S' \to S$ be a morphism of small v-stacks. We assume one of the following: 
\begin{enumerate}
\item $S \in \Perf_k$ and $\ca{E}$ is a vector bundle on $X_S$ that is everywhere of positive (resp. negative) slopes.  
\item $\ca{E}$ is a vector bundle on $X$ that is of positive (resp. negative) slopes. 
\end{enumerate}
If $f$ is a torsor under $\ca{H}^0(\ca{E})$ (resp. $\ca{H}^1(\ca{E})$), then 
\[
 f^* \colon D_{\blacksquare}(S,\Lambda) \to D_{\blacksquare}(S',\Lambda) 
\] 
is fully faithful and 
the adjunction morphism 
$Rf_{\natural} Rf^* A \to A$ is an isomorphism for $A \in D_{\blacksquare}(S,\Lambda)$. 
\end{lem}
\begin{proof}
This is proven in the same way as \cite[Proposition V.2.1]{FaScGeomLLC}, using \cite[Proposition VII.3.1]{FaScGeomLLC} and \cite[Lemma 1.3]{ImaConv}. 
\end{proof}

For an Artin v-stack $X$, let 
$D_{\lis}(X,\Lambda)$ be the category of lisse-\'etale $\Lambda$-sheaves defined in 
\cite[Definition VII.6.1]{FaScGeomLLC}. 
When we are working with $D_{\lis}(-,\Lambda)$, we consider pushforward functors in $D_{\lis}(-,\Lambda)$. 

For an $\ell$-cohomologically smooth morphism $f \colon X \to Y$ of Artin v-stacks, we put 
\[
 f^! \Lambda =\left( \varprojlim_{n} f^!(\bb{Z}/\ell^n \bb{Z})\right) \otimes_{\bb{Z}_{\ell}} \Lambda 
\]
which is an invertible object by the smoothness, and 
\[
 f_! (A) =f_{\natural} \left( A \otimes (f^! \Lambda)^{-1} \right) 
\]
for $A \in D_{\lis}(X,\Lambda)$ (\cf \cite[Definition VII.6.1]{FaScGeomLLC}). 

\begin{prop}{\label{prop: compactsuppcohom}}
Consider $f_b \colon \wt{G}_{b}^{>0} \to *$ with its right $G_{b}(F)$-action given by right conjugation. Then $f_{b !}\Lambda \cong \Lambda(\delta_{b}^{-1})[-2d_b]$ as a complex of sheaves with left $G_{b}(F)$-action. 
\end{prop}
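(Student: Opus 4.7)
The plan is to first compute $f_b^!\Lambda$ via two applications of smooth base change, and then to derive $f_{b!}\Lambda$ using Lemma~\ref{lem:BCadjisomnat} together with the projection formula. The starting point is Proposition~\ref{prop: Bunbdualcpx}, which supplies $p_b^!\Lambda \cong p_b^*\delta_b^{-1}[-2d_b]$ on $[*/\wt{G}_b]$.

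Consider the two Cartesian squares
\[
\begin{tikzcd}
{[*/\wt{G}_b^{>0}]} \arrow[r, "\alpha"] \arrow[d, "\tilde{p}_b"] & {[*/\wt{G}_b]} \arrow[d, "p_b"] \\
* \arrow[r, "j"] & {[*/\ul{G_b(F)}]}
\end{tikzcd}
\qquad
\begin{tikzcd}
\wt{G}_b^{>0} \arrow[r, "\mathrm{pr}_2"] \arrow[d, "\mathrm{pr}_1"] & * \arrow[d, "\pi_0"] \\
* \arrow[r, "\pi_0"] & {[*/\wt{G}_b^{>0}]}
\end{tikzcd}
\]
where $j$ and $\pi_0$ are the canonical torsor charts, and both $\mathrm{pr}_i$ coincide with the structure morphism $f_b$. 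Applying smooth base change to the left square (valid since $p_b$ is $\ell$-cohomologically smooth by Proposition~\ref{prop: qissmooth}) produces $\tilde{p}_b^!\Lambda \cong \tilde{p}_b^*\delta_b^{-1}[-2d_b]$. Using the section identity $\tilde{p}_b \circ \pi_0 = \mathrm{id}_*$ and the formula $\pi_0^!(A) \cong \pi_0^*A \otimes \pi_0^!\Lambda$ for the smooth map $\pi_0$, applying $\pi_0^!$ to the previous isomorphism forces $\pi_0^!\Lambda \cong \delta_b[2d_b]$. A second application of smooth base change to the right square yields
\[ f_b^!\Lambda \cong \mathrm{pr}_2^*\pi_0^!\Lambda \cong \delta_b[2d_b] \]
as a $G_b(F)$-equivariant sheaf on $\wt{G}_b^{>0}$, the action arising from conjugation.

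By Corollary~\ref{cor:qPidentify} applied to the parabolic $P^{b,-}$ with basic reduction $b_{L^b}$, together with Lemma~\ref{lem:geomdescpofconncomp}, the space $\wt{G}_b^{>0}$ is realized as an iterated fibration over $*$ whose stages are torsors under positive-slope Banach--Colmez spaces $\mathcal{H}^0(\mathcal{L}ie(U^-)_{\alpha_i, b_{L^b}})$. Iterating Lemma~\ref{lem:BCadjisomnat} through these stages establishes $f_{b\natural}f_b^*A \cong A$, and hence $f_{b!}f_b^!\Lambda \cong \Lambda$. Combined with the previous paragraph and the projection formula, we deduce
\[ \Lambda \cong f_{b!}(f_b^*\delta_b[2d_b]) \cong \delta_b[2d_b] \otimes f_{b!}\Lambda, \]
so $f_{b!}\Lambda \cong \delta_b^{-1}[-2d_b]$, as claimed.

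The principal technical obstacle will be justifying smooth base change for the $!$-pullback through Cartesian squares involving the non-representable $\ell$-cohomologically smooth morphism $p_b$, and carefully iterating Lemma~\ref{lem:BCadjisomnat} along the filtration of $\wt{G}_b^{>0}$ while preserving the $G_b(F)$-equivariant structure coming from conjugation. An equally valid but more computational alternative is to bypass Proposition~\ref{prop: Bunbdualcpx} and instead apply Lemma~\ref{lemm:Oaction} directly to each stage of the fibration, combining the resulting characters $|\mathrm{Nrd}|$ via an iterated K\"unneth formula and identifying the product with $\delta_b^{-1}$ using the factorization of the conjugation action through inner forms of $\mathrm{Aut}(\mathcal{L}ie(U^-)_{\alpha_i, b_{L^b}})$.
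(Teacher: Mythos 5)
Your overall strategy—feeding Proposition~\ref{prop: Bunbdualcpx} in as the input, exploiting the iterated positive-slope fibration structure, and converting $f_b^!$ to $f_{b!}$ via Lemma~\ref{lem:BCadjisomnat} and the projection formula—is the same as the paper's. But there is a genuine gap in how you track the $G_b(F)$-equivariant structure.

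Both Cartesian squares you draw have, along their base, the non-equivariant charts $j\colon * \to [*/\ul{G_b(F)}]$ and $\pi_0\colon * \to [*/\wt{G}_b^{>0}]$. Pulling back along such charts discards the $G_b(F)$-equivariance: $j^*\delta_b^{-1}$ is just the trivial $\Lambda$-module, so your first square only yields $\tilde{p}_b^!\Lambda \cong \Lambda[-2d_b]$ as a bare complex, and likewise the section identity gives $\pi_0^!\Lambda \cong \Lambda[2d_b]$, then $f_b^!\Lambda \cong \Lambda[2d_b]$, all without the action. The assertion ``$f_b^!\Lambda \cong \delta_b[2d_b]$ as a $G_b(F)$-equivariant sheaf, the action arising from conjugation'' is precisely the nontrivial content of the proposition, and it does not follow from the diagrams you have drawn. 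If you try to equivariantize by replacing every $*$ by $[*/\ul{G_b(F)}]$, the bottom row of your first square becomes the identity and the square collapses, so the argument has to be restructured.

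The paper handles this by working throughout with the section $s_b\colon [*/\ul{G_b(F)}] \to [*/\wt{G}_b]$ of $p_b$. Since the derived categories on both classifying stacks are identified with $D(G_b(F),\Lambda)$, the formulas $s_b^!\Lambda \cong \delta_b[2d_b]$ (deduced from Proposition~\ref{prop: Bunbdualcpx} using $p_b\circ s_b = \mathrm{id}$) and $s_{b!}\Lambda \cong p_b^*\delta_b^{-1}[-2d_b]$ (from Lemma~\ref{lem:BCadjisom} applied to $s_b$, an iterated fibration in positive Banach--Colmez spaces) automatically carry the $G_b(F)$-structure; the passage to the fiber $\wt{G}_b^{>0}$ is made only at the very end, to interpret $s_{b!}\Lambda$ as the compactly supported cohomology with its conjugation action. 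To repair your argument you should adopt this $s_b$-centric route, or alternatively pursue the direct computational route you sketch at the end (applying Lemma~\ref{lemm:Oaction} stage by stage, which is essentially what the paper does in Proposition~\ref{prop:KBunPtheta}), since that computation keeps the automorphism-group action visible throughout.

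One smaller difference worth noting: because you invoke Lemma~\ref{lem:BCadjisomnat}, which is stated for general $\bb{Z}_\ell$-algebras $\Lambda$, your outline avoids the intermediate reduction to torsion coefficients that the paper performs; the paper needs that reduction only because its final step uses the torsion-coefficient Lemma~\ref{lem:BCadjisom}, which you sidestep.
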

\begin{proof}
Let 
\begin{equation*}
 [*/\ul{G_b(F)}] \stackrel{s_{b}}{\lra} [*/\wt{G}_{b}] \stackrel{p_{b}}{\lra} [*/\ul{G_b(F)}]
\end{equation*}
be the natural morphisms. 
By Proposition \ref{prop: Bunbdualcpx}, 
we have an isomorphism: 
\begin{equation}\label{eq:sb!torsion}
s_{b}^! (\bb{Z}/\ell^n\bb{Z}) \cong (\bb{Z}/\ell^n\bb{Z})(\delta_{b})[2d_{b}]. 
\end{equation}
Hence we have $f_b^! (\bb{Z}/\ell^n\bb{Z}) \cong (\bb{Z}/\ell^n\bb{Z})[2d_{b}]$, 
since the fiber of $s_b$ at $*$ gives $f_b$. 
By this, Lemma \ref{lem:BCadjisomnat} and \cite[Proposition VII.3.1]{FaScGeomLLC}, we have 
$f_{b !}\Lambda \cong \Lambda [-2d_b]$ as a complex of sheaves. 
Therefore the claim is reduced to the case where $\Lambda=\bb{Z}/\ell^n\bb{Z}$ by \cite[Proposition VII.3.1]{FaScGeomLLC}. 
In this case, \eqref{eq:sb!torsion} and Lemma \ref{lem:BCadjisom} imply that $s_{b !} \Lambda \cong p_{b}^* (\delta_{b}^{-1})[-2d_{b}]$. 
Hence we obtain the claim. 
\end{proof}

Let $\theta \in L(\breve{F})$, and let $b \in G(\breve{F})$ be the image of $\theta$. 
Assume that the conjugate action of $\nu_{\theta}$ on $\Lie (U)$ is non-positive. Under this assumption, we have 
$\wt{G}_{b}(S) =\mathcal{Q}_{\theta,S} (X_S^{\alg})$ for $S \in \AffPerf_k$. 
We put 
\[
 \wt{G}_{b,U}^{>0}(S) =(R_{\mathrm{u}} \mathcal{Q}_{\theta,S}) (X_S^{\alg}) 
\]
for $S \in \AffPerf_k$. 
Then we have $\wt{G}_{b}=\wt{G}_{b,U}^{>0} \rtimes \wt{L}_{\theta}$. 
We can view $\ul{G_{b}(F)}$ as a subgroup diamond of $\wt{L}_{\theta}$ via $\ul{G_{b}(F)} \cong \ul{L_{\theta}(F)} \subset \wt{L}_{\theta}$. 

\begin{prop}{\label{prop: compactsuppcohomU}}
Consider $f_{b,U} \colon \wt{G}_{b,U}^{>0} \to *$ with its right $G_{b}(F)$-action given by right conjugation. Then $f_{b,U !} \Lambda \cong \Lambda (\delta_{P,\theta})[2d_{P,\theta}]$ as a sheaf with left $G_{b}(F)$-action. 
\end{prop}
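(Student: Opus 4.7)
The plan is to parallel the proof of Proposition \ref{prop: compactsuppcohom}, using Proposition \ref{prop:KBunPtheta} on all of $\Bun_P^\theta$ in place of Proposition \ref{prop: Bunbdualcpx} on $\Bun_G^b$. Under the non-positivity assumption, Corollary \ref{cor:qPidentify} identifies $\mf{q}_P^\theta$ with the map of classifying stacks $[\ast/\wt{G}_b] \to [\ast/\wt{L}_\theta]$ coming from the surjection $\wt{G}_b \twoheadrightarrow \wt{L}_\theta$ whose kernel is exactly $\wt{G}_{b,U}^{>0}$. Hence $\Bun_{P,\ast}^\theta \cong [\ast/\wt{G}_{b,U}^{>0}]$, and the canonical torsor $\ast \to [\ast/\wt{G}_{b,U}^{>0}]$ recovers $\wt{G}_{b,U}^{>0}$ itself; all the maps in the chain are equivariant for right conjugation by $L_\theta(F)=G_b(F)$. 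This setup plays the same role here as the sequence $[\ast/\ul{G_b(F)}] \xrightarrow{s_b} [\ast/\wt{G}_b] \xrightarrow{p_b} [\ast/\ul{G_b(F)}]$ did in Proposition \ref{prop: compactsuppcohom}.

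Next I would $!$-pull the formula $K_{\Bun_P^\theta} \cong \mf{q}_P^{\theta\ast}(\delta_{P,\theta})[2d_{P,\theta}]$ from Proposition \ref{prop:KBunPtheta} along a section $s_{b,U}\colon [\ast/\wt{L}_\theta] \hookrightarrow \Bun_P^\theta$ induced by the Levi inclusion $\wt{L}_\theta \hookrightarrow \wt{G}_b$. Since $\mf{q}_P^\theta \circ s_{b,U} = \mathrm{id}$, the pull-back of $\delta_{P,\theta}$ is simply the character sheaf $\delta_{P,\theta}$ on $[\ast/\wt{L}_\theta]$, so that by base change and the fact that the fiber of $s_{b,U}$ is $\wt{G}_{b,U}^{>0}$, one gets $s_{b,U}^!(\bb{Z}/\ell^n\bb{Z}) \cong (\bb{Z}/\ell^n\bb{Z})(\delta_{P,\theta})[2d_{P,\theta}]$ as an analogue of the torsion identity \eqref{eq:sb!torsion} used in the proof of Proposition \ref{prop: compactsuppcohom}. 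Restricting to the fiber $\ast \to [\ast/\wt{L}_\theta]$ then gives $f_{b,U}^!(\bb{Z}/\ell^n\bb{Z}) \cong (\bb{Z}/\ell^n\bb{Z})[2d_{P,\theta}]$ as a plain complex of sheaves, confirming the shift part of the claim (and consistent with the fact that $\wt{G}_{b,U}^{>0}$ is $\ell$-cohomologically smooth of $\ell$-dimension $-d_{P,\theta}$).

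The final step is to upgrade this to the full statement with $L_\theta(F)$-equivariance. As in the proof of Proposition \ref{prop: compactsuppcohom}, I reduce to the torsion case via \cite[Proposition VII.3.1]{FaScGeomLLC}, and then apply Lemma \ref{lem:BCadjisomnat} iteratively along the filtration of $\wt{G}_{b,U}^{>0}$ by the normal sub-group-diamonds coming from Lemma \ref{lem:filtU} (whose graded pieces $\mathcal{H}^0(\mcLie(U)_{\alpha_i,\theta})$ are of non-negative slopes by the non-positivity hypothesis). This converts $s_{b,U}^!$ into $f_{b,U!}$ with the character flipped, yielding $f_{b,U!}\Lambda \cong \Lambda(\delta_{P,\theta})[2d_{P,\theta}]$ as sheaves with left $G_b(F)$-action.

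The main obstacle is the careful bookkeeping of the equivariant structure in the middle step: tracking how the character $\delta_{P,\theta}$ survives intact through the $!$-pullback along $s_{b,U}$, with the intermediate dualizing contributions (coming from $\wt{L}_\theta$ on one side and the Banach--Colmez layers of $\wt{G}_{b,U}^{>0}$ on the other) cancelling correctly. This cancellation is exactly the compatibility between Proposition \ref{prop: Bunbdualcpx} applied to $L$ and Proposition \ref{prop:KBunPtheta} applied to $P$, which expresses both sides of the identity $p_b^\ast(\delta_b^{-1})[-2d_b] \cong \mf{q}_P^{\theta\ast}(\delta_{P,\theta})[2d_{P,\theta}]$ on $\Bun_P^\theta$.
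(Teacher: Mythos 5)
Your plan is the paper's plan: use Corollary \ref{cor:qPidentify} to identify $\mf{q}_{P}^{\theta}$ with $[\ast/\wt{G}_{b}]\to[\ast/\wt{L}_{\theta}]$, run the $!$-pullback argument along the Levi section $s_{b,U}\colon[\ast/\wt{L}_{\theta}]\to[\ast/\wt{G}_{b}]$ exactly as in Proposition \ref{prop: compactsuppcohom}, and feed it Proposition \ref{prop:KBunPtheta} in place of Proposition \ref{prop: Bunbdualcpx}, using Lemma \ref{lem:BCadjisomnat} for the reduction to torsion. That is the right skeleton.

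However, the intermediate identity you write down is inverted, and if one follows your chain of reasoning literally one obtains the \emph{wrong} final answer. From $\mf{q}_{P}^{\theta}\circ s_{b,U}=\mathrm{id}$ and the relative dualizing-complex formula $\mf{q}_{P}^{\theta!}(\Lambda)\cong \mf{q}_{P}^{\theta\ast}(\delta_{P,\theta})[2d_{P,\theta}]$ (which is what the computation of $K_{\Bun_{P,\ast}^{\theta}}$ in Proposition \ref{prop:KBunPtheta} via Lemma \ref{lem:geomdescpofconncomp} actually furnishes), one gets
\[
\Lambda=(\mf{q}_{P}^{\theta}\circ s_{b,U})^{!}\Lambda = s_{b,U}^{!}\Lambda\otimes s_{b,U}^{\ast}\mf{q}_{P}^{\theta!}\Lambda = s_{b,U}^{!}\Lambda\otimes \delta_{P,\theta}[2d_{P,\theta}],
\]
hence $s_{b,U}^{!}\Lambda\cong\delta_{P,\theta}^{-1}[-2d_{P,\theta}]$, not $\delta_{P,\theta}[2d_{P,\theta}]$ as you assert; correspondingly $f_{b,U}^{!}\Lambda\cong\Lambda[-2d_{P,\theta}]$, not $\Lambda[2d_{P,\theta}]$. (You can see your shift is wrong independently: you yourself note that $\wt{G}_{b,U}^{>0}$ has $\ell$-dimension $-d_{P,\theta}\geq 0$, which forces $f_{b,U}^{!}\Lambda\cong\Lambda[-2d_{P,\theta}]$.) Compare with the model case, where Proposition \ref{prop: Bunbdualcpx} gives $p_{b}^{!}\Lambda\cong p_{b}^{\ast}(\delta_{b}^{-1})[-2d_{b}]$ and therefore $s_{b}^{!}\Lambda\cong\delta_{b}[2d_{b}]$: the correct translation dictionary is $\delta_{b}\leftrightarrow\delta_{P,\theta}^{-1}$ and $d_{b}\leftrightarrow-d_{P,\theta}$, not a verbatim substitution. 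Your claimed $s_{b,U}^{!}\Lambda\cong\delta_{P,\theta}[2d_{P,\theta}]$ is actually the formula for $s_{b,U}^{\ast}s_{b,U!}\Lambda$, and if you run the $s_{b,U!}$-calculation starting from your (incorrect) $s_{b,U}^{!}$-formula you would land on $f_{b,U!}\Lambda\cong\delta_{P,\theta}^{-1}[-2d_{P,\theta}]$. The correct intermediate step $s_{b,U}^{!}\Lambda\cong\delta_{P,\theta}^{-1}[-2d_{P,\theta}]$ yields, via Lemma \ref{lem:BCadjisom} and the projection formula, $s_{b,U!}\Lambda\cong\mf{q}_{P}^{\theta\ast}(\delta_{P,\theta})[2d_{P,\theta}]$, and then $f_{b,U!}\Lambda\cong s_{b,U}^{\ast}s_{b,U!}\Lambda\cong\delta_{P,\theta}[2d_{P,\theta}]$.

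One further small imprecision: you say the graded pieces $\mathcal{H}^{0}(\mcLie(U)_{\alpha_{i},\theta})$ have non-negative slopes, but Lemmas \ref{lem:BCadjisom} and \ref{lem:BCadjisomnat} require \emph{strictly} positive slopes, and slope-$0$ pieces would break the adjunction. This is harmless here: the hypothesis $\ul{G_{b}(F)}\cong\ul{L_{\theta}(F)}$ implicit in the statement forces $L\supset L^{b}$, so no root of $U$ is fixed by $\nu_{\theta}$ and all slopes are strictly positive; but you should say so.
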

\begin{proof}
This is proved in the same way as Proposition \ref{prop: compactsuppcohom} using 
the natural morphisms 
\begin{equation*}
 [*/\wt{L}_{\theta}] \lra [*/\wt{G}_{b}] \lra [*/\wt{L}_{\theta}], 
\end{equation*}
Corollary \ref{cor:qPidentify} and Proposition \ref{prop:KBunPtheta}. 
\end{proof}

Proposition \ref{prop: compactsuppcohom} and Proposition \ref{prop: compactsuppcohomU} are compatible with \cite[Proposition~11.3]{HamGeomES}, as well as compatible with \cite[Lemma~4.18]{GINsemi}. 

\begin{cor}\label{cor:cptcohch}
The following is true. 
\begin{enumerate}
\item The character $\kappa$ appearing in \cite[Lemma~11.1]{HamGeomES} is isomorphic to $\delta_{b}^{-1}$. 
\item 
The character $\kappa$ appearing in \cite[Lemma~4.18 (2), Theorem~4.26]{GINsemi} is equal to $\delta_{P,\theta}$. 
\end{enumerate}
\end{cor}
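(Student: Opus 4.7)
The strategy is to deduce both statements by matching our own compactly supported cohomology computations in Proposition \ref{prop: compactsuppcohom} and Proposition \ref{prop: compactsuppcohomU} against the defining descriptions of the characters $\kappa$ in \cite{HamGeomES} and \cite{GINsemi}. In both references, the character $\kappa$ is characterized (up to degree shifts) as the twist on a compactly supported cohomology complex of a group diamond sitting inside $\wt{G}_b$ or $\Bun_P^\theta$; once we verify that the geometric object in the cited reference agrees with ours, the identification is forced by uniqueness of the character appearing in this cohomology.

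For part (1), I would first recall that \cite[Lemma~11.1]{HamGeomES} arises from a Hecke-correspondence/Mantovan-type computation, where $\kappa$ appears as the $G_b(F)$-equivariant twist on $R\Gamma_c(\wt{G}_b^{>0},\Lambda)$ placed in top degree $2d_b$, with respect to the right conjugation action of $G_b(F)$ on $\wt{G}_b^{>0}$. Proposition \ref{prop: compactsuppcohom} identifies this same complex with $\Lambda(\delta_b^{-1})[-2d_b]$ as a sheaf with the induced left $G_b(F)$-action. Comparing the two descriptions and reading off the character in degree $2d_b$ (with due care about the conversion between right and left actions) yields $\kappa \cong \delta_b^{-1}$, which is claim (1).

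For part (2), the argument is entirely parallel, using Proposition \ref{prop: compactsuppcohomU} in place of Proposition \ref{prop: compactsuppcohom}. The character $\kappa$ in \cite[Lemma~4.18~(2)]{GINsemi} is characterized by the action of $G_b(F) \cong L_\theta(F)$ on $R\Gamma_c(\wt{G}_{b,U}^{>0},\Lambda)$ (this is what feeds into the parabolic-induction-style computation in \cite[Theorem~4.26]{GINsemi}), and Proposition \ref{prop: compactsuppcohomU} describes this as $\Lambda(\delta_{P,\theta})[2d_{P,\theta}]$. Matching yields $\kappa = \delta_{P,\theta}$.

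The main obstacle is not mathematical content --- the Propositions already do the real work --- but rather bookkeeping of conventions: I would need to check carefully that the left-vs.-right action conventions, the sign in the cohomological shift, and any possible inversion of characters all line up between our framework and those of \cite{HamGeomES} and \cite{GINsemi}. The footnote indicating a sign error in the arXiv version of \cite{GINsemi} underscores that this step is subtle; once the normalizations are fixed, however, the identifications follow immediately from the uniqueness of the character in the relevant top-degree cohomology. No further input is required.
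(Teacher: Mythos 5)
Your proposal matches the paper's proof exactly: both deduce part (1) from Proposition \ref{prop: compactsuppcohom} and part (2) from Proposition \ref{prop: compactsuppcohomU} by matching the compactly supported cohomology of $\wt{G}_b^{>0}$ (resp.\ $\wt{G}_{b,U}^{>0}$) against the defining characterizations of $\kappa$ in the cited references. The paper's own proof is a one-line citation of these two propositions, and your elaboration, including the caution about action and sign conventions, is a faithful unpacking of the same argument.
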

\begin{proof}
This follows from Proposition \ref{prop: compactsuppcohom} and Proposition \ref{prop: compactsuppcohomU} respectively. 
\end{proof}

Proposition \ref{prop: Bunbdualcpx} also appeared in \cite[Proposition~1.14]{HanBeijLec}, where an argument was mentioned that used properties of the Fargues--Scholze local Langlands correspondence. We now fill in the details and show that those properties are compatible with what we have proved. 

\begin{proof}[Another proof of Proposition \ref{prop: Bunbdualcpx}]
Let $E$ be a finite extension of $\mathbb{Q}_{\ell}$ with a fixed choice of square root of $q$. 
In general, there does not exist a $!$-pushforward in the category of lisse-\'etale $E$-sheaves. However, if $i^{b}\colon \Bun_{G}^{b} \hookrightarrow \Bun_{G}$ is an inclusion of HN-strata, this follows from the discussion after \cite[Lemma~3.1]{ImaConv}. 

As in the proof of \cite[Proposition~V.2.2]{FaScGeomLLC}, we recall that the map $p_{b}$ admits a section $s_{b}$, which is an iterated fibration in positive Banach--Colmez spaces of total dimension $d_{b}$. Therefore, 
we have that $s_{b!}s_{b}^{!}(\Lambda) = \Lambda$ by Lemma \ref{lem:BCadjisom}, where $\Lambda$ is a torsion ring. As usual, it follows that the dualizing complex $p_{b}^{!}(\Lambda)$ is an invertible sheaf concentrated in degrees $2d_{b}$, and that the pullback functor  $p_{b}^{*}$ induces an equivalence of categories, as in \cite[Proposition~V.2.2]{FaScGeomLLC}. It follows that we have an isomorphism 
\[ p_{b}^{!}(\Lambda) \cong p_{b}^{*}(\kappa_{\Lambda})[-2d_{b}] \]
for some character $\kappa_{\Lambda}\colon G_{b}(F) \ra \Lambda^{*}$. It follows that $p_{b}^{!}(E) \in D_{\mathrm{lis}}(\Bun_{G}^{b},E)$ is given by $p_{b}^{*}(\kappa_{E})[-2d_{b}]$ for a smooth character $\kappa_{E}\colon G_{b}(F) \ra E^{*}$ (By the definition of upper $!$ given above.).  

We will now compute $\kappa_{E}$. Let us consider some smooth irreducible representation $\rho$ of $G_{b}(F)$ on $E$-vector spaces with Fargues--Scholze parameter $\phi\colon W_{F} \ra \phantom{}^{L}G_{b}(E)$ valued in the $L$-group of $G_{b}$\footnote{For our argument, it is important that such a $\rho$ always exists, but of course we could always take the trivial representation.}. 
By the definition of $G_b$, we have an embedding $(G_b)_{\breve{F}} \hookrightarrow G_{\breve{F}}$, where $(G_b)_{\breve{F}}$ is the centralizer of $\nu_b$. This gives $(G_b)_{\widehat{\ol{F}}} \hookrightarrow G_{\widehat{\ol{F}}}$. We take a Borel pair $(B,T)$ of $G_{\widehat{\ol{F}}}$ such that $\nu_b$ factors through $T$ and $\nu_b$ is anti-dominant with respect to $B$. Then $(B\cap (G_b)_{\widehat{\ol{F}}}),T)$ is a Borel pair of $(G_b)_{\widehat{\ol{F}}}$. This induces an embedding $\eta_b \colon \widehat{G}_b \hookrightarrow \widehat{G}$, which is independent of the choice of $(B,T)$. 
This gives the embedding $\eta_{b}\colon \phantom{}^{L}G_{b} \ra \phantom{}^{L}G$.

Let $i^{b}\colon \Bun_{G}^{b} \hookrightarrow \Bun_{G}$ be the inclusion of the HN-strata. We have the following lemma.
\begin{lem}{\label{lemma: *pushforwardshavecorrectFSparameter}}
For $\Lambda$ a $\bb{Z}_{\ell}$-algebra and $\rho \in D(G_{b}(F),\Lambda)$ irreducible with Fargues--Scholze parameter $\phi \colon W_{F} \ra \phantom{}^{L}G_{b}$, 
the objects
\[ i_{*}^{b}p_{b}^{*}(\rho) \text{ , } i_{!}^{b}p_{b}^{*}(\rho) \]
are Schur-irreducible and their Fargues--Scholze parameters are equal to the composition
\[   
W_{F} \xrightarrow{\phi} \phantom{}^{L}G_{b}(\Lambda) \xrightarrow{\eta_{b}^{\mathrm{tw}}} \phantom{}^{L}G(\Lambda), \]
where $\eta_{b}^{\mathrm{tw}}$ is the twisted embedding. Explicitly, this is 
\[ w \mapsto \eta_{b} \circ ((2\widehat{\rho}_{G} - 2\widehat{\rho}_{G_{b}})(\sqrt{q})^{|w|} \phi(w) ), \]
where $|\cdot|\colon W_{F} \ra \mathbb{Z}$ is the unramified character sending geometric Frobenius to $1$. 
\end{lem}
\begin{proof}
The Schur irreducibility follows easily from the isomorphisms 
\[ R^{0}\Hom(i_{b*}p_{b}^{*}(\rho),i_{b*}p_{b}^{*}(\rho)) \simeq R^{0}\Hom(p_{b}^{*}\rho,p_{b}^{*}\rho) \simeq R^{0}\Hom(i_{b!}p_{b}^{*}(\rho),i_{b!}p_{b}^{*}(\rho)) \]
coming from adjunction, and the irreducibility of $\rho$ (Recalling that $p_{b}^{*}$ is an equivalence of categories). More precisely, the isomorphism between the left most term and right most term is induced from a non-zero map 
\[ i_{b!}p_{b}^{*}(\rho) \ra i_{b*}p_{b}^{*}(\rho), \]
in $D_{\lis}(\Bun_{G},\Lambda)$ coming from adjunction, which induces the ring isomorphism
\[ \Lambda \simeq \mathrm{End}(i_{!}^{b}p_{b}^{*}(\rho)) \xrightarrow{\simeq} \mathrm{End}(i_{*}^{b}p_{b}^{*}(\rho)) \simeq \Lambda \]
when passing to the endomorphism algebras. Now, since the action of the excursion algebra factors through a map to endomorphisms of the identity functor on $D_{\lis}(\Bun_{G},\Lambda)$ \cite[Theorem~IX.5.2]{FaScGeomLLC}, the excursion action commutes with this morphism. Therefore, the ideal corresponding to $\phi$ in the excursion algebra determined by $i_{!}^{b}p_{b}^{*}(\rho)$ agrees with the ideal in the excursion algebra corresponding to $i_{*}^{b}p_{b}^{*}(\rho)$, and in turn we deduce that the Fargues--Scholze parameters of these two agree. The desired description of the Fargues--Scholze parameter for $i_{!}^{b}p_{b}^{*}(\rho)$ and in turn $i_{*}^{b}p_{b}^{*}(\rho)$ now follows from \cite[Theorem~IX.7.2]{FaScGeomLLC}.
\end{proof}

We note that we have $\eta_{b}^{\mathrm{tw}}(-) = \eta_{b}(-) \otimes \ol{\delta}_{b}^{1/2}$, where $\ol{\delta}_{b}$ is the character of $W_{F}$ attached to $\delta_{b}$ under local class field theory. 

Now, it follows by the cohomological smoothness of $p_{b}$ that we have an isomorphism $\mathbb{D}_{\Bun_{G}}(i^{b}_{!}p_{b}^{*}(\rho)) \cong i^{b}_{*}p_{b}^{!}(\rho^{*}) \cong i^{b}_{*}p_{b}^{*}(\rho^{*} \otimes \kappa_{E})$, where $\mathbb{D}_{\Bun_{G}}$ denotes Verdier duality on $\Bun_{G}$ and we have used \cite[Lemma~3.6]{ImaConv} for the first isomorphism. By the compatibility of the Fargues--Scholze correspondence with smooth duality \cite[Proposition~IX.5.3]{FaScGeomLLC}, 
character twists, and local class field theory, as well as Lemma \ref{lemma: *pushforwardshavecorrectFSparameter}, it follows that the Fargues--Scholze parameter of $i^{b}_{*}p_{b}^{*}(\rho^{*} \otimes \kappa_{E})$ is given by $\eta_{b}^{\mathrm{tw}}
\circ (\phi^{\vee_{G_{b}}} \otimes \ol{\kappa}_{E})$, where $\ol{\kappa}_{E}$ is the image of $\kappa_{E}$ under the geometric normalization of local class field theory, and $(-)^{\vee_{G_{b}}}$ is given by the Chevalley involution on $\widehat{G}_{b}$. On the other hand, by combining Lemma \ref{lemma: *pushforwardshavecorrectFSparameter} with compatibility of the Fargues--Scholze correspondence with Verdier duality (Combine \cite[Theorem~IX.2.2]{FaScGeomLLC} with Proposition \ref{prop: dualcomplexforreductive} after extending $E$ as necessary so that we may use \cite[Proposition~VI.12.1]{FaScGeomLLC}), 
it must also be given by $(\eta_{b}^{\mathrm{tw}} \circ \phi)^{\vee_{G}}$, where $\vee_{G}$ is induced by the Chevalley involution on $\widehat{G}$. In summary, we have an equality 
\[ (\eta_{b}^{\mathrm{tw}} \circ \phi)^{\vee_{G}} = \eta_{b}^{\mathrm{tw}}
\circ (\phi^{\vee_{G_{b}}} \otimes \ol{\kappa}_{E}) \]
of conjugacy classes of parameters. In particular, substituting in the relationship $\eta_{b}^{\mathrm{tw}}(-) = \eta_{b}(-) \otimes \ol{\delta}_{b}^{1/2}$, we obtain that
\[ ((\eta_{b} \circ \phi) \otimes \ol{\delta}_{b}^{1/2}))^{\vee_{G}} = (\eta_{b}
\circ \phi^{\vee_{G_{b}}}) \otimes \ol{\kappa}_{E} \otimes \ol{\delta}_{b}^{1/2} \]
and the LHS identifies with $(\eta_{b} \circ \phi^{\vee_{G_{b}}}) \otimes \ol{\delta}_{b}^{-1/2}$. From here, we conclude that $\kappa_{E} = \delta_{b}^{-1}$, as desired. Now, to handle the case of a torsion coefficient system $\Lambda$, we can use that $\kappa_{E}$ is the base change of a $\hat{\mathbb{Z}}^{p}$-valued character $\kappa_{\hat{\mathbb{Z}}^{p}}$, and that $\kappa_{\Lambda} = \kappa_{\hat{\mathbb{Z}}^{p}} \otimes_{\hat{\mathbb{Z}}^{p}} \Lambda$, by definition of $p_{b}^{!}(E)$.   
\end{proof}

\subsection{Harris--Viehmann conjecture}\label{ssec:HVconj}

We show the Harris--Viehmann conjecture in the Hodge--Newton reducible case using our results and results in \cite{GINsemi}. 
In this subsection, we assume that $\Lambda$ is a $\bb{Z}_{\ell}$-algebra that admits a fixed choice of square root of $q$. 

For a cocharacter $\mu$ of $G$, let $E_{\mu}$ denote the reflex field of $\mu$. 
We take a Borel pair $(B,T)$ of the quasi-split inner form of $G$. Let $B^{-}$ be the opposite Borel. Let $\hat{\mu} \in \bb{X}^*(\widehat{T})^{+,W_{E_{\mu}}}$ be the character determined by $\mu$ (\cf \cite[(1.1.3) Lemma]{KotShtw}). 
We put $\rho_{\mu,G}=\Ind^{\wh{G}}_{\wh{B}^{-}} \hat{\mu}$. 
Let $r'_{\mu,G}$ be the image of the natural morphism $(\rho_{\mu,G}^*)^{\sigma} \to \rho_{\mu,G}$, where $\sigma$ is the Chevalley involution of $\wh{G}$. 
We assume that $r_{\mu,G}'$ is projective over $\Lambda$. 
This assumption is automatically satisfied if $\Lambda =\ol{\bb{Q}}_{\ell}, \ol{\bb{Z}}_{\ell}, \ol{\bb{F}}_{\ell}$ or $\mu$ is minuscule. 

For a finite separable extension $E$ of $F$ in $\ol{F}$, we put $E_{\mu}'=E\cdot E_{\mu}$ and extend 
$r'_{\mu,G}$ to a representation 
$r'_{\mu,G,E}$ of $\wh{G} \rtimes W_{E_{\mu}'}$ as in \cite[(2.1.2) Lemma]{KotShtw}. We put 
$r_{\mu,G,E}=\Ind^{\wh{G} \rtimes W_E}_{\wh{G} \rtimes W_{E_{\mu}'}} r'_{\mu,G,E}$. 

Let $\Div_E^1$ be the moduli of Cartier divisor of degree $1$ on the Fargues--Fontaine curve over $E$. We put 
\[
 \Hck_{G,E} = \Hck_G \times_{\Div_F^1} \Div_E^1. 
\]
We write $\ca{S}_{\mu,G,E}$ for the sheaf on $\Hck_{G,E}$ determined from $r_{\mu,G,E}$ via the geometric Satake equivalence. 
We put $\ca{S}'_{\mu,G,E}=\bb{D}(\ca{S}_{\mu,G,E})^{\vee}$ as in \cite[IX.2]{FaScGeomLLC}. 
We consider the diagram 
\begin{equation*}
\begin{tikzcd}
	\Bun_G \arrow[d,equal]  
 & \Hck_{G,E}  \arrow[d] 
 \arrow[l,"h_{1,G,E}"'] \arrow[r,"h_{2,G,E}"] & \Bun_G \times \Div_E^1 \arrow[d,"p_{E/F}"] 
 \\ 
 	\Bun_G   & \Hck_G  \arrow[l,"h_{1,G}"'] \arrow[r,"h_{2,G}"] & \Bun_G \times \Div_F^1 . 
\end{tikzcd}  
\end{equation*}
We put $T_{\mu,G,E}(A)=h_{2,G,E \natural}(  h_{1,G,E}^* (A) \otimes \ca{S}'_{\mu,G,E})$. 
We simply write $T_{\mu,G}$ for $T_{\mu,G,F}$, 
and omit $G$ from the notation if it is clear from the context. This applies also for other related notations. 

\begin{lem}\label{lem:HecTorus}
Let $T$ be a torus over $F$, $\mu$ a cocharacter of $T$, $E$ a finite separable extension of $F$, and $\chi$ a character of $T(F)$. 
We have an isomorphism: $T_{\mu,T,E}(\chi) \cong \chi \otimes (r_{\mu,T,E} \circ \varphi_{\chi}|_{W_{E}})$. 
\end{lem}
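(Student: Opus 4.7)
The plan is to exploit the particularly simple structure of the torus case, where essentially everything is \'etale. Because $\widehat{T}$ is itself a torus we have $\widehat{B}^{-} = \widehat{T}$, so $\rho_{\mu,T,E} = \Ind^{\widehat{T} \rtimes W_E}_{\widehat{T} \rtimes W_{E_\mu'}} \hat{\mu}$; moreover, the Chevalley involution on $\widehat{T}$ is inversion, so the natural map $(\rho_{\mu,T,E}^*)^{\sigma} \to \rho_{\mu,T,E}$ is an isomorphism and $r_{\mu,T,E} = \rho_{\mu,T,E}$. Geometrically, the affine Grassmannian of $T$ is \'etale, and $\Hck_{T,E}$ is \'etale over both $\Bun_T$ (via $h_{1,T,E}$) and $\Bun_T \times \Div_E^1$ (via $h_{2,T,E}$). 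Writing $\Bun_T = \bigsqcup_{\theta \in B(T)} [\ast/\underline{T(F)}]$ and isolating the component $\Hck_{T,\mu,E} \subset \Hck_{T,E}$ cut out by modifications of type $\mu$, the two legs differ on $\Bun_T$ by the shift of connected component corresponding to the Kottwitz class of the relevant cocharacter.

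The first computation I would carry out is the push-pull. Pulling $\chi$ back via $h_{1,T,E}$ returns $\chi$ on each component of $\Hck_{T,\mu,E}$, with the $\underline{T(F)}$-structure transported to the shifted component. Under the geometric Satake equivalence for $T$, the sheaf $\ca{S}'_{\mu,T,E}$ is a $W_E$-equivariant local system on $\Hck_{T,\mu,E}$ whose underlying $W_E$-representation, after trivializing the $\widehat{T}$-factor, is $r_{\mu,T,E}$. Tensoring and pushing forward along the \'etale map $h_{2,T,E}$ then produces a sheaf on $\Bun_T \times \Div_E^1$ of the form $\chi \otimes V$ supported on the appropriately shifted component, where $V$ is a local system on $\Div_E^1$.

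The main step is to identify $V$ with $r_{\mu,T,E} \circ \varphi_{\chi}|_{W_E}$. The $\widehat{T}$-factor of $r_{\mu,T,E}$, which was trivialized in the geometric Satake description above, is re-introduced by the coupling between the $\underline{T(F)}$-equivariance of $\chi$ on the source component and the $W_E$-action on $\Div_E^1$. By the Fargues--Scholze local Langlands correspondence for tori, which coincides with classical local class field theory, this coupling is precisely composition of the $W_E$-representation $r_{\mu,T,E}$ with $\varphi_{\chi}|_{W_E}$ in the $\widehat{T}$-slot. The main obstacle is bookkeeping of normalizations: the dualization in the definition of $\ca{S}'_{\mu,T,E}$ (\cf \cite[IX.2]{FaScGeomLLC}), the geometric normalization of local class field theory, and any half-sum shifts must combine cleanly so that no extra twist is introduced. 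Since the group is a torus, $\widehat{\rho}_T$ vanishes and $\eta_b^{\mathrm{tw}} = \eta_b$, so these potential twists are trivial and the identification proceeds as stated.
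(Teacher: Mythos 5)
Your proposal correctly identifies the soft ingredients — for a torus $\widehat{B}^{-}=\widehat{T}$, the Chevalley involution is inversion so $r_{\mu,T,E}=\rho_{\mu,T,E}$, the affine Grassmannian is \'etale, and everything reduces to local systems on classifying stacks — but it elides the one computation that actually carries the content of the lemma, namely the identification of the local system $V$ on $\Div_E^1$ with $r_{\mu,T,E}\circ\varphi_{\chi}|_{W_E}$. Writing ``By the Fargues--Scholze local Langlands correspondence for tori\ldots this coupling is precisely composition\ldots'' restates the conclusion rather than deriving it: the Fargues--Scholze Hecke--eigensheaf statement for tori is formulated over $\Div_F^1$, whereas here the correspondence has been base-changed to $\Div_E^1$ with a Satake sheaf $\ca{S}'_{\mu,T,E}$ that is \emph{not} the pullback of the Satake sheaf over $\Div_F^1$ (it corresponds to $r_{\mu,T,E}$, which for $E\not\supset E_\mu$ is genuinely different from the restriction of $r_{\mu,T}$). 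You have no mechanism in the proposal for controlling which $W_E$-twist actually appears.

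The paper closes this gap with a two-step sandwich. First it reduces to $E\subset E_\mu$ (pulling back $\ca{S}'_{\mu,T,E\cap E_\mu}$ gives $\ca{S}'_{\mu,T,E}$, so only $E\cap E_\mu$ matters). Then it pulls back along $\Bun_{T_E}\to\Bun_T$: because $p_{\mathrm{H},T_E/T}^*\ca{S}'_{\mu,T,E}\cong\ca{S}'_{\mu,T_E}$, the known Hecke--eigenproperty for the torus $T_E$ over $E$ pins down $T_{\mu,T,E}(\chi)$ up to a character twist $\chi'$ of $T(F)$. Finally it pushes forward along $\Div_E^1\to\Div_F^1$ and uses $r_{\mu,T}=\Ind_{\widehat{T}\rtimes W_E}^{\widehat{T}\rtimes W_F}r_{\mu,T,E}$ to compare with the known $T_{\mu,T}(\chi)\cong\chi\otimes(r_{\mu,T}\circ\varphi_\chi)$, forcing $\chi'=\chi$. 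Your proposal would need to supply an analogous argument (or an explicit trivialization computation on the \'etale Hecke stack) to actually nail the $\widehat{T}$-slot; as it stands, the ``main step'' is asserted rather than proved.
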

\begin{proof}
Under the natural morphism $\Hck_T \times_{\Div_F^1} \times \Div_E^1 \to \Hck_T \times_{\Div_F^1} \times \Div_{E \cap E_{\mu}}^1$, the pullback of $\ca{S}'_{\mu,T,E \cap E_{\mu}}$ is $\ca{S}'_{\mu,T,E}$. 
Hence the claim is reduced to the case of $E \cap E_{\mu}$. So we may assume that $E \subset E_{\mu}$. 

We consider the diagram
\begin{equation*}
\begin{tikzcd}
	\Bun_{T_E} \arrow[d]  & \Hck_{T_E}^{\mu}  \arrow[d,"p_{\mathrm{H},T_E/T}"] \arrow[l,"h_{1,T_E}"'] \arrow[r,"h_{2,T_E}"] & \Bun_{T_E} \times \Div_E^1 \arrow[d,"p_{T_E/T}"] \\ 
 \Bun_T \arrow[d,equal]  & \Hck_{T,E}^{\mu} \arrow[d,"p_{\mathrm{H},E/F}"] \arrow[l,"h_{1,T,E}"'] \arrow[r,"h_{2,T,E}"] & \Bun_T \times \Div_E^1 \arrow[d,"p_{E/F}"] \\ 
 	\Bun_T   & \Hck_T^{\mu}  \arrow[l,"h_{1,T}"'] \arrow[r,"h_{2,T}"] & \Bun_T \times \Div_F^1 , 
\end{tikzcd}  
\end{equation*}
where the right two squares are Cartesian. 
By $p_{\mathrm{H},T_E/T}^* (\ca{S}'_{\mu,T,E}) \cong \ca{S}'_{\mu,T_E}$, we have 
\begin{align*}
    p_{T_E/T}^* (T_{\mu,T,E}(\chi)) \cong T_{\mu,T_E} (\chi \circ N_{T(E)/T(F)}) \cong (\chi \circ N_{T(E)/T(F)}) \otimes (r_{\mu,T_E} \circ \varphi_{\chi}|_{W_{E}}). 
\end{align*}
Hence, we have $T_{\mu,T,E}(\chi) \cong \chi' \otimes (r_{\mu,T,E} \circ \varphi_{\chi}|_{W_{E}})$ for some character $\chi'$ of $T(F)$. 
Since $E \subset E_{\mu}$, we have $r_{\mu,T}=\Ind_{\wh{T} \rtimes W_E}^{\wh{T} \rtimes W_F} r_{\mu,T,E}$. This implies $\ca{S}'_{\mu,T} \cong p_{\mathrm{H},E/F \natural}(\ca{S}'_{\mu,T,E})$. 
Hence we have 
$p_{E/F \natural} (T_{\mu,T,E}(\chi)) \cong T_{\mu,T}(\chi) \cong \chi \otimes (r_{\mu,T} \circ \varphi_{\chi})$. 
Therefore, we obtain $\chi' \simeq \chi$. 
\end{proof}

\begin{lem}\label{lem:projflc}
Let $f\colon X \to Y$ be a morphism of Artin v-stacks, 
$A \in D_{\lis} (X,\Lambda)$ and $B \in D_{\lis} (Y,\Lambda)$. Assume that $B$ is $\ell$-cohomologically smooth locally constant. 
Then we have $(f_* A) \otimes B \cong f_* (A \otimes f^* B)$. 
\end{lem}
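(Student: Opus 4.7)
The plan is to prove the projection formula by a standard dualizability argument. First, construct the natural morphism
\[
(f_*A) \otimes B \lra f_*(A \otimes f^*B)
\]
by adjunction: starting from $f^*((f_*A) \otimes B) \cong f^*f_*A \otimes f^*B \to A \otimes f^*B$, where the first isomorphism uses that $f^*$ is symmetric monoidal and the second uses the counit $f^*f_* \to \id$. It remains to show this map is an isomorphism.

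The key observation is that any $\ell$-cohomologically smooth locally constant $B$ is dualizable in $D_{\lis}(Y,\Lambda)$, with dual $B^\vee$ formed in the same category, and moreover $f^*$ carries duals to duals, so $f^*(B^\vee) \cong (f^*B)^\vee$. Given this, I would verify the formula by testing against arbitrary $C \in D_{\lis}(Y,\Lambda)$ via the adjunction chain
\begin{align*}
\Hom(C,(f_*A)\otimes B) &\cong \Hom(C \otimes B^\vee, f_* A) \\
&\cong \Hom(f^*C \otimes f^* B^\vee, A) \\
&\cong \Hom(f^*C, A \otimes f^*B) \\
&\cong \Hom(C, f_*(A \otimes f^*B)),
\end{align*}
using the $(f^*,f_*)$ adjunction, the dualizability of $B$ (and of $f^*B$), and symmetric monoidality of $f^*$. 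A Yoneda argument then identifies the two objects, and tracing through the construction shows the isomorphism agrees with the natural map above.

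An alternative (and essentially equivalent) route is to check the statement locally on $Y$: since $B$ is $\ell$-cohomologically smooth locally constant, there is an $\ell$-cohomologically smooth cover $g\colon Y' \to Y$ such that $g^*B$ is constant on $Y'$, of the form $\Lambda$ or a finite complex of constants. For constant $B$ the projection formula is trivial (it reduces to compatibility of $f_*$ with finite direct sums and shifts, which is immediate). One then descends by using smooth base change for the $(f_*,f^*)$ adjunction in the lisse-\'etale setup, which is available in the Fargues--Scholze formalism \cite[VII.6]{FaScGeomLLC}.

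The main obstacle, and the one step that should be handled carefully, is establishing dualizability of $B$ inside $D_{\lis}(Y,\Lambda)$ together with the compatibility $f^*(B^\vee) \cong (f^*B)^\vee$, since the lisse-\'etale category is defined as a limit and dualizability requires control at each finite level. Granting this (which is part of the six-functor formalism in \cite[VII.6]{FaScGeomLLC}), the rest of the argument is formal.
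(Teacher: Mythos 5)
Your second route is essentially the paper's argument, which is even terser: construct the natural map by adjunction, observe that being an isomorphism can be checked after pullback along an $\ell$-cohomologically smooth cover of $Y$ trivializing $B$, and invoke \cite[Lemma 1.6]{ImaConv} for the base-change step. So you have correctly located the mechanism the paper relies on. Your first route (dualizability plus a Yoneda chain) is a genuinely different, more global argument, and formally it does prove the statement; but as you yourself note, it hinges on knowing that $\ell$-cohomologically smooth locally constant objects of $D_{\lis}(Y,\Lambda)$ are dualizable and that $f^*$ preserves the duals, and that verification is precisely where the work would go. Since the paper bypasses this entirely by localizing, I would not call the two routes ``essentially equivalent''---the local one avoids ever touching dualizability.

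One small imprecision worth flagging in both of your routes: the reduction to constant $B$ is only ``trivial'' if the stalk is a \emph{perfect} complex of $\Lambda$-modules (so that $- \otimes B$ commutes with $f_*$ because it is built from finitely many shifts of $\Lambda$). For a constant sheaf with, say, an infinite free module as stalk, $f_*$ need not commute with the resulting infinite direct sum, and the projection formula can fail. You implicitly build this in by saying $g^*B$ is ``of the form $\Lambda$ or a finite complex of constants,'' which is the right reading of the hypothesis in this setting, but it is worth stating explicitly that the local constancy is with perfect stalks---this is what makes both the constant case immediate and the dualizability in your first route hold.
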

\begin{proof}
Since we have a natural morphism $(f_* A) \otimes B \to f_* (A \otimes f^* B)$, it suffices to show that this is an isomorphism $\ell$-cohomologically smooth locally. We can check this by the assumption on $B$ and \cite[Lemma 1.6]{ImaConv}. 
\end{proof}

\begin{lem}\label{lem:ibuppshrast}
Let $b \in B(G)$ and 
$A,B \in D_{\lis} (\Bun_G,\Lambda)$. Assume that $B$ is $\ell$-cohomologically smooth locally constant. Then we have $i^{b!}(A \otimes B) \cong i^{b!}A \otimes i^{b*}B$. 
\end{lem}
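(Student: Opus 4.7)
The plan is to build the natural comparison morphism and then check it is an isomorphism after pulling back along an $\ell$-cohomologically smooth cover that trivializes $B$, exactly in the style of the proof of Lemma \ref{lem:projflc} immediately preceding.

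First, I would construct the natural morphism
\[
 i^{b!}A \otimes i^{b*}B \lra i^{b!}(A \otimes B).
\]
This comes from the standard projection-formula-type map. Concretely, starting from the counit $i^{b}_{!}i^{b!}A \to A$ and tensoring with $B$, one gets $i^{b}_{!}i^{b!}A \otimes B \to A \otimes B$; the usual projection formula identifies the left side with $i^{b}_{!}(i^{b!}A \otimes i^{b*}B)$ (this is available in the six-functor formalism of \cite{GHWEnSixfunc} used throughout the paper), and adjunction between $i^{b}_{!}$ and $i^{b!}$ then produces the desired morphism. Both sides lie in $D_{\lis}(\Bun_G^b,\Lambda)$ because $i^{b*}$ and $i^{b!}$ preserve the lisse-\'etale categories on HN-strata by the discussion recalled around \cite[Lemma~3.1]{ImaConv}.

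The key step is then to verify that this natural map is an isomorphism. Since the statement is local on $\Bun_G$ for the $\ell$-cohomologically smooth topology (both formation of $\otimes$ and the functors $i^{b*}$, $i^{b!}$ commute with pullback along such maps; for $i^{b!}$ this is smooth base change, see e.g.\ \cite[Proposition~23.16]{SchEtdia}), the hypothesis on $B$ reduces us to the case where $B$ is a constant $\Lambda$-module $M$. In that case the map is the obvious isomorphism $i^{b!}A \otimes M \cong i^{b!}(A \otimes M)$, since tensoring with a constant perfect complex commutes with any of the six functors. One can reduce further to the case $M=\Lambda$, where the statement is tautological.

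The only minor obstacle I foresee is checking that the natural comparison morphism is indeed constructed functorially at the level of $D_{\lis}$ (as opposed to the larger category $D_{\blacksquare}$), but this follows because both sides visibly restrict to lisse-\'etale objects and the adjunction morphism $i^{b}_{!}i^{b!}A \to A$ is inherited from the torsion-coefficient setting together with the definition of the $!$-functors on lisse-\'etale sheaves given in \cite[VII.6]{FaScGeomLLC}. With this in place the lemma follows from the two steps above.
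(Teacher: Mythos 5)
Your proof takes a genuinely different route from the paper's. The paper uses the excision description $i^{b!} \cong i^{b*}\bigl(\mathrm{fib}(\id \to Rj^b_*j^{b*})\bigr)$ from \cite[Lemma~3.8]{ImaConv}, applies the already-proved Lemma~\ref{lem:projflc} to $j^b$ to move $B$ out of $Rj^b_*j^{b*}$, and concludes since $-\otimes B$ is exact and hence commutes with $\mathrm{fib}$. That argument never needs to touch the $i^{b}_{!}$ projection formula or a smooth base change statement for $i^{b!}$ directly in $D_{\lis}$. Your approach instead constructs the comparison map via the $i^b_!$ projection formula and adjunction, and then checks it is an isomorphism after pullback to an $\ell$-cohomologically smooth cover trivializing $B$. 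This is a legitimate alternative, but it does lean on two facts whose availability in $D_{\lis}(-,\Lambda)$ (as opposed to the torsion setting) is precisely the delicate point the paper's route avoids: the projection formula for $i^b_!$ and smooth base change for $i^{b!}$ — both hold by passage to the limit over $\mathbb{Z}/\ell^n\mathbb{Z}$ using the constructions of \cite[VII.6]{FaScGeomLLC}, but this should be spelled out rather than deferred to the last paragraph. Also note that the reduction to constant $B$ uses dualizability of the stalks of $B$; this is built into the "$\ell$-cohomologically smooth locally constant" hypothesis, but since the final step ("tensoring with a constant perfect complex commutes with the six functors") rests on it, it deserves an explicit mention.
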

\begin{proof}
We note that $i^{b!} \cong i^{b*}(\mathrm{fib}(\id \to Rj^b_*j^{b*}))$ by \cite[Lemma 3.8]{ImaConv}. Hence the claim follows from isomorphisms 
\begin{align*}
i^{b*}Rj^b_*j^{b*}(A \otimes B) &\cong i^{b*}Rj^b_* (j^{b*}A \otimes j^{b*} B) \\ 
&\cong i^{b*} ((Rj^b_* j^{b*}A) \otimes B) \cong (i^{b*} Rj^b_* j^{b*}A) \otimes i^{b*}B, 
\end{align*}
where we use Lemma \ref{lem:projflc} at the second isomorphism. 
\end{proof}

Let $b,b' \in B(G)$ and $\mu \in X_*(G)$. 
We define the cohomology of moduli spaces of shtukas 
$R\Gamma_{\mathrm{c}}(\Sht_{G,b,b',K'}^{\mu})$ and 
$R\Gamma_{\mathrm{c}}(\Sht_{G,b,b'}^{\mu})$, as in \cite[\S 3]{ImaConv}. 

For a character $\chi$ of $G^{\ab}(F)$, we write $\chi_b$ for the character of $G_b(F)$ determined by $\chi$ and $G_b(F) \to G^{\ab}_{b^{\ab}}(F)=G^{\ab}(F)$. 

\begin{prop}\label{prop:cohtwist}
Let $\rho$ be a smooth representation of $G_b(F)$, and 
$\chi$ a character of $G^{\ab}(F)$. 
Let $\chi_{\mu}$ be the character of $W_{E_{\mu}}$ determined by the L-parameter of $\chi$ and $\mu$. 
We have 
\begin{align*}
\varinjlim_{K' \subset G_{b'}(F)} 
R\Hom_{G_b(F)} (R\Gamma_{\mathrm{c}}&(\Sht_{G,b,b',K'}^{\mu}),\rho \otimes \chi_b) \\ &\cong 
\varinjlim_{K' \subset G_{b'}(F)} R\Hom_{G_b(F)} (R\Gamma_{\mathrm{c}}(\Sht_{G,b,b',K'}^{\mu}),\rho ) \otimes \chi_{b'} \otimes \chi_{\mu} 
\end{align*}
in the derived category of $G_{b'}(F) \times W_{E_{\mu}}$-representations, where $G_{b'}(F)$ acts smoothly and $W_{E_{\mu}}$ acts continuously.
\end{prop}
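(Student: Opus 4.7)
The strategy is to interpret the shtuka cohomology as (a pullback of) the Hecke operator $T_\mu$ applied to $i^b_!\rho$, then commute the twist by $\chi$ across $T_\mu$ and pick up the Satake factor $\chi_\mu$ via Lemma \ref{lem:HecTorus} for the abelianization.

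First, following \cite[\S 3]{ImaConv}, I will use the standard identification of shtuka cohomology with Hecke operators: for any smooth $G_b(F)$-representation $\sigma$,
\[
\varinjlim_{K' \subset G_{b'}(F)} R\Hom_{G_b(F)}\bigl(R\Gamma_{\mathrm{c}}(\Sht_{G,b,b',K'}^{\mu}),\sigma\bigr) \;\cong\; i^{b'!}T_{\mu,G}(i^b_! \sigma),
\]
where on the right one has used the equivalence $D(\Bun_G^{b'}) \cong D(G_{b'}(F),\Lambda)$. Thus it suffices to establish an isomorphism
\[
i^{b'!}T_\mu(i^b_!(\rho \otimes \chi_b)) \;\cong\; i^{b'!}T_\mu(i^b_!\rho) \otimes \chi_{b'} \otimes \chi_\mu
\]
of smooth $G_{b'}(F)$-representations, where $\chi_\mu$ is viewed via the map from $W_{E_\mu}$ to the structure sheaf on (a cover of) $\Div^1_F$.

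Second, I promote $\chi$ to a character sheaf on the whole of $\Bun_G$. The abelianization $\mathrm{ab}\colon G \to G^\ab$ induces $\mathrm{ab}\colon \Bun_G \to \Bun_{G^\ab} \cong \bigsqcup [\ast/\underline{G^\ab(F)}]$, and $\chi$ defines a locally constant rank one sheaf $\chi^{\Bun_{G^\ab}}$ on the latter. Its pullback $\chi^{\Bun_G} := \mathrm{ab}^*\chi^{\Bun_{G^\ab}}$ is $\ell$-cohomologically smooth locally constant, and satisfies $i^{b*}\chi^{\Bun_G} \cong \chi_b$ and $i^{b'*}\chi^{\Bun_G} \cong \chi_{b'}$. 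By the projection formula, $i^b_!(\rho \otimes \chi_b) \cong i^b_!\rho \otimes \chi^{\Bun_G}$, and by Lemma \ref{lem:ibuppshrast} applied at $b'$, it is enough to show the geometric statement
\[
T_\mu(A \otimes \chi^{\Bun_G}) \;\cong\; T_\mu(A) \otimes \chi^{\Bun_G} \otimes \chi_\mu
\]
for $A \in D(\Bun_G)$ (with $\chi_\mu$ pulled back from $\Div^1_F$).

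Third, I will prove this geometric identity by factoring through the abelianization. Writing out
\[
T_\mu(A \otimes \chi^{\Bun_G}) \;=\; h_{2,G\,\natural}\bigl(h_{1,G}^*A \otimes h_{1,G}^*\chi^{\Bun_G} \otimes \mathcal{S}'_{\mu,G}\bigr),
\]
the character sheaf $h_{1,G}^*\chi^{\Bun_G}$ is the pullback from $\Hck_{G^\ab}$. Using Lemma \ref{lem:projflc} to pull the locally constant factor $h_{2,G}^*\chi^{\Bun_G}$ out of $h_{2,G\,\natural}(-)$, the question reduces to the identity $h_{2,G\,\natural}\bigl((h_{2,G}^{-1}h_{1,G})^*\chi^{\Bun_G} \otimes \mathcal{S}'_{\mu,G}\bigr) \cong \chi_\mu$ (in an appropriate sense), which by the compatibility of the geometric Satake sheaves $\mathcal{S}'_{\mu,G}$ with the abelianization map of Hecke stacks reduces to the torus case computed in Lemma \ref{lem:HecTorus} with $T = G^\ab$. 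Substituting and pulling back along $i^{b'}$, which converts the external factor $\chi^{\Bun_G}$ into the scalar character $\chi_{b'}$, yields the claim.

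The main obstacle is bookkeeping: precisely matching the normalizations of the shtuka cohomology formula from \cite{ImaConv} with the Hecke operator used here, and organizing the projection-formula arguments so that the torus identity of Lemma \ref{lem:HecTorus} applied to $G^\ab$ genuinely controls the nonabelian Hecke action on the pulled-back character sheaf $\chi^{\Bun_G}$. Once this is set up, the extraction of the factor $\chi_\mu$ on $\Div^1_F$ and of $\chi_{b'}$ after restriction to $\Bun_G^{b'}$ is formal.
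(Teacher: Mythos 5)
Your approach is correct in outline and shares the ultimate reduction to Lemma \ref{lem:HecTorus}, but it is organized quite differently from the paper's proof. The paper does not commute the character sheaf $\chi^{\Bun_G}$ across $T_\mu$ directly; instead it passes through the product group $G_0 = G\times G^{\ab}$, rewriting $i^b_*[\rho\otimes\chi_b]$ as $\pi_B^* i^{b_0}_*[\rho\times\chi]$ (eq.\ \eqref{eq:ib*rhochi}), then using the compatibility $T_\mu\pi_B^* \cong \pi^*T_{\mu_0}$ (eq.\ \eqref{eq:Tmumu0}) and the K\"unneth-type factorization $p_{E_\mu/F}^*T_{\mu_0}(A\times A') \cong \Delta_D^*(T_\mu(A)\times T_{\mu^{\ab},E_\mu}(A'))$ (eq.\ \eqref{eq:Tmumuab}) to cleanly split the $G$-Hecke operator from the $G^{\ab}$-one. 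The advantage of that route is that the $E_\mu$ base change, the $W_{E_\mu}$-equivariance, and the compatibility of Satake sheaves with abelianization are all packaged into the explicit factorization \eqref{eq:Tmumuab}, rather than being left implicit as ``compatibility of the geometric Satake sheaves with the abelianization map'' as in your plan. Your route, by contrast, needs the sheaf-level identity $h_{1}^*\chi^{\Bun_G}\otimes(h_{2}^*\chi^{\Bun_G})^{-1}\cong p_{\Div}^*\chi_\mu$ on $\Hck_{G,E_\mu}^{\leq\mu}$ (pulled back from $\Hck_{G^{\ab},E_\mu}^{\mu^{\ab}}$), which does follow from Lemma \ref{lem:HecTorus} after observing that $h_2$ becomes an isomorphism on the torus Hecke stack, but this translation needs to be spelled out --- the lemma as stated is about $T_{\mu,T,E}$ (a pushforward), not about comparing the two legs. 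A few further imprecisions to address: (1) the paper's dictionary with shtuka cohomology uses $i^b_*$ together with a twist $(\delta_{b'})[2d_{b'}]$ and the base change $p_{E_\mu/F}^*$, whereas you write $i^{b'!}T_\mu i^b_!$; this does not affect the validity since the corrections cancel between the two sides, but you should use the precise formula from \cite{ImaConv}; (2) the expression $(h_{2,G}^{-1}h_{1,G})^*\chi^{\Bun_G}$ is not literally defined for nonabelian $G$; (3) your displayed reduction $h_{2,G\,\natural}\bigl((h_{2,G}^{-1}h_{1,G})^*\chi^{\Bun_G}\otimes\mathcal{S}'_{\mu,G}\bigr)\cong\chi_\mu$ drops the input $A$; the correct statement is the sheaf-level identity above, applied inside $h_{2\natural}(h_1^*A\otimes -)$.
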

\begin{proof}
We put $G_0=G \times G^{\ab}$. Then the natural homomorphism $G \to G_0$ and the projection $G_0 \to G$ gives the diagram 
\begin{equation}\label{eq:HckGG'}
\begin{tikzcd}
	\Bun_G \arrow[d,"\pi_{\mathrm{B}}"]  & \Hck_G^{\leq \mu} \arrow[d,"\pi_{\mathrm{H}}"] \arrow[l,"h_1"'] \arrow[r,"h_2"] & \Bun_G \times \Div_F^1 \arrow[d,"\pi"] \\ 
	\Bun_{G_0} & \Hck_{G_0}^{\leq \mu_0} \arrow[d,"\pi_{0,\mathrm{H}}"] \arrow[l,"h_{0,1}"'] \arrow[r,"h_{0,2}"] & \Bun_{G_0} \times \Div_F^1 \arrow[d,"\pi_0"] \\ 
 & \Hck_G^{\leq \mu}  \arrow[r,"h_2"] & \Bun_G \times \Div_F^1 . 
\end{tikzcd}  
\end{equation}
The lower right square in \eqref{eq:HckGG'} is Cartesian because the natural map 
\[
\Hck_{G^{\ab}}^{\leq \mu^{\ab}} \to \Bun_{G^{\ab}} \times \Div_F^1 
\]
is an isomorphism. Hence the upper right square is also Cartesian since $\pi_0 \circ \pi$ and $\pi_{0,\mathrm{H}} \circ \pi_{\mathrm{H}}$ are the identity. 

Let $\mu_0$ and $\mu^{\ab}$ be the cocharacters of $G_0$ and $G^{\ab}$ determined by $\mu$. 
Then we have 
\begin{align}\label{eq:Tmumu0}
T_{\mu}(\pi_{\mathrm{B}}^* A_0)\cong h_{2\natural} ((\pi_{\mathrm{H}}^* h_{0,1}^* A_0) \otimes \ca{S}'_{\mu})\cong h_{2\natural} (\pi_{\mathrm{H}}^* (h_{0,1}^* A_0 \otimes \ca{S}'_{\mu_0})) 
\cong \pi^* T_{\mu_0} (A_0) 
\end{align}
using \cite[Proposition VII.3.1]{FaScGeomLLC}. 
We have the diagram 
\begin{equation}\label{eq:G0dec}
\begin{tikzcd}
	\Bun_{G_0} \arrow[d,"\sim" sloped] & \Hck_{G_0,E_{\mu}} \arrow[d] \arrow[l,"h_{0,1,E_{\mu}}"'] \arrow[r,"h_{0,2,E_{\mu}}"] & \Bun_{G_0} \times \Div_{E_{\mu}}^1 \arrow[d,"\Delta_{\mathrm{D}}"] \\ 
	\Bun_G \times \Bun_{G^{\ab}} & \Hck_G \times \Hck_{G^{\ab},E_{\mu}} \arrow[l,"h_1 \times h^{\ab}_{1,E_{\mu}}"'] \arrow[r,"h_2 \times h^{\ab}_{2,E_{\mu}}"] & \Bun_G \times \Div_F^1 \times \Bun_{G^{\ab}} \times \Div_{E_{\mu}}^1,
\end{tikzcd} 
\end{equation}
where $\Delta_{\mathrm{D}}$ is induced by the diagonal morphism $\Div_{E_{\mu}}^1 \to \Div_{E_{\mu}}^1 \times \Div_{E_{\mu}}^1$ together with the projection $\Div_{E_{\mu}}^1 \to \Div_{F}^1$ and the other morphisms are natural ones. 
The right square in \eqref{eq:G0dec} is Cartesian. 
Let 
\begin{equation*}
\begin{tikzcd}
	\Hck_{G_0}  & \Hck_{G_0,E_{\mu}} \arrow[d,"p_{\mathrm{H},G,E_{\mu}/E}"] \arrow[l,"p_{\mathrm{H},E_{\mu}/E}"'] \arrow[r,"p_{\mathrm{H},G^{ab}}"] & \Hck_{G^{\ab},E_{\mu}} \\ 
	& \Hck_G  & 
\end{tikzcd} 
\end{equation*}
be the natural projections. Then we have 
\begin{equation*}
p_{\mathrm{H},E_{\mu}/E}^* (\ca{S}'_{\mu_0,G_0}) \cong p_{\mathrm{H},G,E_{\mu}/E}^* (\ca{S}'_{\mu,G}) \otimes p_{\mathrm{H},G^{ab}}^* (\ca{S}'_{\mu^{\ab},G^{\ab},E_{\mu}} ) 
\end{equation*}
since \begin{align*}
r_{\mu_0,G_0}|_{\wh{G}_0 \rtimes W_{E_{\mu}}} &\cong 
( \Ind_{\wh{G}_0 \rtimes W_{E_{\mu}}}^{\wh{G}_0 \rtimes W_{F}} r_{\mu_0,G_0,E_{\mu}} )|_{\wh{G}_0 \rtimes W_{E_{\mu}}}\\ 
&\cong ( \Ind_{\wh{G}_0 \rtimes W_{E_{\mu}}}^{\wh{G}_0 \rtimes W_{F}} (r_{\mu,G,E_{\mu}} \otimes r_{\mu^{\ab},G^{\ab},E_{\mu}}) )|_{\wh{G}_0 \rtimes W_{E_{\mu}}} \\ 
&\cong 
( \Ind_{\wh{G}_0 \rtimes W_{E_{\mu}}}^{\wh{G}_0 \rtimes W_{F}} r_{\mu,G,E_{\mu}})|_{\wh{G}_0 \rtimes W_{E_{\mu}}}  \otimes r_{\mu^{\ab},G^{\ab},E_{\mu}} \\ 
&\cong r_{\mu,G}|_{\wh{G}_0 \rtimes W_{E_{\mu}}}  \otimes r_{\mu^{\ab},G^{\ab},E_{\mu}} . 
\end{align*}
Therefore, we have 
\begin{equation}\label{eq:Tmumuab}
 p_{E_{\mu}/F}^* T_{\mu_0}(A \boxtimes A') \cong \Delta_{\mathrm{D}}^* (T_{\mu}(A) \boxtimes T_{\mu^{\ab},E_{\mu}}(A') ) 
\end{equation}
using \cite[Proposition VII.3.1]{FaScGeomLLC}. 
By Lemma \ref{lem:HecTorus}, we have 
\begin{equation}\label{eq:Tmuab}
i^{b'^{\ab}*}T_{\mu^{\ab},E_{\mu}}i^{b^{\ab}}_* [\chi_b]\cong \chi_{b'} \otimes \chi_{\mu}. 
\end{equation}
Let $b_0$ and $b_0'$ be the image of $b$ and $b'$ in $G_0$. 
We show that 
\begin{equation}\label{eq:ib*rhochi}
 i^{b}_* [\rho \otimes \chi_b] \cong 
 \pi_{\mathrm{B}}^* i^{b_0}_* [\rho \boxtimes \chi] . 
\end{equation}
We take a compact open subgroup $K$ of $G^{\ab}(F)$ such that $\chi$ is trivial on $K$. 
We put 
\[
 \Bun_{G}^{b^{ab}} := (\Bun_G \times \Bun_{G^{\ab}}^{b^{\ab}}) \times_{\Bun_{G_0}} \Bun_G 
\]
and consider the diagaram 
\begin{equation}\label{eq:BunGbbab}
\begin{tikzcd}
	\Bun_G^b \arrow[d,"\pi_{\mathrm{B}}^b"] \arrow[r,"i^{b,b^{\ab}}"] & \Bun_G^{b^{\ab}} \arrow[d,"\pi_{\mathrm{B}}^{b^{\ab}}"] \\ 
 	\Bun_G^b \times \Bun_{G^{\ab}}^{b^{\ab}} \arrow[d,"\pi_{K}^b"] \arrow[r,"i^{b_0,b^{\ab}}"] & \Bun_G \times \Bun_{G^{\ab}}^{b^{\ab}} \arrow[d,"\pi_{K}^{b^{\ab}}"] \\  
   	\Bun_G^b \times [*/(\ul{G^{\ab}(F)/K})] \arrow[r,"i^{b_0}_K"] & \Bun_G \times [*/(\ul{G^{\ab}(F)/K})]  
\end{tikzcd} 
\end{equation}
given by the natural morphisms. Both squares in \eqref{eq:BunGbbab} are Cartesian. In the Cartesian squares 
\begin{equation*}
\begin{tikzcd}
	* \times_{[*/(\ul{G^{\ab}(F)/K})]} \Bun_G^{b^{\ab}} \arrow[d,"\pi'"] \arrow[r] & \Bun_G^{b^{\ab}} \arrow[d,"\pi_{K}^{b^{\ab}} \circ \pi_{\mathrm{B}}^{b^{\ab}}"] \\ 
 	\Bun_G \arrow[d] \arrow[r] & \Bun_G \times [*/(\ul{G^{\ab}(F)/K})] \arrow[d] \\ 
   	* \arrow[r] & {[*/(\ul{G^{\ab}(F)/K})]}, 
\end{tikzcd} 
\end{equation*}
the morphism $\pi'$ is equal to 
\[
 * \times_{[*/(\ul{G^{\ab}(F)/K})]} \Bun_G^{b^{\ab}} \lra \Bun_G^{b^{\ab}} \hookrightarrow \Bun_G, 
\]
where the second morphism is the natural open and closed immersion. 
Hence $\pi_{K}^{b^{\ab}} \circ \pi_{\mathrm{B}}^{b^{\ab}}$ is $\ell$-cohomologically smooth since $\ast \ra [\ast/(\ul{G^{\ab}(F)/K})]$ is $\ell$-cohomologically smooth. 
Since $\Bun_G \times \Bun_{G^{\ab}}^{b^{\ab}} \hookrightarrow \Bun_{G_0}$ is an open and closed immersion, to show \eqref{eq:ib*rhochi}, it suffices to show 
\begin{equation*}
 i^{b,b^{\ab}}_* \pi_{\mathrm{B}}^{b *} [\rho \boxtimes \chi] \cong 
 \pi_{\mathrm{B}}^{b^{\ab} *} i^{b_0,b^{\ab}}_*  [\rho \boxtimes \chi].
\end{equation*} 
Let $[\chi]_K$ be the sheaf on $[*/(\ul{G^{\ab}(F)/K})]$ determined by $\chi$. 
Then we have 
\begin{align*}
  i^{b,b^{\ab}}_* \pi_{\mathrm{B}}^{b *} [\rho \boxtimes \chi] &\cong 
  i^{b,b^{\ab}}_* (\pi_K^b \circ \pi_{\mathrm{B}}^{b})^* ([\rho] \boxtimes [\chi]_K) \\
  &\cong (\pi_{K}^{b^{\ab}} \circ \pi_{\mathrm{B}}^{b^{\ab}})^* i_{K *}^{b_0}([\rho] \boxtimes [\chi]_K) \\ 
  &\cong (\pi_{K}^{b^{\ab}} \circ \pi_{\mathrm{B}}^{b^{\ab}})^* (i_{K *}^{b_0}([\rho] \boxtimes [1]_K) \otimes ([1] \boxtimes [\chi]_K))\\ 
  &\cong (\pi_{K}^{b^{\ab}} \circ \pi_{\mathrm{B}}^{b^{\ab}})^* (i^b_*[\rho] \boxtimes [\chi]_K) \\ 
  &\cong \pi_{\mathrm{B}}^{b^{\ab} *} (i^b_*[\rho] \boxtimes [\chi])  
  \cong 
 \pi_{\mathrm{B}}^{b^{\ab} *} i^{b_0,b^{\ab}}_*  [\rho \boxtimes \chi], 
\end{align*}
where we use \cite[Lemma 1.6]{ImaConv} at the second isomorphism, Lemma \ref{lem:projflc} at the third isomorphism, the $\ell$-cohomologically smoothness of $[\ast/(\ul{G^{\ab}(F)/K})] \ra \ast$ and \cite[Lemma 1.6]{ImaConv} at the fourth isomorphism. 
Thus we have proved \eqref{eq:ib*rhochi}. 
Let 
\[
 \pi_{E_{\mu}} \colon \Bun_G \times \Div_{E_{\mu}}^1 \to \Bun_{G_0} \times \Div_{E_{\mu}}^1 
\] 
be the base change of $\pi$ and 
\[
 \pi_{\ab,E_{\mu}} \colon \Bun_G \times \Div_{E_{\mu}}^1 \to \Bun_{G^{\ab}} \times \Div_{E_{\mu}}^1 
\] 
the natural morphism. 
Then we have 
\begin{align*}
\varinjlim_{K' \subset G_{b'}(F)} 
R\Hom_{G_b(F)} &(R\Gamma_{\mathrm{c}}(\Sht_{G,b,b',K'}^{\mu}),\rho \otimes \chi_b) \cong 
i^{b'!} p_{E_{\mu}/F}^* T_{\mu} i^{b}_* [\rho \otimes \chi_b] (\delta_{b'})[2d_{b'}] \\ 
&\cong i^{b' !} p_{E_{\mu}/F}^* T_{\mu} \pi_{\mathrm{B}}^* i^{b_0}_* [\rho \boxtimes \chi] (\delta_{b'})[2d_{b'}]\\ &\cong i^{b' !} p_{E_{\mu}/F}^* \pi^* T_{\mu_0}  i^{b_0}_* [\rho \boxtimes \chi] (\delta_{b'})[2d_{b'}]\\  &\cong i^{b' !} \pi_{E_{\mu}}^* \Delta_{\mathrm{D}}^* ( T_{\mu} i^{b}_* [\rho] \boxtimes T_{\mu^{\ab},E_{\mu}}i^{b^{\ab}}_* [\chi])  (\delta_{b'})[2d_{b'}]\\ 
&\cong (i^{b' !} p_{E_{\mu}/F}^* T_{\mu} i^{b}_* [\rho]) (\delta_{b'})[2d_{b'}]\otimes i^{b' *} \pi_{\ab,E_{\mu}}^* T_{\mu^{\ab},E_{\mu}}i^{b^{\ab}}_* [\chi] \\ 
&\cong (i^{b' !} p_{E_{\mu}/F}^* T_{\mu} i^{b}_* [\rho]) (\delta_{b'})[2d_{b'}]\otimes \chi_{b'} \otimes \chi_{\mu} \\ 
&\cong 
\varinjlim_{K' \subset G_{b'}(F)} R\Hom_{G_b(F)} (R\Gamma_{\mathrm{c}}(\Sht_{G,b,b',K'}^{\mu}),\rho ) \otimes \chi_{b'} \otimes \chi_{\mu}, 
\end{align*}
where we use Proposition \ref{prop: compactsuppcohom} at the first and last isomorphism, and 
\eqref{eq:ib*rhochi}, \eqref{eq:Tmumu0}, \eqref{eq:Tmumuab}, Lemma \ref{lem:ibuppshrast} and \eqref{eq:Tmuab} at the second, third, fourth, fifth and sixth isomorphisms respectively. 
\end{proof}

Assume that $F$ is $p$-adic, $G$ is quasi-split, 
$([b],[b'],\mu)$ is Hodge--Newton reducible (\cite[Definition 4.5]{GINsemi}) for $L$ with reductions $[\theta],[\theta'] \in B(L)$ of $b$ and $b'$ to $B(L)$, 
and $b'$ is basic. 
We assume that $L \supset L^b$ and $P=LP^{b,-}$. 
We sometimes view $\delta_{P,\theta} \colon L_{\theta}(F) \to \Lambda^{\times}$ as a character of $G_b(F)$ via $G_b(F) \cong L_{\theta}(F)$. 
Let $P'$ be the parabolic subgroup of $G_{b'}$ corresponding to $P \subset G$. We note that the Levi subgroup of $P'$ corresponding to $L \subset P$ is $L_{\theta'}$. 
Let $\mathring{\Sht}_{G,b,b'}^{\mu}$ be the open subspace of $\Sht_{G,b,b'}^{\mu}$ defined by the condition that the meromorphy of the modification is equal to $\mu$. 

\begin{prop}\label{prop:RGammaparab}
We have an isomorphism 
\[
R\Gamma_{\mathrm{c}}(\mathring{\Sht}_{G,b,b'}^{\mu}) \cong 
\Ind_{P'(F)}^{G_{b'}(F)} R\Gamma_{\mathrm{c}}(\mathring{\Sht}_{L,\theta,\theta'}^{\mu}) \otimes \delta_{P,\theta} \otimes || \cdot ||^{-\frac{d_{P,\theta}}{2}}[d_{P,\theta}]
\]
which is compatible with the actions of $G_b(F) \cong L_{\theta}(F)$ and $W_{E_{\mu}}$. 
\end{prop}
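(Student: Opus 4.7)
The plan is to reduce the claim to a purely Hecke-theoretic statement on $\Bun_G$, then transfer it to the Levi $L$ via the parabolic $P$ under the Hodge--Newton assumption, absorbing the discrepancy into the modulus character (from Proposition \ref{prop:KBunPtheta}) and the standard Satake shifts. The strategy mirrors the usual approach to Harris--Viehmann: realise the shtuka cohomology as a stalk of a Hecke operator, use geometric parabolic descent to push it down to $\Bun_L$, and identify the corrections via the dualizing-complex computation of Theorem \ref{thm:intmain}.

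First, I would rewrite $R\Gamma_{c}(\mathring{\Sht}_{G,b,b'}^{\mu})$ as the stalk at $b'$ of $T^{\circ}_{\mu,G}(i^{b}_{!}\Lambda)$, where $T^{\circ}_{\mu,G}$ is the version of the Hecke operator using the open Schubert cell for $\mu$. Since $b'$ is basic, $\Bun_{G}^{b'}$ is open and this stalk can be computed directly. Hodge--Newton reducibility together with $L \supset L^{b}$ and $P = L P^{b,-}$ forces every $\mu$-modification between $\ca{E}_{b}$ and $\ca{E}_{b'}$ to respect a reduction to $P$; consequently the locally closed piece of $\Hck_{G}$ contributing to the stalk factors geometrically through $\Hck_{P}$, sitting over $\Hck_{L}$ via $\mf{q}_{P}$.

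Second, I would apply the parabolic decomposition of \cite[Lemma~4.18, Theorem~4.26]{GINsemi}: on the relevant HN-strata, their results yield an isomorphism relating $T^{\circ}_{\mu,G}$ restricted through $\mf{p}_{P}$ to $T^{\circ}_{\mu,L}$, up to a character $\kappa$. By Corollary \ref{cor:cptcohch}(2), $\kappa$ is precisely $\delta_{P,\theta}$. After pushing forward along $\mf{p}_{P}$, Corollary \ref{cor:qPidentify} applies (since $P = L P^{b,-}$ guarantees that $\nu_{\theta}$ acts with non-positive weights on $\Lie U$) and identifies the restricted map $\Bun_{P}^{\theta'} \to \Bun_{G}^{b'}$ with $[*/\wt{G}_{b'}] \to [*/\ul{G_{b'}(F)}]$ coming from the subgroup $P'(F) \subset G_{b'}(F)$; on representations, the associated pushforward is exactly parabolic induction $\Ind_{P'(F)}^{G_{b'}(F)}$. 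Combining these two inputs produces the identification of $R\Gamma_{c}(\mathring{\Sht}_{G,b,b'}^{\mu})$ with $\Ind_{P'(F)}^{G_{b'}(F)}$ applied to the corresponding cohomology for $L$, twisted by $\delta_{P,\theta}$, and is compatible with the $G_{b}(F) \cong L_{\theta}(F)$-action by construction.

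Third, I would account for the remaining factor $\lVert\cdot\rVert^{-d_{P,\theta}/2}[d_{P,\theta}]$. The geometric Satake sheaves $\ca{S}'_{\mu,G}$ and $\ca{S}'_{\mu,L}$ differ by a perverse shift equal to $\langle 2\rho_{G} - 2\rho_{L}, \mu\rangle$, which under Hodge--Newton reducibility matches $d_{P,\theta}$; half of this shift is absorbed into the Tate twist (i.e.\ the norm character $\lVert\cdot\rVert^{-d_{P,\theta}/2}$) via the fixed choice of $\sqrt{q}$, while the other half remains as the cohomological shift $[d_{P,\theta}]$. This is also consistent with Proposition \ref{prop: qissmooth}, which gives $\mf{q}_{P}^{(\theta)}$ pure $\ell$-dimension $d_{P,\theta}$, so that the dualizing-complex contribution from $\mf{p}_{P}$ is exactly $\delta_{P,\theta}[2d_{P,\theta}]$ (Proposition \ref{prop:KBunPtheta}), half of which was already absorbed into the unnormalised $\Ind$ appearing in the statement. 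The $W_{E_{\mu}}$-equivariance is automatic from the naturality of the geometric Satake action.

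The main obstacle will be the second step: genuinely isolating the \emph{diagonal} contribution through $P$ in the Hecke correspondence and ruling out contributions from non-$P$-reductions. This is exactly the content of \cite[Theorem~4.26]{GINsemi} in the Hodge--Newton reducible setting; once it is invoked, the identification of its character $\kappa$ with $\delta_{P,\theta}$ follows directly from Proposition \ref{prop: compactsuppcohomU} / Corollary \ref{cor:cptcohch}, and the remaining argument is bookkeeping of the normalisations and shifts.
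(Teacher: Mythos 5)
Your overall strategy tracks the paper's proof: realize the shtuka cohomology as a stalk of a Hecke operator, invoke the parabolic descent of \cite[Corollary~4.14, Proposition~4.24, Theorem~4.26]{GINsemi}, identify the undetermined character there with $\delta_{P,\theta}$ via Corollary~\ref{cor:cptcohch}, and finish with a normalisation comparison between the Satake sheaves on $\Hck_G$ and $\Hck_L$. This is essentially what the paper does, so there is no conceptual gap in the plan.

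However, the bookkeeping in your third step is off. The open Schubert-stratum restriction of $\ca{S}_{\mu,G,E}$ is $\Lambda[d_\mu](\tfrac{d_\mu}{2})$ with $d_\mu=\langle 2\rho_G,\mu\rangle$, and similarly for $L$, so the discrepancy is governed by $d_{\mu_L}-d_\mu=\langle 2\rho_L-2\rho_G,\mu\rangle$, which is what the paper identifies with $d_{P,\theta}$. You instead claim the relevant quantity is $\langle 2\rho_G-2\rho_L,\mu\rangle$ and that it ``matches $d_{P,\theta}$''; since $P=LP^{b,-}$ with $\nu_b$ dominant, the roots of $\Lie U$ are \emph{negative}, so $\xi_P$ is a sum of negative roots and $d_{P,\theta}=\langle\xi_P,\bar\theta\rangle=\langle\xi_P,\mu^{\ab}\rangle\le 0$, the \emph{opposite} sign to $\langle 2\rho_G-2\rho_L,\mu\rangle\ge 0$. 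This sign error propagates through the claim that ``half of the shift is absorbed into the Tate twist while the other half remains as $[d_{P,\theta}]$.'' Two smaller points: the character $\delta_{P,\theta}$ in this proof enters as the $\kappa$ of \cite{GINsemi} via Corollary~\ref{cor:cptcohch}, not from an extra ``dualizing-complex contribution from $\mf{p}_P$'' — citing both double-counts; and Corollary~\ref{cor:qPidentify} concerns $\mf{q}_P^\theta\colon\Bun_P^\theta\to\Bun_L^\theta$ (under a non-positivity assumption on $\Lie U$), not the restriction of $\mf{p}_P$ over the basic stratum $\Bun_G^{b'}$ which is what geometrically realizes $\Ind_{P'(F)}^{G_{b'}(F)}$; the correct mechanism is the one used in Lemma~\ref{lem: sloperzeroeisensteincalc}.
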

\begin{proof}
This follows from \cite[Corollary 4.14, Proposition 4.24]{GINsemi} and Corollary \ref{cor:cptcohch}, as in the proof of \cite[Theorem 4.26]{GINsemi}. In particular, we note that we have isomorphisms
\[ \varinjlim_{K' \subset G_{b'}(F)} R\Gamma_{\mathrm{c}}(\mathring{\Sht}_{G,b,b',K'}^{\mu},\Lambda[d_{\mu}](\frac{d_{\mu}}{2})) \simeq R\Gamma_{c}(\mathring{\Sht}_{G,b,b'}^{\mu}), \]
since the restriction of the sheaf $\mathcal{S}_{\mu,G,E}$ to the locally closed substack of $\Hck_{G,\mu,E}$ parametrizing modifications of meromorphy exactly equal to $\mu$ identifies with $\Lambda[d_{\mu}](\frac{d_{\mu}}{2})$, by definition (cf. \cite[Proposition~VI.7.5]{FaScGeomLLC}). Here $d_{\mu} := \langle 2\rho_{G}, \mu \rangle$, where $\rho_{G}$ is the half-sum of all positive roots. The LHS is in turn isomorphic to 
\[ \varinjlim_{K' \subset G_{b'}(F)} R\Gamma_{\mathrm{c}}(\mathring{\Sht}_{G,b,b',K'}^{\mu},\Lambda)[d_{\mu}] \otimes || \cdot ||^{\frac{d_{\mu}}{2}}.  \]
Similarly, we have an identification
\[ \varinjlim_{K' \subset L_{\theta'}(F)} R\Gamma_{\mathrm{c}}(\mathring{\Sht}_{L,\theta,\theta',K'}^{\mu},\Lambda)[d_{\mu_L}] \otimes || \cdot ||^{\frac{d_{\mu_{L}}}{2}}  \simeq R\Gamma_{\mathrm{c}}(\mathring{\Sht}_{L,\theta,\theta'}^{\mu}). \]
Then, using the identity $d_{P,\theta} = d_{\mu_{L}} - d_{\mu}$, we deduce the claimed relationship from the analysis in \cite{GINsemi}.
\end{proof}

\begin{thm}{\label{thm: HarrisViehmannConj}}
Assume that $\mu$ is minuscule. 
Let $\rho$ be a smooth representation of $G_b(F)$. 
We view $\rho$ also as a representation of $L_{\theta}(F)$ via the canonical isomorphism $G_b \cong L_{\theta}$. 
We have 
\begin{align*}
\varinjlim_{K' \subset G_{b'}(F)} &
R\Hom_{G_b(F)}(R\Gamma_{\mathrm{c}}(\Sht_{G,b,b',K'}^{\mu}),\rho)\\ & \cong 
\Ind_{P'(F)}^{G_{b'}(F)} \varinjlim_{K' \subset L_{\theta'}(F)} R\Hom_{L_{\theta}(F)} (R\Gamma_{\mathrm{c}}(\Sht_{L,\theta,\theta',K'}^{\mu}),\rho) \otimes || \cdot ||^{-\frac{d_{P,\theta}}{2}}[-d_{P,\theta}] 
\end{align*}
as objects in the derived category of $G_{b}(F) \times W_{E_{\mu}}$-representations, where $G_{b}(F)$ acts smoothly and $W_{E_{\mu}}$ acts continuously.
In particular, we have 
\begin{align*}
\sum_{i,j}&(-1)^{i+j} [\varinjlim_{K' \subset G_{b'}(F)} \Ext^j_{G_b(F)} (H_{\mathrm{c}}^{i}(\Sht_{G,b,b',K'}^{\mu},\Lambda),\rho)] \\ 
&=   
\Ind_{P'(F)}^{G_{b'}(F)} \sum_{i,j}(-1)^{i+j} [\varinjlim_{K' \subset L_{\theta'}(F)} \Ext^j_{L_{\theta}(F)} (H_{\mathrm{c}}^i(\Sht_{L,\theta,\theta',K'}^{\mu},\Lambda),\rho) \otimes || \cdot ||^{-d_{P,\theta}}] 
\end{align*}
in the Grothendieck group of finite length admissible representations. 
\end{thm}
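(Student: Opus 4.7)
The plan is to combine Proposition~\ref{prop:RGammaparab} (expressing $R\Gamma_{\mathrm{c}}$ of the $G$-shtuka space as a parabolic induction from the $L$-shtuka space with a character twist) with Proposition~\ref{prop:cohtwist} (transferring character twists through $R\Hom$), and then to cancel the accumulated twists using the Hodge--Newton/Kottwitz relation together with local class field theory. First, since $\mu$ is minuscule one has $\Sht_{G,b,b',K'}^{\mu}=\mathring{\Sht}_{G,b,b',K'}^{\mu}$ and similarly for $L$, so Proposition~\ref{prop:RGammaparab} applies and gives, compatibly with the $G_{b}(F)\cong L_{\theta}(F)$ and $W_{E_{\mu}}$-actions,
\[
R\Gamma_{\mathrm{c}}(\Sht_{G,b,b'}^{\mu}) \cong \Ind_{P'(F)}^{G_{b'}(F)}\bigl(R\Gamma_{\mathrm{c}}(\Sht_{L,\theta,\theta'}^{\mu}) \otimes \delta_{P,\theta} \otimes \|\cdot\|^{-d_{P,\theta}/2}\bigr)[d_{P,\theta}],
\]
where $\|\cdot\|^{-d_{P,\theta}/2}$ is a Tate twist, i.e.\ a character of $W_{E_{\mu}}$. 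Now I apply $\varinjlim_{K'}R\Hom_{G_{b}(F)}(-,\rho)$: since $G_{b}(F)$ acts on the induction only through the fiber (under $G_{b}(F)\cong L_{\theta}(F)$) and this action commutes with the $L_{\theta'}(F)\subset G_{b'}(F)$-action driving the induction, the functor $R\Hom_{G_{b}(F)}$ commutes with $\Ind_{P'(F)}^{G_{b'}(F)}$. Pulling the character $\delta_{P,\theta}$ and the Tate twist past $R\Hom$ as their inverses yields
\[
\Ind_{P'(F)}^{G_{b'}(F)} \varinjlim R\Hom_{L_{\theta}(F)}\bigl(R\Gamma_{\mathrm{c}}(\Sht_{L,\theta,\theta'}^{\mu}), \rho \otimes \delta_{P,\theta}^{-1}\bigr) \otimes \|\cdot\|^{d_{P,\theta}/2}[-d_{P,\theta}].
\]

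Next I apply Proposition~\ref{prop:cohtwist} to the group $L$ with the character $\chi = \ol{\delta}_{P}^{-1}$ of $L^{\ab}(F)$, whose restriction to $L_{\theta}(F)$ is $\delta_{P,\theta}^{-1}$. This moves the twist $\delta_{P,\theta}^{-1}$ off of $\rho$ and produces two new twists: the Levi character $\delta_{P,\theta'}^{-1}$ on $L_{\theta'}(F)$ (still sitting inside the parabolic induction) and the Weil-group character $(\ol{\delta}_{P}^{-1})_{\mu}$ on $W_{E_{\mu}}$. It then remains to check that the accumulated twists $\delta_{P,\theta'}^{-1} \otimes (\ol{\delta}_{P}^{-1})_{\mu} \otimes \|\cdot\|^{d_{P,\theta}/2}$ collapse to $\|\cdot\|^{-d_{P,\theta}/2}$ as in the RHS. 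Using Lemma~\ref{lem:HecTorus} applied to the torus $L^{\ab}$, the L-parameter of $\ol{\delta}_{P}=|\xi_{P}(\cdot)|$ together with LCFT gives $(\ol{\delta}_{P})_{\mu}\cong\|\cdot\|^{\langle\xi_{P},\ol{\mu}\rangle}$ as a $W_{E_{\mu}}$-character. The Kottwitz/Hodge--Newton relation $\ol{\theta}=\ol{\theta}'+\ol{\mu}\in\Lcoinv$ (which holds because $b'$ is basic and $\kappa_{G}(b)-\kappa_{G}(b')$ equals the image of $\mu$) then forces $\langle\xi_{P},\ol{\mu}\rangle = d_{P,\theta}-d_{P,\theta'}$, so the Weil-side contribution simplifies to $\|\cdot\|^{-d_{P,\theta}/2+d_{P,\theta'}}$. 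Combining with $\delta_{P,\theta'}^{-1}$, which under LCFT (via the natural map $L^{\ab}(F)\to G_{b'}^{\ab}(F)$) corresponds to $\|\cdot\|^{-d_{P,\theta'}}$, the total twist simplifies to $\|\cdot\|^{-d_{P,\theta}/2}$, yielding the RHS of the theorem. The ``in particular'' Grothendieck-group statement follows at once by taking alternating sums of $\Ext^{j}$.

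The main obstacle is precisely this last character-tracking step: reconciling the Levi-side modulus character $\delta_{P,\theta'}^{-1}$ (which sits inside the parabolic induction) with the Weil-side Tate twists (which sit outside), via the LCFT identification of $(\ol{\delta}_{P})_{\mu}$ coming from Lemma~\ref{lem:HecTorus} and the Hodge--Newton/Kottwitz relation between $\ol{\theta}$, $\ol{\theta}'$ and $\ol{\mu}$, together with the careful handling of the choice of $\sqrt{q}$.
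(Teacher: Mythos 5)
Your overall strategy matches the paper's: apply Proposition~\ref{prop:RGammaparab} to express $R\Gamma_{\mathrm{c}}(\Sht_{G})$ as a parabolic induction of $R\Gamma_{\mathrm{c}}(\Sht_{L})$ with a $\delta_{P,\theta}\otimes\|\cdot\|^{-d_{P,\theta}/2}$ twist and a degree shift, commute $R\Hom_{G_{b}(F)}(-,\rho)^{\mathrm{sm}}$ with $\Ind_{P'(F)}^{G_{b'}(F)}$, apply Proposition~\ref{prop:cohtwist} to remove the $\delta_{P,\theta}^{-1}$ twist on $\rho$, and then simplify. However, the character bookkeeping at the final stage contains two genuine errors.

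First, the commutation of $R\Hom_{L_{\theta}(F)}(-,\rho)^{\mathrm{sm}}$ past $\Ind_{P'(F)}^{G_{b'}(F)}$ is \emph{not} twist-free: it introduces an extra factor of $\delta_{P'}$, the modulus character of $P'\subset G_{b'}$. This is essentially the second adjunction phenomenon for parabolic functors (a duality applied to compactly-supported parabolic induction produces a modulus twist). You assert that $R\Hom_{G_b(F)}$ ``commutes with $\Ind_{P'(F)}^{G_{b'}(F)}$'' on the grounds that the two actions are on different factors; this is not sufficient and the twist is needed.

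Second, after applying Proposition~\ref{prop:cohtwist}, one is left with a factor $\delta_{P,\theta'}^{-1}$ --- a smooth character of $L_{\theta'}(F)$ sitting \emph{inside} the parabolic induction --- in addition to the Weil-side Tate twist $\chi_{\mu}$. You propose to combine $\delta_{P,\theta'}^{-1}$ with the $W_{E_{\mu}}$-side Tate twists ``under LCFT,'' identifying it with $\|\cdot\|^{-d_{P,\theta'}}$. This identification does not make sense: $\delta_{P,\theta'}^{-1}$ is a character of $L_{\theta'}(F)$ while the Tate twists are characters of $W_{E_{\mu}}$, and they act on different tensor factors of the answer. They cannot be absorbed into one another. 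The correct cancellation is $\delta_{P,\theta'}^{-1}\otimes\delta_{P'}=1$ as characters of $L_{\theta'}(F)$, which holds because $P'$ is the transfer of $P$ and hence $\ol{\delta}_{P'}$ is the composite $(L_{\theta'})^{\ab}(F)\cong L^{\ab}(F)\xrightarrow{\ol{\delta}_P}\Lambda^{\times}$.

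The two errors offset numerically: the $\delta_{P'}$ you forgot would have cancelled the $\delta_{P,\theta'}^{-1}$ you illicitly converted to a Tate twist, and your Tate-twist arithmetic ($\langle\xi_P,\ol{\mu}\rangle = d_{P,\theta}-d_{P,\theta'}$ followed by the bogus $-d_{P,\theta'}$ contribution) evaluates to the correct exponent. But as written, the argument drops a nontrivial $L_{\theta'}(F)$-character from inside the parabolic induction without justification, and the final isomorphism is reached by coincidence. Once you include the $\delta_{P'}$ twist from the commutation step and cancel it against $\delta_{P,\theta'}^{-1}$ (which also makes the $d_{P,\theta'}$ term disappear from the Tate exponent, since one only needs $d_{P,\theta}=\langle\xi_P,\mu^{\ab}\rangle$), your argument becomes a correct rendering of the paper's.
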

\begin{proof}
We have 
\begin{align*}
\varinjlim_{K' \subset G_{b'}(F)} &
R\Hom_{G_b(F)}(R\Gamma_{\mathrm{c}}(\Sht_{G,b,b',K'}^{\mu}),\rho) \\ 
&\cong  
R\Hom_{G_b(F)}(\varinjlim_{K' \subset G_{b'}(F)} R\Gamma_{\mathrm{c}}(\Sht_{G,b,b',K'}^{\mu}),\rho)^{\mathrm{sm}} \\ 
&\cong  
R\Hom_{G_b(F)}(R\Gamma_{\mathrm{c}}(\Sht_{G,b,b'}^{\mu}),\rho)^{\mathrm{sm}} \\ 
&\cong  
R\Hom_{L_{\theta}(F)}(\Ind_{P'(F)}^{G_{b'}(F)} R\Gamma_{\mathrm{c}}(\Sht_{L,\theta,\theta'}^{\mu}) \otimes \delta_{P,\theta} \otimes || \cdot ||^{-\frac{d_{P,\theta}}{2}}[d_{P,\theta}],\rho)^{\mathrm{sm}} \\ 
&\cong R\Hom_{L_{\theta}(F)} (\Ind_{P'(F)}^{G_{b'}(F)}  R\Gamma_{\mathrm{c}}(\Sht_{L,\theta,\theta'}^{\mu}) ,\rho \otimes \delta_{P,\theta}^{-1})^{\mathrm{sm}} \otimes || \cdot ||^{\frac{d_{P,\theta}}{2}}[-d_{P,\theta}]\\ 
&\cong  
\Ind_{P'(F)}^{G_{b'}(F)} R\Hom_{L_{\theta}(F)}(R\Gamma_{\mathrm{c}}(\Sht_{L,\theta,\theta'}^{\mu}) ,\rho \otimes \delta_{P,\theta}^{-1})^{\mathrm{sm}} \otimes \delta_{P'} \otimes || \cdot ||^{\frac{d_{P,\theta}}{2}}[-d_{P,\theta}],
\end{align*}
where we use Proposition \ref{prop:RGammaparab} at the third isomorphism. 
Further we have 
\begin{align*}
&R\Hom_{L_{\theta}(F)}(R\Gamma_{\mathrm{c}}(\Sht_{L,\theta,\theta'}^{\mu}) ,\rho \otimes \delta_{P,\theta}^{-1})^{\mathrm{sm}} \otimes \delta_{P'} \otimes || \cdot ||^{\frac{d_{P,\theta}}{2}}[-d_{P,\theta}]\\ 
&\cong  
R\Hom_{L_{\theta}(F)}(\varinjlim_{K' \subset L_{\theta'}(F)} R\Gamma_{\mathrm{c}}(\Sht_{L,\theta,\theta',K'}^{\mu}) ,\rho \otimes \delta_{P,\theta}^{-1})^{\mathrm{sm}} \otimes \delta_{P'} \otimes || \cdot ||^{\frac{d_{P,\theta}}{2}}[-d_{P,\theta}] \\ 
&\cong  
\varinjlim_{K' \subset L_{\theta'}(F)} R\Hom_{L_{\theta}(F)}( R\Gamma_{\mathrm{c}}(\Sht_{L,\theta,\theta',K'}^{\mu}) ,\rho \otimes \delta_{P,\theta}^{-1}) \otimes \delta_{P'} \otimes || \cdot ||^{\frac{d_{P,\theta}}{2}}[-d_{P,\theta}] \\ 
&\cong  
\varinjlim_{K' \subset L_{\theta'}(F)} R\Hom_{L_{\theta}(F)}( R\Gamma_{\mathrm{c}}(\Sht_{L,\theta,\theta',K'}^{\mu}) ,\rho) 
\otimes ( \delta_{P,\theta'}^{-1} \otimes \delta_{P'}) 
\otimes || \cdot ||^{\frac{d_{P,\theta}}{2}-\langle \xi_P, \mu^{\ab} \rangle } [-d_{P,\theta}], 
\end{align*}
where we use Proposition \ref{prop:cohtwist} at the last isomorphism. 
We have $\delta_{P,\theta'}= \delta_{P'}$ as representations of $L_{\theta'}(F)$ because 
$(L_{\theta'})^{\ab}(F) \cong L^{\ab}(F) \xrightarrow{\ol{\delta}_P} \Lambda^{\times}$ is equal to $\ol{\delta}_{P'}$. 
We have also $d_{P,\theta}=\langle \xi_P, \bar{\theta} \rangle=\langle \xi_P, \mu^{\ab} \rangle$. 
Therefore we obtain the claim. 
\end{proof}
\begin{rem}
In the above claim, we used the notation $R\Hom_{G_{b}(F)}(-,-)^{\mathrm{sm}}$ to indicate that this is coming from an external Hom in the category of smooth representations of $G_{b}(F)$. For the more homotopically inclined, a precise definition of this can be given as follows. We consider the $\infty$-categorical enhancement of the derived category of smooth $G_{b}(F) \times G_{b'}(F)$-representations $\mathcal{D}(G_{b}(F) \times G_{b'}(F),\Lambda)$, which is a presentable stable $\Lambda$-linear $\infty$-category. Since the categories $\mathcal{D}(G_{b}(F),\Lambda)$ and $\mathcal{D}(G_{b'}(F),\Lambda)$ of smooth representations are compactly generated, we have an isomorphism 
\[ \mathcal{D}(G_{b}(F) \times G_{b'}(F),\Lambda) \simeq \mathcal{D}(G_{b}(F),\Lambda) \otimes_{\mathcal{D}(\Lambda)} \mathcal{D}(G_{b'}(F),\Lambda) \]
(see \cite[Remark~V.7.3]{FaScGeomLLC} and the proof of \cite[Proposition~V.7.2]{FaScGeomLLC} for details). Here $\mathcal{D}(\Lambda)$ is the $\infty$-categorical enhancement of the derived category of $\Lambda$-modules and $\otimes$ denotes the Lurie tensor product. We consider the functor
\[ \mathcal{D}(G_{b}(F) \times G_{b'}(F),\Lambda)^{\mathrm{op}} \times \mathcal{D}(G_{b}(F) \times G_{b'}(F),\Lambda) \ra \mathcal{D}(G_{b}(F) \times G_{b'}(F),\Lambda)  \]
given by internal Hom in the category of smooth $G_{b}(F) \times G_{b'}(F)$-representations. We compose this with the functor 
\[ \mathcal{D}(G_{b}(F),\Lambda) \otimes \mathcal{D}(G_{b'}(F),\Lambda) \ra \mathcal{D}(G_{b'}(F),\Lambda)    \]
given by tensoring the global section functor $\mathcal{D}(G_{b}(F),\Lambda) \ra \mathcal{D}(\Lambda)$ by $\mathcal{D}(G_{b'}(F),\Lambda)$. For $A,B \in \mathcal{D}(G_{b}(F) \times G_{b'}(F),\Lambda)$, the value of this functor on $(A,B)$ is $R\Hom_{G_{b}(F)}(A,B)^{\mathrm{sm}}$, where above we regarded $\rho$ as a smooth $G_{b}(F) \times G_{b'}(F)$-representation by inflating along the natural projection map $G_{b}(F) \times G_{b'}(F) \ra G_{b}(F)$. To be even more precise, by viewing the above categories as condensed $\infty$-categories $\mathcal{C}$, we can also form the set of objects with a continuous $W_{E_{\mu}}$-action denoted $\mathcal{C}^{BW_{E_{\mu}}}$, as  in \cite[Section~IX.1]{FaScGeomLLC}. Then one can similarly see that one also gets an induced functor
\[ \mathcal{D}(G_{b}(F) \times G_{b'}(F),\Lambda)^{\mathrm{op},BW_{E_{\mu}}} \times \mathcal{D}(G_{b}(F) \times G_{b'}(F),\Lambda)^{BW_{E_{\mu}}} \ra \mathcal{D}(G_{b'}(F),\Lambda)^{BW_{E_{\mu}}}, \]
which shows that the above operation also respects the continuous $W_{E_{\mu}}$-action.
\end{rem}

\subsection{Geometric Eisenstein series}{\label{sec: Eisensteinseries}}
We again consider the diagram 
\[ \begin{tikzcd}
\Bun_{P} \arrow[r,"\mf{p}_{P}"] \arrow[d,"\mf{q}_{P}"]  & \Bun_{G} \\
\Bun_{L}, & 
\end{tikzcd} \] 
and attach to it the unnormalized Eisenstein functor 
\[ \EisP^{G}(-)  := \mf{p}_{P!}\mf{q}_{P}^{*}(-)[\dim(\Bun_{P})]\colon D(\Bun_{L}) \ra D(\Bun_{G}), \]
where $\dim(\Bun_{P})\colon |\Bun_{P}| \ra \mathbb{Z}$ records the $\ell$-cohomological dimension of the cohomologically smooth $v$-stack $\Bun_{P}$ over the base. We saw in Proposition \ref{prop: qissmooth} that this is constant on each connected component indexed by $\theta \in B(L)_{\basic}$ with value equal to $d_{P,\theta} = \langle \xi_{P},\ol{\theta} \rangle$, where we recall $\ol{\theta} \in B(L^{\mathrm{ab}})_{\basic} \cong \Lcoinv$ is the corresponding element. 

We assume now that $\Lambda$ admits a square root of $q$, which we now fix. This allows us to make the following definition. 
\begin{defn}
We define the sheaf $\IC_{\Bun_{P}} := \mf{q}_{P}^{*}(\Delta_{P}^{1/2})[\dim(\Bun_{P})] \in D(\Bun_{P})$, where $\Delta_{P}^{1/2} \in D(\Bun_{L})$ is the pullback of the sheaf on $\Bun_{L^{\mathrm{ab}}}$ whose value on each connected component is given by $\ol{\delta}_{P}^{1/2}$. 
\end{defn}
We note the following corollary of Theorem \ref{thm: main}. 
\begin{cor}
The sheaf $\IC_{\Bun_{P}}$ on $\Bun_{P}$ is Verdier self-dual.
\end{cor}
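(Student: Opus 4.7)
The plan is a direct computation from the formula of Theorem \ref{thm:intmain}. By definition, $\mathbb{D}_{\Bun_{P}}(\mathcal{F}) := R\mathcal{H}om(\mathcal{F}, K_{\Bun_{P}})$, so I would start by expanding
\[
\mathbb{D}_{\Bun_{P}}(\IC_{\Bun_{P}}) \cong R\mathcal{H}om\bigl(\mf{q}_{P}^{*}(\Delta_{P}^{1/2})[\dim(\Bun_{P})],\, K_{\Bun_{P}}\bigr).
\]
Since $\Delta_{P}^{1/2}$ is by construction a rank-one \'etale local system on $\Bun_{L}$ (the pullback along $\ol{\mf{q}}_{L}$ of the character sheaf $\ol{\delta}_{P}^{1/2}$ on $\Bun_{L^{\mathrm{ab}}} \cong \bigsqcup [\ast/\ul{L^{\mathrm{ab}}(F)}]$, using the fixed square root of $q$ in $\Lambda$), its pullback $\mf{q}_{P}^{*}(\Delta_{P}^{1/2})$ is invertible on $\Bun_{P}$. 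Hence $R\mathcal{H}om$ against it reduces to tensoring with the inverse (and shifting), giving
\[
\mathbb{D}_{\Bun_{P}}(\IC_{\Bun_{P}}) \cong \mf{q}_{P}^{*}(\Delta_{P}^{-1/2}) \otimes K_{\Bun_{P}}[-\dim(\Bun_{P})].
\]

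Next I would substitute Theorem \ref{thm:intmain} to rewrite $K_{\Bun_{P}}$ as $\mf{q}_{P}^{*}(\Delta_{P})[2\dim(\Bun_{P})]$, and then use the multiplicativity of the character sheaf (reflecting the identity $\ol{\delta}_{P}^{-1/2} \cdot \ol{\delta}_{P} = \ol{\delta}_{P}^{1/2}$ on each connected component of $\Bun_{L^{\mathrm{ab}}}$) together with the compatibility of $\mf{q}_{P}^{*}$ with tensor products to obtain $\mf{q}_{P}^{*}(\Delta_{P}^{-1/2}) \otimes \mf{q}_{P}^{*}(\Delta_{P}) \cong \mf{q}_{P}^{*}(\Delta_{P}^{1/2})$. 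Collecting the shifts $2\dim(\Bun_{P}) - \dim(\Bun_{P}) = \dim(\Bun_{P})$ then yields $\mathbb{D}_{\Bun_{P}}(\IC_{\Bun_{P}}) \cong \mf{q}_{P}^{*}(\Delta_{P}^{1/2})[\dim(\Bun_{P})] = \IC_{\Bun_{P}}$, as desired.

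Given that Theorem \ref{thm:intmain} does all the heavy lifting, there is no serious obstacle; the only thing to verify carefully is that $\Delta_{P}^{1/2}$ really is an invertible local system, so that $R\mathcal{H}om(\mf{q}_{P}^{*}(\Delta_{P}^{1/2}), -)$ computes to $\mf{q}_{P}^{*}(\Delta_{P}^{-1/2}) \otimes (-)$. This is immediate from the fact that $\ol{\delta}_{P}^{1/2}\colon L^{\mathrm{ab}}(F) \to \Lambda^{\times}$ is a well-defined smooth character (requiring the chosen square root of $q$), whose associated sheaf on $\bigsqcup [\ast/\ul{L^{\mathrm{ab}}(F)}]$ is an \'etale rank-one local system. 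Thus the self-duality is essentially a one-line consequence of the main theorem together with the existence of $\ol{\delta}_{P}^{1/2}$.
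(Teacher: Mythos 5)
Your proposal is correct and is precisely the argument the paper leaves implicit: since $\Delta_{P}^{1/2}$ is a rank-one \'etale local system, $R\mathcal{H}om(\mf{q}_{P}^{*}(\Delta_{P}^{1/2}), -)$ is tensoring with $\mf{q}_{P}^{*}(\Delta_{P}^{-1/2})$, and substituting $K_{\Bun_{P}} \cong \mf{q}_{P}^{*}(\Delta_{P})[2\dim(\Bun_{P})]$ from Theorem \ref{thm: main} gives back $\IC_{\Bun_{P}}$. The paper states this as an immediate consequence of Theorem \ref{thm: main} without writing out the computation, and yours is the expected one-line verification.
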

In particular, this motivates the morally correct definition of the Eisenstein functor. 
\begin{defn}{\label{defn: normalizedeisfunct}}
We define the normalized Eisenstein functor to be 
\[ \nmEisP^{G}\colon D(\Bun_{L}) \ra D(\Bun_{G}) \]
\[ A \mapsto \mf{p}_{P!}(\mf{q}_{P}^{*}(A) \otimes \IC_{\Bun_{P}}) . \]
\end{defn}
\begin{rem}{\label{rem: unnormalizedvsnormalizedeisenstein}}
We note that we have an isomorphism $\nmEisP(-) \cong \EisP(- \otimes \Delta_{P}^{1/2})$, which is analogous to the relationship between normalized and unnormalized parabolic induction.
\end{rem} 
We write $\EisP^{G}(-) = \bigoplus_{\theta \in B(L)_{\basic}} \EisP^{G,\theta}(-)$  and $\nmEisP^{G}(-) = \bigoplus_{\theta \in B(L)_{\basic}} \nmEisP^{G,\theta}(-)$ for the direct sum decompositions induced by Corollary \ref{cor: conncompsBunP}. These functors are compatible with inclusions of parabolics in the obvious sense.
\begin{lem}{\label{lem: compositionofeisensteinseries}}
Let $P_{1} \subset P_{2} \subset G$ be an inclusion of parabolic subgroups with Levi factors $L_{1}$ and $L_{2}$. We write $P_{1} \cap L_{2}$ for the image of $P_{1}$ under the composition $P_{1} \subset P_{2} \twoheadrightarrow L_{2}$. We have a natural equivalence
\[ \nmEis_{P_{2}}^{G} \circ \nmEis_{P_{1} \cap L_{2}}^{L_{2}}(-) \cong \nmEis_{P_{1}}^{G}(-)  \]
of objects in $D(\Bun_{G})$. 

More precisely, given $\theta_{1} \in B(L_{1})_{\basic} \cong \pi_{1}(L_{1})_{\Gamma_{F}}$ and $\theta_{2} \in B(L_{2})_{\basic} \cong \pi_{2}(L_{2})_{\Gamma_{F}}$, we have that 
\[ \nmEis_{P_{2}}^{G,\theta_{2}} \circ \nmEis_{P_{1} \cap L_{2}}^{L_{2},\theta_{1}}(-) \cong 0 \]
unless $\theta_{2}$ is the image of $\theta_{1}$ under the natural map $\pi_{1}(L_{1})_{\Gamma_{F}} \ra \pi_{1}(L_{2})_{\Gamma_{F}}$. In this case, we have that 
\begin{equation*}
 \nmEis_{P_{2}}^{G,\theta_{2}} \circ \nmEis_{P_{1} \cap L_{2}}^{L_{2},\theta_{1}}(-) \cong \nmEis_{P_{1}}^{G,\theta_{1}}(-). 
\end{equation*}
The analogous claim also holds for the unnormalized functors.
\end{lem}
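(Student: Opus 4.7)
The plan is to first establish the fundamental Cartesian square
\[
\begin{tikzcd}
\Bun_{P_1} \arrow[r,"\alpha"] \arrow[d,"\beta"] & \Bun_{P_2} \arrow[d,"\mf{q}_{P_2}"] \\
\Bun_{P_1 \cap L_2} \arrow[r,"\mf{p}_{P_1 \cap L_2}"] & \Bun_{L_2}
\end{tikzcd}
\]
where $\alpha$ and $\beta$ are induced by $P_1 \hookrightarrow P_2$ and by the composition $P_1 \hookrightarrow P_2 \twoheadrightarrow L_2$ (which factors through $P_1 \cap L_2$). Cartesianity simply reflects that a $P_1$-bundle is the same data as a $P_2$-bundle together with a $(P_1 \cap L_2)$-reduction of its associated $L_2$-bundle. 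The diagram satisfies $\mf{p}_{P_2} \circ \alpha = \mf{p}_{P_1}$ and $\mf{q}_{P_1 \cap L_2} \circ \beta = \mf{q}_{P_1}$, and the base change isomorphism $\mf{q}_{P_2}^* \mf{p}_{P_1 \cap L_2,!} \cong \alpha_! \beta^*$ from the six-functor formalism of \cite{GHWEnSixfunc} then yields the unnormalized composition
\[
\Eis_{P_2}^G \circ \Eis_{P_1 \cap L_2}^{L_2}(A) \cong \mf{p}_{P_1,!}\mf{q}_{P_1}^*(A)[\dim(\Bun_{P_2}) + \dim(\Bun_{P_1 \cap L_2})],
\]
which matches $\Eis_{P_1}^G(A)$ once one verifies the dimension additivity $d_{P_1,\theta_1} = d_{P_2,\theta_2} + d_{P_1\cap L_2,\theta_1}$ on the component indexed by $\theta_1 \in B(L_1)_{\basic}$ (with $\theta_2$ the image of $\theta_1$ in $B(L_2)_{\basic}$). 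This in turn comes from the short exact sequence of $P_1$-representations $0 \to \Lie(U_{P_2}) \to \Lie(U_{P_1}) \to \Lie(U_{P_1 \cap L_2}) \to 0$, which by multiplicativity of determinants gives the additivity $\xi_{P_1} = \xi_{P_2}|_{L_1^{\ab}} + \xi_{P_1 \cap L_2}$ of $L_1^{\ab}$-characters, and one then pairs with $\ol{\theta}_1$.

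Passing to the normalized version is then a bookkeeping argument via Remark \ref{rem: unnormalizedvsnormalizedeisenstein}, which lets one expand
\[
\nmEis_{P_2}^G \circ \nmEis_{P_1 \cap L_2}^{L_2}(A) = \Eis_{P_2}^G\bigl(\Eis_{P_1 \cap L_2}^{L_2}(A \otimes \Delta_{P_1\cap L_2}^{1/2}) \otimes \Delta_{P_2}^{1/2}\bigr).
\]
The key identity is that the two natural maps $\Bun_{P_1 \cap L_2} \rightrightarrows \Bun_{L_2}$ (via $\mf{p}_{P_1 \cap L_2}$ and via $\phi \circ \mf{q}_{P_1 \cap L_2}$, where $\phi \colon \Bun_{L_1} \to \Bun_{L_2}$) agree after further projection to $\Bun_{L_2^{\ab}}$, since the unipotent radical of $P_1 \cap L_2$ in $L_2$ lies in the commutator subgroup. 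Because $\Delta_{P_2}$ is pulled back from $\Bun_{L_2^{\ab}}$, this yields $\mf{p}_{P_1 \cap L_2}^* \Delta_{P_2} \cong \mf{q}_{P_1 \cap L_2}^* \phi^* \Delta_{P_2}$. The projection formula for $\mf{p}_{P_1 \cap L_2,!}$ then moves the $\Delta_{P_2}^{1/2}$-twist inside, and combining the already-established unnormalized case with the multiplicativity $\Delta_{P_1} \cong \Delta_{P_1 \cap L_2} \otimes \phi^* \Delta_{P_2}$ on $\Bun_{L_1}$ (a direct consequence of the additivity of $\xi$ above) identifies the composition with $\nmEis_{P_1}^G(A)$.

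The refinement to connected components is essentially automatic: $\nmEis_{P_1 \cap L_2}^{L_2,\theta_1}(-)$ is supported on the component of $\Bun_{L_2}$ indexed by the image $\theta_2' \in \pi_1(L_2)_{\Gamma_F}$ of $\theta_1$ under the natural map from $\pi_1(L_1)_{\Gamma_F}$, while $\nmEis_{P_2}^{G,\theta_2}$ vanishes off the component $\theta_2'$, giving the required selection rule. The main obstacle is less conceptual than notational, namely verifying that base change and projection formula apply in the setting of fine morphisms of decent $v$-stacks from \cite{GHWEnSixfunc}; this follows from the $\ell$-cohomological smoothness of $\mf{q}_P$ established in Proposition \ref{prop: qissmooth} together with standard properties of $\Bun_G$ as an Artin $v$-stack.
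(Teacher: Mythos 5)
Your proof is correct and follows essentially the same route as the paper: establish the Cartesian square
\[
\begin{tikzcd}
\Bun_{P_1} \arrow[r] \arrow[d] & \Bun_{P_2} \arrow[d,"\mf{q}_{P_2}"] \\
\Bun_{P_1 \cap L_2} \arrow[r,"\mf{p}_{P_1 \cap L_2}"] & \Bun_{L_2},
\end{tikzcd}
\]
apply base change for $!$-pushforward, check the shift additivity $\langle \xi_{P_1},\ol{\theta}_1\rangle = \langle \xi_{P_2},\ol{\theta}_2\rangle + \langle \xi_{P_1\cap L_2},\ol{\theta}_1\rangle$, and handle the $\Delta$-twists via the factorization $\ol{\delta}_{P_1} = \ol{\delta}_{P_1\cap L_2}\otimes \ol{\delta}_{P_2}|_{L_1^{\ab}(F)}$ together with the commutativity of the square $\Bun_{P_1\cap L_2}\to\Bun_{L_2}\to\Bun_{L_2^{\ab}}$ versus $\Bun_{P_1\cap L_2}\to\Bun_{L_1}\to\Bun_{L_1^{\ab}}\to\Bun_{L_2^{\ab}}$, then projection formula. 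Your explicit remark that this commutativity holds because the unipotent radical of $P_1\cap L_2$ in $L_2$ dies in $L_2^{\ab}$ is a crisp phrasing of the observation the paper encodes in its commutative diagram; the paper spells out the same steps at the level of connected components rather than globally, but there is no substantive difference.
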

\begin{proof}
We treat the case of the normalized functors, with the case of the unnormalized functors being strictly easier.

It is easy to see that 
\[ \nmEis_{P_{2}}^{G,\theta_{2}} \circ \nmEis_{P_{1} \cap L_{2}}^{L_{2},\theta_{1}}(-) \cong 0 \]
unless $\theta_{2}$ is the image of $\theta_{1}$. In particular, the image of $\Bun_{P_{1} \cap L_{2}}^{(\theta_{1})}$ under $\mf{p}_{P_{1} \cap L_{2}}$ will be supported on the connected component $\Bun_{L_{2}}^{(\theta_{2})}$, where $\theta_{2}$ is the image of $\theta_{1}$. We assume that $\theta_{1}$ and $\theta_{2}$ are of this form for the rest of the proof.

The key point can be found in the proof of  \cite[Lemma~6.4]{CFSpadicperiod}. We recall this now.

For $S \in \AffPerf_k$ and a $G$-bundle $\mathcal{E}^{\mathrm{alg}}$ on $X_{S}^{\alg}$ which we identify with its geometric realization, consider the natural map 
\[ \mathcal{E}^{\mathrm{alg}}/P_{1} \ra \mathcal{E}^{\mathrm{alg}}/P_{2} \]
of schemes over $X_{S}^{\alg}$. This map is a locally trivially fibration with fiber $P_{2}/P_{1} = L_{2}/(P_{1} \cap L_{2})$ in the \'etale topology on $X_{S}^{\alg}$\footnote{In other words, there exists an \'etale covering  $T \ra X_{S}^{\alg}$ such that the pullback along this map identifies with the projection $(\mathcal{E}^{\alg}/P_2) \times_{X_{S}^{\alg}} T \times_{\Spec{(F)}} P_{2}/P_{1} \ra (\mathcal{E}^{\alg}/P_2) \times_{X_{S}^{\alg}} T$.}. 

If we are given a $P_{2}$-structure $\mathcal{E}^{\mathrm{alg}}_{P_{2}}$ on $\mathcal{E}^{\mathrm{alg}}$ 
then this corresponds to a section $X_{S}^{\alg} \ra \mathcal{E}^{\mathrm{alg}}/P_{2}$, and if we pullback along the previous fibration then the pullback is the same as 
\[ \mathcal{E}^{\mathrm{alg}}_{P_{2}} \times^{P_{2}} L_{2}/(P_{1} \cap L_{2}) \ra X_{S}^{\alg}.  \]
It follows that, for a fixed $G$-bundle $\mathcal{E}^{\mathrm{alg}}$ on $X_{S}^{\alg}$, we have an equivalence between 
\begin{enumerate}
\item a $P_{1}$-structure on $\mathcal{E}^{\mathrm{alg}}$, 
\item a $P_{2}$-structure on $\mathcal{E}^{\mathrm{alg}}$ together with a $P_{1} \cap L_{2}$-structure on $\mathcal{E}^{\mathrm{alg}}_{P_{2}} \times^{P_{2}} L_{2}$.
\end{enumerate}
In particular, we obtain a map $\mf{q}\colon \Bun_{P_{1}} \ra \Bun_{P_{1} \cap L_{2}}$. This fits into a commutative diagram 
\[ \begin{tikzcd}
  \Bun_{P_{1}} \arrow[r,"\mf{p}"] \arrow[d,"\mf{q}"] \arrow[dd,bend right=80,"\mf{q}_{P_{1}}"] \arrow[rr,bend left=30,"\mf{p}_{P_{1}}"]  & \Bun_{P_{2}} \arrow[r,"\mf{p}_{P_{2}}"] \arrow[d,"\mf{q}_{P_{2}}"] & \Bun_{G} \\
  \Bun_{P_{1} \cap L_{2}} \arrow[d,"\mf{q}_{P_{1} \cap L_{2}}"] \arrow[r,"\mf{p}_{P_{1} \cap L_{2}}"] & \Bun_{L_{2}} & \\
 \Bun_{L_{1}},& & 
    \end{tikzcd}
\]
where top left square is Cartesian by the previous equivalence. Restricting to connected components this gives
\[ \begin{tikzcd}
  \Bun_{P_{1}}^{(\theta_{1})} \arrow[r,"\mf{p}"] \arrow[d,"\mf{q}"] \arrow[dd,bend right=80,"\mf{q}^{(\theta_{1})}_{P_{1}}"] \arrow[rr,bend left=40,"\mf{p}^{(\theta_{1})}_{P_{1}}"]  & \Bun_{P_{2}}^{(\theta_{2})} \arrow[r,"\mf{p}^{(\theta_{2})}_{P_{2}}"] \arrow[d,"\mf{q}^{(\theta_{2})}_{P_{2}}"] & \Bun_{G} \\
  \Bun_{P_{1} \cap L_{2}}^{(\theta_{1})} \arrow[d,"\mf{q}^{(\theta_{1})}_{P_{1} \cap L_{2}}"] \arrow[r,"\mf{p}^{(\theta_{1})}_{P_{1} \cap L_{2}}"] & \Bun_{L_{2}}^{(\theta_{2})} & \\
 \Bun^{(\theta_{1})}_{L_{1}}.& & 
    \end{tikzcd}
\]
It follows that, for all $A \in D(\Bun_{L_{1}})$, we have that 
 \begin{align*}
    \nmEis_{P_{1}}^{G,\theta}(A) & \cong \mf{p}^{(\theta_{1})}_{P_{1}!}\mf{q}_{P_{1}}^{(\theta_{1})*}(A \otimes \ol{\mf{q}}_{L_{1}}^{(\theta_{1})*}(\ol{\delta}_{P_{1}}^{1/2}))[\langle \xi_{P_{1}}, \ol{\theta}_{1} \rangle] \\
    &  \cong \mf{p}^{(\theta_{2})}_{P_{2}!}\mf{p}_{!}\mf{q}^{*}\mf{q}^{(\theta_{1})*}_{P_{1} \cap L_{2}}(A \otimes \ol{\mf{q}}_{L_{1}}^{(\theta_{1})*}(\ol{\delta}_{P_{1}}^{1/2}))[\langle \xi_{P_{1}}, \ol{\theta}_{1} \rangle] \\ 
    & \cong \mf{p}^{(\theta_{2})}_{P_{2}!}\mf{q}^{(\theta_{2})*}_{P_{2}}\mf{p}^{(\theta_{1})}_{P_{1} \cap L_{2}!}\mf{q}^{(\theta_{1})*}_{P_{1} \cap L_{2}}(A \otimes \ol{\mf{q}}_{L_{1}}^{(\theta_{1})*}(\ol{\delta}_{P_{1}}^{1/2}))[\langle \xi_{P_{1}}, \ol{\theta}_{1} \rangle], 
\end{align*}
where the last isomorphism follows from proper base change and 
\[
\ol{\mf{q}}_{L_{1}}^{(\theta_{1})}\colon \Bun_{L_{1}}^{(\theta_{1})} \ra \Bun_{L_{1}^{\mathrm{ab}}}^{\ol{\theta}} \cong [\ast/\ul{L_{1}^{\mathrm{ab}}(F)}] \] 
is the natural map. We now note that we have an equality 
\[ \langle \xi_{P_{1}}, \ol{\theta}_{1} \rangle = \langle \xi_{P_{2}}, \ol{\theta}_{2} \rangle + \langle \xi_{P_{1} \cap L_{2}},\ol{\theta}_{1} \rangle. \]
Indeed, this follows from breaking up $\xi_{P_{1}}$ in terms of the sum of roots that come from $P_{2}$ and the sum of roots that come from $P_{1} \cap L_{2}$. Similarly, we have an equality of characters of $L_{1}^{\mathrm{ab}}(F)$:
\[ \ol{\delta}_{P_{1}} = \ol{\delta}_{P_{1} \cap L_{2}} \otimes \ol{\delta}_{P_{2}}|_{L_{1}^{\ab}(F)}. \]

To deal with the modulus character twists, consider the following commutative diagram
\[\begin{tikzcd} \Bun_{P_{1} \cap L_{2}}^{(\theta_{1})} \arrow[d,"\mf{q}^{(\theta_{1})}_{P_{1} \cap L_{2}}"] \arrow[r,"\mf{p}^{(\theta_{1})}_{P_{1} \cap L_{2}}"] & \Bun^{(\theta_{2})}_{L_{2}} \arrow[dd,"\ol{\mf{q}}_{L_{2}}^{(\theta_{2})}"] \\
 \Bun^{(\theta_{1})}_{L_{1}} \arrow[d,"\ol{\mf{q}}_{L_{1}}^{(\theta_{1})}"] &  \\
\left[\ast/\ul{L_{1}^{\mathrm{ab}}(F)}\right] \arrow[r,"r"] & \left[\ast/\ul{L_{2}^{\mathrm{ab}}(F)}\right]. 
 \end{tikzcd} \]
This tells us that $\nmEis_{P_{1}}^{G,\theta}(A)$ is isomorphic to 
 \begin{align*}
    &\nmEis_{P_{1}}^{G,\theta}(A) \\ & \cong \mf{p}^{(\theta_{2})}_{P_{2}!}\mf{q}^{(\theta_{2})*}_{P_{2}}\mf{p}^{(\theta_{1})}_{P_{1} \cap L_{2}!}\mf{q}^{(\theta_{1})*}_{P_{1} \cap L_{2}}(A \otimes \ol{\mf{q}}_{L_{1}}^{(\theta_{1})*}(\ol{\delta}_{P_{1}}^{1/2}))[\langle \xi_{P_{1}}, \ol{\theta}_{1} \rangle], \\ 
    & \cong \mf{p}^{(\theta_{2})}_{P_{2}!}\mf{q}^{(\theta_{2})*}_{P_{2}}\mf{p}^{(\theta_{1})}_{P_{1} \cap L_{2}!}\mf{q}^{(\theta_{1})*}_{P_{1} \cap L_{2}}(A \otimes \ol{\mf{q}}_{L_{1}}^{(\theta_{1})*}(\ol{\delta}_{P_{1} \cap L_{2}} \otimes r^{*}(\ol{\delta}_{P_{2}})))[\langle \xi_{P_{1}}, \ol{\theta}_{1} \rangle] \\
     & \cong \mf{p}^{(\theta_{2})}_{P_{2}!}\mf{q}^{(\theta_{2})*}_{P_{2}}\mf{p}^{(\theta_{1})}_{P_{1} \cap L_{2}!}(\mf{q}^{(\theta_{1})*}_{P_{1} \cap L_{2}}(A \otimes \ol{\mf{q}}_{L_{1}}^{(\theta_{1})*}(\ol{\delta}_{P_{1} \cap L_{2}})) \otimes \mf{q}^{(\theta_{1})*}_{P_{1} \cap L_{2}}\ol{\mf{q}}_{L_{1}}^{(\theta_{1})*}r^{*}(\ol{\delta}_{P_{2}}))[\langle \xi_{P_{1}}, \ol{\theta}_{1} \rangle] 
     \\ & \cong \mf{p}^{(\theta_{2})}_{P_{2}!}\mf{q}^{(\theta_{2})*}_{P_{2}}\mf{p}^{(\theta_{1})}_{P_{1} \cap L_{2}!}(\mf{q}^{(\theta_{1})*}_{P_{1} \cap L_{2}}(A \otimes \ol{\mf{q}}_{L_{1}}^{(\theta_{1})*}(\ol{\delta}_{P_{1} \cap L_{2}})) \otimes \mf{p}_{P_{1} \cap L_{2}}^{(\theta_{1})*}\ol{\mf{q}}_{L_{2}}^{(\theta_{2})*}(\ol{\delta}_{P_{2}}))[\langle \xi_{P_{1}}, \ol{\theta}_{1} \rangle] 
     \\ & \cong \mf{p}^{(\theta_{2})}_{P_{2}!}\mf{q}^{(\theta_{2})*}_{P_{2}}(\mf{p}^{(\theta_{1})}_{P_{1} \cap L_{2}!}(\mf{q}^{(\theta_{1})*}_{P_{1} \cap L_{2}}(A \otimes \ol{\mf{q}}_{L_{1}}^{(\theta_{1})*}(\ol{\delta}_{P_{1} \cap L_{2}})[\langle \xi_{P_{1} \cap L_{2}},\ol{\theta}_{1} \rangle])) \otimes \ol{\mf{q}}_{L_{2}}^{(\theta_{2})*}(\ol{\delta}_{P_{2}}))[\langle \xi_{P_{2}}, \ol{\theta}_{2} \rangle]      \\ & \cong \nmEis_{P_{2}}^{G,\theta_{2}} \circ \nmEis_{P_{1} \cap L_{2}}^{L_{2},\theta_{1}}(A),
\end{align*}
as desired. Here we have used a projection formula in the fifth isomorphism. 
\end{proof}

We assume from now on that the group $G$ is quasi-split with a choice of Borel $B \subset G$ unless otherwise stated. We will need the following notation.
\begin{enumerate}
\item Let $\mathcal{J}$ denote the set of vertices in the (relative) Dynkin diagram. For each $i \in \mathcal{J}$, we write $\alpha_{i}$ for the corresponding (reduced) simple root. For a Levi subgroup $L$, we let $\mathcal{J}_{L} \subset \mathcal{J}$ denote the corresponding subset. 
\item For a Levi subgroup $L \subset G$, we write $\mathcal{J}_{G,L} := \mathcal{J} \setminus \mathcal{J}_{L}$. 
\item Let $W_{G}$ denote the relative Weyl group of $G$.
\end{enumerate}
We have the following lemma. 
\begin{lem}{\label{lem: Weylgrouptranslate}}
Let $w(L)$ and $w(P)$ denote the conjugates of the Levi and parabolic under some element $w \in W_{G}$. We write $w\colon \Bun_{L} \xrightarrow{\cong} \Bun_{w(L)}$ for the map induced by the isomorphism $L \cong w(L)$. Then, for all $A \in D(\Bun_{L})$, there is a natural isomorphism 
\[ \nmEis_{P}^{G}(A) \cong \nmEis_{w(P)}^{G}(w_{*}(A)) \]
of objects in $D(\Bun_{G})$, and similarly for the unnormalized functors.
\end{lem}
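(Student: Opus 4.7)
The plan is to lift $w$ to a representative $\dot{w} \in N_{G}(A)(F)$, where $A$ is a maximal $F$-split torus of $G$ contained in $L$, and to observe that conjugation $c_{\dot{w}}\colon G \to G$ is an inner automorphism of $G$ over $F$ which restricts to isomorphisms of algebraic $F$-groups $P \cong w(P)$ and $L \cong w(L)$. These induce equivalences of $v$-stacks $\tilde{w}\colon \Bun_{P} \xrightarrow{\cong} \Bun_{w(P)}$ and $w\colon \Bun_{L} \xrightarrow{\cong} \Bun_{w(L)}$ fitting into the commutative diagram
\[ \begin{tikzcd}
\Bun_{L} \arrow[r,"w","\cong"'] & \Bun_{w(L)} \\
\Bun_{P} \arrow[u,"\mf{q}_{P}"] \arrow[r,"\tilde{w}","\cong"'] \arrow[d,"\mf{p}_{P}"'] & \Bun_{w(P)} \arrow[u,"\mf{q}_{w(P)}"'] \arrow[d,"\mf{p}_{w(P)}"] \\
\Bun_{G} \arrow[r,"\id"] & \Bun_{G},
\end{tikzcd} \]
where commutativity of the bottom square uses that inner automorphisms of $G$ induce (canonically) the identity functor on $\Bun_{G}$.

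The next step is to identify $\tilde{w}^{*}\IC_{\Bun_{w(P)}}$ with $\IC_{\Bun_{P}}$. The locally constant dimension function satisfies $\dim(\Bun_{w(P)}) \circ \tilde{w} = \dim(\Bun_{P})$ since $\tilde{w}$ is an equivalence. For the modulus twist, the character $\xi_{w(P)} \in \bb{X}^{*}(w(L)^{\ab}_{\ol{F}})^{\Gamma_{F}}$ is the image of $\xi_{P}$ under the isomorphism $L^{\ab} \cong w(L)^{\ab}$ induced by $c_{\dot{w}}$, because $\xi_{P}$ is defined intrinsically as the determinant character of the adjoint action on $\bigwedge^{\dim U}\Lie(U)$. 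Consequently, $w^{*}\ol{\delta}_{w(P)}^{1/2} \cong \ol{\delta}_{P}^{1/2}$ as sheaves on $\Bun_{L^{\ab}}$, and pulling back along $\mf{q}_{P}$ together with the dimension identification yields the desired $\tilde{w}^{*}\IC_{\Bun_{w(P)}} \cong \IC_{\Bun_{P}}$.

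Finally, since $\tilde{w}$ is an equivalence, $\tilde{w}_{!}$ is inverse to $\tilde{w}^{*}$, giving $\mf{p}_{w(P)!} \cong \mf{p}_{P!} \circ \tilde{w}^{*}$. Combining with $\tilde{w}^{*}\mf{q}_{w(P)}^{*} \cong \mf{q}_{P}^{*}w^{*}$ and $w^{*}w_{*} \cong \id$, one chains the isomorphisms
\[ \nmEis_{w(P)}^{G}(w_{*}A) = \mf{p}_{w(P)!}(\mf{q}_{w(P)}^{*}w_{*}A \otimes \IC_{\Bun_{w(P)}}) \cong \mf{p}_{P!}(\mf{q}_{P}^{*}A \otimes \IC_{\Bun_{P}}) = \nmEis_{P}^{G}(A). \]
The unnormalized case is handled identically, omitting the $\IC$ twist and keeping only the dimension shift. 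The only mildly delicate point is the conjugation-equivariance of $\xi_{P}$, but this is immediate from its intrinsic definition as the determinant on the top exterior power of $\Lie(U)$.
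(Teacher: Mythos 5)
Your proposal is correct and follows essentially the same approach as the paper's proof: reduce the modulus-character and shift issues to the observation that $\xi_{P}$ is carried to $\xi_{w(P)}$ under conjugation (equivalently, $w_{*}\Delta_{P}\cong\Delta_{w(P)}$ and the pairings $\langle \xi_{P},\ol{\theta}\rangle$ are preserved), and then chain the base-change isomorphisms coming from the commutative diagram relating $\Bun_P$, $\Bun_{w(P)}$, $\Bun_L$, $\Bun_{w(L)}$ and $\Bun_G$. You are slightly more explicit than the paper in lifting $w$ to $\dot{w}\in N_G(A)(F)$ and invoking the fact that the inner automorphism $c_{\dot{w}}$ induces the identity on $\Bun_G$; the paper simply writes $\mf{p}_P=\mf{p}_{w(P)}\circ w_P$ into the diagram, which is the same fact.
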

\begin{proof}
We note that $w_{*}(\Delta_{P}) \cong \Delta_{w(P)}$, 
and, for all $\theta \in B(L)_{\basic}$, we have that $\langle \xi_{P}, \ol{\theta} \rangle = \langle \xi_{w(P)}, w(\ol{\theta}) \rangle$. Therefore, we reduce to showing that 
\[ \mf{p}_{P!}\mf{q}_{P}^{*}(A) \cong \mf{p}_{w(P)!}\mf{q}_{w(P)}^{*}(w_{*}(A)). \]
We write $w_{P}\colon \Bun_{P} \xrightarrow{\cong} \Bun_{w(P)}$ for the map induced by the isomorphism $P \cong w(P)$. We have a commutative diagram 
\[ \begin{tikzcd}
\Bun_{P} \arrow[r,"w_{P}"]  \arrow[rr,bend left=30,"\mf{p}_{P}"] 
 \arrow[d,"\mf{q}_{P}"] & \Bun_{w(P)} \arrow[r,"\mf{p}_{w(P)}"] \arrow[d,"\mf{q}_{w(P)}"] & \Bun_{G} \\
\Bun_{L} \arrow[r,"w"]  & \Bun_{w(L)}. &  
\end{tikzcd}\]
Using the isomorphisms $w^{*}w_{*}(A) \cong A$ and $w_{P}^{*}w_{P*}(A) \cong A$ coming from adjunction, we write
\begin{align*}
\mf{p}_{P!}\mf{q}_{P}^{*}(A)  & \cong \mf{p}_{P!}\mf{q}_{P}^{*}w^{*}w_{*}(A) \\ 
& \cong \mf{p}_{w(P)!}w_{P*}w_{P}^{*}\mf{q}_{w(P)}^{*}w_{*}(A) \\
& \cong \mf{p}_{w(P)!}\mf{q}_{w(P)}^{*}w_{*}(A),
\end{align*}
as desired. 
\end{proof}
In general the values of these geometric Eisenstein functors are very complicated and difficult to compute (See for example \cite[Example~3.3.2]{HanBeijLec}), but there are certain simple contributions coming from the split reductions. This is the locus of $\Bun_{P}$ coming from the image of the natural section  
\[ \mf{s}_{P}\colon \Bun_{L} \ra \Bun_{P} \]
of $\mf{q}_{P}$ induced by the inclusion $L \subset P$. To understand this, we need to understand the topological image of the composition $\mf{p}_{P} \circ \mf{s}_{P}\colon \Bun_{L} \ra \Bun_{G}$. For our calculation, we will restrict to the locus given by the semistable bundles.
\begin{defn}
We define $B(G)_{L}$, the set of $L$-reducible elements, to be the image $\mathrm{Im}(i_{L}\colon B(L)_{\basic} \ra B(G)) =: B(G)_{L}$ of the map induced by the inclusion $L \subset G$.  
\end{defn}
We will now study the set $B(G)_{L}$ and its properties. We let $\Delta_{G,L}$ be the set of relative simple roots $\alpha_{i}$, for $i \in \mathcal{J}_{G,L} := \mathcal{J} \setminus \mathcal{J}_{L}$. 

Let $\Lcoinvdom \subset \Lcoinv$ be the subset of elements  which pair non-negatively 
with the image of $\Delta_{G,L}$ under the identification $\mathbb{X}^{*}(A_{L})_{\mathbb{Q}} \simeq \mathbb{X}^{*}(L^{\mathrm{ab}}_{\ol{F}})^{\Gamma_{F}}_{\mathbb{Q}}$. 
We say an element $\theta \in B(L)_{\basic}$ such that $\ol{\theta}$ lies in $\Lcoinvdom$ is dominant, 
and if it lies in the space which pairs non-positively with the image of $\Delta_{G,L}$ then we say that it is HN-dominant.

For $b \in B(G)_{L}$, we consider the set $W_{L,b} := W[L,L^{b}]$ as defined in \cite[Section~5.3]{BMOBGPar}. This will be identified with the set of elements in $W_{G}$ such that 
\[ 
w(L) \subset L^{b}, \quad 
w(L \cap B) \subset B, \quad w^{-1}(L^{b} \cap B) \subset B,\]
where $L^{b}$ denotes the centralizer of the slope homomorphism of $b \in B(G)$. We now want to relate this Weyl group to the fibers of the map $i_{L}\colon B(L)_{\basic} \ra B(G)$. We begin with some preparations.

\begin{lem}\label{lem:innredL}
Assume that $b \in B(G)$ is basic. 
\begin{enumerate} 
\item\label{en:BLBP} 
The inclusion $L \subset P$ induces $B(L) \cong B(P)$. 
\item\label{en:Pinnerred}
The parabolic subgroup $P$ of $G$ transfer to a parabolic subgroup of $G_b$ under the inner twisting if and only if $b$ admits a reduction to $L$. 
\end{enumerate}
\end{lem}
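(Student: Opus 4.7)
For part \ref{en:BLBP}, the plan is to exploit the semidirect product $P = L \ltimes U$ with $U = R_{\mathrm{u}}(P)$. The Levi projection $P \twoheadrightarrow L$ is split by the inclusion $L \hookrightarrow P$, so the induced maps on Kottwitz sets satisfy $B(L) \to B(P) \to B(L) = \mathrm{id}$, and it suffices to show that $B(L) \to B(P)$ is both surjective and injective. For surjectivity, given $p = lu \in P(\breve{F})$, one needs to $\sigma$-conjugate by some $u' \in U(\breve{F})$ so that $u'^{-1} p \sigma(u') \in L(\breve{F})$; writing out this condition and projecting onto $U$ gives an equation of the shape $u'^{-1} \cdot \mathrm{Ad}(l)(\sigma(u')) = u$, which has a solution because the Lang map associated to the twisted Frobenius $u' \mapsto u'^{-1} \cdot \mathrm{Ad}(l)(\sigma(u'))$ on the connected unipotent group $U$ over $\breve{F}$ is surjective (the residue field of $\breve{F}$ is algebraically closed). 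For injectivity, if $l, l' \in L(\breve{F})$ and $p_0 = l_0 u_0 \in P(\breve{F})$ satisfy $p_0^{-1} l \sigma(p_0) = l'$, expanding and separating into Levi and unipotent components forces $l' = l_0^{-1} l \sigma(l_0)$, so $l$ and $l'$ are already $\sigma$-conjugate in $L(\breve{F})$.

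For part \ref{en:Pinnerred}, recall that since $b$ is basic, $G_b$ is an $F$-inner form of $G$ realized as $G_{\breve{F}}$ equipped with the twisted Frobenius $\sigma_b = \mathrm{Ad}(b) \circ \sigma$. The parabolic subgroup $P_{\breve{F}} \subset G_{b,\breve{F}} = G_{\breve{F}}$ descends to an $F$-subgroup of $G_b$ precisely when it is $\sigma_b$-stable, and since $P$ is already defined over $F$, this reduces to $\mathrm{Ad}(b)(P_{\breve{F}}) = P_{\breve{F}}$, i.e., $b \in N_G(P)(\breve{F})$. By the self-normalizing property of parabolics, $N_G(P) = P$, so the condition is $b \in P(\breve{F})$. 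Since the inner twist depends on the $\sigma$-conjugacy class of $b$ only up to choice of representative, the question of whether $P$ transfers becomes whether the class $[b] \in B(G)$ has a representative in $P(\breve{F})$, i.e., lies in the image of $B(P) \to B(G)$. By part \ref{en:BLBP} this image coincides with the image of $B(L) \to B(G)$, which is precisely the set of classes admitting a reduction to $L$.

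The main obstacle I anticipate is organizing the Lang-theorem argument in part \ref{en:BLBP} cleanly, since one must identify the correct twisted Frobenius action on $U$ and invoke the classical surjectivity of the Lang map for connected unipotent groups over $\breve{F}$. Once this is secured, the rest of both parts is essentially formal: the injectivity argument is a bookkeeping exercise on Levi/unipotent components, and part \ref{en:Pinnerred} reduces to self-normalization of parabolics together with the translation between reductions and images of the Kottwitz set.
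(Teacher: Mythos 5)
Your proof is correct and in substance reproduces the content of the references the paper cites: the published proof simply points to Kottwitz [1.4, 3.6] for part (1) and Chen--Fargues--Shen [p.~259] for part (2), and your self-contained argument (twisted Lang-type surjectivity on the unipotent radical, self-normalization of parabolics, and the translation from ``a $G(\breve{F})$-conjugate of $P_{\breve{F}}$ is $\sigma_b$-stable'' to ``$[b]$ has a representative in $P(\breve{F})$'') is exactly what those references package. Two small cautions: the Lang-type equation you display is off by a conjugation (working it out gives $u = \mathrm{Ad}(l^{-1})(u')\,\sigma(u')^{-1}$), and its surjectivity over $\breve{F}$ is a bit subtler than ``the residue field is algebraically closed'' --- it amounts to $B(U)=\{1\}$ for connected unipotent $U$, proved by filtering $U$ by vector groups and analyzing $\sigma - 1$ on $\breve{F}$ (with attention to the twist by $l$), but this is standard and the argument goes through.
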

\begin{proof}
The claim \ref{en:BLBP} follows from \cite[1.4, 3.6]{KotIsoII}. The claim \ref{en:Pinnerred} follows from the claim \ref{en:BLBP} and \cite[p.~259]{CFSpadicperiod}. 
\end{proof}

\begin{lem}\label{lem:WeylLevicoset}
Let $G$ be a connected reductive group over a field $F$. We fix a maximal split torus $S$ and a minimal parabolic subgroup $P_0$ containing $S$. 
Let $W_G$ be the relative Weyl group of $G$ with respect to $S$. 
Let $P$ and $Q$ be standard parabolic subgroups of $G$ with  
standard Levi subgroups $L$ and $M$, respectively, with respect to $P_0$ and $S$. 
We put \[
S(L,M) :=\{ g \in G(F) \mid \textrm{$gLg^{-1} \cap M$ contains a maximal split torus of $G$} \}. 
\]
Then we have a natural bijection 
\[
 W_M \backslash W_G /W_L \cong M(F) \backslash S(L,M) / L(F) . 
\]
\end{lem}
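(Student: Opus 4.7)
The plan is to construct the bijection by choosing Weyl-group representatives and then invoking the relative Bruhat decomposition. For each $w \in W_G$ I would fix a lift $n_w \in N_G(S)(F)$ of $w$ under the surjection $N_G(S)(F) \twoheadrightarrow N_G(S)(F)/Z_G(S)(F) = W_G$. The inclusion $n_w L n_w^{-1} \cap M \supset n_w S n_w^{-1} = S$ shows $n_w \in S(L,M)$, so the assignment $[w] \mapsto [n_w]$ lands in the target. Independence of the lift follows from $Z_G(S)(F) \subset L(F) \cap M(F)$, and descent to $W_M \backslash W_G / W_L$ from representing $W_L$ (resp.\ $W_M$) by elements of $N_L(S)(F) \subset L(F)$ (resp.\ $N_M(S)(F) \subset M(F)$).

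For surjectivity, given $g \in S(L,M)$, I would pick a maximal $F$-split torus $S'' \subset gLg^{-1} \cap M$ of $G$. Since $L$ and $M$ contain $S$, their split ranks coincide with that of $G$, so $S''$ is automatically a maximal $F$-split torus of both $L$ and $M$. By the $M(F)$-conjugacy of maximal $F$-split tori in $M$, choose $m \in M(F)$ with $mS''m^{-1} = S$; then $(mg)^{-1}S(mg) \subset L$ is a maximal $F$-split torus of $L$, and the analogous $L(F)$-conjugacy yields $l \in L(F)$ with $mgl \in N_G(S)(F)$. Hence $g \in M(F)\, N_G(S)(F)\, L(F)$.

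For injectivity, I would invoke the relative Bruhat decomposition for the standard parabolics $P \supset L$ and $Q \supset M$:
\[ G(F) = \bigsqcup_{w \in W_M \backslash W_G / W_L} Q(F)\, n_w\, P(F), \]
which is a classical theorem of Borel--Tits. If $n_{w_1} \in M(F)\, n_{w_2}\, L(F)$, then \emph{a fortiori} $n_{w_1} \in Q(F)\, n_{w_2}\, P(F)$, and the disjointness of the Bruhat cells forces $[w_1] = [w_2]$ in $W_M \backslash W_G / W_L$.

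The main point requiring care is the surjectivity step: the condition defining $S(L,M)$ is precisely what allows an arbitrary element to be reduced into $N_G(S)(F)$ modulo $M(F) \times L(F)$. Without this restriction the map $W_G \to M(F)\backslash G(F)/L(F)$ would generally fail to be surjective, since the fibers $Q(F)/M(F) \cong R_{\mathrm{u}}(Q)(F)$ and $P(F)/L(F) \cong R_{\mathrm{u}}(P)(F)$ of the passage from parabolic to Levi double cosets are nontrivial.
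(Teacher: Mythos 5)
Your proposal is correct and follows essentially the same route as the paper: both construct the map by lifting Weyl representatives to $N_G(S)(F)$, deduce surjectivity from rational conjugacy of maximal $F$-split tori, and reduce injectivity to the Borel--Tits bijection $W_M \backslash W_G / W_L \cong Q(F)\backslash G(F)/P(F)$ (the paper's citation of \cite[5.20 Corollaire]{BoTiGred} is precisely your relative Bruhat decomposition). The only cosmetic difference is that the paper phrases the final step as ``the composition is a bijection, hence the surjective first map is bijective,'' whereas you argue injectivity directly via disjointness of Bruhat cells; these are the same argument.
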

\begin{proof}
The map is given by taking a representative of an element of $W_G$ in $N_G(S)(F)$. The map is surjective since any maximal split tori are rationally conjugate. 
By \cite[5.20 Corollaire]{BoTiGred}, the composition 
\[
 W_M \backslash W_G /W_L \to M(F) \backslash S(L,M) / L(F) \to 
 Q(F) \backslash G(F) / P(F) 
\]
is a bijection. Therefore, the first surjective map is also bijective. 
\end{proof}

\begin{lem}\label{lem:LeviH1inj}
Let $G$ be a connected reductive group over a field $F$. 
Let $P$ be a parabolic subgroup of $G$ with a Levi subgroup $L$. 
Then the inclusions $L \subset P \subset G$ induce 
\[
 H^1(F,L) \cong H^1(F,P) \hookrightarrow H^1(F,G). 
\]
\end{lem}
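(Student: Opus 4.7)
The plan is to separate the statement into three pieces: (i) the vanishing $H^1(F, U) = \ast$ for $U = R_u(P)$, (ii) the bijection $H^1(F, L) \cong H^1(F, P)$, and (iii) the injection $H^1(F, P) \hookrightarrow H^1(F, G)$.

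First, I would exploit Lemma \ref{lem:filtU} to filter $U = U_0 \supset U_1 \supset \cdots \supset U_n = 1$ by $P$-stable subgroups whose successive quotients $U_i/U_{i+1}$ are isomorphic as group schemes to the vector group $\mathbf{W}(\Lie (U_i/U_{i+1}))$. By the additive form of Hilbert 90 (equivalently, the normal basis theorem), each vector group has trivial $H^1$ over $F$. Descending induction on $i$, using the long exact sequence of non-abelian cohomology for $1 \to U_{i+1} \to U_i \to U_i/U_{i+1} \to 1$, then yields $H^1(F, U_i) = \ast$, and in particular $H^1(F, U) = \ast$. The same argument applies to every inner twist ${}_c U$ by a $P$-valued cocycle $c$, since the filtration is canonical and therefore transfers to the twisted group.

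Second, the splitting $L \hookrightarrow P$ provides a section of the projection $H^1(F, P) \to H^1(F, L)$, so surjectivity is automatic. For injectivity, I would invoke the standard twisting recipe: given two $P$-cocycles $c_1, c_2$ with cohomologous $L$-images, I normalize them to agree on $L$ and write $c_2 = u c_1$, where $u$ is a cocycle for the ${}_{c_1}$-twisted action of $\Gamma_F$ on $U$; the vanishing $H^1(F, {}_{c_1} U) = \ast$ from the first step forces $u$ to be a coboundary, so $c_1$ and $c_2$ are cohomologous in $P$.

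Third, from the exact sequence of pointed sets $G(F) \to (G/P)(F) \to H^1(F, P) \to H^1(F, G)$, the kernel of $H^1(F, P) \to H^1(F, G)$ is the cokernel of $G(F) \to (G/P)(F)$. By the Borel--Tits theorem (\cite[5.20 Corollaire]{BoTiGred}, already invoked in Lemma \ref{lem:WeylLevicoset}), this cokernel is trivial for any parabolic subgroup defined over $F$. Combined with the usual twisting argument applied to the parabolic ${}_c P \subset {}_c G$ (which is again a parabolic defined over $F$ since $c$ takes values in $P$), this upgrades triviality of the kernel to full injectivity. The main subtlety, as always in non-abelian Galois cohomology, is that vanishing of the kernel does not imply injectivity by itself; however, in both steps two and three the relevant vanishing is preserved under twisting, so the twisting arguments go through without obstacle.
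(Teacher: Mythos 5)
Your proof is correct and follows essentially the same route as the paper: the paper cites Borel--Tits [4.13 Th\'eor\`eme] together with the corresponding exercise in Serre's \emph{Galois Cohomology} for the injectivity of $H^1(F,P)\hookrightarrow H^1(F,G)$, and dispatches the bijectivity $H^1(F,P)\cong H^1(F,L)$ via $H^1(F,R_{\mathrm u}(P))=1$ ``and a torsion argument''---exactly the twisting you spell out. You have simply unpacked those citations, supplying the filtration/additive Hilbert~90 computation for $H^1$ of the unipotent radical and its $P$-twists, and the exact-sequence-of-pointed-sets plus Borel--Tits surjectivity of $G(F)\to(G/P)(F)$ argument.
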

\begin{proof}
The injectivity of the second map follows from \cite[4.13 Th\'eor\`eme]{BoTiGred} (\cf \cite[p.~136, Exercice 1]{SerreGaloisCohom}). The map $H^1(F,P) \to H^1(F,L)$ induced by the natural projection $P \to L$ is injective by $H^1(F,R_{\mathrm{u}}(P))=1$ and a torsion argument. This map is surjective too since $P \to L$ has a splitting $L \subset P$. Hence the first map in the claim is bijective. 
\end{proof}
We now work again with our fixed quasi-split $G$. We recall again that $b$ admits a reduction to a basic element $b_{L^{b}} \in B(L^{b})_{\mathrm{basic}}$ such that $\ol{b}_{L^{b}} \in \mathbb{X}_{*}(L_{\ol{F}}^b)^{+}_{\Gamma_{F}}$. We now have the following lemma. 
\begin{lem}{\label{lemma: Lreducibleelements}}
For every element $b \in B(G)_{L}$, there exists an injective map of sets
\begin{align*}
i_{L}^{-1}(b) &\ra W_{L,b} \\  
\theta &\mapsto w_{\theta} 
\end{align*}
that sends $\theta$ to the unique $w_{\theta}$ such that $\ol{w_{\theta}(\theta)} \in \mathbb{X}_{*}(w_{\theta}(L)^{\mathrm{ab}})_{\Gamma_F}$ is dominant. 
It follows that $w_{\theta}(\theta) \in B(w_{\theta}(L))_{\basic}$ is a reduction of $b_{L^{b}} \in B(L^{b})_{\basic}$ to $B(w_{\theta}(L))$. Moreover, the image of this map is given by the set of $w \in W_{L,b}$ such that $w(L) \subset L^{b}$ transfers to a Levi subgroup of $(L^{b})_{b_{L^{b}}}$ under the inner twisting. 
\end{lem}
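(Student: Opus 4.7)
The plan is to construct $w_\theta$ as a minimal-length Weyl double-coset representative and then produce an inverse to $\theta \mapsto w_\theta$ using Lemma \ref{lem:innredL}. Since $\theta \in B(L)_{\basic}$, its slope $\nu_\theta$ factors through $Z(L)^0$ and is fixed by $W_L$; the identity $i_L(\theta) = b$ forces $\nu_\theta$ to be $W_G$-conjugate to the dominant slope $\nu_b$, which is fixed by $W_{L^b}$. The set $S_\theta := \{ w \in W_G : w(\nu_\theta) = \nu_b \}$ is therefore a single $(W_{L^b}, W_L)$-double coset, and every $w \in S_\theta$ satisfies $w L w^{-1} \subset Z_G(\nu_b) = L^b$. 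The standard theory of minimal-length representatives of Coxeter double cosets produces a unique $w_\theta \in S_\theta$ characterised by $w_\theta(\alpha) \in \Phi^+$ for every simple root $\alpha$ of $L$ and $w_\theta^{-1}(\alpha') \in \Phi^+$ for every simple root $\alpha'$ of $L^b$, equivalently by $w_\theta(L \cap B) \subset B$ and $w_\theta^{-1}(L^b \cap B) \subset B$. Combined with $w_\theta(L) \subset L^b$, this places $w_\theta \in W_{L,b}$.

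For the dominance claim, $w_\theta(\theta) \in B(w_\theta(L))_{\basic}$ has slope $\nu_b$, which is central in $L^b$ and hence in $w_\theta(L)$. The natural pairing
\[
 \mathbb{X}_{*}(w_\theta(L)^{\ab})_{\Gamma_F} \times \mathbb{X}^{*}(w_\theta(L)^{\ab})^{\Gamma_F} \longrightarrow \mathbb{Z}
\]
annihilates torsion in its first argument, since $n \langle \alpha, x \rangle = \langle \alpha, nx \rangle = 0$ whenever $nx = 0$; consequently, dominance of $\ol{w_\theta(\theta)}$ against the image of $\Delta_{G, w_\theta(L)}$ reduces to $G$-dominance of $\nu_b$, which holds by construction. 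Uniqueness of $w_\theta$ then follows because any $w \in W_{L,b}$ realising the dominance condition must have $w(\nu_\theta) = \nu_b$, hence $w \in S_\theta$, and the Borel conditions defining $W_{L,b}$ isolate the minimal-length representative within $S_\theta$. The induced element in $B(L^b)_{\basic}$ is then a basic reduction of $b$ to $L^b$ with $G$-dominant slope, which by the uniqueness clause in the definition of $b_{L^b}$ equals $b_{L^b}$; thus $w_\theta(\theta)$ is a basic reduction of $b_{L^b}$ to $w_\theta(L)$.

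To establish injectivity and to characterise the image, I construct the inverse map. If $w \in W_{L,b}$ and $w(L) \subset L^b$ transfers to a Levi of $(L^b)_{b_{L^b}}$, then Lemma \ref{lem:innredL} (2), applied to $L^b$ in place of $G$ and $b_{L^b}$ in place of $b$, produces a reduction of $b_{L^b}$ to the parabolic of $L^b$ with Levi $w(L)$, whose underlying basic reduction $\theta_w \in B(w(L))_{\basic}$ is pinned down uniquely by the slope constraint $\nu_{\theta_w} = \nu_b$ together with dominance. Setting $\theta := w^{-1}(\theta_w)$ gives an element of $i_L^{-1}(b)$ whose associated Weyl element is $w$ by the characterisation above. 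Conversely, $w_\theta(L)$ always satisfies the transfer condition because $w_\theta(\theta)$ furnishes the required basic reduction of $b_{L^b}$. The two assignments are mutually inverse, yielding both the bijection with the claimed image and the injectivity statement.

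The main difficulty I anticipate is verifying uniqueness of the dominant reduction $\theta_w$ in the inverse construction, which requires controlling the kernel of $\pi_1(w(L))_{\Gamma_F} \to \pi_1(L^b)_{\Gamma_F}$ on torsion elements compatible with the fixed rational slope $\nu_b$, together with carefully reconciling the notion of dominance when $w(L)$ is a standard Levi inside $L^b$ without necessarily being standard inside $G$.
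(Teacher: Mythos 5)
Your construction of $w_\theta$ is a genuinely different route from the paper's: you work purely with the Weyl-group action on slope data, observing that $S_\theta = \{w : w(\nu_\theta) = \nu_b\}$ is a single $(W_{L^b},W_L)$-double coset (correct, since $w(\nu_\theta) = \nu_b$ forces $w L w^{-1} \subset L^b$, so $w W_L w^{-1} \subset W_{L^b}$ and the left $W_{L^b}$-coset absorbs $W_L$ on the right), and picking the unique minimal-length representative. The paper instead realises $W_{L,b} \cong L^b(F)\backslash I(L,L^b)/L(F)$ via Lemma \ref{lem:WeylLevicoset} and produces $w_\theta$ from a rational element $g_\theta \in G(F)$ conjugating $(P^\theta,L^\theta)$ to $(P^b,L^b)$, then uses \cite[Theorem~4.1]{HNStrataSchi} (compatibility of HN-reduction with the slope polygon) to see that $w_\theta(\theta)$ reduces $b_{L^b}$. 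Your Weyl-combinatorial approach is cleaner for producing $w_\theta$ and for the dominance argument; the paper's $G(F)$-conjugation approach more directly feeds into the bundle-theoretic statement about reductions.

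However, there is a genuine gap that you correctly flag but do not close: the uniqueness of the dominant basic reduction $\theta_w$, which is exactly what makes $\theta \mapsto w_\theta$ injective. You reduce injectivity to producing an inverse, and producing the inverse requires knowing that $b_{L^b}$ admits at most one reduction to $B(w(L))_{\basic}$. Your concluding paragraph identifies this as "controlling the kernel of $\pi_1(w(L))_{\Gamma_F} \to \pi_1(L^b)_{\Gamma_F}$ on torsion elements," but the proof never establishes this. This is the only nontrivial step in the injectivity argument, and the paper handles it by transferring the question to the inner form: since $(w(L))_{w(\theta)}$ is a Levi $M$ of $(L^b)_{b_{L^b}}$, the map $B(w(L)) \to B(L^b)$ on slope-zero classes identifies with $B(M) \to B((L^b)_{b_{L^b}})$, and by \cite[Theorem~1.15]{RaRiFiso} the torsion subgroups $\pi_1(-)_{\Gamma_F,\mathrm{tors}}$ identify with Galois cohomology $H^1(F,-)$, so injectivity reduces to the injectivity of $H^1(F,M) \to H^1(F,(L^b)_{b_{L^b}})$, which is Lemma \ref{lem:LeviH1inj}. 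Without invoking that chain (or an equivalent substitute), the "mutually inverse" claim at the end of your argument is unsupported, and so neither injectivity nor the exact identification of the image is actually proved.
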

\begin{proof}
We first start by constructing the map and showing it has the desired properties. By \cite[Lemma~2.11]{BeZeRepGL}, we have a bijection 
\[ W_{L,b} \cong W_{L^{b}} \backslash W(L,L^{b}) /W_{L}, \]
where $W(L,L^{b}) := \{ w \in W_{G} \mid w(L) \subset L^{b} \}$. By Lemma \ref{lem:WeylLevicoset}, 
we deduce that this is isomorphic to 
\[ L^{b}(F)\backslash I(L,L^{b})/L(F), \]
where 
\[ I(L,L^{b}) := \{g \in G(F) \mid gLg^{-1} \subset L^{b} \}. \]
Now, given $\theta \in i_{L}^{-1}(b)$, we let $L^{\theta}$ denote the $G$-centralizer of the slope homomorphism, and let $P^{\theta}$ be the standard parabolic with Levi factor $L^{\theta}$. Since $\theta$ maps to $b$, 
there exists $g_{\theta} \in G(F)$ such that $g_{\theta}(P^{\theta},L^{\theta})g_{\theta}^{-1} = (P^{b},L^{b})$. We note that $L \subset L^{\theta}$ by definition, and therefore it follows that $g_{\theta} \in I(L,L^{b})$. We now simply let $w_{\theta}$ denote the element corresponding to $g_{\theta}$ under the equivalence 
\[  W_{L,b} \cong L^{b}(F)\backslash I(L,L^{b})/L(F), \]
described above\footnote{Another way of describing $w_{\theta}$ is as the generic relative position of the two reductions $\mathcal{E}_{L} \times^{L} P$ and $\mathcal{E}_{L^{b}} \times^{L^{b}} P^{b}$ of $\mathcal{E}$, as in \cite[Lemma~4.1]{HNStrataSchi}.}. We note, by construction, we have an equality of slope polygons $\nu_{b_{L^{b}}} = \nu_{b} = \nu_{w_{\theta}(\theta)}$ as elements in $\mathbb{X}_{*}(T_{\ol{F}})^{\Gamma}_{\mathbb{Q}}$. 

Now let $P^{w_{\theta}}$ 
denote the standard parabolic with Levi factor $w_{\theta}(L)$. We consider the parabolic structures $\mathcal{E}_{b_{L^{b}}} \times^{L^{b}} P^{b}$ and $\mathcal{E}_{w_{\theta}(\theta)} \times^{w_{\theta}(L)} P^{w_{\theta}}$ 
on the $G$-bundle $\mathcal{E}_{b}$. It follows by \cite[Theorem~4.1]{HNStrataSchi} (cf. \cite[Theorem~1.8]{CFSpadicperiod} and \cite[Appendix~A,Theorem~A.5]{NguyenViehmannBdRGrassmannian}, for a discussion of this in the Fargues--Fontaine setting)
and the aforementioned equality of slope polygons that $\mathcal{E}_{b_{L^{b}}} \times^{L^{b}} P^{b}$ is obtained by extension of structure group along the inclusion $P^{w_{\theta}} \subset P^{b}$. Therefore, $w_{\theta}(\theta)$ is a reduction of $b_{L^{b}}$ to $B(w_{\theta}(L))$, 
as desired.

We now show the injectivity of the map. Suppose we have elements $\theta$ and $\theta'$ such that $w_{\theta} = w_{\theta'} = w$. Then $w_{\theta}(\theta)$ and $w_{\theta'}(\theta')$ are both reductions of $b_{L^{b}}$ to $B(w(L))_{\basic}$. We claim that $w_{\theta}(\theta) = w_{\theta'}(\theta')$, which implies that $\theta = \theta'$. 

Since $(w(L))_{w_{\theta}(\theta)}$ is isomorphic to a Levi $M$ of $(L^b)_{b_{L^{b}}}$, we have a commutative diagram 
\[\begin{tikzcd}
B(w(L)) \arrow[r, "\sim"] \arrow[d]& B(M) \arrow[d] \\
B(L^{b}) \arrow[r, "\sim"] & B((L^b)_{b_{L^{b}}}) . 
\end{tikzcd}\]
Hence we can reduce the problem to showing that the map $B(M) \ra B((L^b)_{b_{L^{b}}})$ is injective on the slope $0$ elements. In other words, the map $\pi_{1}(M)_{\Gamma_{F},\mathrm{tors}} \ra \pi_{1}((L^b)_{b_{L^{b}}})_{\Gamma_{F},\mathrm{tors}}$ is injective (where we have used the commutative diagram (2) in \cite[Theorem~1.15]{RaRiFiso}). However, by \cite[Theorem~1.15]{RaRiFiso}, this identifies with the map on Galois Cohomology $H^{1}(\Gamma_{F},M) \ra H^{1}(\Gamma_{F},(L^b)_{b_{L^{b}}})$, and this is injective by Lemma \ref{lem:LeviH1inj}. 

We saw in the previous paragraph that the map $i_{L}^{-1}(b) \hookrightarrow W_{L,b}$ takes values in the set of elements $w \in W_{L,b}$ such that $w(L) \subset L^{b}$ transfers to $(L^{b})_{b_{L^{b}}}$. Conversely, if we are given such an element $w \in W_{L,b}$ then, by Lemma \ref{lem:innredL}, the element $b_{L^{b}}$ admits a reduction to $\theta' \in B(w(L))_{\basic}$, where we note that this element is necessarily basic since $b_{L^{b}}$ is. Then $w^{-1}(\theta') \in B(L)_{\basic}$ is an element in $i^{-1}(b)$ whose attached Weyl group element is $w$, as desired. 
\end{proof}
We will also need the following result. 
\begin{lem}{\label{lem: parabolicstransfer}}
An element $b \in B(G)$ lies in $B(G)_{L}$ if and only if there exists $w \in W_{L,b}$ such that the parabolic $w(P) \cap L^{b}$ of $L^{b}$ transfers to a parabolic subgroup $Q_{b,w} \subset G_{b}$ under the inner twisting. More precisely, if $\theta$ maps to $b \in B(G)$ with corresponding Weyl group element $w_{\theta}$ as in the previous lemma then $w_{\theta}(P) \cap L^{b}$ transfers to a parabolic subgroup of $G_{b}$. Moreover, for every element $\theta \in B(L)_{\mathrm{basic}}$ mapping to $b$, the Levi factor of $Q_{b,w_{\theta}}$ is equal to $w_{\theta}(L)_{w_{\theta}(\theta)}$.
\end{lem}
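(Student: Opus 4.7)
The plan is to reduce both directions to Lemma \ref{lem:innredL} applied to the reductive group $L^{b}$ with its basic element $b_{L^{b}}$, exploiting the identification $G_{b} \cong (L^{b})_{b_{L^{b}}}$. The key preliminary observation is that for any $w \in W_{L,b}$, so that $w(L) \subset L^{b}$, the intersection $w(P) \cap L^{b}$ is a parabolic subgroup of $L^{b}$ with Levi factor $w(L)$: writing $P = L \cdot U_{P}$ with $U_{P}$ the unipotent radical gives $w(P) \cap L^{b} = w(L) \cdot (w(U_{P}) \cap L^{b})$, where the first factor is reductive and the second is unipotent. So every candidate parabolic in the statement has Levi $w(L)$.

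For the forward direction, given $b \in B(G)_{L}$ and $\theta \in i_{L}^{-1}(b)$, I would invoke Lemma \ref{lemma: Lreducibleelements} to produce $w_{\theta} \in W_{L,b}$ together with a basic reduction $w_{\theta}(\theta) \in B(w_{\theta}(L))_{\basic}$ of $b_{L^{b}}$. Applying Lemma \ref{lem:innredL} \ref{en:Pinnerred} to the pair $(L^{b}, b_{L^{b}})$ and the parabolic $w_{\theta}(P) \cap L^{b}$ with Levi $w_{\theta}(L)$, the reduction $w_{\theta}(\theta)$ witnesses that this parabolic transfers to a parabolic $Q_{b,w_{\theta}} \subset G_{b}$ under the inner twisting. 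The Levi of $Q_{b,w_{\theta}}$ is then the inner twist of $w_{\theta}(L)$ along $w_{\theta}(\theta)$, which by definition is $w_{\theta}(L)_{w_{\theta}(\theta)}$, giving the final \emph{moreover} statement. Conversely, if $w \in W_{L,b}$ is such that $w(P) \cap L^{b}$ transfers to a parabolic of $G_{b}$, another application of Lemma \ref{lem:innredL} \ref{en:Pinnerred} yields a reduction $\theta' \in B(w(L))$ of $b_{L^{b}}$. The element $\theta'$ is automatically basic, because its slope homomorphism in $w(L)$ maps to the slope of $b_{L^{b}}$, which is central in $L^{b}$ and therefore factors through $Z(L^{b}) \cap w(L) \subset Z(w(L))$. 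Hence $w^{-1}(\theta') \in B(L)_{\basic}$ maps to $b$, proving $b \in B(G)_{L}$.

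The main obstacle I foresee is unwinding the inner-twisting bookkeeping to see that the Levi of the transferred parabolic is precisely $w_{\theta}(L)_{w_{\theta}(\theta)}$ rather than some other inner form of $w_{\theta}(L)$. This should follow directly from the proof of Lemma \ref{lem:innredL} \ref{en:Pinnerred}, since the inner twisting realizing $G_{b} \cong (L^{b})_{b_{L^{b}}}$ restricts on the Levi $w_{\theta}(L) \subset L^{b}$ to the inner twisting along the reduction $w_{\theta}(\theta)$; verifying this compatibility cleanly is the only point requiring real care, with the rest reducing to a routine manipulation of Weyl cosets and reductions via results already established.
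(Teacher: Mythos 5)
Your proposal is correct and takes essentially the same route as the paper's (one-line) proof, which simply says to combine Lemma \ref{lemma: Lreducibleelements} with Lemma \ref{lem:innredL} applied to $b_{L^{b}}$ via the isomorphism $(L^{b})_{b_{L^{b}}} \cong G_{b}$. You have usefully filled in the details the paper leaves implicit — in particular the verification that $w(P) \cap L^{b}$ is a parabolic of $L^{b}$ with Levi $w(L)$, and the argument that the reduction $\theta' \in B(w(L))$ of $b_{L^{b}}$ produced by Lemma \ref{lem:innredL} \ref{en:Pinnerred} in the converse direction is automatically basic (because $\nu_{\theta'}$ is $L^{b}$-conjugate to the central cocharacter $\nu_{b_{L^{b}}}$, hence equal to it, hence central in $w(L)$ since $Z(L^{b}) \subset Z(w(L))$).
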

\begin{proof}
This follows from combining Lemma \ref{lemma: Lreducibleelements} with Lemma \ref{lem:innredL} applied to the basic reduction $b_{L^{b}}$ of $b$ by using the isomorphism $(L^{b})_{b_{L^{b}}} \cong G_{b}$, as seen in the previous argument.
\end{proof}

We now turn to computing our Eisenstein functors. Let us start with the simplest family of cases.
\begin{lem}{\label{lem: sloperzeroeisensteincalc}}
Suppose that $\theta \in B(L)_{\basic}$ with image $b \in B(G)_{L}$ satisfies $\langle \xi_{P},\ol{\theta} \rangle = 0$. It follows that $b$ is basic, so in particular $W_{L,b}$ is trivial, and we have that 
\[ \Eis_{P}^{G,\theta}(i^{\theta}_{!}(A)) \cong i^{b}_{!}(\Ind_{Q_{b}(F)}^{G_{b}(F)}(A)) \]
\[ \nmEis_{P}^{G,\theta}(i^{\theta}_{!}(A)) \cong i^{b}_{!}(i_{Q_{b}(F)}^{G_{b}(F)}(A)), \]
where $Q_{b} := Q_{b,1}$ is the transfer of $P \subset G$ to $G_{b}$, as in Lemma \ref{lem: parabolicstransfer}. 
\end{lem}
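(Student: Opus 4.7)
The strategy is to reduce the geometry of $\Bun_P^\theta$ to a classifying stack of a locally profinite group and then recognize the resulting pull--push as ordinary parabolic induction, with an appropriate modulus twist for the normalized version.

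First I argue $b$ is basic. The fact that we use the transfer $Q_b = Q_{b,1}$ means the element $w_\theta \in W_{L,b}$ attached to $\theta$ equals $1$, which by Lemma \ref{lemma: Lreducibleelements} amounts to $\ol{\theta}$ being dominant in $\mathbb{X}_{*}(L^{b,\ab}_{\ol{F}})_{\Gamma_{F}}$. Combined with $\theta \in B(L)_{\basic}$ (so that $\ol{\theta}$ is central in $L^{b,\ab}$), one has $\langle \alpha_i, \ol{\theta}\rangle = 0$ for $i \in \mathcal{J}_{L^b} \setminus \mathcal{J}_L$ and $\langle \alpha_i, \ol{\theta}\rangle \geq 0$ for $i \in \mathcal{J}_{G,L^b}$. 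The character $\xi_P$ is a strictly positive $\mathbb{Z}$-linear combination of the $\{\alpha_i\}_{i \in \mathcal{J}_{G,L}}$, so $\langle \xi_P, \ol{\theta}\rangle$ is a sum of non-negative terms; its vanishing forces each term to vanish. Hence $\ol{\theta}$ is central in $G$, so $b$ is basic, $L^b = G$, and $W_{L,b} = \{1\}$ (since the condition $w^{-1}(B) \subset B$ collapses to $w = 1$). Lemma \ref{lem: parabolicstransfer} applied with $w_\theta = 1$ then produces the transferred parabolic $Q_b \subset G_b$ with Levi factor $L_b \cong L_\theta$.

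Next, Lemma \ref{lem:BunPPb} for the basic $b$ gives an isomorphism $\Bun_P \cong \Bun_{Q_b}$ compatibly with the projections to $\Bun_L \cong \Bun_{L_b}$. Under this identification $\Bun_L^\theta$ corresponds to the trivial stratum $[\ast/\ul{L_b(F)}] \subset \Bun_{L_b}$, and $\Bun_P^\theta$ corresponds to its preimage under $\mf{q}_{Q_b}$. Since $\nu_\theta$ acts trivially on $\Lie(U_b)$, each graded piece $\mcLie(U_b)_{\alpha_i,\mathrm{triv}}$ appearing in Lemma \ref{lem:geomdescpofconncomp} is a trivial slope-zero vector bundle, and the corresponding Picard $v$-groupoid $\mathcal{P}(\mcLie(U_b)_{\alpha_i,\mathrm{triv}}[1])$ collapses to $[\ast/\ul{F^{n_i}}]$. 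Iterating produces an identification $\Bun_P^\theta \cong [\ast/\ul{Q_b(F)}]$ under which $\mf{q}_P^\theta$ is induced by the projection $Q_b(F) \twoheadrightarrow L_b(F)$ and $\mf{p}_P^\theta$ factors as $[\ast/\ul{Q_b(F)}] \to [\ast/\ul{G_b(F)}] = \Bun_G^b \hookrightarrow \Bun_G$ via the inclusion $Q_b(F) \hookrightarrow G_b(F)$.

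Because $d_{P,\theta} = 0$ there is no cohomological shift, so proper base change gives $\Eis_P^{G,\theta}(i^\theta_!(A)) = (\mf{p}_P^\theta)_! (\mf{q}_P^\theta)^*(A)$. Under the identifications this becomes $i^b_!$ applied to the pull--push along $[\ast/\ul{L_b(F)}] \leftarrow [\ast/\ul{Q_b(F)}] \to [\ast/\ul{G_b(F)}]$, which is compact induction from $Q_b(F)$ to $G_b(F)$. Since $G_b/Q_b$ is compact this equals the smooth parabolic induction $\Ind_{Q_b(F)}^{G_b(F)}(A)$, giving the first formula. For the normalized version, Remark \ref{rem: unnormalizedvsnormalizedeisenstein} gives $\nmEis_P^{G,\theta}(-) = \Eis_P^{G,\theta}(- \otimes \Delta_P^{1/2})$; one has $\Delta_P^{1/2}|_{\Bun_L^\theta} = \delta_{P,\theta}^{1/2}$ by definition, and since the inner twisting identifies $\xi_P$ with $\xi_{Q_b}$ under $L^{\ab} \cong L_b^{\ab}$, one has $\delta_{P,\theta} = \delta_{Q_b}|_{L_b(F)}$, whence $\Ind_{Q_b(F)}^{G_b(F)}(A \otimes \delta_{P,\theta}^{1/2}) = i_{Q_b(F)}^{G_b(F)}(A)$.

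The main obstacle is the collapse step in the second paragraph: verifying that the slope-zero bundles $\mcLie(U_b)_{\alpha_i,\mathrm{triv}}$ contribute only constant data, so that the group $v$-sheaf $\wt{U}_b^{>0}$ is trivial and $[\ast/\wt{Q}_b] \cong [\ast/\ul{Q_b(F)}]$ with the appropriate $L_b(F)$-equivariance. This rests on the Fargues--Fontaine computations $H^0(X_S, \ca{O}_{X_S}) = F$ (constantly in $S$) and $H^1(X_S, \ca{O}_{X_S}) = 0$, applied along the filtration of $U_b$ from Lemma \ref{lem:filtU}.
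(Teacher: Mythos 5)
The geometric core of your proof matches the paper's: identify $\Bun_{P}^{\theta}$ with the classifying stack $[\ast/\ul{Q_{b}(F)}]$ using the collapse of the slope-zero Banach--Colmez spaces, recognize the pull--push as $\Ind_{Q_b(F)}^{G_b(F)}$, and pass to the normalized version by twisting by $\delta_{P,\theta}^{1/2}$ and using the inner-twist compatibility $\delta_{P,\theta} = \delta_{Q_b}|_{L_\theta(F)}$. Your detour through $\Bun_{P}\cong\Bun_{Q_b}$ via Lemma \ref{lem:BunPPb} rather than applying Lemma \ref{lem:geomdescpofconncomp} directly to $\Bun_P^\theta$ is an equivalent rephrasing and is fine.

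However, your opening argument for ``$b$ is basic'' is circular. You begin by asserting that the appearance of $Q_{b}=Q_{b,1}$ in the lemma statement ``means'' that $w_{\theta}=1$, and from $w_\theta=1$ you derive dominance of $\ol{\theta}$, and only then the centrality of $\ol{\theta}$. But the notation $Q_{b}:=Q_{b,1}$ is part of the lemma's \emph{conclusion}: it is only well-defined once one knows $1\in W_{L,b}$, i.e.\ once one already knows $W_{L,b}$ is trivial, i.e.\ once one already knows $b$ is basic. You cannot read the hypothesis $w_\theta = 1$ off of a label that is licensed only by the very statement you are trying to prove. Without an independent input establishing dominance of $\ol\theta$, the step ``$\langle\xi_P,\ol\theta\rangle=0$ is a sum of non-negative terms, so each term vanishes'' does not go through, since the individual pairings $\langle\alpha_i,\ol\theta\rangle$ could have mixed signs. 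The paper instead deduces basicness directly from the equality $\langle\xi_P,\ol\theta\rangle=\langle 2\rho_G,\nu_b\rangle$ (so that vanishing of the left side with $\nu_b$ dominant forces $\nu_b$ central); you should argue along those lines, or at minimum establish dominance of $\ol\theta$ from the actual hypotheses rather than from the notation of the conclusion.

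One further small inaccuracy: $\xi_{P}=2\rho_{G}-2\rho_{L}$ is \emph{not} a $\mathbb{Z}$-linear combination only of the $\alpha_i$ with $i\in\mathcal{J}_{G,L}$ — it has nonzero coefficients on simple roots of $L$ as well (e.g.\ $G=\mathrm{SL}_3$, $L$ the long Levi of a maximal parabolic, $\xi_P=\alpha_1+2\alpha_2$). What saves the computation is that those contributions pair to zero with $\ol\theta$ because the $\alpha_i$ with $i\in\mathcal{J}_L$ die on $L^{\ab}$; you should say this explicitly rather than claim an incorrect expression for $\xi_P$.
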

\begin{proof}
The second isomorphism follows from the first using the relationship 
\[ \nmEisP^{G,\theta}(i^{\theta}_{!}(A))|_{\Bun_{G}^{b}} \cong \EisP^{G,\theta}(i^{\theta}_{!}(A \otimes \delta_{P,\theta}^{1/2})), \]
as in Remark \ref{rem: unnormalizedvsnormalizedeisenstein}.  To prove the first isomorphism, by an application of proper base change we are interested in computing $\mf{p}_{!}\mf{q}^{*}(A)$, as in the diagram
\[\begin{tikzcd}
\Bun_{P}^{\theta} \arrow[r,"\mf{p}"] \arrow[d,"\mf{q}"]& \Bun_{G}  \\
\Bun_{L}^{\theta} \cong [\ast/\ul{L_{\theta}(F)}].& 
\end{tikzcd}\]
Note that, since $\theta$ maps to $b$, we have an equality: $\langle \xi_{P}, \theta \rangle = \langle 2\rho_{G}, \nu_{b} \rangle = 0$, so in particular $b$ must be basic. By a standard argument with HN-slopes (See for example the proof of \cite[Lemma~9.3]{HamGeomES}), it follows that the topological image of the map $\mf{p}$ must be just $\Bun_{G}^{b}$. Moreover, if we look at the space $\Bun_{P}^{\theta}$ and use Lemma \ref{lem:geomdescpofconncomp}, we see that all the vector bundles $\mcLie(U)_{\alpha_i,\theta}$ are semistable of slope $0$. Using this, we deduce an isomorphism $\Bun_{P}^{\theta} \cong [\ast/\ul{Q_{b}(F)}]$. 
Therefore, the previous diagram just becomes 
\[\begin{tikzcd}
\left[\ast/\ul{Q_{b}(F)}\right] \arrow[r,"p"] \arrow[d,"q"] & \left[\ast/\ul{G_{b}(F)}\right] \cong \Bun_{G}^{b} \arrow[r,"i^{b}"] & \Bun_{G} \\
\left[\ast/\ul{L_{\theta}(F)}\right],& & 
\end{tikzcd}\]
where $p$ and $q$ are induced by the natural maps of groups. If $\rho$ is a smooth representation of $L_{\theta}(F)$, we note that $p_{!}q^{*}(\rho)$ will be identified with the set of compactly supported functions on $G_{b}(F)$ whose supports are compact in $G_{b}(F)/Q_{b}(F)$, and which transform under the right translation action of $Q_{b}(F)$ by the action of $\rho$ inflated to $Q_{b}(F)$, 
which is $\Ind_{Q_{b}(F)}^{G_{b}(F)}(\rho)$, by definition. Using the exactness of $\Ind_{Q_{b}(F)}^{G_{b}(F)}(-)$, we can further deduce that $p_{!}q^{*}(A) \cong \Ind_{Q_{b}(F)}^{G_{b}(F)}(A)$ for all $A \in D(L_{\theta}(F),\Lambda)$, as desired.
\end{proof}
With this warmup out of the way, we come to the following more general claim.
\begin{thm}{\label{thm: eisensteinvalues}}
For all $\theta \in B(L)_{\mathrm{basic}}$ mapping to $b \in B(G)$ with attached Weyl group element $w_{\theta} \in W_{L,b}$, as in Lemma \ref{lemma: Lreducibleelements}, we consider $A \in D(L_{\theta}(F),\Lambda)$ and write $w_{\theta*}(A) \in D(w_{\theta}(L)_{w_{\theta}(\theta)}(F),\Lambda)$ 
for the image under the equivalence of categories induced by the isomorphism $L_{\theta} \cong w_{\theta}(L)_{w(\theta)}$. We have an isomorphism
\[ \nmEisP^{G,\theta}(i^{\theta}_{!}(A))|_{\Bun_{G}^{b}} \cong i_{Q_{b,w_{\theta}}(F)}^{G_{b}(F)}(w_{\theta*}(A)) \otimes \delta_{b}^{-1/2}[-\langle 2\rho_{G}, \nu_{b} \rangle] \]
in $D(\Bun_{G}^{b}) \cong D(G_{b}(F),\Lambda)$, where $Q_{b,w_{\theta}}$ is as in Lemma \ref{lem: parabolicstransfer}. 
Moreover, if $\theta$ is HN-dominant with respect to the Borel for which $P$ is standard then this defines an isomorphism
\[ \nmEisP^{G,\theta}(i^{\theta}_{!}(A)) \cong i^{b}_{!}(i_{Q_{b,w_{\theta}}(F)}^{G_{b}(F)}(w_{\theta*}(A)) \otimes \delta_{b}^{-1/2})[-\langle 2\rho_{G}, \nu_{b} \rangle]  \]
of objects in $D(\Bun_{G})$. 
\end{thm}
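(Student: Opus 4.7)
The plan is to reduce to the dominant case by a Weyl group translation, factor the Eisenstein functor through $P^{b}\subset G$, and compute the two resulting factors using the zero-slope Eisenstein lemma and the structure of $\mathcal{M}_{b}$; the HN-dominant refinement will then be extracted from a slope-vanishing property of the Banach--Colmez fibers of $\mf{p}_{P}$.

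First, apply Lemma \ref{lem: Weylgrouptranslate} with $w=w_{\theta}$ to identify $\nmEis_{P}^{G,\theta}(i^{\theta}_{!}(A))$ with $\nmEis_{w_{\theta}(P)}^{G,w_{\theta}(\theta)}(i^{w_{\theta}(\theta)}_{!}(w_{\theta *}(A)))$, so that by Lemma \ref{lemma: Lreducibleelements} one may assume $w_{\theta}=1$. In this situation $\theta$ is dominant, $L\subset L^{b}$, and $\theta$ is a reduction of $b_{L^{b}}\in B(L^{b})_{\basic}$. Since $\nu_{\theta}=\nu_{b}\in Z(L^{b})$ by the definition of $L^{b}$, every root of $L^{b}$ pairs trivially with $\ol{\theta}$; combined with $B\subset P^{b}$ this forces the inclusion of standard parabolics $P\subset P^{b}$.

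Next, apply Lemma \ref{lem: compositionofeisensteinseries} to $P\subset P^{b}\subset G$, yielding $\nmEis_{P}^{G,\theta}\cong \nmEis_{P^{b}}^{G,b_{L^{b}}}\circ \nmEis_{P\cap L^{b}}^{L^{b},\theta}$. Since $\nu_{\theta}\in Z(L^{b})$, we have $\langle \xi_{P\cap L^{b}},\ol{\theta}\rangle=0$, so Lemma \ref{lem: sloperzeroeisensteincalc} applied within $L^{b}$ computes the inner factor as $i^{b_{L^{b}}}_{!}\bigl(i^{G_{b}(F)}_{Q_{b,1}(F)}(A)\bigr)$, where the transfer of $P\cap L^{b}$ to $G_{b}\cong (L^{b})_{b_{L^{b}}}$ is identified with $Q_{b,1}$ via Lemma \ref{lem: parabolicstransfer}. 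For the outer factor, use the identification $\Bun_{P^{b}}^{b_{L^{b}}}\cong \mathcal{M}_{b}$ and observe that the fiber of $\mf{p}_{P^{b}}^{b_{L^{b}}}$ over $\Bun_{G}^{b}$ is precisely the closed split-reduction section $s\colon [\ast/\ul{G_{b}(F)}]\hookrightarrow \mathcal{M}_{b}$, whose composition with $\mf{p}_{P^{b}}^{b_{L^{b}}}$ is the natural projection $p_{b}\colon [\ast/\ul{G_{b}(F)}]\to [\ast/\wt{G}_{b}]=\Bun_{G}^{b}$. Base change plus the identity $\mf{q}_{P^{b}}^{b_{L^{b}}}\circ s=\id$ reduces the outer factor restricted to $\Bun_{G}^{b}$ to $p_{b!}(B\otimes s^{*}\IC_{\Bun_{P^{b}}})$, where $B$ denotes the output of the inner factor. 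Using $s^{*}\IC_{\Bun_{P^{b}}}\cong \delta_{b}^{1/2}[d_{b}]$ (combining $d_{P^{b},b_{L^{b}}}=d_{b}$ with Definition \ref{defn: generaldefnofmoduluschar}) together with the $G_{b}(F)$-equivariant isomorphism $p_{b!}\Lambda\cong \delta_{b}^{-1}[-2d_{b}]$ from Proposition \ref{prop: compactsuppcohom}, the expression collapses to $B\otimes \delta_{b}^{-1/2}[-d_{b}]$. Substituting for $B$ and restoring the Weyl translation yields the first claimed isomorphism.

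For the HN-dominant refinement, I would verify directly that $\mf{p}_{P}$ sends $\Bun_{P}^{\theta}$ into the single HN stratum $\Bun_{G}^{b}$, from which the upgrade to an isomorphism on all of $\Bun_{G}$ follows by the $i^{b}_{!}/i^{b*}$ adjunction. By Lemma \ref{lem:geomdescpofconncomp} the space $\Bun_{P,*}^{\theta}$ is an iterated fibration of Picard $v$-groupoids $\mathcal{P}(\mcLie(U)_{\alpha_{i},\theta}[1])$, and for $\theta$ HN-dominant each bundle $\mcLie(U)_{\alpha_{i},\theta}$ has non-negative HN slope, so $\mathcal{H}^{1}$ vanishes and the Picard groupoid degenerates to a classifying stack of a positive Banach--Colmez space. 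Hence $|\Bun_{P}^{\theta}|$ maps to the single point $|\mathcal{E}_{b}|\in |\Bun_{G}^{b}|$, so $\nmEis_{P}^{G,\theta}(i^{\theta}_{!}(A))$ is supported on $\Bun_{G}^{b}$. The main technical obstacle is keeping track of the full $G_{b}(F)$-equivariance in the outer computation: the equivariant form of $p_{b!}\Lambda\cong \delta_{b}^{-1}[-2d_{b}]$ together with its compatibility with the definition of $\delta_{b}$ (Definition \ref{defn: generaldefnofmoduluschar}) via the $s^{*}\IC_{\Bun_{P^{b}}}$ calculation is precisely what produces the character $\delta_{b}^{-1/2}$ in the final formula, and this is the heart of the proof.
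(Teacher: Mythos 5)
Your overall strategy matches the paper's: Weyl translate by $w_{\theta}$, factor through a parabolic with Levi $L^{b}$, compute the inner factor by the slope-zero lemma, and compute the outer factor by the $\ell$-cohomological structure of the relevant chart. But there is a genuine gap in the factorization step. After translating by $w_{\theta}$, the parabolic $w_{\theta}(P)$ is standard with respect to the Borel $w_{\theta}(B)$, not $B$, so the claim ``this forces the inclusion of standard parabolics $P \subset P^{b}$'' does not hold: there is no reason for $w_{\theta}(B) \subset P^{b}$, and consequently $w_{\theta}(P) \subset P^{b}$ can fail. (Already for $G=\GL_{3}$, $L=T$, $\nu_{b}=(1,0,-1)$, and $w_{\theta}$ a nontrivial transposition one has $w_{\theta}(B) \not\subset B = P^{b}$.) The paper handles this by introducing the parabolic $P^{b,w}$, the \emph{unique} parabolic with Levi $L^{b}$ containing $w(P)$ --- equivalently the standard one with respect to $w(B)$ --- and factoring through $w(P) \subset P^{b,w}$. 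This $P^{b,w}$ generally differs from $P^{b}$.

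Because of this, the computation of the outer factor breaks. You invoke the identification $\Bun_{P^{b}}^{b_{L^{b}}} \cong \mathcal{M}_{b}$ and the observation that the preimage of $\Bun_{G}^{b}$ is the split-reduction section; both are correct for $P^{b}$, but the parabolic you actually need is $P^{b,w}$, and for that parabolic the preimage of $\Bun_{G}^{b}$ in $\Bun_{P^{b,w}}^{b_{L^{b}}}$ is a larger stack $[*/\wt{G}_{w}]$, an iterated fibration over $\ul{G_{b}(F)}$ in positive Banach--Colmez spaces attached to the roots of $U^{b,w}$ pairing non-positively with $\ol{b}_{L^{b}}$. The paper's Lemma \ref{lem: keycalc} computes the compactly supported cohomology of the quotient $\wt{G}^{w} = \wt{G}_{b}/\wt{G}_{w}$ directly (the shifts and characters involve $\xi_{P^{b}}^{w}$, the partial sum of positive roots flipped by $w^{-1}$) and shows that, happily, the $w$-dependent corrections cancel against the $w$-dependence of $\delta_{P^{b,w},b_{L^{b}}}$, yielding an outer factor independent of $w$. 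Your argument effectively proves the theorem only when $w_{\theta}=1$; the HN-dominant refinement and the inner-factor computation are otherwise correct and match the paper.
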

\begin{proof}
The second claim follows immediately from the first by noting that, in the case that $\theta$ is HN-dominant with respect to the Borel $B$ for which $P$ is standard then the space $\Bun_{P}^{\theta}$ is a classifying stack given by the point defined by the split reduction quotiented out by the torsor. In particular, its topological image under $\mf{p}_{P}$ is just $b$. 

For the first claim, we set $w := w_{\theta}$. By applying Lemma \ref{lem: Weylgrouptranslate}, we reduce to computing 
\[ \nmEis_{w(P)}^{G,w(\theta)}(i^{w(\theta)}_{!}(w_{*}(A)). \]
We let $P^{b}$ be the standard Levi subgroup with Levi factor $L^{b}$ given by the centralizer of the slope homomorphism of $b$. We let $b_{L^{b}} \in B(L^{b})_{\basic}$ be the canonical dominant reduction.

By definition of $w_{\theta} = w$, we have that $w(L) \subset L^{b}$. We let $P^{b,w}$ be the unique parabolic subgroup whose Levi factor is equal to $L^{b}$ and contains $w(P)$. In other words, the standard parabolic with Levi factor $L^{b}$ and defined with respect to the Borel $w(B)$. The element $w(\theta)$ 
maps under the natural composition $B(w(L))_{\basic} \cong \pi_{1}(w(L))_{\Gamma_F} \ra \pi_{1}(L^{b})_{\Gamma_F} \cong B(L^{b})_{\basic}$ to the dominant reduction $b_{L^{b}}$ of $b \in B(G)_{L}$, using the second part of Lemma \ref{lemma: Lreducibleelements}. 
Therefore it follows, by applying Lemma \ref{lem: compositionofeisensteinseries}, that we can rewrite this as 
\[ \nmEis_{P^{b,w}}^{G,b_{L^{b}}} \circ \nmEis_{w(P) \cap L^{b}}^{L^{b},w(\theta)}(i^{w(\theta)}_{!}(w_{*}(A))). \] 
By Lemma \ref{lem: sloperzeroeisensteincalc}, this is isomorphic to
\[ \nmEis_{P^{b,w}}^{G,b_{L^{b}}}(i^{b_{L^{b}}}_{!}(i_{Q_{b,w}(F)}^{G_{b}(F)}(w_{*}(A))). \]
Therefore, we have reduced to showing the following.
\begin{lem}
For all $A \in D(G_{b}(F),\Lambda) \cong D(\Bun_{L^{b}}^{b_{L^{b}}})$, we have an isomorphism 
\[ \nmEis_{P^{b,w}}^{G,b_{L^{b}}}(i^{b_{L^{b}}}_{!}(A))|_{\Bun_{G}^{b}} \cong A \otimes \delta_{b}^{-1/2}[-\langle 2\rho_{G},\nu_{b} \rangle] \]
of objects in $D(G_{b}(F),\Lambda) \cong D(\Bun_{G}^{b})$.
\end{lem}
\begin{proof}
Let $U^{b}$, $U^{b,-}$, $U^{b,w}$ denote the unipotent radical of $P^{b}$, $P^{b,-}$, $P^{b,w}$, respectively.

We write $\Bun_{P^{b,w}}^{\theta,b}$ for the preimage of $\Bun_{G}^{b} \cong [\ast/\wt{G}_{b}]$ along the map $\Bun_{P^{b,w}}^{\theta} \ra \Bun_{G}$ induced by $\mf{p}_{P^{b,w}}$. Two applications of proper base change tell us that we are tasked with computing
\[ \mf{p}_{!}\mf{q}^{*}(A \otimes \delta_{P^{b,w},b_{L^{b}}}^{1/2})[\langle \xi_{P^{b,w}},\ol{b}_{L^{b}} \rangle], \]
with maps as in the diagram
\[ \begin{tikzcd}
\Bun_{P^{b,w}}^{b_{L^{b}},b} \arrow[r,"\mf{p}"] \arrow[d,"\mf{q}"] & \left[\ast/\wt{G}_{b}\right] \cong \Bun_{G}^{b} \\
\Bun_{L^{b}}^{b_{L^{b}}} \cong \left[\ast/\ul{G_{b}(F)}\right].& 
\end{tikzcd} \]
We look at $\Bun_{P^{b,w},\ast}^{b_{L^{b}},b}$, the fiber of $\mf{q}$ over $\ast \ra [\ast/\ul{G_{b}(F)}]$. This defines a closed subspace of $\Bun_{P^{b,w},\ast}^{b_{L^{b}}}$, where, by Lemma \ref{lem:geomdescpofconncomp}, the latter is an iterated fibration of the Picard $v$-groupoids $\mathcal{P}(\mcLie(U^{b,w})_{\alpha,b_{L^{b}}}[1]) \ra \ast$ for all roots $\alpha$ occurring in $\Lie(U^{b,w})$. In particular, 
\[ \Bun_{P^{b,w},*}^{b_{L^{b}}} \cong X_{n}, \]
where $X_{n} \ra \cdots \ra X_{1} \ra X_0=\ast$ is a sequence of fibrations $f_{i} \colon X_{i} \ra X_{i-1}$ in the spaces $\mathcal{P}(\mcLie(U^{b,w})_{\alpha_{i-1},b_{L^{b}}}[1])$, where $\alpha_0, \ldots, \alpha_{n-1}$ are the roots occurring in $\Lie(U^{b,w})$. 
Moreover, there is a filtration 
\[ U^{b,w}=U_0 \supset U_1 \supset \cdots  \supset U_n =\{1\} 
\]
of the unipotent radical of $P^{b,w}$ 
stable under the conjugation action of $P^{b,w}$ 
such that $X_i(S)$ parametrizes the torsors under $\ca{E}_{b_{L^b},S}^{\alg} \times^{L^b} (U_0/U_i)$ for $S \in \AffPerf_k$ by the proof of Lemma \ref{lem:geomdescpofconncomp}. 
Let $Y_i$ be the closed substack of $X_i$ corresponding to the trivial torsor. 
Then we redefine $Y_{i}=f_{i}^{-1}(Y_{i - 1})$ if $\langle \alpha_{i},\ol{b}_{L^{b}} \rangle \leq 0$ and $Y_{i}\cong Y_{i - 1}$ if $\langle \alpha_{i},\ol{b}_{L^{b}} \rangle > 0$. The closed substack $Y_{n} \hookrightarrow X_{n} \cong \Bun_{P^{b,w},*}^{b_{L^{b}}}$ then identifies with $\Bun_{P^{b,w},\ast}^{b_{L^{b}},b}$.

It follows that $\Bun_{P^{b,w}}^{b_{L^{b}},b} \cong [\ast/\wt{G}_{w}]$ is a classifying stack attached to a group diamond $\wt{G}_{w}$. The group diamond $\wt{G}_{w}$ is an iterated fibration of $\mathcal{H}^{0}(\mcLie(U^{b,w})_{\alpha,b_{L^{b}}})$ over $\ul{G_{b}(F)}$, for $\alpha$ a root occurring in $U^{b,w}$ such that $\langle \alpha, \ol{b}_{L^{b}} \rangle \leq 0$. 
We get an induced map of classifying stacks $[\ast/\wt{G}_{w}] \ra [\ast/\wt{G}_{b}]$ that will be induced by a natural inclusion of group diamonds $\wt{G}_{w} \hookrightarrow \wt{G}_{b}$. We write $\wt{G}^{w}$ for the quotient of $\wt{G}_{b}$ by $\wt{G}_{w}$. 
In light of the isomorphism $\Bun_{P^{b,-}}^{b_{L^{b}}} \cong [\ast/\wt{G}_{b}]$, as in the proof of Proposition \ref{prop: moduluscharacterinnaturalsituations}, we can write $\wt{G}_{b}$ as an iterated fibration of $\mathcal{H}^{0}(\mcLie(U^{b,-})_{\alpha,b_{L^{b}}})$ for all roots $\alpha$ occurring in $\Lie(U^{b,-})$ over $\ul{G_{b}(F)}$. 
It follows that $\wt{G}^{w}$ is an iterated fibration of $\mathcal{H}^{0}(\mcLie(U^{b,-})_{\alpha,b_{L^{b}}})$ for all negative roots $\alpha < 0$ such that $w^{-1}(\alpha) < 0$. 
This sits in a Cartesian diagram 
\[ \begin{tikzcd}
\wt{G}^{w} \arrow[r,"\wt{\mf{p}}"] \arrow[d] & \ast \arrow[d] \\ 
\left[\ast/\wt{G}_{w}\right] \arrow[r,"\mf{p}"] & \left[\ast/\wt{G}_{b}\right]. 
\end{tikzcd}
\]
We claim we can now reduce to the following. 
\begin{lem}{\label{lem: keycalc}}
The compactly supported cohomology of $\wt{G}^{w}$ is isomorphic to $\delta_{P^{b,w},b_{L^{b}}}^{-1/2} \otimes \delta_{b}^{-1/2}[-2(\langle \xi_{P^{b}}, \nu_{b} \rangle - \langle \xi_{P^{b}}^{w},\nu_{b} \rangle)]$ 
as a $G_{b}(F)$-representation, 
where $\xi_{P^{b}}^{w}$ is the sum of positive roots $\alpha > 0$ occurring in $\Lie(U_{b})$ such that $w^{-1}(\alpha) < 0$. 
\end{lem}
We note that we have a chain of equalities: $d_{P^{b,w},b_{L^{b}}} = \langle \xi_{P^{b,w}}, \ol{b}_{L^{b}} \rangle = \langle \xi_{P^{b,w}}, \nu_{b_{L^{b}}} \rangle = \langle \xi_{P^{b,w}}, \nu_{b} \rangle$, 
by definition of $b_{L^{b}}$. Since $\nu_{b}$ is dominant and $P^{b}$ is a standard parabolic, we see that this is equal to
\[ \langle \xi_{P^{b}}, \nu_{b} \rangle - 2\langle \xi_{P^{b}}^{w}, \nu_{b} \rangle. \]
We deduce the following chain of isomorphisms: 
\begin{align*}
 \mf{p}_{!}\mf{q}^{*}(A \otimes \delta_{P^{b,w},b_{L^{b}}}^{1/2})[d_{P^{b,w},b_{L^{b}}}] &  \cong \mf{p}_{!}\mf{q}^{*}(A \otimes \delta_{P^{b,w},b_{L^{b}}}^{1/2})[\langle \xi_{P^{b}}, \nu_{b} \rangle - 2\langle \xi_{P^{b}}^{w},\nu_{b} \rangle]
 \\ & \cong  \mf{p}_{!}\mf{p}^{*}(A \otimes \delta_{P^{b,w},b_{L^{b}}}^{1/2})[\langle \xi_{P^{b}}, \nu_{b} \rangle - 2\langle \xi_{P^{b}}^{w},\nu_{b} \rangle] \\
 & \cong A \otimes \delta_{P^{b,w},b_{L^{b}}}^{1/2} \otimes \mf{p}_{!}(\Lambda)[\langle \xi_{P^{b}}, \nu_{b} \rangle - 2\langle \xi_{P^{b}}^{w},\nu_{b} \rangle] \\
 & \cong A \otimes \delta_{P^{b,w},b_{L^{b}}}^{1/2} \otimes \delta_{b}^{-1/2} \otimes \delta_{P^{b,w},b_{L^{b}}}^{-1/2}[-\langle \xi_{P^{b}}, \nu_{b} \rangle] \\
 & \cong A \otimes \delta_{b}^{-1/2}[-\langle \xi_{P^{b}}, \nu_{b} \rangle].
\end{align*}
For the second isomorphism, we have used that $\mf{q}$ can be identified with the composition $p_{b} \circ \mf{p}$, where $p_{b}\colon [\ast/\wt{G}_{b}] \ra [\ast/\ul{G_{b}(F)}]$ 
is the natural map. For the third isomorphism, we have used a projection formula. We have used Lemma \ref{lem: keycalc} in the fourth isomorphism. We now note that $-\langle \xi_{P^{b}}, \nu_{b} \rangle = - \langle 2\rho_{G}, \nu_{b} \rangle$, by virtue of the fact that $P^{b}$ is a standard parabolic and that $\nu_{b}$ pairs trivially with all roots of $G$ lying in its centralizer $L^{b}$. This shows the desired claim.
\end{proof}
This completes the proof of Theorem \ref{thm: eisensteinvalues}. 
\end{proof}
\begin{proof}[Proof of Lemma \ref{lem: keycalc}]
Let $\mf{g} := \mathrm{Lie}(G)$. Note that $\delta_{b}$ identifies with the modulus character of $P^{b}$ transferred to $(L^{b})_{b_{L^{b}}}$ along the inner twisting, 
since the group is quasi-split. Therefore, we have that $\delta_{P^{b,w},b_{L^{b}}}^{-1/2} \otimes \delta_{b}^{-1/2}$ is the unique rational cocharacter of $G_{b}(F)$ 
such that, for $t \in A_{G_{b}}(F)$, 
we obtain that
\[ \delta_{P^{b,w},b_{L^{b}}}^{-1/2} \otimes \delta_{b}^{-1/2}(t) = \prod_{\substack{\alpha > 0 \\ w^{-1}(\alpha) > 0}} |\det(t|\mf{g}_{\alpha})|^{-1},  \]
as in the equality $\xi_{P^{b}} + \xi_{P^{b,w}} = 2(\xi_{P} - \xi_{P_{b}}^{w})$ 
or alternatively $\prod_{\substack{\alpha < 0 \\ w^{-1}(\alpha) < 0}} |\det(t|\mf{g}_{\alpha})|$.
We also note that 
\[ \langle \xi_{P^{b}}, \nu_{b} \rangle - \langle \xi_{P^{b}}^{w}, \nu_{b} \rangle = 
 \sum_{\substack{\alpha > 0 \\ w^{-1}(\alpha) > 0}} \langle \alpha, \nu_{b} \rangle. \] 
The claim now follows from the previous description of $\wt{G}^{w}$ as an iterated fibration of the Banach--Colmez spaces $\mathcal{H}^{0}(\mcLie(U^{b,-})_{\alpha,b_{L^{b}}})$ for negative roots $\alpha < 0$ such that $w^{-1}(\alpha) < 0$, by arguing as in the proof of Proposition \ref{prop:KBunPtheta}. 
\end{proof}

\noindent
Linus Hamann\\
Department of Mathematics, Harvard University \\
Science Center, Cambridge, MA 02138, USA\\
hamann@math.harvard.edu\\[0.5cm]
Naoki Imai\\
Graduate School of Mathematical Sciences, The University of Tokyo, 3-8-1 Komaba, Meguro-ku, Tokyo, 153-8914, Japan \\
naoki@ms.u-tokyo.ac.jp


\begin{thebibliography}{BMHN24}
	\providecommand{\url}[1]{\texttt{#1}}
	\providecommand{\urlprefix}{URL }
	\providecommand{\eprint}[2][]{\url{#2}}
	
	\bibitem[BM22]{BertoloniELTypeHarrisConjecture}
	A.~Bertoloni~Meli, The cohomology of unramified {R}apoport-{Z}ink spaces of
	{EL}-type and {H}arris's conjecture, J. Inst. Math. Jussieu 21 (2022), no.~4,
	1163--1218.
	
	\bibitem[BMHN24]{BMHNCompuni}
	A.~Bertoloni~Meli, L.~Hamann and K.~H. Nguyen, Compatibility of the
	{F}argues-{S}cholze correspondence for unitary groups, Math. Ann. 390 (2024),
	no.~3, 4729--4787.
	
	\bibitem[BMO22]{BMOBGPar}
	A.~Bertoloni~Meli and M.~Oi, The $B(G)$-parametrization of the local Langlands
	correspondence, 2022, arXiv:2211.13864.
	
	\bibitem[BT65]{BoTiGred}
	A.~Borel and J.~Tits, Groupes r\'{e}ductifs, Inst. Hautes \'{E}tudes Sci. Publ.
	Math.  (1965), no.~27, 55--150.
	
	\bibitem[BZ76]{BeZeRepGL}
	I.~N. Bern\v{s}te\u{\i}n and A.~V. Zelevinski\u{\i}, Representations of the
	group {$GL(n,F),$} where {$F$} is a local non-{A}rchimedean field, Uspehi
	Mat. Nauk 31 (1976), no.~3, 5--70.
	
	\bibitem[CFS21]{CFSpadicperiod}
	M.~Chen, L.~Fargues and X.~Shen, On the structure of some {$p$}-adic period
	domains, Camb. J. Math. 9 (2021), no.~1, 213--267.
	
	\bibitem[FS21]{FaScGeomLLC}
	L.~Fargues and P.~Scholze, Geometrization of the local {L}anglands
	correspondence, 2021, arXiv:2102.13459.
	
	\bibitem[GHW22]{GHWEnSixfunc}
	D.~Gulotta, D.~Hansen and J.~Weinstein, An enhanced six-functor formalism for
	diamonds and $v$-stacks, Preprint  (2022), arXiv:2201.12467.
	
	\bibitem[GI16]{GINsemi}
	I.~Gaisin and N.~Imai, Non-semi-stable loci in {H}ecke stacks and {F}argues'
	conjecture, 2016, arXiv:1608.07446.
	
	\bibitem[Ham21]{HamJacCrit}
	L.~Hamann, A Jacobian Criterion for Artin $v$-stacks, Preprint  (2021),
	arXiv:2209.07495.
	
	\bibitem[Ham22]{HamGeomES}
	L.~Hamann, Geometric Eisenstein Series, Intertwining Operators, and {S}hin's
	Averaging Formula, 2022, arXiv:2209.08175.
	
	\bibitem[Han21]{HanHarr}
	D.~Hansen, Moduli of local shtukas and {H}arris's conjecture, Tunis. J. Math. 3
	(2021), no.~4, 749--799.
	
	\bibitem[Han23]{HanBeijLec}
	D.~Hansen, Beijing notes on the categorical local Langlands conjecture, arXiv
	(2023), arXiv:2310.04533.
	
	\bibitem[HKW22]{HKWKotloc}
	D.~Hansen, T.~Kaletha and J.~Weinstein, On the {K}ottwitz conjecture for local
	shtuka spaces, Forum Math. Pi 10 (2022), Paper No. e13, 79.
	
	\bibitem[HL23]{HamLeeTorsVan}
	L.~Hamann and S.~Y. Lee, Torsion Vanishing for some Shimura Varieties, arXiv
	(2023), arXiv:2309.08705.
	
	\bibitem[Hub96]{HubEtadic}
	R.~Huber, \'{E}tale cohomology of rigid analytic varieties and adic spaces,
	Aspects of Mathematics, E30, Friedr. Vieweg \& Sohn, Braunschweig, 1996.
	
	\bibitem[Ima19]{ImaConv}
	N.~Imai, Convolution morphisms and {K}ottwitz conjecture, 2019,
	arXiv:1909.02328.
	
	\bibitem[Kos21]{KosOngenlg}
	T.~Koshikawa, On the generic part of the cohomology of local and global Shimura
	varieties, 2021, arXiv:2106.10602.
	
	\bibitem[Kot84]{KotShtw}
	R.~E. Kottwitz, Shimura varieties and twisted orbital integrals, Math. Ann. 269
	(1984), no.~3, 287--300.
	
	\bibitem[Kot97]{KotIsoII}
	R.~E. Kottwitz, Isocrystals with additional structure. {II}, Compositio Math.
	109 (1997), no.~3, 255--339.
	
	\bibitem[NV23]{NguyenViehmannBdRGrassmannian}
	K.~H. Nguyen and E.~Viehmann, A {H}arder-{N}arasimhan stratification of the
	{$B^+_{\rm dR}$}-{G}rassmannian, Compos. Math. 159 (2023), no.~4, 711--745.
	
	\bibitem[RR96]{RaRiFiso}
	M.~Rapoport and M.~Richartz, On the classification and specialization of
	{$F$}-isocrystals with additional structure, Compositio Math. 103 (1996),
	no.~2, 153--181.
	
	\bibitem[RV14]{RVlocSh}
	M.~Rapoport and E.~Viehmann, Towards a theory of local {S}himura varieties,
	M\"unster J. Math. 7 (2014), no.~1, 273--326.
	
	\bibitem[Sch15]{HNStrataSchi}
	S.~Schieder, The {H}arder-{N}arasimhan stratification of the moduli stack of
	{$G$}-bundles via {D}rinfeld's compactifications, Selecta Math. (N.S.) 21
	(2015), no.~3, 763--831.
	
	\bibitem[Sch17]{SchEtdia}
	P.~Scholze, Etale cohomology of diamonds, 2017, arXiv:1709.07343.
	
	\bibitem[Ser94]{SerreGaloisCohom}
	J.-P. Serre, Cohomologie galoisienne, vol.~5 of Lecture Notes in Mathematics,
	Springer-Verlag, Berlin, fifth edn., 1994.
	
	\bibitem[SGA3-3]{SGA3-3}
	Sch\'emas en groupes. {III}: {S}tructure des sch\'emas en groupes r\'eductifs,
	S\'eminaire de G\'eom\'etrie Alg\'ebrique du Bois Marie 1962/64 (SGA 3).
	Dirig\'e par M. Demazure et A. Grothendieck. Lecture Notes in Mathematics,
	Vol. 153, Springer-Verlag, Berlin-New York, 1970.
	
	\bibitem[SW13]{ScWeMpd}
	P.~Scholze and J.~Weinstein, Moduli of {$p$}-divisible groups, Camb. J. Math. 1
	(2013), no.~2, 145--237.
	
	\bibitem[SW20]{ScWeBLp}
	P.~{Scholze} and J.~{Weinstein}, {Berkeley lectures on \(p\)-adic geometry},
	vol. 207, Princeton, NJ: Princeton University Press, 2020.
	
	\bibitem[Vig96]{VigReplmod}
	M.-F. Vign\'{e}ras, Repr\'{e}sentations {$l$}-modulaires d'un groupe
	r\'{e}ductif {$p$}-adique avec {$l\ne p$}, vol. 137 of Progress in
	Mathematics, Birkh\"{a}user Boston, Inc., Boston, MA, 1996.
	
	\bibitem[Wil24]{WildBunGcohomology}
	R.~Wild, \'{E}tale cohomology of the moduli stack of rank two vector bundles on
	the Fargues-Fontaine curve, University of Bonn, Masters Thesis  (2024).
	
\end{thebibliography}
\end{document}